\theoremstyle{plain}
\DeclareMathOperator{\Int}{Int}
\newcommand{\cN}{\lceil N\rceil}
\newtheorem{theorem}{Theorem}[section]
\newtheorem*{th:re}{Theorem \ref{th:re}}
\newtheorem{fact}[theorem]{Fact}
\newtheorem{lemma}[theorem]{Lemma}
\newtheorem{corollary}[theorem]{Corollary}
\newtheorem{proposition}[theorem]{Proposition}
\theoremstyle{definition}
\newtheorem{definition}[theorem]{Definition}
\newtheorem{assumption}[theorem]{Assumption}
\theoremstyle{remark}
\newtheorem{remark}[theorem]{Remark}
\newtheorem{example}[theorem]{Example}
\newtheorem*{jjj}{Remark}
\newcommand{\red}[1]{{\color{black} #1}}
\DeclareSymbolFont{AMSb}{U}{msb}{m}{n}
\DeclareMathSymbol{\N}{\mathalpha}{AMSb}{"4E}
\DeclareMathSymbol{\R}{\mathalpha}{AMSb}{"52}
\DeclareMathSymbol{\Z}{\mathalpha}{AMSb}{"5A}
\DeclareMathSymbol{\D}{\mathalpha}{AMSb}{"44}
\DeclareMathSymbol{\s}{\mathalpha}{AMSb}{"53}
\DeclareMathOperator{\md}{md}
\newcommand{\uN}{\lceil N\rceil}
\renewcommand{\Im}{\mbox{Im}}
\newcommand{\sF}{\scriptscriptstyle{F}}
\newcommand{\sM}{\scriptscriptstyle{M}}
\DeclareMathOperator{\tr}{tr}
\DeclareMathOperator{\vol}{vol}
\DeclareMathOperator{\supp}{supp}
\DeclareMathOperator{\de}{d}
\DeclareMathOperator{\m}{m}
\DeclareMathOperator{\ric}{Ric}
\newcommand{\T}{\mathcal{T}}
\def\eps{\epsilon}
\renewcommand{\L}{\mbox{L}}
\title[Glued spaces]{Glued spaces and lower Ricci curvature bounds}
\author{Christian Ketterer}
\thanks{ }
\thanks{{\it 2010 Mathmatics Subject Classification.} Primary 53C21, 54E35. Keywords: metric measure space,  Ricci curvature, curvature-dimension condition, gluing construction.}
\address{Logic House, Department of Mathematics and Statistics, South Campus, Maynooth University}
\email{christian.ketterer@mu.ie}
\begin{document}
\begin{abstract} We consider  Riemannian manifolds $M_i$, ${i=0,1}$, with boundary and  $\Phi_i\in C^{\infty}(M_i)$ non-negative such that  $(M_i, \Phi_i)$ has Bakry-Emery $N$-Ricci curvature bounded from below by $K$. Let $Y_0$ and $Y_1$ be isometric,  compact components of the boundary  of $M_0$ and $M_1$ respectively and assume $\Phi_0=\Phi_1$ on $Y_0\simeq Y_1$.  We assume that $\Pi_0+\Pi_1=:\Pi \geq 0$ (*), and $d\Phi_0(\nu_0)+ d\Phi_1(\nu_1)\leq \tr \Pi$ on $Y_0\simeq Y_1$ (**) where $\Pi_i$ is the second fundamental form and $\nu_i$ is inner unit normal field along $\partial M_i$. 
We show that the metric glued space $M=M_0\cup_{\mathcal I}M_1$ together with the measure $\Phi d\mathcal H^n$ satisfies the curvature-dimension condition $CD(K, N)$ where $\Phi: M\rightarrow [0,\infty)$ arises  tautologically from $\Phi_1$ and $\Phi_2$. Moreover, $(M, \Phi d\mathcal H^n)$ is the collapsed Gromov-Hausdorff limit of smooth, $\lceil N \rceil$-dimensional Riemannian manifolds with Ricci curvature bounded from below by $K- \epsilon$ and is also the measured Gromov-Hausdorff limit of smooth, weighted  Riemannian manifolds such that the Bakry-Emery $\lceil N \rceil$-Ricci curvature is bounded from below by $K-\epsilon$. 
On the other hand we show that given a glued manifold as described it satisfies the curvature-dimension condition $CD(K,N)$ only if the condition (*) and (**) hold. The latter statement generalizes a theorem of Kosovski\u{\i} for sectional lower curvature bounds and  especially applies for the  case  $\Phi\equiv 1$ where a lower Ricci curvature bound and $\dim_{M_i}\leq N$ replaces a lower Bakry-Emery $N$-Ricci curvature bound.
\end{abstract}
\maketitle
\tableofcontents
\section{Introduction and Statement of Main Results}
In the context of lower curvature bounds  Petrunin proved  that the gluing  along the isometric boundary of two finite dimensional Alexandrov  spaces with curvature bounded from below by   $k\in \R$ is again an Alexandrov space with curvature bounded from below by   $k$ \cite{petruningluing}.  The gluing theorem for 2-dimensional Alexandrov spaces is due to A. D. Alexandrov. 

For Riemannian manifolds Kosovski\u{\i} proved the following generalization: Given two Riemannian manifolds with boundary and sectional curvature bounded from below  by $k$ such that there exists an isometry between their boundaries,   the sum of the corresponding second fundamental forms is pointwise positive semi-definite
 if and only if the glued space is an Alexandrov space with curvature bounded from below $k$ \cite{kosovskiigluing, kosovski}. More precisely Kosovski\u{\i} proved that 
 the glued space is the uniform limit of a sequence of Riemannian manifolds with sectional curvature bounded from below by $K-\epsilon$ for $\epsilon>0$ arbitrarily small. A theorem that replaces lower sectional curvature bounds with Ricci curvature in the latter statement was   established before by Perelman \cite{perelmanlarge}. Perelman used this to show the existence of Riemannian manifolds with volume bounded below,  positive Ricci curvature and arbitrarily large Betti numbers. 
Other applications  of gluing constructions  for instance  appear in  \cite{reisersurgery, reisersurgery2}, \cite{burdick, burdickthesis}, \cite{bww}, \cite{bns23} and in \cite{wong}.

In \cite{sch} Schlichting uses Kosovski\u{\i}'s method to prove a gluing theorem  for other curvature conditions including  positive isotropic curvature and lower bounds for the Riemannian curvature operator. Nonnegativity of the sum of the second fundamental forms along the isometric boundary is again  crucial.  For lower scalar curvature bounds  the   condition on the boundary is a lower bound for the sum of the mean curvatures \cite{miaolocalized, sch, gromovlawson} (see also \cite{sch} and \cite{bh}). A generalization of Petrunin's theorem for synthetic Ricci curvature bounds in the context of Alexandrov metric spaces was proven by the author together with V. Kapovitch and K.-T. Sturm \cite{kakest}.

These results and their applications show why gluing constructions, and  how they preserve lower curvature bounds, are  interesting and relevant  problems in smooth and nonsmooth geometry. In this article we address the gluing problem from the viewpoint of the  curvature-dimension condition in the sense of Lott-Sturm-Villani in the setting of weighted Riemannian manifolds. In our main theorem we will generalize Kosovski\u{\i}'s characterization  result of sectional curvature lower bounds for glued spaces to the setting of Ricci and also Bakry-Emery $N$-Ricci curvature lower bounds.

Let $(M_i,  g_i, \Phi_i)$, $i=0,1$, be \red{complete}, weighted Riemannian manifolds with boundary $\partial M_i$ where $\Phi_i\in C^\infty(M_i)$ with $\Phi_i\geq 0$. \red{We assume the sectional curvature is bounded from above and below by constants $\underline \kappa$ and $\overline \kappa$, respectively, and $\sup_{M_i} |\nabla \Phi_i|<\infty$. This holds, for instance, if $M_i$ is compact.}
We set   $ \mathring M_i:= M_i\backslash \Phi_i^{-1}(\{0\})$. Let $p\in \mathring M_i$ and let $v\in T_pM$. For a  constant $N>\dim_{M_i}=:n$ the Bakry-Emery $N$-Ricci tensor of $(M_i, g_i, \Phi_i)$ in $p$ is defined as
\begin{align*}
\ric^{\Phi_i, N}_{g_{i}}|_p(v,v)
%&
= \ric_{g_i}|_p(v,v) - (N-n) \frac{ \nabla^2 \Phi_i^{\frac{1}{N-n}}|_p(v,v)}{\Phi_i^{\frac{1}{N-n}}(p)}.
\end{align*}
We write $\ric_{g_i}^{\Phi_i, N}\geq K$ if $\ric^{\Phi_i, N}_{g_i}|_p\geq Kg_i|_p$ for $p\in \mathring M_i$. If $\Phi_i\equiv const$ and $N=n$, $\ric_{g_i}^{\Phi_i, N}$ is  defined as the standard Ricci tensor (Definition \ref{def:baem}).

Assume $X_i=\Phi_i^{-1}(\{0\})\cap \partial M_i $ is empty or a closed and connected component of $\partial M_i$ and let $Y_i\neq\emptyset$ be another {\color{black} compact} and connected component of $\partial M_i$ with $X_i \cap Y_i = \emptyset$. 
We assume that there exists a Riemannian isometry $\mathcal I:   Y_0\rightarrow  Y_1$. In this case one can define the glued space $M_0\cup_{\mathcal I} M_1$ between $M_0$ and $M_1$ along $Y_0\simeq Y_1$.  $M_0\cup_{\mathcal I} M_1$ is diffeomorphic to a smooth manifold $M$ with boundary equipped with  the $C^0$ Riemannian metric $g$ defined via $g|_{M_i} = g_i$, $i=0,1$.  $d_g$ denotes the induced distance function (Section \ref{sec:gluing}).  We assume $\Phi_0(x)= \Phi_1(\mathcal I(x))>0$ for all $x\in Y_0$ and  define 
\begin{center}$
\Phi = \begin{cases} \Phi_0 & \mbox{ on } M_0\\
\Phi_1 &  \mbox{ on } M_1. 
\end{cases} 
$\end{center}
%(Subsection \ref{subsec:weightedgluing})
Let $\Pi_i$ be the second fundamental form of $\partial M_i$ and $\nu_i$ the inner unit normal vector field along $\partial M_i$.
We define $\lceil N \rceil:=\min\{n\in \N: n\geq N\}$. 
\begin{theorem}\label{main2}  Let $K\in \R$ and $N\in [1,\infty)$. 
 Assume for $i=0,1$ that $X_i=\emptyset$,  $\ric_{g_i}^{\Phi_i, N}\geq K$ and 
\begin{itemize}
%\item[(1)] $\Pi_i \geq 0$ on $\partial M_i \backslash Y_i,$
\item[(1)] $\Pi_1 + \Pi_2 =: \Pi \geq 0$ on $Y_0\simeq Y_1$,  
\medskip
\item[(2)] $\langle N_0, \nabla \log \Phi_0\rangle + \langle N_1, \nabla \log \Phi_1\rangle\leq \tr \Pi$ on $Y_0\simeq Y_1$.
\end{itemize}
\smallskip
There exist $\lceil N \rceil$-dimensional Riemannian manifolds $(M_i, g_i)_{i\in \N}$  {\color{black}with boundary }such that $\ric_{g_i}\geq K-\epsilon(i)$ with $\epsilon(i)\downarrow 0$ for $i\rightarrow \infty$ converging in measured Gromov-Hausdorff sense to the metric measure space  $(M, d_g, \Phi \vol_g)$.
\end{theorem}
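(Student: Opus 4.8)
The plan is to realize $(\hat M, d_g, \Phi\vol_g)$ as a measured Gromov–Hausdorff limit by smoothing the glued metric near the gluing hypersurface $Y := Y_0\simeq Y_1$. Following Kosovski\u{\i}'s and Perelman's strategy, I would first choose Fermi (normal) coordinates on a two-sided collar neighborhood $Y\times(-\delta,\delta)$ of $Y$ inside $\hat M$, where $Y\times[0,\delta)$ lies in $M_1$ and $Y\times(-\delta,0]$ lies in $M_0$, with the signed distance to $Y$ as the transverse coordinate $t$. In these coordinates $g = dt^2 + g_t$ where $t\mapsto g_t$ is a family of metrics on $Y$ that is smooth on each side of $t=0$ and $C^0$ across it; the jump in the $t$-derivative at $t=0$ is governed exactly by $\Pi_0+\Pi_1 = \Pi \ge 0$, since $\partial_t g_t|_{0^+} - \partial_t g_t|_{0^-} = -2\Pi$ (with the sign convention matching the statement). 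The weight $\Phi$ is positive and continuous near $Y$, smooth on each side, and by hypothesis (2) the analogous jump in $\partial_t\log\Phi$ is $\ge -\tr\Pi$.

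Next I would construct, for each small parameter $\sigma>0$, a smooth metric $g^\sigma$ and smooth positive weight $\Phi^\sigma$ on $\hat M$ agreeing with $(g,\Phi)$ outside the collar of width $\sigma$, obtained by mollifying the coefficient functions $g_t(\cdot)$ and $\Phi(t,\cdot)$ in the $t$-variable at scale $\sigma$. The convexity inequalities (1) and (2) are precisely what is needed so that, after mollification, the Bakry–Émery $N$-Ricci curvature of $(g^\sigma,\Phi^\sigma)$ stays bounded below by $K - \epsilon(\sigma)$ with $\epsilon(\sigma)\downarrow 0$: the second-derivative-in-$t$ terms that would otherwise produce a negative Dirac-type contribution are killed by $\Pi\ge 0$ and by (2); the remaining error terms are $O(\sigma)$ because the $C^0$-closeness of $g^\sigma$ to $g$ controls the lower-order curvature contributions and the mollified second derivatives are comparable to the one-sided classical ones. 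This is the same computation underlying Perelman's and Kosovski\u{\i}'s smoothings, adapted to the weighted setting via the formula for $\ric^{\Phi,N}$; one must be slightly careful that the correct object to keep positive is the full $N$-Bakry–Émery tensor, which is why the weighted boundary condition (2) enters alongside (1).

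Then I would pass from a lower bound on $\ric^{\Phi^\sigma,\lceil N\rceil}$ to a genuine Riemannian lower Ricci bound in dimension $\lceil N\rceil$: since $\lceil N\rceil \ge N$ and $\ric^{\Phi^\sigma,N}\ge K-\epsilon$ implies $\ric^{\Phi^\sigma,\lceil N\rceil}\ge K-\epsilon$, one invokes the standard warped-product / fiber-bundle trick (as in the ``collapsing'' direction of the Lott–Sturm–Villani theory, cf.\ the construction referenced earlier in the paper) that produces, for each $\sigma$, a smooth $\lceil N\rceil$-dimensional Riemannian manifold $N^\sigma$ — essentially $M^\sigma$ with a tiny warped torus fiber of radius $\to 0$ — whose Ricci curvature is $\ge K - \epsilon(\sigma) - \epsilon'$ and which is mGH-close to $(M^\sigma, d_{g^\sigma}, \Phi^\sigma\vol_{g^\sigma})$. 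Finally I would run a diagonal argument: choose $\sigma = \sigma(j)\downarrow 0$ and the auxiliary fiber scale $\to 0$ fast enough, set $(M_j,g_j) := N^{\sigma(j)}$, check $d_{g^{\sigma(j)}}\to d_g$ uniformly (from $C^0$-convergence $g^\sigma\to g$) and $\Phi^{\sigma(j)}\vol_{g^{\sigma(j)}}\to \Phi\vol_g$ weakly, and combine these with the fiber-collapse mGH estimates to conclude $(M_j,d_{g_j},\vol_{g_j})\xrightarrow{\mathrm{mGH}}(\hat M, d_g,\Phi\vol_g)$ with $\ric_{g_j}\ge K-\epsilon(j)$. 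The main obstacle is the curvature estimate in the second step: verifying that the mollification of the one-sided-smooth metric and weight genuinely yields $\ric^{\Phi^\sigma,N}\ge K-\epsilon(\sigma)$ requires a careful bookkeeping of how the jump terms in $\partial_t g_t$ and $\partial_t\log\Phi$ interact inside the Bakry–Émery tensor — this is where hypotheses (1) and (2) are used in an essential, quantitative way, and where the bulk of the technical work lies.
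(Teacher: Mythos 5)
Your overall architecture (smooth the singular glued object, then convert the Bakry--\'Emery bound into a genuine Ricci bound on an $\lceil N\rceil$-dimensional manifold by collapsing small fibers, then diagonalize) matches the paper's endgame, but the central step of your plan has a genuine gap. You propose to mollify the Lipschitz glued metric and weight directly in Fermi coordinates and assert that hypotheses (1) and (2) make the error terms $O(\sigma)$ because ``$C^0$-closeness of $g^\sigma$ to $g$ controls the lower-order curvature contributions.'' This is exactly the point where a proof is needed and where the naive argument fails: $C^0$-closeness controls nothing about curvature, and inside the width-$\sigma$ smoothing collar the terms of the Ricci tensor that are \emph{quadratic} in the first derivatives of the metric (equivalently, in the shape operator of the level sets of $d_Y$, which jumps across $Y$) are $O(1)$ errors, not $O(\sigma)$. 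The smeared Dirac term of size $\sim\Pi(v,v)/\sigma$ dominates them only in directions where $\Pi(v,v)$ is bounded away from zero; since (1) only gives $\Pi\geq 0$ (possibly degenerate), there are directions in which no large positive term is available to absorb these errors. This is precisely why Kosovski\u{\i} and Schlichting do \emph{not} mollify the glued metric directly: they first replace $g_0$ by $g_0(\cdot,{\bf G}_\delta\cdot)$ with ${\bf G}_\delta={\bf I}+2F_\delta{\bf L}-2C\mathcal F_\delta{\bf P}^\top$, which both removes the jump of the normal derivative (making the glued metric $W^{2,\infty}$) and injects an extra convexity term with a large constant $C$ whose sole purpose is to dominate exactly those $O(1)$ error terms; only then is the metric mollified, and the curvature estimate \eqref{est} together with \eqref{id:ricci} is where (1) enters. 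Your sketch has no substitute for this correction, so the inequality $\ric^{\Phi^\sigma,N}\geq K-\epsilon(\sigma)$ is not established.

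A second, related gap concerns how the weighted condition (2) is used. The paper does not redo the Kosovski\u{\i}--Schlichting estimates for a weighted Bakry--\'Emery tensor; instead it encodes the weight geometrically, gluing the warped products $M_i\times_{f_i} r\mathbb S^{\lceil N\rceil-n}$ with $f_i=\Phi_i^{1/(N-n)}$. Under (2) alone the sum of the second fundamental forms of the warped boundaries need \emph{not} be nonnegative (only trace-controlled), so even there the smoothing only yields $\ric_{\tilde g^\delta}\geq K-\epsilon(\delta)-\overline H$ with the defect $\overline H$ acting purely on fiber directions (Proposition \ref{prop:gluedwarped2}); the paper then has to show that this defect disappears in the collapsed limit $r\downarrow 0$, via the Riccati/Jacobian estimate along Wasserstein geodesics, before recovering $\ric^{\Phi^\delta,\lceil N\rceil}_{g^\delta}\geq K-\epsilon(\delta)$ and only then bootstrapping with a second warped-product step as you do at the end. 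Your direct approach would need an analogous mechanism for the ``bad-sign but trace-dominated'' jump of $\partial_t\log\Phi$, which the distributional heuristic alone does not supply. Finally, a smaller but real error: the collapsing fibers must be small round spheres with $\ric_F\geq(\lceil N\rceil-n-1)\tilde L$ for $\tilde L$ large (Proposition \ref{prop:warped}), not tori; a flat torus fiber cannot absorb the $\Delta f/f$ and $|\nabla f|^2/f^2$ terms unless the weight is constant or $\lceil N\rceil-n=1$.
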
 
We note that $\tr \Pi_i- g_i( N_i, \nabla \log \Phi_i)=: H^{\Phi_i}$ is the generalized mean curvature of the boundary of the weighted space $(M_i, g_i, \Phi_i)$ that was studied before, for instance, in \cite{milman, kettererhk, bukemcwo}.

\begin{theorem} \label{main3}  Let $K\in \R$ and $N\in [1,\infty)$. 
 Assume for $i=0,1$ that  $\ric_{g_i}^{\Phi_i, N}\geq K$ and 
\begin{itemize}
%\item[(1)] $\Pi_i \geq 0$ on $\partial M_i \backslash Y_i,$
%\medskip 
\item[(1)] $ \Pi \geq 0$ on $Y_0\simeq Y_1$,  
\medskip
\item[(2)] $\tr \Pi-\langle N_0, \nabla \log \Phi_0\rangle - \langle N_1, \nabla \log \Phi_1\rangle\geq 0 $ on $Y_0\simeq Y_1$.
\end{itemize}
\smallskip
Then  there exists a sequence of smooth Riemannian metrics $g^{n}$ on $M$ and functions $\Phi^{n}\in C^\infty(M)$ such that $\ric^{\lceil N \rceil, \Phi^{n}}_{g^n}\geq K-\epsilon(n)$ for $\epsilon(n)\downarrow 0$ as $n\rightarrow \infty$ and $g^n$ and $\Phi^n$ converge uniformly to $g$ and $\Phi$. 
\end{theorem}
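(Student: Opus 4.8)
The plan is to carry out a Perelman--Kosovski\u{\i}-type smoothing of the glued data $(g,\Phi)$ in a Fermi tubular neighbourhood of the common boundary $Y:=Y_0\simeq Y_1$, staying inside the weighted category so that the approximants are genuine smooth weighted Riemannian manifolds; conditions (1) and (2) will enter only through the behaviour of the data at $Y$, namely as sign conditions on the jumps of the second fundamental form of the distance slices and of their generalized mean curvature. First I would reduce the parameter. From Definition~\ref{def:baem} one computes $\ric^{\Phi_i,N}_{g_i}=\ric_{g_i}-\nabla^2\log\Phi_i-\tfrac1{N-n}\,d\log\Phi_i\otimes d\log\Phi_i$ on $\mathring M_i$, $n:=\dim M_i$, which is nondecreasing in $N$; hence $\ric^{\Phi_i,N}_{g_i}\ge K$ and $\lceil N\rceil\ge N$ give $\ric^{\Phi_i,\lceil N\rceil}_{g_i}\ge K$, so it suffices to find smooth $(g^n,\Phi^n)$ on $\hat M$ with $\ric^{m,\Phi^n}_{g^n}\ge K-\epsilon(n)$, $\epsilon(n)\downarrow0$, and $(g^n,\Phi^n)\to(g,\Phi)$ uniformly, where $m:=\lceil N\rceil$.

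Off a fixed product collar $U_\delta\cong Y\times(-\delta,\delta)$, built from Fermi coordinates for $\partial M_1\subset M_1$ on $\{r\ge0\}$ and for $\partial M_0\subset M_0$ on $\{r\le0\}$ (Section~\ref{sec:gluing}), the data are already smooth with $\ric^{m,\Phi}_g\ge K$ and are left unchanged; all the surgery happens in $U_\delta$, where $g=dr^2+g_r$ with $g_r$ a curve of metrics on $Y$, $\Phi=\Phi(r,\cdot)>0$, both continuous in $r$ and smooth on $\{r>0\}$ and on $\{r<0\}$. Writing $S_r:=\tfrac12 g_r^{-1}\partial_r g_r$ and $\psi_r:=\tr S_r+\partial_r\log\Phi=\partial_r\log(\Phi\sqrt{\det g_r})$, conditions (1) and (2) say precisely (cf.\ Section~\ref{sec:gluing}) that across $r=0$ the tensor $S_r$ jumps downward by $g_0^{-1}\Pi\succeq0$ and the scalar $\psi_r$ jumps downward by $H^{\Phi_0}+H^{\Phi_1}=\tr\Pi-\langle\nu_0,\nabla\log\Phi_0\rangle-\langle\nu_1,\nabla\log\Phi_1\rangle\ge0$, while $g_r,\partial_r g_r,\partial_r\log\Phi$ stay uniformly bounded on $U_\delta$.

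The surgery: fix $\rho\ge0$ supported in $[-1,1]$ with $\int\rho=1$, vanishing to exactly first order at $\pm1$, set $\rho_\lambda(r):=\lambda^{-1}\rho(r/\lambda)$, and for small $\lambda>0$ let $(g^\lambda,\Phi^\lambda)$ be obtained on $U_\delta$ by $\lambda$-mollifying the radial derivatives $\partial_r g_r$ and $\partial_r\log\Phi$ near $r=0$ and integrating them back, glued to $(g,\Phi)$ across the annulus $\{\lambda\le|r|\le2\lambda\}$ by a standard cutoff; this routine construction yields smooth $g^\lambda=dr^2+g^\lambda_r$ and $\Phi^\lambda>0$ on $\hat M$. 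Since $\partial_r g^\lambda_r,\partial_r\log\Phi^\lambda$ are uniformly bounded and converge in $L^1$ to $\partial_r g_r,\partial_r\log\Phi$ uniformly in the base point, $(g^\lambda,\Phi^\lambda)\to(g,\Phi)$ uniformly, so $g^n:=g^{\lambda_n}$, $\Phi^n:=\Phi^{\lambda_n}$ ($\lambda_n\downarrow0$) have the required convergence, and only the curvature bound on $U_{2\lambda}$ remains.

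For that I would feed $(g^\lambda,\Phi^\lambda)$ into the standard formulas for $\ric^{m,\Phi}_g$ of a metric $dr^2+g_r$ with density $\Phi$: one gets $\ric^{m,\Phi^\lambda}_{g^\lambda}(\partial_r,\partial_r)=-\partial_r\psi^\lambda_r-\tr((S^\lambda_r)^2)-\tfrac1{m-n}(\partial_r\log\Phi^\lambda)^2$, a tangential formula whose only unbounded term is $-g^\lambda_r(\partial_r S^\lambda_r\,\cdot,\cdot)$, and bounded mixed components. The only terms blowing up as $\lambda\to0$ are $\partial_r\psi^\lambda_r$ and $\partial_r S^\lambda_r$, and each is, by construction, a bounded absolutely continuous part plus the $\lambda$-mollification of the jump above; as those jumps are downward by the nonnegative $H^{\Phi_0}+H^{\Phi_1}$ and $g_0^{-1}\Pi$, they add to $\ric^{m,\Phi^\lambda}_{g^\lambda}$ a positive semidefinite ``kick'' $\rho_\lambda(r)\,\mathrm{diag}\big(H^{\Phi_0}+H^{\Phi_1},\,\Pi\big)\succeq0$. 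The remaining terms are $O(1)$, and because $g^\lambda_r\to g_0$, $(g^\lambda_r)^{-1}\to g_0^{-1}$ in every $C^k(Y)$ uniformly on $U_{2\lambda}$, they reorganize --- using convexity of $x\mapsto\tr(x^2)$ and $t\mapsto t^2$ in the radial slot --- into a convex combination $t\,\ric^{m,\Phi}_g|_{r=0^+}+(1-t)\,\ric^{m,\Phi}_g|_{r=0^-}\succeq Kg_0$ (each summand being $\succeq Kg_0$, valid at $Y$ since $\Phi_i>0$ there), up to an $o(1)$ and a covariance term in the tangential block that is quadratic in the jump $g_0^{-1}\Pi$. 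Thus on $U_{2\lambda}$,
\[
\ric^{m,\Phi^\lambda}_{g^\lambda}-Kg^\lambda\ \succeq\ \big(t\,\ric^{m,\Phi}_g|_{0^+}+(1-t)\,\ric^{m,\Phi}_g|_{0^-}-Kg_0\big)\ +\ \rho_\lambda\,\mathrm{diag}\big(H^{\Phi_0}+H^{\Phi_1},\,\Pi\big)\ -\ \mathrm{Cov}\ -\ o(1),
\]
the first bracket is $\succeq0$, and it remains to see that the kick dominates $\mathrm{Cov}$. This is the delicate point: it fails for a generic mollifier because $\rho_\lambda$ vanishes at the ends of the window while $\mathrm{Cov}$ need not, and here the first-order vanishing of $\rho$ at $\pm1$ is used, as in Kosovski\u{\i}'s argument --- near the window ends the kick and the window-factor bounding $\mathrm{Cov}$ vanish to the same order, so for $\lambda$ small the kick wins uniformly on $Y$, including where $\Pi$ degenerates (use $\langle\Pi X,X\rangle\ge\|\Pi\|^{-1}|\Pi X|^2$, and that $\mathrm{Cov}(X,X)$ is then equally small). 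Hence $\ric^{m,\Phi^\lambda}_{g^\lambda}\ge(K-\epsilon(\lambda))g^\lambda$ on $U_{2\lambda}$, trivially off it, so on all of $\hat M$, with $\epsilon(\lambda)\downarrow0$. I expect this last estimate --- keeping the bound at $K-\epsilon$ rather than at a mere fixed constant --- to be the main obstacle, along with verifying that the weight-dependent contributions ($\partial_r^2\log\Phi$, absorbed into $\partial_r\psi_r$; the bounded parts of $\mathrm{Hess}\log\Phi$ and $d\log\Phi\otimes d\log\Phi$ in the tangential and mixed slots) do not disturb the sign bookkeeping; the freedom $m=\lceil N\rceil\ge N$ helps, since the only genuinely new negative term $-\tfrac1{m-n}(\partial_r\log\Phi^\lambda)^2$ is least negative for the largest parameter and is absorbed by the same convexity argument as $-\tr((S^\lambda_r)^2)$.
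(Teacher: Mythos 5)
Your proposal takes a genuinely different route from the paper's proof. The paper does not attempt a weighted Kosovski\u{\i} smoothing on $\hat M$ directly; instead it uses Lott's collapsed warped-product device. It forms $M_i\times_{f_i} r\mathbb S^{\lceil N\rceil -n}$ with $f_i=\Phi_i^{1/(N-n)}$, which converts the Bakry--Emery bound into a genuine (unweighted) Ricci bound on a higher-dimensional manifold (Propositions~\ref{bigformula},~\ref{prop:warped}); computes the second fundamental form of the new boundary (Proposition~\ref{prop:connection}) and observes that hypotheses (1)--(2) make the \emph{trace} of the glued second fundamental form nonnegative; applies Schlichting's version of Kosovski\u{\i}'s construction as a black box to get $\tilde g^\delta$ with $\ric_{\tilde g^\delta}\ge K-\epsilon(\delta)-\overline H\,|v|^2$ where the extra $\overline H$--defect only hits the collapsed sphere directions (Proposition~\ref{prop:gluedwarped2}); and then sends $r\downarrow 0$, using the structural Lemma~4.9 ($\tilde g^\delta=g^\delta+r^2\tilde h^\delta$) and an optimal-transport/Jacobi-field argument to show the defect vanishes in the collapsed limit, producing $(g^\delta,\Phi^\delta)$ with $\ric^{\lceil N\rceil,\Phi^\delta}_{g^\delta}\ge K-\epsilon(\delta)$. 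What this buys the paper is that the delicate pointwise curvature estimate for the smoothed metric never has to be re-proved in the weighted category; it is inherited entirely from the unweighted Kosovski\u{\i}/Schlichting theorem.

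The outline you give --- Fermi coordinates, jumps of the shape operator $S_r$ and of the generalized mean curvature $\psi_r$, positive kicks from the mollified jumps --- is conceptually the right thing, and your sign bookkeeping for (1) and (2) is correct. But the central step, ``the kick dominates $\mathrm{Cov}$,'' is not established and I do not think it follows from your surgery as stated. What you describe is a plain mollification of $\partial_r g_r$ and $\partial_r\log\Phi$; Kosovski\u{\i}'s construction is \emph{not} a mollification. It first bends the metric on one side only via ${\bf G}_\delta={\bf I}+2F_\delta(x^n){\bf L}-2C\mathcal F_\delta(x^n){\bf P}^\top$, where the $F_\delta{\bf L}$ term removes the first-derivative jump and the $-2C\mathcal F_\delta{\bf P}^\top$ term is an explicit compensator (a ``cylindrical'' buffer) that supplies positive curvature to absorb the $O(1)$ error introduced by the bending, especially in the directions where $\Pi$ degenerates; only after this is $g_{(\delta)}$ mollified. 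Your proposal drops the $\mathcal F_\delta$ compensator entirely and tries to replace it with a choice of mollifier vanishing to first order at the ends of the window. That is not how the original estimate works --- in the inequality~\eqref{est} the dangerous term is $-2f_\delta'\,\mathrm{Ric}_{g_\delta}(\mathcal L)$, and it is tamed because after the bending one can prove $\mathrm{Ric}_{g_\delta}(\mathcal L)\ge 0$ pointwise (identity~\eqref{id:ricci}); the sign of $f_\delta'$ alone, which is what a clever $\rho$ controls, does not suffice when $\Pi$ has kernel. Your brief remark about $\langle\Pi X,X\rangle\ge\|\Pi\|^{-1}|\Pi X|^2$ does not close this: the $O(1)$ tangential errors are not themselves controlled by $|\Pi X|^2$. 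So as written the proof has a genuine gap at precisely the point you flag as the main obstacle; to make it rigorous you would essentially need to reprove Schlichting's estimate in the weighted category, which is what the paper's warped-product reduction is designed to avoid.
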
 
The {\it curvature-dimension condition} $CD(K,N)$ is a synthetic notion of Ricci curvature bounded from below and dimension bounded from above for metric measure spaces  (Definition \ref{def:cd}) that is stable under measured Gromov-Hausdorff convergence and equivalent to Bakry-Emery curvature bounded from below for weighted Riemannian manifolds with convex boundary \cite{stugeo2, lottvillani}.

We also obtain the following corollary.
\begin{corollary}\label{cor1} In addition to the previous properties assume that $\Pi_i\geq 0$ on $\partial M_i \backslash Y_i$. Then
the metric glued space $(M_0\cup_{\mathcal I}M_1, d_g)$  equpped with the measure $\Phi \vol_g$ satisfies the  condition $CD(K, N)$.
\end{corollary}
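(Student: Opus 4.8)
The plan is to obtain the corollary from the approximation results, Theorems~\ref{main2} and~\ref{main3}, using the extra convexity hypothesis together with the stability of the curvature-dimension condition under measured Gromov-Hausdorff convergence. First I would identify the boundary of the glued manifold: after identifying $Y_0$ with $Y_1$ this common hypersurface $Y$ lies in the interior of $\hat M$, so $\partial\hat M=(\partial M_0\setminus Y_0)\cup(\partial M_1\setminus Y_1)$ (together with $X_0\cup X_1$ in the setting of Theorem~\ref{main3}, where moreover $\Phi$ vanishes). The new assumption $\Pi_i\ge 0$ on $\partial M_i\setminus Y_i$ says precisely that $\hat M$ has convex boundary with respect to $g$.

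Next I would run the approximation of Theorem~\ref{main3}: there are smooth metrics $g^n$ and weights $\Phi^n\in C^\infty(\hat M)$ with $\ric^{\lceil N\rceil,\Phi^n}_{g^n}\ge K-\epsilon(n)$, $\epsilon(n)\downarrow 0$, and $g^n\to g$, $\Phi^n\to\Phi$ uniformly; in particular $(\hat M,d_{g^n},\Phi^n\vol_{g^n})\to(\hat M,d_g,\Phi\vol_g)$ in the measured Gromov-Hausdorff sense. The key observation is that the metric modification in this construction is confined to an arbitrarily thin collar of $Y$. Since $Y_i$ is a connected component of $\partial M_i$, hence at positive distance from the components forming $\partial M_i\setminus Y_i$, one may take this collar small enough to be disjoint from $\partial\hat M$; consequently $\partial\hat M$ is left unchanged, and still convex, for every $g^n$. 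A smooth weighted Riemannian manifold with convex boundary whose Bakry-Emery $N'$-Ricci curvature is bounded below by $K'$ satisfies $CD(K',N')$ (the equivalence recalled after Theorem~\ref{main3}, \cite{stugeo2,lottvillani}); applied to $(\hat M,g^n,\Phi^n\vol_{g^n})$ this gives $CD(K-\epsilon(n),\lceil N\rceil)$, and passing to the limit yields that $(\hat M,d_g,\Phi\vol_g)$ satisfies $CD(K,\lceil N\rceil)$.

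It then remains to replace $\lceil N\rceil$ by $N$. If $\Phi\equiv\text{const}$ then $N=\dim\hat M=\lceil N\rceil$ and there is nothing more to prove. Otherwise $N>\dim\hat M$, so the Bakry-Emery $N$-Ricci tensor of every metric $g^n$ is defined, and one repeats the construction of Theorem~\ref{main3} while keeping track of the sharper hypothesis $\ric^{N,\Phi_i}_{g_i}\ge K$ on the two pieces (observe that conditions (1) and (2) do not involve the dimension parameter). This produces $g^n,\Phi^n$ with $\ric^{N,\Phi^n}_{g^n}\ge K-\epsilon(n)$, hence $CD(K-\epsilon(n),N)$ with convex boundary as before, and stability of $CD$ gives $CD(K,N)$ in the limit. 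Alternatively, one may start from $CD(K,\lceil N\rceil)$ and upgrade it, using that $(\hat M,d_g,\Phi\vol_g)$ is a smooth weighted manifold with $\ric^{N,\Phi}\ge K$ on the open set $\hat M\setminus Y$ of full measure, together with a localized Bochner argument across $Y$.

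I expect the main obstacle to be exactly this bookkeeping: checking that the smoothing in Theorem~\ref{main3} can genuinely be localized in a collar of $Y$ disjoint from $\partial\hat M$, so that the second fundamental forms of $\partial\hat M$, and with them convexity, are untouched, and that the same construction delivers the sharp Bakry-Emery $N$-bound and not merely the $\lceil N\rceil$-bound. Everything else is a direct application of the cited characterization of the curvature-dimension condition for weighted manifolds with convex boundary and of its stability under measured Gromov-Hausdorff convergence.
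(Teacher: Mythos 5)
Your first step --- obtaining $CD(K,\lceil N\rceil)$ from Theorem~\ref{main3} together with stability of the curvature-dimension condition under measured GH convergence, noting that the modification of $g$ is confined to a collar of $Y$ and hence does not touch $\partial\hat M$ --- matches the opening of the paper's Section~\ref{sec: Second application} and is fine. But the entire substance of Corollary~\ref{cor1} is the upgrade from $\lceil N\rceil$ to the sharp real $N$, and on that point both of your proposed routes have a genuine gap. You cannot ``repeat the construction of Theorem~\ref{main3} keeping track of the sharper hypothesis'': that construction manufactures the approximating $(g^n,\Phi^n)$ by forming the warped product $M_i\times_{f_i}r\,\mathbb S^{\lceil N\rceil-n}$, smoothing the glued warped-product metric, and collapsing $r\downarrow 0$. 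The fiber is a sphere of \emph{integer} dimension $\lceil N\rceil-n$; there is no $(N-n)$-dimensional sphere when $N\notin\mathbb N$, and Proposition~\ref{prop:warped} has no meaning for a real fiber dimension. Consequently Theorems~\ref{main2} and~\ref{main3} output $\lceil N\rceil$-Bakry-Emery bounds by design, and a sharper $N$-bound for $g^n$ is not available from that construction.

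Your fallback, a ``localized Bochner argument across $Y$'', is hand-waved and misses exactly the difficulty that the paper isolates: along $Y$ the glued metric is only $C^0$, and geodesics may cross $Y$ infinitely often --- the paper gives an explicit oscillating example. What the paper actually does to pass from $\lceil N\rceil$ to $N$ is a $1D$ localisation argument (Section~\ref{sec: Second application}). One fixes a $1$-Lipschitz $u$, disintegrates $\Phi\vol_g$ into one-dimensional conditional measures $h_\gamma\,d\mathcal L^1$, uses the already-established $CD(K,\lceil N\rceil)$ to get the left-right derivative jump inequality $\frac{d^-}{dr}h_\gamma^{1/(N-1)}\ge\frac{d^+}{dr}h_\gamma^{1/(N-1)}$ on $(a_\gamma,b_\gamma)$, and then --- via Lemma~\ref{lemma: tan-t->Ct}, Lemma~\ref{lemma:tangentgeodesics} and Corollary~\ref{cor: tan-measure 0}, which rely on the extra hypothesis $\Pi_i\ge 0$ to make the glued space Alexandrov and on a Heintze-Karcher estimate --- shows that the set of transport rays that are tangent to $Y$ (the ones that could oscillate) is $\mathfrak q$-negligible. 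On the remaining full-measure set of rays, each piece of $\gamma$ lies in a single $M_i$ where the sharp $\ric^{\Phi_i,N}\ge K$ holds, and Lemma~\ref{importantlemma} patches the $CD(K,N)$ differential inequality for $h_\gamma$ across the finitely many crossings. One then concludes by the characterisation $CD^1_{Lip}(K,N)\Rightarrow CD(K,N)$ of Theorem~\ref{thm:cavmil}. None of this machinery appears in your proposal, and without it the passage from $\lceil N\rceil$ to $N$ is unjustified.
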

%The assumptions in Theorem \ref{main2} and \ref{main3} are satisfied  when $\overline H\leq 0$. In this case we  give an intrinsic proof of the Corollary \ref{cor1} that does not rest on the warped product construction but uses the characterisaton of the curvature-dimension condition via $1D$ localisation.  Then we can drop the assumptions (4) and (5) that appear in Theorem \ref{main2} and \ref{main3}.
%\begin{theorem} \label{main}  Let $K\in \R$ and $N\in [1,\infty)$. 
%%We assume that
%%$$(\Pi_0+\Pi_1)|_{Y_0\simeq Y_1}\geq 0 \ \ \& \ \ \Pi_i|_{\partial M_i\backslash Y_i}\geq 0, \ i=0,1,$$
%%where $\Pi_i$ the second fundamental form w.r.t. the inward normal vector. Moreover, we assume
% Assume $\ric_{M_i}^{\Phi, N}\geq K$, $i=0,1$,  
%\begin{itemize}
%\item[(1)] $\Pi_i\geq 0$ on $\partial M_i\backslash Y_i$, \smallskip
%\item[(2)] $\Pi_0+ \Pi_1=\Pi\geq 0$ on $Y_0\simeq Y_1$,
%\smallskip
%\item[(3)'] $\langle \nabla \Phi_0, \nu_0 \rangle+ \langle\nabla \Phi_1, \nu_1\rangle \leq0$ on $Y_0\simeq Y_1$.
%\smallskip
%\end{itemize}
%Then the metric glued space $M_0\cup_{\mathcal I}M_1$  equpped with the measure $\Phi \vol_g$ satisfies the  condition $CD(K,N)$.
%\end{theorem} 
%\begin{remark}
%The characterisation of $CD(K,N)$ condition via $1D$ localisation  allows to prove Theorem \ref{main} with condition (3) in Theorem \ref{main2} in place of condition (3)'. This becomes clear from the proof of Theorem \ref{th:re} but is  more technical. We will address this in a future publication. 
%\end{remark}

For the proof of Theorem \ref{main2} and Theorem \ref{main3} we apply an idea of Lott \cite{lobaem} (see also \cite{ketterer}): The problem  translates to the  setting of Riemannian manifolds by considering warped products of the form $M_i \times_{f_i}r\mathbb S^{\lceil N \rceil}$ where we choose $f_i=\Phi_i^{{1}/{(N-1)}}$.  Then we  apply the previously mentioned construction by Kosovski\u{\i} and Schlichting and  send the parameter $r$ to $0$. 
\begin{jjj}{In a previous version of this article we required additional conditions for the boundary components $X_i$ and $\partial M_i \backslash (Y_i \cup X_i)$. By localizing our arguments we could remove this assumptions.}
\end{jjj}

As already explained for an Alexandrov lower curvature bound  the condition on the second fundamental form is not only sufficient but  also necessary \cite{kosovskiigluing, kosovski}. We show that the conditions (1) and (2) are also necessary conditions for a synthetic lower Ricci curvature bound for the glued space. 
\begin{theorem}\label{th:re}
Assume the metric glued space $(M_0\cup_{\mathcal I} M_1, d_g)$ equipped with $\m= \Phi \vol_g$ satisfies a curvature-dimension condition $CD(K,N)$ for $K\in \R$ and $N\in [1, \infty)$. Then it follows
 $\Pi_i \geq 0$ on $\partial M_i \backslash Y_i$ and 
\smallskip
\begin{itemize}
\item[(1)] $\Pi_1 + \Pi_2  \geq 0$ on $Y_0\simeq Y_1$,  
\medskip
\item[(2)] $H^{\Phi_0}+ H^{\Phi_1}\geq  0$ on $Y_0\simeq Y_1$.
\end{itemize}
\end{theorem}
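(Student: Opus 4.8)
The plan is to argue by contraposition: assuming that one of the stated inequalities fails, I will produce a pair of probability measures whose $L^2$-Wasserstein geodesic violates the defining inequality of $CD(K,N)$ (Definition~\ref{def:cd}), most conveniently in its Brunn--Minkowski consequence $\m(A_t)^{1/N}\ge\tau^{(1-t)}_{K,N}(\Theta)\,\m(A_0)^{1/N}+\tau^{(t)}_{K,N}(\Theta)\,\m(A_1)^{1/N}$, with $\m=\Phi\vol_g$ and $\Theta$ an upper bound for $d_g$ on $A_0\times A_1$. Because $\Pi_i$ and $\Phi_i$ are continuous, all three conclusions are pointwise and closed, so it suffices to reach a contradiction from a \emph{strict} violation at a single point $p$ lying in the relative interior of the relevant face: $p\in Y_0\simeq Y_1$ away from the other boundary components for~(1) and~(2), and $p$ in the interior of a smooth face of $\partial M_i\setminus Y_i$ for the convexity statement. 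Since $CD(K,N)$ passes to sufficiently small convex neighbourhoods --- which exist about $p$ --- I localize to a small neighbourhood $B$ of $p$, on which $(\hat M,d_g,\m)$ is two smooth weighted Riemannian half-balls of $M_0$ and $M_1$ glued isometrically along the smooth hypersurface $Y$ (respectively, a single smooth weighted half-ball of $M_i$), $\Phi$ is smooth and positive, and the minimizing $d_g$-geodesics are concatenations of smooth $g_i$-geodesics meeting $Y$ transversally.

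Morally the mechanism is that the distributional $\ric^{\Phi,N}$-curvature of the glued metric has a singular part supported on $Y$ which is nonnegative precisely when $\Pi_0+\Pi_1\ge0$ and $H^{\Phi_0}+H^{\Phi_1}\ge0$ hold (and, on a boundary face, the obstruction is the failure of $\Pi_i\ge0$), and that $CD(K,N)$ forces this singular part to be nonnegative. To extract this from the transport inequality I proceed as follows. For the trace condition~(2), suppose $H^{\Phi_0}(p)+H^{\Phi_1}(p)<0$ and test $CD(K,N)$ with the normalized restrictions of $\m$ to two small balls $A_0\subset\mathring M_0$, $A_1\subset\mathring M_1$ of common radius $\epsilon$, placed symmetrically about $p$ on the $g$-geodesic through $p$ orthogonal to $Y$, with $\epsilon\ll\ell:=d_g(A_0,A_1)$ small; then $A_{1/2}$ sits near $p$ and the transport geodesics cross $Y$ orthogonally near their midpoints, where the optimal coupling is the obvious one. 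Along such a geodesic $\gamma$ one has $\rho_t(\gamma_t)^{-1/N}=c_\gamma\big(\Phi(\gamma_t)J(t)\big)^{1/N}$ with $J$ the metric Jacobian of the transport and $J(0)=1$, $J(1)\approx1$, and the matching conditions for the transverse Jacobi fields across $Y$ show that $\tfrac{d}{dt}\log\!\big(\Phi(\gamma_t)J(t)\big)$ jumps across the crossing by $-\ell\,\big(H^{\Phi_0}+H^{\Phi_1}\big)(\gamma_{1/2})>0$; thus $t\mapsto\rho_t(\gamma_t)^{-1/N}$ has a convex corner at $t=\tfrac12$. Consequently $\m(A_{1/2})$ falls below the $\tau_{K,N}$-interpolated lower bound by an amount $\sim\ell\,|H^{\Phi_0}(p)+H^{\Phi_1}(p)|$, which dominates the $O(\ell^2)$ corrections coming from $K$, from the ambient $N$-Ricci curvature of the $M_i$, and from the smooth variation of $\Phi$, once $\ell$ is small: a contradiction. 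For the matrix conditions --- (1) and $\Pi_i\ge0$ on $\partial M_i\setminus Y_i$ --- one runs the same scheme with test configurations adapted to a would-be negative eigendirection $v\in T_pY$ (resp.\ $v\in T_p\partial M_i$): either thin slabs elongated in $v$ whose transport geodesics hug $Y$ (resp.\ bow into the interior of $M_i$) in the $v$-direction, or a localization in the manner of a needle decomposition along a $1$-Lipschitz guiding function whose transport rays run in the $v$-direction, arranged so that the relevant distortion is governed by $(\Pi_0+\Pi_1)(v,v)$ (resp.\ $\Pi_i(v,v)$); a negative value again pinches the intermediate measure and contradicts $CD(K,N)$.

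The step I expect to be the main obstacle is precisely this local analysis near the non-smooth locus. One must (i) establish existence, local uniqueness, and the first- and second-order structure of the minimizing $d_g$-geodesics joining the test sets --- the behaviour at $Y$ and the jump it induces in a transverse Jacobi field --- so that the optimal couplings and the intermediate densities $\rho_t$ are explicit enough to exhibit the corner; (ii) \emph{isolate} the correct geometric quantity: the orthogonal-crossing transport naturally yields only the trace $H^{\Phi_0}+H^{\Phi_1}$, and obtaining the full matrix inequality~(1) (respectively $\Pi_i\ge0$) rather than merely its trace requires test configurations tailored to a single eigendirection together with careful control of the contributions of the transverse curvature and of the weight --- this is the delicate point, and parallels the corresponding refinement in Kosovski\u{\i}'s argument for sectional bounds; and (iii) bookkeep all $O(\ell^2)$ error terms so that the sign of the offending quantity is decisive at small scale. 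Granting these, a failure of $\Pi_i\ge0$ on $\partial M_i\setminus Y_i$, of~(1), or of~(2) each yields test measures violating the curvature-dimension inequality, which proves the theorem.
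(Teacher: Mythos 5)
Your overall mechanism — a strict failure of one of the boundary inequalities pinches the intermediate density of a Wasserstein interpolation across $Y$ and contradicts $CD(K,N)$ — is the right heuristic, and your orthogonal-crossing test sets would plausibly give the trace condition~(2). However, the paper reaches the conclusions by a cleaner route: it uses the characterization $CD(K,N)\Rightarrow CD^1_{Lip}(K,N)$ and reads off the boundary conditions from the \emph{needle densities}. For~(2) one localizes along the signed distance $d_Y=d_{Y_0}-d_{Y_1}$, disintegrates $\m$ explicitly via the normal exponential map $T(x,t)=\exp_x(t\nu(x))$ so that $\m_x=h_x(t)\,dt$ with $h_x(t)\propto \det(DT_{(x,t)})\,\Phi\circ T(x,t)$, and the $CD$ semi-convexity of $h_x^{1/(N-1)}$ at $t=0$ forces $\frac{d}{dt^+}\log h_x(0)+\frac{d}{dt^-}\log h_x(0)=H^{\Phi_0}(x)+H^{\Phi_1}(x)\ge0$. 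This is morally your corner argument, but packaged so that all the $O(\ell^2)$ bookkeeping you defer becomes unnecessary.

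The genuine gap in your proposal is in step~(1), and you yourself flag it as the delicate point without resolving it. You suggest test configurations ``whose transport rays run in the $v$-direction'' for $v\in T_pY$; but rays tangent to $Y$ do not cross $Y$ and so do not see the jump in the second fundamental form — they cannot produce the required corner. The paper's resolution is to tilt the transport rays: take a unit field $V$ along $Y$ with $\mathbf{P}^{\top}V=v$ fixed, $\mathbf{P}^{\top}V$ parallel in $Y$, and normal component $b:=\langle V,N_0\rangle$ small but nonzero, build the associated $1$-Lipschitz function from the tilted exponential $T(x,t)=\exp_x(tV(x))$, and compute the one-sided derivatives of the needle density $h_x(t)=\lambda_x^{-1}\det DT_{(x,t)}\,\Phi\circ T(x,t)$. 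After splitting $\det DT_{(x,t)}$ into a tangential Jacobian and an angle factor $\beta_x(t)$ and cancelling the intrinsic contributions, the jump in $\log h_x$ at $t=0$ comes out as $|v|^2\bigl(\Pi_0(v,v)+\Pi_1(v,v)\bigr)+O(b^2)$; sending $b\to0$ isolates $(\Pi_0+\Pi_1)(v,v)\ge0$. This choice of nearly-tangent, transversal rays together with the explicit splitting of the Jacobian is exactly the missing idea. Finally, $\Pi_i\ge0$ on $\partial M_i\setminus Y_i$ is not re-derived in the paper but taken from the known characterization of $CD(K,N)$ for weighted manifolds with boundary (Han), whereas your proposal gestures at a direct argument that would also need to be carried out.
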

The proof of 
Theorem \ref{th:re} is based on a characterizaton of the curvature-dimension condition via $1D$ localisation w.r.t. $1$-Lipschitz functions.  We  note that,  by stability of the condition $CD(K,N)$, the existence of a sequence as in Theorem \ref{main2} or as in Theorem \ref{main3} also yields the conclusion of Theorem \ref{th:re}. Theorem \ref{th:re} applies in particular to the "noncollapsed" case, i.e. $\Phi_0\equiv \Phi_1\equiv 1$. 

We get the following Corollary. 
\begin{corollary}
Let $M^n_i$, $i=0,1$, be Riemannian manifolds with $\ric_{M_i}\geq K$ and compact boundary. Let $\mathcal I: \partial M_0\rightarrow \partial M_1$ be an isometry. Then the glued space $M_0\cup_{\mathcal I} M_1$ satisfies the curvature-dimension condition $CD(K,n)$ if and only if $\Pi_1 + \Pi_2\geq 0$ on $\partial M_0\simeq \partial M_1$.
\end{corollary}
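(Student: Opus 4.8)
The plan is to deduce this Corollary directly by combining Corollary~\ref{cor1} and Theorem~\ref{th:re} in the unweighted, closed-boundary setting. Set $\Phi_0\equiv\Phi_1\equiv 1$ and $N=n=\dim M_0=\dim M_1$, so that $\ric^{\Phi_i,N}_{g_i}=\ric_{M_i}\geq K$, and observe that since $\partial M_i$ is a single closed component we may take $Y_i=\partial M_i$, $X_i=\emptyset$, and $\partial M_i\setminus Y_i=\emptyset$. In this situation condition~(2) of Theorem~\ref{main2}/Theorem~\ref{main3} becomes $0\leq\tr\Pi$, which is automatically implied by condition~(1) $\Pi=\Pi_0+\Pi_1\geq0$ by taking traces. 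Likewise the hypothesis $\Pi_i\geq0$ on $\partial M_i\setminus Y_i$ in Corollary~\ref{cor1} is vacuous.

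For the "if" direction, assume $\Pi_1+\Pi_2\geq0$ on $\partial M_0\simeq\partial M_1$. Then all hypotheses of Corollary~\ref{cor1} are satisfied (with the weight identically $1$ and $N=n$), so the metric glued space $(M_0\cup_{\mathcal I}M_1,d_g)$ equipped with $\vol_g$ satisfies $CD(K,n)$. For the "only if" direction, assume the glued space with $\m=\vol_g$ satisfies $CD(K,n)$. Theorem~\ref{th:re} then yields conclusion~(1), namely $\Pi_1+\Pi_2\geq0$ on $Y_0\simeq Y_1=\partial M_0\simeq\partial M_1$ (the additional conclusions $\Pi_i\geq0$ on $\partial M_i\setminus Y_i$ and conclusion~(2) are automatic here). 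This completes the equivalence.

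I expect no genuine obstacle: the Corollary is a specialization of results already proven in the paper. The only points worth a sentence of care are (i) checking that the Bakry-Emery convention in Definition~\ref{def:baem} reduces to the ordinary Ricci tensor exactly when $\Phi\equiv\mathrm{const}$ and $N=\dim M_i$, which is stated in the excerpt, so that the hypothesis $\ric_{M_i}\geq K$ is literally $\ric^{\Phi_i,N}_{g_i}\geq K$; and (ii) noting that the noncollapsed measure $\mathcal H^n=\vol_g$ is exactly $\Phi\vol_g$ with $\Phi\equiv1$, so Theorem~\ref{th:re} applies verbatim. Both are immediate from the definitions recalled in the introduction, so the proof is essentially a two-line invocation of Corollary~\ref{cor1} and Theorem~\ref{th:re}.
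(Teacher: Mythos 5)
Your argument is correct and follows the paper's intended route: the ``only if'' direction is exactly Theorem \ref{th:re} specialized to $\Phi_0\equiv\Phi_1\equiv 1$ (the noncollapsed case the paper explicitly mentions), and the ``if'' direction is the paper's gluing results with the second-fundamental-form hypothesis, the extra convexity condition being vacuous since $Y_i=\partial M_i$. One small caveat: the machinery behind Corollary \ref{cor1} (warped products with $f_i=\Phi_i^{1/(N-n)}$ and fiber $r\mathbb S^{\lceil N\rceil-n}$) implicitly presupposes $N>n$, so for the unweighted case $N=n$ it is cleaner to invoke Theorem \ref{th:glue2} together with the remark following it (smooth approximations $g^\delta$ with $\ric_{g^\delta}\geq K-\epsilon(\delta)$ plus stability of $CD(K,n)$ under measured Gromov--Hausdorff convergence), which yields $CD(K,n)$ for $(\hat M,d_g,\vol_g)$ directly.
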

\begin{remark}[Doubling]
Corollary \ref{cor1} applies when we glue together  two copies  of a weighted Riemannian manifold $(M,g_M,\Phi)$ that satisfies the curvature-dimension condition $CD(K,N)$ along a boundary component. For this one needs that  the weight $\Phi$ satisfies $\Phi|_Y>0$ and $\langle \nabla \Phi, \nu\rangle |_Y\leq 0$ where $\nu$ is the  inner unit normal vector field along $Y$. 
In \cite{giorgiorizzi} the authors observe that   a doubling construction for a  metric measure space does not preserve the Riemannian curvature-dimension condition in general. Their example  is the Grushin half plane  $\mathbb G$ equipped with a suitable weight.  $\mathbb G$ is the closure of a Riemannian manifold that is topologically the open half plane. From results in \cite{pangrushin, panwei} we know that $\mathbb G$ with this weight  is $RCD$ (The $RCD$ condition is stronger than  the original curvature-dimension condition and rules out non-Riemannian Finsler structures and Banach spaces). The doubling of this space along its boundary is a sub-Riemannian manifold and there is  no  weight such that the doubling is a metric measure space that satisfies  $RCD$.  
Theorem \ref{th:re} doesn't apply for this example since $\mathbb G$ doesn't have a {\it smooth} boundary. \smallskip

\noindent
{\it \color{black} Question: Is there  a notion of boundary for collapsed  $RCD(K,N)$ metric measure spaces such that, under correct assumption, a doubling theorem, or more generally a gluing theorem, holds. }\smallskip
\noindent\end{remark}
%A partial answer to this question was given in \cite{kakest} in the context of Alexandrov spaces with curvature bounded from below.
 For non-collapsed $RCD(K,N)$ spaces, i.e. $\m=\mathcal H^N$,  notions of boundary were introduced in \cite{Kap-Mon19, GP-noncol}. In this context a doubling or gluing theorem is expected to be true in the same form as for Alexandrov spaces. For $RCD$ spaces where the corresponding metric space also satisfies locally an upper curvature bound in the sense of Alexandrov  a well-defined notion of boundary has been introduced in \cite{kkk}. Hence for such spaces it seems feasable to give a partial  answer to the previous question. 
\smallskip

Very recently Reiser and Wraith began to investigate glued spaces in connection with $k$-intermediate Ricci curvature bounds $\ric_k\geq K$ \cite{rw, rwnew}.  The  $k$-intermediate Ricci curvatures  interpolate between sectional curvature ($k=1$) and Ricci curvature ($k=n-1$). Reiser and Wraith prove a generalization of one direction in the theorems of Perelman and Kosovski\u{\i} under intermediate lower Ricci curvature bounds where the assumption on the boundary is the same as in \cite{kosovski, perelmanlarge}, i.e. $\Pi_0+\Pi_1\geq 0$ on $\partial M_0\simeq \partial M_1$. They show the glued space of Riemannian manifolds with $k$-intermediate Ricci curvature  is the uniform limit of smooth Riemannian manifolds with $k$-intermediate Ricci curvature lower bounds up to an arbitrarily small $\epsilon$ error. In \cite{ketterermondino} the author and Andrea Mondino propose a synthetic definition of intermediate Ricci lower curvature bounds. Stability of this definition under Gromov-Hausdorff convergence is  an open problem and so we don't know  whether the glued spaces that Reiser and Wraith consider belong to this class. But it is clear that $\ric_k\geq K$ implies $\ric\geq K$ for all $k\in \{1, \dots, n-1\}$ and in particular the glued spaces of Reiser and Wraith with $k$-interemediate lower Ricci curvature bounds satisfy a curvature-dimension condition $CD(K,n)$.  Hence Theorem \ref{th:re} implies the following result.  
\begin{corollary}
The  boundary condition $\Pi_0+\Pi_1\geq 0$ in the theorem of Reiser and Wraith on gluing constructions under $k$-intermediate lower Ricci curvature bounds is  necessary. 
\end{corollary}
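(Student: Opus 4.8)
The plan is to read the conclusion of the Reiser--Wraith gluing theorem as a source of a synthetic lower Ricci bound on the glued space and then invoke Theorem~\ref{th:re}. Suppose $M_0^n, M_1^n$ are Riemannian manifolds with $\ric_k\geq K$, that $\mathcal I\colon \partial M_0\to \partial M_1$ is an isometry, and that the glued space $M_0\cup_{\mathcal I}M_1$ is the uniform limit of a sequence of smooth Riemannian manifolds $(\hat M, g_j)$ with $\ric_k(g_j)\geq K-\epsilon_j$ and $\epsilon_j\downarrow 0$, which is exactly what the Reiser--Wraith theorem produces once one assumes in addition $\Pi_0+\Pi_1\geq 0$. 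I will show that the mere existence of such a sequence forces $\Pi_0+\Pi_1\geq 0$; this is the asserted necessity of their boundary condition.

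First, as noted in the discussion preceding the corollary, $\ric_k\geq\kappa$ implies $\ric\geq\kappa$ for every $k\in\{1,\dots,n-1\}$ (sum the $k$-plane curvatures over all $k$-element orthonormal subsets of $v^{\perp}$ and renormalize). Hence $\ric(g_j)\geq K-\epsilon_j$; and since $\hat M$ is closed (the $M_i$ are glued along their entire boundary, so there is no leftover boundary) and $g_j$ is smooth, $(\hat M, g_j)$ satisfies the curvature-dimension condition $CD(K-\epsilon_j, n)$ by \cite{stugeo2, lottvillani}.

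Next I upgrade the convergence to the measured Gromov--Hausdorff topology. The $g_j$ are metric tensors on the fixed compact manifold $\hat M$ converging uniformly to the glued $C^0$-metric $g$, so $\sqrt{1-\delta_j}\,d_g\leq d_{g_j}\leq\sqrt{1+\delta_j}\,d_g$ with $\delta_j\downarrow 0$ and the density $\sqrt{\det g_j}/\sqrt{\det g}\to 1$ uniformly; thus the identity maps exhibit $(\hat M, d_{g_j},\vol_{g_j})\to(\hat M, d_g,\vol_g)$ in the measured Gromov--Hausdorff sense. By stability of the curvature-dimension condition under mGH convergence and $K-\epsilon_j\to K$, the limit $(\hat M, d_g,\vol_g)$ satisfies $CD(K,n)$.

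Finally I apply Theorem~\ref{th:re} with $\Phi_0\equiv\Phi_1\equiv 1$ — so $H^{\Phi_i}=\tr\Pi_i$ — and $Y_i=\partial M_i$: it gives $\Pi_i\geq 0$ on $\partial M_i\setminus Y_i=\emptyset$ (vacuous) together with $\Pi_0+\Pi_1\geq 0$ on $Y_0\simeq Y_1=\partial M_0\simeq\partial M_1$, which is precisely the claim. The genuinely hard input is Theorem~\ref{th:re} itself, which we are permitted to assume; among the elementary remaining steps the one most prone to an oversight is the passage to measured Gromov--Hausdorff convergence, but since the $g_j$ live on the fixed compact manifold $\hat M$ and converge in $C^0$, the Riemannian volumes converge weakly and no non-collapsing estimate is needed.
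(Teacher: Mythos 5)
Your proposal is correct and follows the paper's own (informally stated) argument exactly: the Reiser--Wraith approximating sequence with $\ric_k\geq K-\epsilon_j$ gives $\ric\geq K-\epsilon_j$, hence $CD(K-\epsilon_j,n)$, and $C^0$-convergence of metrics on the fixed closed manifold $\hat M$ yields measured Gromov--Hausdorff convergence, so by stability the glued space is $CD(K,n)$ and Theorem~\ref{th:re} with $\Phi\equiv 1$ forces $\Pi_0+\Pi_1\geq 0$. The only elaboration you add beyond the paper's compressed remark is the explicit mGH step, which the paper itself flags as routine in the remark following Theorem~\ref{th:re}.
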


Theorem \ref{main3}  was also obtained independently   in \cite{reitri}. They prove a more general  version of Theorem \ref{main3} and use this  to  construct new examples of weighted Riemannian manifolds with postive Bakry-Emery curvature via surgergy.
The proof in \cite{reitri}  follows the work of Perelman. Our approach is based on \cite{sch, kosovski}.
\\

\noindent
The rest of the article is structured as follows. 

In Section 2 we recall the definition of the Bakry-Emery tensor for a weighted Riemannian manifold and its properties, the curvature-dimension condition in the sense of Lott-Sturm-Villani for a metric measure space, some facts about semi-concave functions, the definition of Alexandrov spaces and the $1D$-localisation technique. 

In Section \ref{sec:gluing} we recall the construction of a glued space between Riemannian manifolds $M_0$ and $M_1$. We also recall the proof of the preservation of lower Ricci curvature bounds for glued spaces given by \cite{kosovski, sch}. Finally we define the glued space between weighted Riemannian manifolds. 

In Section \ref{sec:4}  we first recall the definition of warped products and the formula for the Ricci curvature of warped product. We then prove Theorem \ref{main2} and Theorem \ref{main3}. 

In Section \ref{sec: Second application} we show how the $1D$ localisation technique is applied to obtain an important property of geodesics in the glued space. We use that to show that under our boundary assumption the tautological extension of semi-concave functions on $M_0$ and $M_1$ to the glued space is still semi-concave.  We prove Corollary \ref{cor1}.

Finally in  Section \ref{sec:6} we prove Theorem \ref{th:re}.

\subsubsection*{Acknowledgments}
Parts of this work were done when the author stayed   as a Longterm Visitor  at the Fields Institute in Toronto during the Thematic Program on {\it Nonsmooth Riemannian and Lorentzian Geometry}. I want to thank the organizers of the program and  the Fields Institute  for providing  an excellent research environment.  I  want to thank Philipp Reiser for stimulating discussions about gluing constructions and surgery and for remarks on an earlier version of this article.  I also want to thank Alexander Lytchak for suggesting Theorem \ref{th:re}. Finally, I am very grateful to the unknow referee whose insightful comments and remarks lead to major improvements in the final version of this article.
\section{Preliminaries}
\subsection{Bakry-Emery curvature condition}
Let $(M,g)$ be a connected Riemannian manifold with  boundary $\partial M$ and $d_g$  the induced intrinsic distance. Assume $M$ is equipped with  the measure  $\Phi  \vol_M$ where $\Phi\in C^{\infty}(M)$ and $\Phi\geq 0$.  We set $\mathring M= M\backslash \Phi^{-1}(\{0\})$. We call  the triple $(M,g, \Phi)$ a weighted Riemannian manifold. 
%We set $\mathring M:= M\backslash \Phi^{-1}(\{0\})$.
\begin{definition}[\cite{baem}] \label{def:baem} Let $N\geq 1$.
If  $N>n=\dim_M$,  for $p\in \mathring M$ and $v\in T_pM$ the Bakry-Emery $N$-Ricci tensor is
\begin{align*}
\ric_g^{\Phi, N}|_p(v,v)
= \ric_g|_p(v,v) - (N-n) \frac{ \nabla^2 \Phi^{\frac{1}{N-n}}|_p(v,v)}{\Phi^{\frac{1}{N-n}}(p)}.
\end{align*}
If $N=n$, then 
\begin{align*}
\ric_g^{\Phi, n}|_p(v,v) = \begin{cases} \ric_M|_p(v,v) - \nabla^2 \log \Phi |_p(v,v)
&\mbox{
if $g( \nabla \log \Phi _p, v)=0$}, \\
-\infty&\mbox{ otherwise.}
\end{cases}\end{align*} 
Finally, we set $\ric_g^{\Phi, N}\equiv-\infty$ if $1\leq N<n$. 
%We say the weighted Riemannian manifold $(M, \Phi )$ satisfies the Bakry-Emery curvature-dimension condition $BE(K,N)$ if $\ric^{\Phi, N}\geq K$ on $\mathring M$. 
\end{definition}
\begin{fact}\label{firstfact} Let $\eta \in \R$ and $N> \dim_M=n$. 
If $M$ has sectional curvature bounded from above $\overline \kappa>0$ and $(M,g, \Phi)$ satisfies $\ric_g^{\Phi,N}\geq (N-1) \eta$, then 
$$\nabla^2 \Phi^{\frac{1}{N-n}}+ \theta\Phi^{\frac{1}{N-n}}\leq 0 \mbox{ on } \mathring M \mbox{ 
where $\theta :=\min\{-\overline \kappa, \eta\} < 0.$ }$$
\end{fact}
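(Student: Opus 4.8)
The plan is to prove Fact~\ref{firstfact} by a pointwise argument at an arbitrary $p\in\mathring M$, reducing everything to an inequality for the Hessian of $\Phi^{1/(N-n)}$. Write $u:=\Phi^{1/(N-n)}$, which is smooth and positive on $\mathring M$. The hypothesis $\ric_g^{\Phi,N}\ge (N-1)\eta$ unwinds by Definition~\ref{def:baem} to
\begin{align*}
\ric_g|_p(v,v) - (N-n)\frac{\nabla^2 u|_p(v,v)}{u(p)} \;\ge\; (N-1)\eta\, |v|^2
\qquad\text{for all } v\in T_pM.
\end{align*}
The goal is the two-tensor inequality $\nabla^2 u|_p + \theta\, u(p)\, g|_p \le 0$, i.e. $\nabla^2 u|_p(v,v)\le -\theta\, u(p)\, |v|^2$ for all $v$, where $\theta=\min\{-\overline\kappa,\eta\}<0$.

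The key steps, in order. First I would fix $v\in T_pM$ with $|v|=1$ and split into the two cases coming from the definition of $\theta$. For the bound involving $\eta$: rearranging the displayed inequality gives
\begin{align*}
(N-n)\frac{\nabla^2 u|_p(v,v)}{u(p)} \;\le\; \ric_g|_p(v,v) - (N-1)\eta.
\end{align*}
Here I need an \emph{upper} bound on $\ric_g|_p(v,v)$ in terms of the sectional curvature bound $\overline\kappa$: since $\sec\le\overline\kappa$, each of the $n-1$ sectional curvatures of planes containing $v$ is at most $\overline\kappa$, so $\ric_g|_p(v,v)\le (n-1)\overline\kappa$. Substituting yields $\nabla^2 u|_p(v,v)\le \tfrac{u(p)}{N-n}\big((n-1)\overline\kappa-(N-1)\eta\big)$, and one checks the elementary inequality $\tfrac{1}{N-n}\big((n-1)\overline\kappa-(N-1)\eta\big)\le -\theta$ when $\theta=\eta$ (i.e. when $\eta\le-\overline\kappa$; note $(n-1)\overline\kappa-(N-1)\eta\le (n-1)\overline\kappa+(N-1)\overline\kappa$ and compare with $-(N-n)\eta\ge(N-n)\overline\kappa$). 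For the bound involving $-\overline\kappa$ (the regime $-\overline\kappa\le\eta$, so $\theta=-\overline\kappa$), I expect a slightly different combination: one uses $\ric_g|_p(v,v)\le (n-1)\overline\kappa$ together with $(N-1)\eta\ge -(N-1)\overline\kappa$ to again bound $\nabla^2 u|_p(v,v)/u(p)$ above by $\overline\kappa=-\theta$, after dividing by $N-n$ and checking the arithmetic. Finally, since the resulting scalar inequality $\nabla^2 u|_p(v,v)\le -\theta\,u(p)$ holds for every unit $v$, it holds for all $v$ by homogeneity, which is exactly the asserted tensor inequality on $\mathring M$ as $p$ was arbitrary.

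The main obstacle is getting the constant bookkeeping exactly right: one must verify that in \emph{both} regimes of the minimum defining $\theta$ the combination $\tfrac{1}{N-n}\big(\ric\text{-bound}-(N-1)\eta\big)$ is dominated by $-\theta$, and in particular that the sign conventions ($N>n$ so $N-n>0$, $\overline\kappa>0$, $\theta<0$) all line up. There is no deep geometry beyond the standard fact $\sec\le\overline\kappa\Rightarrow\ric\le(n-1)\overline\kappa$; the content is purely the algebra of the curvature-dimension bound, and the only subtlety is remembering that $\overline\kappa>0$ is needed precisely to guarantee $\theta<0$ and to make the case $\theta=-\overline\kappa$ give a genuine upper bound on the Hessian.
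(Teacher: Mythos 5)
Writing $u=\Phi^{1/(N-n)}$ as you do, your strategy matches the paper's one-line proof exactly: rewrite the Bakry--Emery hypothesis and apply $\ric_g\le(n-1)\overline\kappa$ to get, for every unit $v\in T_pM$,
\begin{align*}
\frac{\nabla^2 u|_p(v,v)}{u(p)}\;\le\;\frac{(n-1)\overline\kappa-(N-1)\eta}{N-n}.
\end{align*}
The gap is precisely in the step you defer to the reader. The elementary inequality $\frac{1}{N-n}\bigl((n-1)\overline\kappa-(N-1)\eta\bigr)\le-\theta$ with $\theta=\min\{-\overline\kappa,\eta\}$ fails whenever $n\ge2$. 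In the regime $\eta\le-\overline\kappa$ you invoke $(n-1)\overline\kappa-(N-1)\eta\le(n-1)\overline\kappa+(N-1)\overline\kappa$, but $\eta\le-\overline\kappa$ means $-\eta\ge\overline\kappa$, so $-(N-1)\eta\ge(N-1)\overline\kappa$ and your inequality runs the wrong way; pushing the algebra through, the desired bound would force $\overline\kappa\le\eta$, which is impossible here. In the regime $\eta\ge-\overline\kappa$ the bound $-(N-1)\eta\le(N-1)\overline\kappa$ only yields $\frac{(n-1)\overline\kappa-(N-1)\eta}{N-n}\le\frac{N+n-2}{N-n}\overline\kappa$, and $\frac{N+n-2}{N-n}\le1$ if and only if $n\le1$.

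This is not a bookkeeping issue but a false claim. Take $n=2$, $N=3$, $\overline\kappa=1$, $\eta=-1$ and, on a flat disk, $u(x,y)=1+\tfrac{9}{10}x^2$. Then $\ric_g^{\Phi,N}(v,v)=-\nabla^2u(v,v)/u\ge-\tfrac{9}{5}>-2=(N-1)\eta$ and sectional curvature is $0\le\overline\kappa$, but at the origin $\nabla^2u(e_1,e_1)=\tfrac{9}{5}>1=u(0)$, so $\nabla^2u+\theta u\le0$ fails with $\theta=\min\{-1,-1\}=-1$. What the displayed chain actually establishes is $\nabla^2 u+\theta_0\, u\le 0$ with $\theta_0:=\frac{(N-1)\eta-(n-1)\overline\kappa}{N-n}$, which is negative whenever $(N-1)\eta<(n-1)\overline\kappa$, in particular for $\eta\le 0$. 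That weaker constant is all the later uses of the Fact (Corollary~\ref{cor:pos}; the local $\mathcal F\theta$-concavity of $\Phi^{1/(N-n)}$ in Section~\ref{sec:4}) actually require: they only need \emph{some} finite $\theta$ making $u$ locally $\mathcal F\theta$-concave. Your argument should stop at $\theta_0$ rather than assert the two elementary inequalities, which are not true.
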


\begin{proof}
To see this observe
$(N-n) \frac{\nabla^2\Phi^{\frac{1}{N-n}}}{\Phi^{\frac{1}{N-n}}}+(N-1)\eta \leq \ric_g\leq  (n-1)\overline\kappa .$
\end{proof}
\begin{corollary}\label{cor:pos}
If  $\ric_g^{\Phi, N}\geq (N-1) \eta$ and there exists $q\in M$ such that $\Phi(q)>0$, then $\Phi>0$ on $M\backslash \partial M$. 
\end{corollary}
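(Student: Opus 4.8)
The plan is to argue by contradiction using the differential inequality from Fact \ref{firstfact}. Suppose $\Phi(q)>0$ for some $q$ but $\Phi(p)=0$ for some $p\in M\backslash\partial M$. Set $n=\dim_M$ and write $u=\Phi^{1/(N-n)}$ when $N>n$ (the cases $N=n$ and $N<n$ need to be handled separately, but when $N\le n$ the hypothesis $\ric_g^{\Phi,N}\ge (N-1)\eta$ forces $\Phi$ to be, up to the degenerate convention, effectively weight-free on the support — more precisely for $N<n$ the tensor is $-\infty$ so the hypothesis is vacuous unless $\mathring M=\emptyset$, and for $N=n$ the condition $\nabla^2\log\Phi\le \ric_g$ on $\{\Phi>0\}$ together with connectedness already prevents $\Phi$ from vanishing in the interior by the same ODE comparison along geodesics; so I will focus on $N>n$). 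First I would cover $M$ by small geodesic balls on which the sectional curvature is bounded above by some $\overline\kappa>0$ (possible locally on any Riemannian manifold), so that Fact \ref{firstfact} applies locally and gives $\nabla^2 u + \theta u \le 0$ on $\mathring M$ with $\theta=\min\{-\overline\kappa,\eta\}<0$.

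The key step is a propagation-of-positivity argument along minimizing geodesics in $M\backslash\partial M$. Consider the (relatively) open set $U=\{x\in M\backslash\partial M: \Phi(x)>0\}=\mathring M\cap(M\backslash\partial M)$; it is nonempty since $q\in U$ (note $\Phi(q)>0$ implies $q\notin\partial M$ is not automatic, but $\ric_g^{\Phi,N}\geq(N-1)\eta$ holds on $\mathring M$ and one checks $q$ can be taken interior, or one works on $\mathring M$ directly). I claim $U$ is also closed in $M\backslash\partial M$, which by connectedness of $M\backslash\partial M$ gives $U=M\backslash\partial M$, i.e. $\Phi>0$ there. To see closedness, suppose $x_k\in U$, $x_k\to x_\infty\in M\backslash\partial M$ with $\Phi(x_\infty)=0$. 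Pick a minimizing geodesic $\gamma:[0,\ell]\to M\backslash\partial M$ from $x_\infty$ to some $x_k$ with $\Phi(x_k)>0$, chosen short enough to lie in one curvature-bounded ball, and set $\varphi(t)=u(\gamma(t))=\Phi^{1/(N-n)}(\gamma(t))\ge 0$. Then $\varphi(0)=0$, $\varphi(\ell)>0$, $\varphi\ge 0$, and $\varphi''(t)=\nabla^2 u(\dot\gamma,\dot\gamma)\le -\theta u = -\theta\varphi$ pointwise wherever this makes sense — but $u=\Phi^{1/(N-n)}$ is only Hölder, not $C^2$, at zeros of $\Phi$, so this must be interpreted in the barrier/viscosity sense or circumvented.

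The honest way around the regularity issue at $\{\Phi=0\}$ is to work where $\Phi>0$ only and use the ODE comparison there: on the open set $\{t:\varphi(t)>0\}\subset[0,\ell]$ we have $\varphi$ smooth and $\varphi''\le-\theta\varphi$ with $-\theta>0$, so $\varphi''\le C\varphi$ for $C=-\theta>0$; comparing with solutions of $\psi''=C\psi$ one finds that $\varphi$ cannot approach $0$ with the wrong behavior — concretely, if $\varphi(t_0)=0$ is a boundary point of $\{\varphi>0\}$ then on the adjacent interval $(t_0,t_0+\delta)$ the function $\varphi(t)\le \varphi(t_0+\delta)\frac{\sinh(\sqrt C(t-t_0))}{\sinh(\sqrt C\delta)}$ by the maximum principle for $\varphi''-C\varphi\le 0$, and letting $\delta$ run shows $\{\varphi>0\}$ is forward-invariant in a way incompatible with $\varphi(0)=0,\varphi(\ell)>0$ unless $\varphi\equiv 0$ — more simply: $e^{-\sqrt C t}\varphi$ is... rather, the clean statement is that $\varphi''\ge 0$ would already do it, but $\varphi''\le C\varphi$ with $\varphi\ge0$ still gives that $\varphi$ cannot have an interior zero with $\varphi\not\equiv 0$ by the strong maximum principle applied to the operator $\partial_t^2 - C$ (zeros of a nonnegative supersolution are impossible in the interior). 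Applying this with the interior zero at $\gamma^{-1}(x_\infty)$ — which after extending $\gamma$ slightly past $x_\infty$ (still in $M\backslash\partial M$, possible since $x_\infty$ is interior) becomes an interior zero of the nonnegative supersolution $\varphi$ — forces $\varphi\equiv 0$ near $x_\infty$, contradicting $x_k\to x_\infty$ with $\Phi(x_k)>0$. Hence $U$ is open and closed in the connected set $M\backslash\partial M$, so $U=M\backslash\partial M$.

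The main obstacle is precisely this low regularity of $\Phi^{1/(N-n)}$ at its zero set: one cannot naively write down $\varphi''$ there. I expect the cleanest fix is to avoid the zero set entirely — run the strong maximum principle / Hopf-type argument for the linear ODE $\varphi''-C\varphi\le 0$ on the open set where $\varphi>0$ and derive the contradiction from the boundary behavior, rather than attempting to make sense of the inequality at $\{\Phi=0\}$. An alternative, perhaps even simpler, is to invoke that $\ric_g^{\Phi,N}\ge (N-1)\eta$ makes $(M,g,\Phi\vol_g)$ an $\mathsf{RCD}$-type (or at least $CD$) space on $\mathring M$ and use that the density of a $CD$ measure is locally bounded below by a positive constant on the interior; but the self-contained differential-inequality argument above is more in the spirit of Fact \ref{firstfact}, which is clearly why that Fact was recorded just beforehand.
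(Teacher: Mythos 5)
You take essentially the same route as the paper's one‑line proof: invoke Fact~\ref{firstfact} to get the differential inequality for $u=\Phi^{1/(N-n)}$ and then run a strong‑maximum‑principle / open‑and‑closed argument in $M\setminus\partial M$. The only structural difference is that you reduce to the one‑dimensional inequality $\varphi''\le -\theta\varphi$ along geodesics, whereas the paper applies the elliptic maximum principle for $\Delta+\theta$ directly on a small ball around a zero of $\Phi$; these are cosmetically different implementations of the same idea.

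Two points worth recording. First, your hesitation about regularity is well founded, and in fact the paper's displayed statement is itself imprecise: Fact~\ref{firstfact} gives $\nabla^2 u\le -\theta\,u$ with $-\theta>0$ and $u\ge 0$, which is \emph{not} the same as $\nabla^2 u\le 0$; what the paper really uses is the strong maximum principle for the operator $\Delta+\theta$ (zeroth‑order coefficient $\theta<0$) applied to a nonnegative function vanishing at an interior point. Second, your proposed repair -- apply the Hopf/strong maximum principle for $\partial_t^2-C$ only on $\{\varphi>0\}$ -- is by itself not enough. With the inequality assumed only where $\varphi>0$, the conclusion can fail: $\varphi(t)=\sinh(\sqrt C\,|t-t_0|)$ satisfies $\varphi''=C\varphi$ on $\{\varphi>0\}$, is nonnegative, vanishes at the interior point $t_0$, and is not identically zero. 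The additional input you must feed in is the smoothness of $\Phi$ (not of $u$) at its zero: since $\Phi\ge 0$ is smooth and $\Phi(p)=0$, one has $\nabla\Phi(p)=0$, so $\Phi\circ\gamma$ vanishes to order at least $2$ at $t_0$ and hence $\varphi$ vanishes to order at least $2/(N-n)$. When $N-n<2$ this kills the one‑sided derivative and the Hopf argument closes; when $N-n\ge 2$ one needs a little more, e.g.\ arguing directly with $\Phi\circ\gamma$ and the inequality it inherits, or establishing the $\mathcal F\theta$‑concavity inequality~\eqref{kuconcavity1} for $u\circ\gamma$ across the zero set and noting that $u(\gamma(1/2))=0$ together with $\sigma_\theta^{(1/2)}>0$ forces $u(\gamma(0))=u(\gamma(1))=0$.
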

\begin{proof}
Assume there exists $p\in M\backslash \partial M$ such that $\Phi(p)=0$.   Then $p$ is also a critical point, i.e. $\nabla \Phi|_p=0$. Since $\nabla^2 \Phi^{\frac{1}{N-n}}+ \theta \Phi^{\frac{1}{N-n}}\leq 0$, it follows  that $\Phi\equiv 0$ in a neighborhood of $p$. 
Hence $\Phi^{-1}(\{0\})$ is  open  in $M\backslash \partial M$ that is   connected. It follows that $\Phi\equiv 0$. 
\end{proof}
\begin{example}
Consider $(I, \Phi)$ where $I\subset \R$  is an interval and $\Phi: I\rightarrow [0,\infty)$ is smooth.  In this case the   Bakry-Emery $N$-Ricci curvature is bounded from below by $K$ if and only if $\frac{d^2}{dt^2} \Phi^{\frac{1}{N-1}}+ \frac{K}{N-1} \Phi^{\frac{1}{N-1}}\leq 0$ on $\mathring I$.  Moreover, it follows $\Phi^{-1}(\{0\})\subset \partial I$. 
%Given another weighted space $([b,c], \Psi)$ that also has Bakry-Emery $N$-Ricci curvature bounded from below by $K$ the glued space $([a,c], \Phi\cup\Psi)$ satisfies the synthetic Ricci curvature
\end{example}
%\newpage
\subsection{Curvature-dimension condition}
For $\kappa\in \mathbb{R}$ let $\sin_{\kappa}:[0,\infty)\rightarrow \mathbb{R}$ be the solution of 
$
v''+\kappa v=0, \ v(0)=0 \ \ \& \ \ v'(0)=1.
$
 $\pi_\kappa\in \left\{\frac{\pi}{\sqrt{\kappa}}, \infty\right\}$ is the diameter of a simply connected space  $\mathbb S^2_\kappa$ of constant curvature $\kappa$.
%, i.e.
%\[
%\pi_\kappa= \begin{cases}
% \infty \ &\textrm{ if } \kappa\le 0\\
%\frac{\pi}{\sqrt \kappa}\ &  \textrm{ if } \kappa> 0.
%
%\end{cases}
%\]
\smallskip

For $K\in \mathbb{R}$, $N\in (0,\infty)$ and $\theta\geq 0$ we define the \textit{distortion coefficient} as
\begin{align*}
t\in [0,1]\mapsto \sigma_{K,N}^{(t)}(\theta)=\begin{cases}
                                             \frac{\sin_{K/N}(t\theta)}{\sin_{K/N}(\theta)}\ &\mbox{ if } \theta\in [0,\pi_{K/N}),\\
                                             \infty\ & \ \mbox{otherwise}.
                                             \end{cases}
\end{align*}
One sets $\sigma_{K,N}^{(t)}(0)=t$.
Moreover, for $K\in \mathbb{R}$, $N\in [1,\infty)$ and $\theta\geq 0$ the \textit{modified distortion coefficient} is defined as
\begin{align*}
t\in [0,1]\mapsto \tau_{K,N}^{(t)}(\theta)=\begin{cases}
                                            \theta\cdot\infty \ & \mbox{ if }K>0\mbox{ and }N=1,\\
                                            t^{\frac{1}{N}}\left[\sigma_{K,N-1}^{(t)}(\theta)\right]^{1-\frac{1}{N}}\ & \mbox{ otherwise}
                                           \end{cases}\end{align*}
                                           where $0\cdot \infty=0$. 

Let $(X,d)$ be a complete separable metric space equipped with a locally finite Borel measure $\m$. We call the triple $(X,d,\m)$ a metric measure (mm) space. 
Let $\L$ be the induced length functional for  continuous curves in  $(X,d)$. A geodesic  is a continuous map $\gamma: [a,b]\rightarrow X$   such that $\L(\gamma)=d(\gamma(a), \gamma(b))$.
The set of  Borel probability measures with finite second moment is $\mathcal P^2(X)$,  the set of probability measures in $\mathcal P^2(X)$ that are $\m$-absolutely continuous is denoted with $\mathcal P^2(X,\m)$ and the subset of measures in $\mathcal P^2(X,\m)$ with bounded support is  $\mathcal{P}_b^2(X,\m)$.
The space $\mathcal P^2(X)$ is equipped with the $L^2$-Wasserstein distance $W_2$.  $(\mathcal P^2(X), W_2)$ is a metric space. 
\smallskip

{The \textit{$N$-Renyi entropy} is defined by
\begin{align*}
S_N(\cdot|\m):\mathcal{P}^2_b(X)\rightarrow (-\infty,0],\ \ S_N(\mu|\m)=\begin{cases}-\int_X \rho^{1-\frac{1}{N}}d\!\m& \ \mbox{ if $\mu=\rho\m$,  }\smallskip\\
0&\ \mbox{ otherwise}.
\end{cases}
\end{align*}}

\begin{definition}[\cite{stugeo2, lottvillani}]\label{def:cd}
A mm space $(X,d,\m)$ satisfies the \textit{curvature-dimension condition} $CD(K,N)$ for $K\in \mathbb{R}$, $N\in [1,\infty)$ if for every pair $\mu_0,\mu_1\in \mathcal{P}_b^2(X,\m)$ 
there exists an $L^2$-Wasserstein geodesic $(\mu_t)_{t\in [0,1]}$ and an optimal coupling $\pi$ between $\mu_0$ and $\mu_1$ such that 
\begin{align}\label{ineq:cd}
S_N(\mu_t|\m)\leq -\int \left[\tau_{K,N}^{(1-t)}(\theta)\rho_0(x)^{-\frac{1}{N}}+\tau_{K,N}^{(t)}(\theta)\rho_1(y)^{-\frac{1}{N}}\right]d\pi(x,y)
\end{align}
where $\mu_i=\rho_id\m$, $i=0,1$, and $\theta= d(x,y)$.
\end{definition}
It will be useful to introduce a localized version of the curvature-dimension condition. 

\begin{definition}\label{locdef}
Given a mm space $(X,d,\m)$  let $U\subset X$ be an open subset. We say $U$ satisfies the condition $CD_{loc}(K,N)$ for $K\in \mathbb{R}$, $N\in [1,\infty)$ if for every pair $\mu_0,\mu_1\in \mathcal{P}_b^2(X,\m)$ with $\mu_i(U)=1$ for $i=0,1$
there exists an $L^2$-Wasserstein geodesic $(\mu_t)_{t\in [0,1]}$  in $X$ and an optimal coupling $\pi$ between $\mu_0$ and $\mu_1$ such that \eqref{ineq:cd} holds. 
\end{definition}
\begin{remark}
{\color{black}
The condition $CD_{loc}(K,N)$, that we propose here  for an open subset $U$  in $X$,  is  non-standard.
The local curvature-dimension condition $CD_{loc}(K,N)$ usually applies to  a metric measure space $(X, d, \m)$   and requires that {\it every} point in $X$ has a neighborhood that satisfies the property in Definition \ref{locdef}. Moreover, if $(X,d,\m)$ is  also essentially non-branching, this condition $CD_{loc}(K,N)$  implies the curvature-dimension condition $CD(K,N)$ \cite{cavmil}.}
\end{remark}
\begin{example}
A metric measure space $(I, \m)$ for an interval $I\subset \R$  and a measure $\m$ satisfies the condition $CD(K,N)$ for $K\in \R$ and $N\in [1, \infty)$  if and only if $\m= \Phi d\mathcal L^1$  and  $\Phi$ is $(K, N)$-concave in the sense that 
$$\frac{d^2}{dt^2} \Phi^{\frac{1}{N-1}} + \frac{K}{N-1} \Phi^{\frac{1}{N-1}}\leq 0\ \mbox{ on } I$$
in the distributional sense, e.g.  $([0,\pi], \sin^{N-1}(r) dr)$ satisfies $CD(N-1,N)$.
\end{example}
\begin{theorem}\label{th:cdbe} Let $(M, g, \Phi)$ be a weighted Riemannian manifold with $\partial M\neq \emptyset$.
%If $M\backslash \partial M$ is geodesically convex, then 
$(M, g, \Phi)$ satisfies $\ric^{\Phi, N}_g\geq K$ and $\Pi_{\partial M}\geq 0$ if and only if the metric measure space $(M, d_g, \Phi \vol_M)$ satisfies the condition $CD(K,N)$.
\end{theorem}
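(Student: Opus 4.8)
The plan is to prove the two implications separately. A weighted Riemannian manifold with (locally) convex boundary is essentially non-branching, so by the local-to-global theorem \cite{cavmil} it suffices, for the implication ``Bakry--\'Emery $\Rightarrow CD$'', to verify the local condition $CD_{loc}(K,N)$; this is where $\Pi_{\partial M}\geq 0$ is used. The converse implication is infinitesimal: away from $\partial M$ it reduces to the classical characterization on manifolds without boundary, and one extra boundary computation yields $\Pi_{\partial M}\geq 0$.

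\medskip\noindent\textbf{From $\ric^{\Phi,N}_g\geq K$ and $\Pi_{\partial M}\geq 0$ to $CD(K,N)$.} First I would extend $g$ and $\Phi$ a little across $\partial M$ to a weighted manifold $(\tilde M,\tilde g,\tilde\Phi)$ without boundary containing $M$, so that $\partial M$ is a hypersurface in $\tilde M$ that is convex from the $M$-side. Then every point of $M$ has a neighbourhood $U$ in which any two points of $U\cap M$ are joined by a unique $\tilde g$-minimizing geodesic which, because $\Pi_{\partial M}\geq 0$, stays inside $M$ and is therefore $d_g$-minimizing. Fix $\mu_0=\rho_0\,\Phi\vol_g$, $\mu_1=\rho_1\,\Phi\vol_g$ supported in such a $U$. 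By the standard structure of optimal transport on a Riemannian manifold (McCann's theorem) there is a $c$-concave Kantorovich potential $\varphi$ with $T_t(x)=\exp_x(t\nabla\varphi(x))$ the unique $W_2$-geodesic and $\mu_t=(T_t)_\ast\mu_0\ll\vol_g$. Writing $\mathcal J_t(x)$ for the Riemannian Jacobian of $T_t$ at $x$ and $\mathcal J^\Phi_t(x):=\mathcal J_t(x)\,\Phi(T_t(x))/\Phi(x)$ for the weighted Jacobian, a Bochner--Riccati comparison along each transport geodesic $t\mapsto T_t(x)$ — isolating the radial Jacobi field, which produces the factor $t^{1/N}$, and bounding the $N-1$ transverse directions by $\ric^{\Phi,N}_g\geq K$ — gives the pointwise inequality
\begin{align*}
\big(\mathcal J^\Phi_t(x)\big)^{1/N}\ \geq\ \tau_{K,N}^{(1-t)}(\theta)\ +\ \tau_{K,N}^{(t)}(\theta)\,\big(\mathcal J^\Phi_1(x)\big)^{1/N},\qquad \theta:=|\nabla\varphi(x)|.
\end{align*}
Multiplying by $\rho_0(x)^{-1/N}$, using the Monge--Amp\`ere identity $\rho_0(x)=\rho_t(T_tx)\,\mathcal J^\Phi_t(x)$ (so that $\rho_0(x)^{-1/N}(\mathcal J^\Phi_1(x))^{1/N}=\rho_1(T_1x)^{-1/N}$), and integrating $d\mu_0(x)$ gives exactly \eqref{ineq:cd} with $\pi=(\mathrm{id},T_1)_\ast\mu_0$. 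Hence $U$ satisfies $CD_{loc}(K,N)$, and \cite{cavmil} upgrades this to $CD(K,N)$ for $(M,d_g,\Phi\vol_g)$.

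\medskip\noindent\textbf{From $CD(K,N)$ to $\ric^{\Phi,N}_g\geq K$ and $\Pi_{\partial M}\geq 0$.} For the curvature bound, fix an interior point $p\in\mathring M$ and a unit $v\in T_pM$, apply \eqref{ineq:cd} to normalized restrictions of $\Phi\vol_g$ to small balls centred at $\exp_p(\mp rv)$, and let $r\downarrow 0$. Second-order Taylor expansions of $\tau^{(t)}_{K,N}$, of the Riemannian volume density in normal coordinates (contributing $\ric_g$), and of $\Phi$ along the geodesic (contributing $\nabla^2\Phi^{1/(N-n)}/\Phi^{1/(N-n)}$, respectively $\nabla^2\log\Phi$ together with the forced vanishing of $\langle\nabla\log\Phi,v\rangle$ when $N=n$) force $\ric^{\Phi,N}_g|_p(v,v)\geq K$; this is the classical argument behind \cite{stugeo2,lottvillani} and uses nothing about $\partial M$. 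It then remains to show $\Pi_{\partial M}\geq 0$. Suppose instead $\Pi_{\partial M}|_p(w,w)<0$ for some $p\in\partial M$ and $w\in T_p(\partial M)$. Then $(M,d_g)$ fails to be geodesically convex near $p$ in the $w$-direction: the $d_g$-minimizing geodesic joining two points $x_\pm$ close to $p\pm rw$ and pushed slightly into $\mathring M$ must bow into the interior and is strictly longer than the ambient segment, the defect being of order $-\Pi_{\partial M}|_p(w,w)\,r^3>0$. Testing \eqref{ineq:cd} with normalized restrictions of $\m=\Phi\vol_g$ to $\delta$-balls around $x_\pm$ with $\delta\ll r$, and tracking how this length defect together with the displacement of the midpoint degrades the interpolation density, yields a violation of \eqref{ineq:cd} for $r$ small. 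Equivalently one may run the $1D$-localization of $CD(K,N)$ along a $1$-Lipschitz function agreeing with $x\mapsto d(x,\partial M)$ near $p$ — the tool used in Section~\ref{sec:6} for Theorem~\ref{th:re} — whose conditional one-dimensional densities are $\Phi$ times the Jacobian of the normal exponential map of $\partial M$ and cannot be $(K,N)$-concave up to the starting endpoint when the boundary is concave in some direction.

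\medskip\noindent The step I expect to be the main obstacle is this last one: quantifying precisely how a direction of negative $\Pi_{\partial M}$ — equivalently, a mild failure of geodesic convexity of $(M,d_g)$ — produces a loss in the interpolated density of first order in $\Pi_{\partial M}$ that no admissible $W_2$-geodesic can absorb. The remaining ingredients are the standard optimal-transport and Bochner machinery together with the local-to-global principle.
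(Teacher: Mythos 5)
The paper does not actually supply a proof of Theorem~\ref{th:cdbe}: the sentence following the statement attributes the boundaryless case to \cite{cms,sturmrenesse} and the case with boundary to \cite{han19}, and the later use of the boundary-convexity conclusion in the proof of Theorem~\ref{th:re} again simply cites \cite{han19}. So there is no in-paper proof to compare against; what follows is an assessment of your reconstruction on its own terms.

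Your forward direction (Bakry--\'Emery plus $\Pi_{\partial M}\geq 0$ implies $CD(K,N)$) is the standard optimal-transport argument and is sound in outline: extend $(g,\Phi)$ smoothly across $\partial M$, use convexity of the boundary to see that short ambient geodesics between nearby points of $M$ stay in $M$ (so $d_g$ agrees locally with the extended distance), apply McCann's theorem and the Monge--Amp\`ere identity, run the Riccati/Jacobian comparison to obtain the determinant inequality, and invoke the local-to-global theorem of Cavalletti--Milman for essentially nonbranching spaces. This is exactly the route that gives the result in the literature, and there is no gap beyond the usual care that the radial Jacobi field contributes the $t^{1/N}$ factor while the transverse part plus the $\Phi$-contribution is controlled by $\ric^{\Phi,N}_g\geq K$.

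In the converse direction, the interior step (testing \eqref{ineq:cd} on shrinking balls around $\exp_p(\mp rv)$ and Taylor-expanding to recover $\ric^{\Phi,N}_g|_p(v,v)\geq K$) is the classical argument of \cite{stugeo2,lottvillani,cms} and applies verbatim at interior points. The weak point, which you correctly identify, is the passage to $\Pi_{\partial M}\geq 0$. The length-defect heuristic (a concave direction of the boundary forces minimizing curves to bow into the interior with a cubic defect and squeezes the interpolated measure) is the right intuition, but as written it does not show that the squeezing actually overcomes the distortion coefficients in \eqref{ineq:cd}; one would need a quantitative estimate on how the transport Jacobian degenerates along the bottleneck. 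Your alternative via $1D$ localization is the robust route and is in fact the methodology this paper uses for the related necessity statement in Section~\ref{sec:6}: localize along the signed distance to $\partial M$ to get the weighted mean-curvature inequality, and then, to promote this to the full $\Pi_{\partial M}\geq 0$ rather than just $\tr\Pi\geq\langle N,\nabla\log\Phi\rangle$, localize along a family of Lipschitz functions whose transport rays hit $\partial M$ obliquely at a prescribed tangential direction $v$, exactly as in step~{\bf 2} of the proof of Theorem~\ref{th:re}; one-sided semi-concavity of the conditional densities at the boundary endpoint then yields $\Pi_{\partial M}(v,v)\geq 0$ in the limit where the normal component of the direction is sent to $0$. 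That refinement is what your sketch is missing, and without it your argument only recovers a mean-curvature inequality rather than the pointwise positivity of $\Pi_{\partial M}$.

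One small technical remark: if $\Phi$ is allowed to vanish, then by Corollary~\ref{cor:pos} this can only happen on $\partial M$, and both implications should be stated (as the theorem implicitly does) with the Bakry--\'Emery condition understood on $\mathring M=M\setminus\Phi^{-1}(0)$; your proof should acknowledge that the localization rays used near a component of $\partial M$ where $\Phi$ vanishes require the $\mathcal F\theta$-concavity of $\Phi^{1/(N-n)}$ from Fact~\ref{firstfact} to make sense of the conditional densities up to the endpoint.
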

\noindent
Here $\Pi_{\partial M}$ is the second fundamental form of $\partial M$.  For the case $\partial M=\emptyset$ see \cite{cms, sturmrenesse}.
In this form the theorem was proved in \cite{han19}. 
\begin{remark}
The set of $CD(K,N)$ spaces with $K\in \R$ and $N\in [1,\infty)$ is closed w.r.t. measured Gromov-Hausdorff convergence \cite{stugeo2, lottvillani}.  Moreover, let $(X_i,d_i,\m_i,o_i)$ converge in pointed GH sense to a pointed metric measure space $(X,d,\m,o)$.  This means that balls of a fixed radius converge in measured GH sense \cite{gmsstability}. It is not hard to see that a condition  $CD_{loc}(K,N)$ for the balls $B_R(o_i)$ implies the condition $CD_{loc}(K,N)$ for the ball $B_R(o)$ in the limit, for instance compare this with the proof of Theorem 7.1 in \cite{kettererlp}.
\end{remark}

\subsection{Semiconcave functions}
%\noindent
We recall some  facts about concave functions \cite{plaut, simonconvexity}.

Let
$u:[a,b]\rightarrow \R$ be concave.
Then $u$ is lower semi continuous and continuous on $(a,b)$.
 The right and left  derivative
$
\frac{d}{dr^+} u (r)$ and 
$\frac{d}{dr^-} u (r)$
exist in $\R\cup\{\infty\}$ and $\R\cup \{-\infty\}$ respectively
for  $r\in [a,b]$ with values in $\R$ if $r\in (a,b)$. Moreover $\frac{d}{dr^+}u(r)\leq \frac{d}{dr^-}u(r)$ $\forall r\in (a,b)$ and $\frac{d}{dr^{+/-}}u(r)\downarrow$ in $r$. 
%If $\frac{d^+}{dr}u(a)<\infty$ ($\frac{d^-}{dr}u(b)>-\infty$), $u$ is continuous in $a$ (in $b$).

\begin{definition}
Let $f: [a,b]\rightarrow \R$ be continuous on $(a,b)$, and let $F:[a,b]\rightarrow \R$ be such that
$F''=f$ on $(a,b)$.
For a function $u: [a,b]\rightarrow \R$ 
we write $u''\leq f$ on $(a,b)$ if $u-F$ is concave on $(a,b)$. We say $u$ is $f$-concave.
\end{definition}
\noindent
We say $u$ is semiconcave if for any $r\in (0,\theta)$ we can find $\epsilon>0$ and $\lambda\in \R$ such that $u$ is $\lambda$-concave on $(r-\epsilon,r+\epsilon)$.

Consider $u: [a,b]\rightarrow (0,\infty)$ that satisfies
\begin{align}\label{kuconcavity1}
u\circ\gamma(t)\geq \sigma_{\theta}^{(1-t)}(|\dot{\gamma}|)u\circ \gamma(0) + \sigma_{\theta}^{(t)}(|\dot{\gamma}|)u\circ\gamma(1)
\end{align}
for any constant speed geodesic $\gamma:[0,1]\rightarrow [a,b]$. 
It follows that $u$ is lower semi continuous and continuous on $(a,b)$.
 Let
$U(t):=\int_a^bg(s,t) u(s) ds$. Then $U$ satisfies $U''=-u$ on $(a,b)$ where $g(s,t)$ is the Green function of the interval $(a,b)$.
If $u$ satisfies \eqref{kuconcavity1} for every constant speed geodesic $\gamma:[0,1]\rightarrow [a,b]$, then
\begin{align}\label{die}
u''+\theta u\leq 0\mbox{ on }(a,b)
\end{align}
in the sense that $u-\theta U$ is concave on $(a,b)$.

The next lemma is Lemma 2.17 in \cite{kakest}.
\begin{lemma}
Let $u:[a,b]\rightarrow \R$ be lower semi-continuous and continuous on $(a,b)$ such that $u''+\theta u\leq 0$ on $(a,b)$ in the sense of the definition above.
Then $u$ satisfies \eqref{kuconcavity1} for every constant speed geodesic $\gamma:[0,1]\rightarrow [a,b]$.
\end{lemma}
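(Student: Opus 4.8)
The plan is to reduce the claim to the one-dimensional comparison principle for ordinary differential inequalities. Fix a constant speed geodesic $\gamma:[0,1]\rightarrow[a,b]$; write $\gamma(t)=(1-t)x+ty$ with $x=\gamma(0)$, $y=\gamma(1)$, so $|\dot\gamma|=|y-x|=:\theta_0\le b-a$. Define $w(t):=u(\gamma(t))$ for $t\in[0,1]$. Since $u$ is continuous on $(a,b)$ and lower semicontinuous on $[a,b]$, the same holds for $w$ on $[0,1]$. By the chain rule for the distributional second derivative under an affine reparametrisation, the hypothesis $u''+\theta u\le 0$ on $(a,b)$ (in the sense that $u-\theta U$ is concave, $U''=-u$) transfers to $w''+\theta_0^2\,\theta\, w\le 0$ on $(0,1)$ in the analogous distributional sense. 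The target inequality \eqref{kuconcavity1} is exactly the statement that
\[
w(t)\ \ge\ \sigma_\theta^{(1-t)}(\theta_0)\,w(0)+\sigma_\theta^{(t)}(\theta_0)\,w(1),\qquad t\in[0,1],
\]
i.e. that $w$ lies above the unique solution of $v''+\theta_0^2\theta v=0$ on $(0,1)$ with boundary values $v(0)=w(0)$, $v(1)=w(1)$. So it suffices to prove: if $w$ is lsc on $[0,1]$, continuous on $(0,1)$, and $w''+c\,w\le0$ on $(0,1)$ distributionally (with $c=\theta_0^2\theta$), then $w\ge v$ where $v$ solves $v''+cv=0$ with matching endpoint data — and when $c\ge \pi^2$ or the relevant $\sin_c$-comparison degenerates, $\sigma_\theta^{(\cdot)}=\infty$ and the inequality is vacuous, so we may assume we are in the regime $\theta_0\sqrt{c^+}<\pi$ where $v$ is well defined and positive.

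The key steps, in order: (i) record the affine-change-of-variables computation showing $w-cw$-type concavity passes from $u$; concretely, with $G(s,t)$ the Green function of $(a,b)$ and $U(s)=\int G(s,r)u(r)\,dr$ so $U''=-u$, one checks directly that $s\mapsto U((1-\tfrac{\cdot}{\theta_0})\ldots)$ rescales correctly, or more cleanly just reprove the $1$D statement on $(0,1)$ from scratch since the structure is identical. (ii) Prove the endpoint comparison on $(0,1)$: let $h:=w-v$. Formally $h$ is lsc on $[0,1]$, $h''+ch\le0$ on $(0,1)$, and $\liminf_{t\to0^+}h(t)\ge0$, $\liminf_{t\to1^-}h(t)\ge0$. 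One wants $h\ge0$. In the regime $0<c<\pi^2$, multiply the situation by the first eigenfunction $\varphi>0$ of $-\varphi''=\pi^2\varphi$ on $(0,1)$ (i.e. $\varphi(t)=\sin\pi t$) — actually, since $c<\pi^2$, the operator $-\frac{d^2}{dt^2}-c$ is a positive operator on $H^1_0(0,1)$, hence satisfies the maximum principle: a distributional supersolution (which $h$ is, up to the subtlety that $h$ is only lsc, not $H^1$) with nonnegative boundary liminf is nonnegative. Make this rigorous by the standard barrier argument: for $\varepsilon>0$ the function $h+\varepsilon\psi$, with $\psi$ the positive solution of $-\psi''-c\psi=1$ (which exists and is positive when $c<\pi^2$), is a strict distributional supersolution, cannot attain a nonpositive interior minimum, and has positive boundary liminf, hence is $\ge0$; let $\varepsilon\downarrow0$. (iii) For $c\le0$ the argument is easier and classical (concavity-type comparison), and $c=0$ recovers linear interpolation. (iv) Unwind: $h\ge0$ on $[0,1]$ is precisely \eqref{kuconcavity1} for this $\gamma$; since $\gamma$ was arbitrary, done.

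The main obstacle is the low regularity: $u$ (hence $w$, $h$) is only lower semicontinuous on the closed interval and continuous in the interior, with the differential inequality holding only in the distributional "$u-\theta U$ concave" sense, so one cannot directly invoke a classical maximum principle or test against $H^1_0$ functions. The clean fix is to work with the integrated formulation throughout: concavity of $u-\theta U$ is a pointwise statement about a continuous (on $(a,b)$) function, and adding the barrier $\varepsilon\psi$ preserves concavity-after-subtracting; then the finite-dimensional fact that a concave function on an open interval whose value dominates its secant at points approaching both endpoints must dominate the secant everywhere does the job. Handling the boundary behaviour — passing from "$\liminf$ at the endpoints is $\ge0$" together with interior concavity to the endpoint-secant bound — uses lower semicontinuity at $a$ and $b$ and is where a little care is needed; everything else is bookkeeping with the explicit $\sigma_\theta^{(t)}$ coefficients, which are by construction the solution operator of $v''+\theta_0^2\theta v=0$ after the affine rescaling of the interval.
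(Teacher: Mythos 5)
The paper gives no proof of this lemma --- it is stated as ``Lemma 2.17 in \cite{kakest}'' --- so your argument stands on its own. Your reduction is the correct, standard one: pull back along the affine geodesic to get $w''+cw\le 0$ distributionally on $(0,1)$ with $c=\theta_0^2\theta$, subtract the ODE solution $v$ with matching endpoint data, and show $h:=w-v\ge 0$ from a one-dimensional maximum principle for $-\partial_t^2-c$ with $c<\pi^2$. But the decisive step --- that $h+\varepsilon\psi$ ``cannot attain a nonpositive interior minimum'' --- is a \emph{local} maximum-principle argument, and it fails exactly in the regime $0<c<\pi^2$ where you invoke it: at an interior minimum $t^*$ one has (formally) $g''(t^*)\ge0$ and hence $g''(t^*)+cg(t^*)\ge cg(t^*)$, which for $c>0$ and $g(t^*)\le0$ is itself $\le0$ and contradicts nothing against $g''+cg\le-\varepsilon$. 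The interior-minimum test is a $c\le0$ tool. The maximum principle you actually want is the global one (positivity of the Green function of $-\partial_t^2-c$ on $(0,1)$ for $c<\pi^2$), and at this low regularity the clean way to exploit it is a substitution reducing to the concave case rather than a barrier. Take $\psi>0$ on $[0,1]$ solving $\psi''+c\psi=0$ (exists iff $c<\pi^2$, e.g.\ $\psi(t)=\cos(\sqrt c(t-\tfrac12))$); set $\eta:=h/\psi$ and check, for $\chi\ge0$ in $C_c^\infty(0,1)$, that
\[
\bigl\langle(\psi^2\eta')',\chi\bigr\rangle=\bigl\langle h,(\psi\chi)''+c\,\psi\chi\bigr\rangle\le0,
\]
since $\psi\chi\ge0$ is an admissible test function. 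Thus $\psi^2\eta'$ is nonincreasing, i.e.\ $\eta$ is concave in the variable $\tau=\int\psi^{-2}\,dt$; together with continuity on $(0,1)$, lower semicontinuity at $\{0,1\}$, and $\eta(0)=\eta(1)=0$, the $c=0$ argument you already give yields $\eta\ge0$ and hence $h\ge0$.

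Two smaller points. The non-degeneracy threshold should be $\sqrt{c^+}<\pi$ (equivalently $c<\pi^2$, equivalently $|\dot\gamma|<\pi_\theta$), not $\theta_0\sqrt{c^+}<\pi$. And when the comparison degenerates with $u$ positive at the endpoints of $\gamma$, the right-hand side of \eqref{kuconcavity1} is $+\infty$ while the left is finite, so the inequality is \emph{not} vacuous; rather, Sturm comparison shows that $u''+\theta u\le0$ with $u>0$ cannot hold on an interval of length $\ge\pi_\theta$, so the hypotheses exclude this case. (The lemma should be read with $u>0$ as in the surrounding paragraph of the paper; for sign-changing $u$ the stated conclusion can genuinely fail in the degenerate case.)
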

\begin{remark}
Let $u:[a,b]\rightarrow [0,\infty)$ be a function. It is easy to check that the metric measure space $([a,b], ud\mathcal L^1)$ satisfies $CD(K,N)$ if and only if $u^{\frac{1}{N-1}}$ satisfies \eqref{kuconcavity1} with $\kappa= \frac{K}{N-1}$. 
\end{remark}
\begin{lemma}\label{lem:local}
If $u$ satisfies \eqref{kuconcavity1} for every constant speed geodesic $\gamma:[0,1]\rightarrow [a,b]$ of length less than $\theta<b-a$, then $u$ satisfies \eqref{kuconcavity1}.
\end{lemma}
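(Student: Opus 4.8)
The plan is to upgrade the inequality \eqref{kuconcavity1} from short geodesics to all geodesics in $[a,b]$ by a subdivision argument, using the equivalent formulation in terms of the sign of $u'' + \theta u$. First I would invoke the discussion preceding the lemma: if $u$ satisfies \eqref{kuconcavity1} for \emph{every} constant speed geodesic of length $<\theta$, then one wants to conclude $u'' + \theta u \le 0$ on $(a,b)$ in the distributional sense, i.e.\ that $u - \theta U$ is concave, where $U(t) = \int_a^b g(s,t)\, u(s)\, ds$ with $g$ the Green function. The point is that concavity is a \emph{local} property: $u - \theta U$ is concave on $(a,b)$ if and only if it is concave on a neighborhood of each point of $(a,b)$. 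For a fixed $r \in (a,b)$, choose $\epsilon > 0$ small enough that $2\epsilon < \theta$ and $[r-\epsilon, r+\epsilon] \subset (a,b)$; then every geodesic with image in $[r-\epsilon, r+\epsilon]$ has length at most $2\epsilon < \theta$, so \eqref{kuconcavity1} holds for all such geodesics, and by the implication recalled just before Lemma \ref{lem:local} (the paragraph containing ``If $u$ satisfies \eqref{kuconcavity1} for every constant speed geodesic $\gamma:[0,1]\rightarrow [a,b]$, then $u''+\theta u\le 0$''), applied with $[r-\epsilon, r+\epsilon]$ in place of $[a,b]$, we get $u'' + \theta u \le 0$ on $(r-\epsilon, r+\epsilon)$, hence $u - \theta U_r$ is concave there, where $U_r$ is the corresponding local double integral. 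Since changing the interval of integration changes $U$ by an affine function on the overlap (two solutions of $W'' = -u$ differ by an affine function), concavity of $u - \theta U$ on $(r-\epsilon, r+\epsilon)$ follows as well. As $r$ was arbitrary, $u - \theta U$ is concave on all of $(a,b)$, i.e.\ $u'' + \theta u \le 0$ on $(a,b)$.

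Having established the differential inequality on the full interval, I would then apply the preceding lemma (Lemma 2.17 of \cite{kakest}, stated just above): since $u$ is lower semi-continuous and continuous on $(a,b)$ — which follows from \eqref{kuconcavity1} for short geodesics, again a local statement — and satisfies $u'' + \theta u \le 0$ on $(a,b)$, it satisfies \eqref{kuconcavity1} for \emph{every} constant speed geodesic $\gamma : [0,1] \to [a,b]$, including those of length $\ge \theta$. This is exactly the conclusion.

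One technical point to handle carefully is the behavior at the endpoints $a$ and $b$: the differential inequality is only asserted on the open interval $(a,b)$, and \eqref{kuconcavity1} for a geodesic whose endpoints are $a$ or $b$ must be recovered by a limiting argument using lower semi-continuity of $u$ at the endpoints, approximating $\gamma$ by geodesics with interior endpoints. This is the only place where the hypothesis that $u$ is lower semi-continuous (not merely continuous on the open interval) is actually used, and it is the step I would expect to require the most care; everything else is a routine localization. An alternative, essentially equivalent, route avoids the Green-function language entirely: directly subdivide a long geodesic $\gamma : [0,1] \to [a,b]$ into finitely many pieces each of length $< \theta$, apply \eqref{kuconcavity1} on each piece, and chain the resulting inequalities together using the cocycle/concatenation identities for the coefficients $\sigma^{(t)}_\theta$ — but the Green-function argument is cleaner and matches the formalism already set up in the excerpt.
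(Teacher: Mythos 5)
The paper states Lemma \ref{lem:local} without a proof, so there is nothing to compare against; but your argument is correct and is exactly the argument that the machinery of the preceding paragraphs is set up to support. The core observations — that the implication ``\eqref{kuconcavity1} for all geodesics $\Rightarrow$ $u-\theta U$ concave'' applies verbatim on each short subinterval $[r-\epsilon,r+\epsilon]$; that changing the reference interval changes $U$ only by an affine function, so local concavity of $u-\theta U$ is well defined; that concavity of a function on an open interval is a local property; and that the preceding Lemma (Lemma~2.17 of \cite{kakest}) then converts the global inequality $u''+\theta u\le 0$ on $(a,b)$ back into \eqref{kuconcavity1} for \emph{all} geodesics in $[a,b]$ — are all sound and fit together as you describe.

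One small point worth cleaning up: the ``technical point at the endpoints'' you flag is already absorbed into Lemma~2.17 of \cite{kakest}, whose conclusion covers geodesics with endpoints $a$ or $b$ provided $u$ is lower semi-continuous on $[a,b]$ and continuous on $(a,b)$. That regularity follows from \eqref{kuconcavity1} for short geodesics by the same reasoning the paper uses in the paragraph preceding the lemma (which only ever tests short geodesics), so you do not need a separate limiting argument at the endpoints — just record that the hypotheses of Lemma~2.17 of \cite{kakest} are met. Your alternative ``subdivide and chain the $\sigma_\theta^{(t)}$ coefficients'' route can also be made to work, but it is genuinely more delicate than it may look (the trigonometric interpolants do not chain by a one-line inequality, and one typically needs a maximum-principle style argument); the Green-function route you actually carried out is both cleaner and the one consistent with the paper's setup.
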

\begin{lemma}\label{importantlemma}
Let  $u: (a,b)\rightarrow \R$ be continuous and $c\in (a,b)$ such that $u''\leq - ku$ on $(a,c)\cup (c,b)$.
Then $u''\leq -k u$ on $(a,b)$ if 
$
\frac{d}{dr^-} u (c)\geq \frac{d}{dr^+} u (c).
$
\end{lemma}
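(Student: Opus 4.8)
The plan is to reduce the claim to a standard one-variable fact about concave functions. Since $f:=-ku$ is continuous on $(a,b)$ (because $u$ is), I fix once and for all an $F\in C^2(a,b)$ with $F''=-ku$ on $(a,b)$; any other admissible choice of $F$ differs from this one by an affine function, and adding an affine function changes neither concavity nor the one-sided derivatives, so the choice is harmless. Set $v:=u-F$. By hypothesis $v$ is continuous on $(a,b)$ and concave on each of the subintervals $(a,c)$ and $(c,b)$. Moreover $F$ is differentiable at $c$, so $\frac{d}{dr^{\pm}}v(c)=\frac{d}{dr^{\pm}}u(c)-F'(c)$, and therefore the hypothesis $\frac{d}{dr^-}u(c)\ge\frac{d}{dr^+}u(c)$ is equivalent to $\frac{d}{dr^-}v(c)\ge\frac{d}{dr^+}v(c)$. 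Thus it suffices to show: a function $v$ that is continuous on $(a,b)$, concave on $(a,c)$ and on $(c,b)$, and satisfies $\frac{d}{dr^-}v(c)\ge\frac{d}{dr^+}v(c)$ is concave on all of $(a,b)$; by the definition of $(-ku)$-concavity this gives $u''\le -ku$ on $(a,b)$.

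For the latter, I would first observe that when the derivative inequality holds both one-sided derivatives at $c$ are in fact finite: by the recalled properties of concave functions $\frac{d}{dr^-}v(c)\in\R\cup\{-\infty\}$ while $\frac{d}{dr^+}v(c)\in\R\cup\{+\infty\}$, and the inequality between them rules out both infinite values. Pick any $s\in[\frac{d}{dr^+}v(c),\frac{d}{dr^-}v(c)]$ and set $\ell(r):=v(c)+s(r-c)$. Using monotonicity of the difference quotients of the concave functions $v|_{(a,c]}$ and $v|_{[c,b)}$ one checks that $v\le\ell$ on $(a,b)$ with equality at $c$; this \emph{supporting line at $c$} is the heart of the argument.

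Finally, to verify the chord inequality for arbitrary $x<y$ in $(a,b)$ and $t\in(0,1)$: if $x$ and $y$ lie on the same side of $c$, concavity of $v$ on that side, extended to the closed subinterval by continuity, gives it directly. If $x<c<y$, then writing $c$ as a convex combination of $x$ and $y$, the bounds $v(x)\le\ell(x)$ and $v(y)\le\ell(y)$ together with $\ell$ being affine show that the chord from $(x,v(x))$ to $(y,v(y))$ lies below the value $v(c)$ at $c$; combining this with concavity of $v$ on $[x,c]$ and on $[c,y]$, again by affine interpolation, yields the chord inequality at every point of $[x,y]$. Hence $v$, and therefore $u$, is $(-ku)$-concave on $(a,b)$.

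The routine parts — continuity, the algebra of convex combinations, and the behaviour of difference quotients — are all standard; the only genuinely load-bearing step is the construction of the supporting line $\ell$ at $c$ out of the derivative condition, and the way the two one-sided chord estimates are glued across $c$. I expect that gluing to be the main (and essentially the only) point requiring care.
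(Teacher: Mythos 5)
Your proof is correct. The paper itself states Lemma 2.18 without proof (treating it as a standard fact about semiconcave functions), so there is no in-paper argument to compare against. Your reduction to genuine concavity via the global antiderivative $F$ with $F''=-ku$ is the natural move, and the two load-bearing points — that the derivative inequality forces both one-sided derivatives of $v$ at $c$ to be finite (because the left one lives in $\R\cup\{-\infty\}$ and the right one in $\R\cup\{+\infty\}$), and that the resulting supporting line $\ell$ at $c$ lets you glue the chord inequality across $c$ — are both handled correctly. The only small point worth recording explicitly is that concavity of $v$ on the open intervals $(a,c)$ and $(c,b)$ does extend to the half-closed intervals $(a,c]$ and $[c,b)$ by continuity of $v$ at $c$, which is what makes "concave on $[x,c]$ and on $[c,y]$" legitimate; you use this implicitly and it is true. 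In short: correct, and essentially the textbook proof one would expect here.
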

\subsection{Alexandrov spaces}\label{intro:Alex}
Let $\md_\kappa:[0,\infty)\rightarrow [0,\infty)$ be the solution of 
\begin{align*}
v''+ \kappa v=1 \ \ \ v(0)=0 \ \ \&\ \ v'(0)=0.
\end{align*}
\begin{definition}
A complete geodesic metric space $(X,d)$  has curvature bounded  below by $\kappa\in\mathbb{R}$ in the sense of Alexandrov  if for any  constant speed geodesic $\gamma : [0,\L(\gamma)]\to X$ {and any  point $y\in X$} 
such that 
$
d(y,\gamma(0))+\L(\gamma)+d(\gamma(l),y)<2\pi_\kappa
$
it holds that 
\begin{align*}
\left[\md_{\kappa}(d_y\circ\gamma)\right]''+\md_{\kappa}(d_y\circ\gamma)\leq 1.
\end{align*}
If $(X,d)$ has curvature bounded from below for some $\kappa\in \R$ in the sense of Alexandrov, we say that $(X,d)$ is an Alexandrov space.
\end{definition}

\begin{remark}
Alexandrov spaces are nonbranching: if $\gamma, \tilde \gamma: (0,1) \rightarrow X$ are geodesics with $\gamma|_{(0,\epsilon)}= \tilde \gamma|_{(0, \epsilon)}$ for some $\epsilon>0$, then $\gamma=\tilde \gamma$. 
\end{remark}
\begin{remark}
 The set of $n$-dimensional  Alexandrov spaces with curvature bounded from below by $\kappa$ is closed w.r.t. Gromov-Hausdorff convergence.
\end{remark}
\begin{theorem}[\cite{palvs}]\label{th:petrunincd}
Let $X$ be an $n$-dimensional Alexandrov space with curvature bounded  below by $\kappa$. 
Then $(X,d_X,\mathcal H^n_X)$ satisfies $CD(\kappa(n-1),n)$.
\end{theorem}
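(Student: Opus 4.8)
The plan is to verify Definition~\ref{def:cd} directly by analysing $L^2$-optimal transport on $X$ and reducing the displacement convexity of the $n$-Renyi entropy to a pointwise Jacobian estimate along optimal geodesics; that estimate will in turn be an infinitesimal incarnation of the Alexandrov comparison $[\md_\kappa(d_y\circ\gamma)]''+\md_\kappa(d_y\circ\gamma)\le1$. First I would reduce, by a standard truncation and $W_2$-approximation argument (lower semicontinuity of $S_n$ together with stability of Wasserstein geodesics), to pairs $\mu_0=\rho_0\mathcal H^n,\ \mu_1=\rho_1\mathcal H^n\in\mathcal P^2_b(X,\mathcal H^n)$ with bounded densities and bounded supports. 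For such a pair, since $X$ is \emph{non-branching} and, by Bertrand's theorem on optimal maps in Alexandrov spaces, $W_2$-optimal plans out of $\mathcal H^n$-absolutely continuous measures are unique and induced by a Borel map $T=T_1$, the associated Kantorovich potential is locally Lipschitz and semiconcave, and the displacement interpolant is $\mu_t=(T_t)_\#\mu_0$, where $T_t(x)$ is the point at parameter $t$ on the $\mu_0$-a.e.\ unique geodesic $\gamma_x$ from $x$ to $T(x)$.

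Next I would invoke the structure theory of $n$-dimensional Alexandrov spaces: the regular set has full $\mathcal H^n$-measure and carries Perelman's DC-coordinate charts together with an $L^2_{\mathrm{loc}}$ Riemannian metric (Otsu--Shioya), in which $\mathcal H^n$ is mutually absolutely continuous with Lebesgue measure. Using this I would prove that $\mu_t\ll\mathcal H^n$ for all $t\in[0,1]$ and establish a change-of-variables (Monge--Amp\`{e}re) identity $\rho_0(x)=\rho_t(T_t(x))\,\mathcal J_t(x)$ for $\mu_0$-a.e.\ $x$, where $\mathcal J_t(x)$ is the Jacobian of $T_t$ at $x$ computed in the DC structure at the (a.e.\ regular) image points; non-branching and semiconcavity of the potential guarantee that $t\mapsto\gamma_x(t)$ is a genuine geodesic for a.e.\ $x$ and that $t\mapsto\mathcal J_t(x)$ is measurable and strictly positive.

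The heart of the argument is then the pointwise inequality, valid for $\mu_0$-a.e.\ $x$ with $\theta:=d(x,T(x))$,
\begin{align*}
\mathcal J_t(x)^{1/n}\ \ge\ \tau^{(1-t)}_{\kappa(n-1),\,n}(\theta)+\tau^{(t)}_{\kappa(n-1),\,n}(\theta)\,\mathcal J_1(x)^{1/n}.
\end{align*}
To obtain it I would split an infinitesimal transported volume at $x$ into its component along $\dot\gamma_x$ and an $(n-1)$-dimensional transversal component: the radial direction is responsible for the exponent $1/n$ and the $t$-weights, while the transversal $(n-1)$-plane is governed by the spreading of the equidistant hypersurfaces to $\gamma_x$, and it is exactly here that curvature $\ge\kappa$ enters --- the Alexandrov comparison yields a Laplacian-type comparison along $\gamma_x$ (cf.\ Kuwae--Shioya) forcing the $(n-1)$st root $w(t)$ of the transversal Jacobian to satisfy $w''+\kappa w\le0$, equivalently $w(t)\ge\sigma^{(1-t)}_{\kappa(n-1),\,n-1}(\theta)w(0)+\sigma^{(t)}_{\kappa(n-1),\,n-1}(\theta)w(1)$ by the one-dimensional concavity lemma recalled above. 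Combining the two factors via the weighted arithmetic--geometric mean inequality produces precisely the coefficients $\tau=t^{1/n}[\sigma_{\kappa(n-1),\,n-1}]^{1-1/n}$. Finally, inserting the change-of-variables identity into $S_n(\mu_t|\mathcal H^n)=-\int\mathcal J_t(x)^{1/n}\rho_0(x)^{-1/n}\,d\mu_0(x)$, applying the displayed estimate, and recognising $\mathcal J_1(x)^{1/n}\rho_0(x)^{-1/n}=\rho_1(T(x))^{-1/n}$, gives \eqref{ineq:cd} with $K=\kappa(n-1)$, $N=n$ and optimal coupling $\pi=(\mathrm{id},T)_\#\mu_0$.

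The main obstacle is the combination of the absolute-continuity statement for the interpolants and the pointwise Jacobian estimate: both require a robust calculus for the transport map and its linearisation on a space with no a priori smooth structure, so the real work goes into the Alexandrov machinery --- a.e.\ twice-differentiability of semiconcave functions, the second-variation formula, and the meaning of a ``transversal Jacobi field'' via quasigeodesics --- which is exactly what makes the synthetic curvature bound strong enough to imply the infinitesimal comparison. An alternative route, closer to the methods used elsewhere in this paper, is to bypass the transport map and argue by $1D$-localisation: disintegrate $\mathcal H^n$ along the transport rays of a Kantorovich potential for the $W_1$-problem and show each conditional density is $(\kappa(n-1),n)$-concave; the one-dimensional bookkeeping is then handled by Lemmas~\ref{importantlemma} and~\ref{lem:local}, but the curvature input --- concavity of the conditional densities --- is again supplied by the same Alexandrov comparison, so the core difficulty is unchanged.
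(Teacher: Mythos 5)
The paper does not prove Theorem~\ref{th:petrunincd} at all: it is imported as a black box from Petrunin's work \cite{palvs} and used in Section~\ref{sec: Second application} to conclude that the glued space with $\vol_g$ satisfies $CD(\underline\kappa(n-1),n)$ before running the $1D$-localisation machinery. So there is no in-paper argument to compare against.

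That said, your outline is a faithful sketch of the standard optimal-transport proof, which is essentially Petrunin's strategy: reduce to a Jacobian estimate along the $\mu_0$-a.e.\ unique optimal geodesics (uniqueness via Bertrand), split the Jacobian into a radial factor (affine in $t$, supplying the $t^{1/n}$ weight) and an $(n-1)$-dimensional transversal factor whose geometric-mean root $w$ satisfies $w''+\kappa w\le 0$ (supplying $\sigma_{\kappa(n-1),n-1}^{1-1/n}$), then recombine by the weighted AM--GM/H\"older step to produce precisely the modified distortion coefficients $\tau$; the Monge--Amp\`ere identity converts this into the entropy inequality \eqref{ineq:cd}. The ingredients you list (Otsu--Shioya regular structure and a.e.\ Riemannian metric, Alexandrov's a.e.\ second-order differentiability of semiconcave potentials, non-branching) are the right ones, and your suggested alternative via $1D$-localisation is also viable in principle though historically it postdates Petrunin. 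The one caveat worth stressing is proportion: what you flag as ``the main obstacle'' --- giving a rigorous meaning to the transversal Jacobian and extracting $w''+\kappa w\le0$ from the synthetic comparison $[\md_\kappa(d_y\circ\gamma)]''+\md_\kappa(d_y\circ\gamma)\le1$ in the absence of a smooth exponential map --- is not a deferrable technicality but is, in Petrunin's paper, the entire content of the argument (handled there via the gradient-exponential map and a careful second-variation analysis on the regular set). As a roadmap your proposal is correct; as a proof it stops exactly where the real difficulty begins.
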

\begin{remark}
Let $(X,d)$ be an $n$-dimensional  Alexandrov space. There exists a  unique Gromov-Hausdorff  blow up tangent cone $C_pX$ at $p\in X$. The blow up tangent cone at $p$ coincides with the metric cone $C(\Sigma_pX)$ over   the space of directions $\Sigma_pX$ at $p$ equipped with the angle metric \cite{bbi}. 
Let $\gamma:[0,l]\rightarrow X$ be a unit speed geodesic such that  $\gamma(t_0)=p$ for $t_0\in [0,l]$. Then, there exists a unique $v\in \Sigma_p X$ such that $\gamma(t_i)\rightarrow v$ for any sequence $t_i\rightarrow t_0$.
The limit $\gamma(t_i)\rightarrow v$  is  understood in  the following sense: For every $i\in \N$ there exists an $\epsilon_i$-Gromov-Hausdorff-approximation $\psi_i: (t_i^{-1}B_{2t_i}(p),p)\rightarrow B_{2}(o)\subset (C_pX,o)$. Then it is required that $\psi_i(\gamma(t_i))\rightarrow v$. We say $\gamma(t_i)$ converges to $v$ in GH sense. For an overview on Alexandrov spaces with lower curvature bounds we refer to \cite{petsem}.
\end{remark}

\subsubsection{ Semiconcave functions on Alexandrov spaces}
Let $X$ be an Alexandrov  space with curvature bounded from below by $\underline \kappa$ and let $\Omega\subset X$ be open. A function $f:\Omega\rightarrow \R$ is $\lambda$-concave if $f$ is { locally Lipschitz} and $f\circ\gamma:[0,\mbox{L}(\gamma)]\rightarrow \R$ is $\lambda$-concave for every constant speed geodesic $\gamma:[0, \mbox{L}(\gamma)]\rightarrow \Omega$. $f$ is semiconcave if for every $p\in \Omega$ there exists an open neighborhood $U$ of $p$ such that $f|_U$ is { $\lambda$-concave for some  $\lambda\in \R$.}  We adopt the following terminology from \cite{albi}. A function $f: \Omega \rightarrow [0,\infty)$ that satisfies \eqref{kuconcavity1} along any constant speed geodesic $\gamma:[0,1]\rightarrow \Omega$ is called {\it $\mathcal F \theta$-concave} in $\Omega$.
\begin{fact}
Let $(M, g_M)$ be a  Riemannian manifold  with boundary and let $\Omega \subset M\backslash \partial M$ be open.  A smooth function $f$ is $\mathcal F\theta$-concave in $\Omega$ if and only if $\nabla^2 f+ \theta f g_{\sM}\leq 0$ on $\Omega$.
\end{fact}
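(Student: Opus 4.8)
The plan is to reduce both implications, along an arbitrary constant speed geodesic, to the one-dimensional statements already recorded in the Preliminaries. Fix a constant speed geodesic $\gamma:[0,1]\to\Omega$, put $\ell:=|\dot\gamma|$ and $u:=f\circ\gamma$. Since $\Omega\subset M\backslash\partial M$ and $\gamma$ is an ordinary Riemannian geodesic there, $\nabla_{\dot\gamma}\dot\gamma=0$, so $u$ is smooth with $u''(t)=\nabla^2 f|_{\gamma(t)}(\dot\gamma(t),\dot\gamma(t))$, while $g_M(\dot\gamma(t),\dot\gamma(t))\equiv\ell^2$.

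For the ``if'' direction, assume $\nabla^2 f+\theta f g_M\le0$ on $\Omega$. Then $u''(t)\le-\theta\ell^2 u(t)$ on $(0,1)$. Rescaling the parameter, set $\tilde u(s):=u(s/\ell)$ on $[0,\ell]$; then $\tilde u$ is continuous and $\tilde u''(s)=\ell^{-2}u''(s/\ell)\le-\theta\,\tilde u(s)$ on $(0,\ell)$. Applying the one-dimensional comparison lemma recalled above (Lemma~2.17 of \cite{kakest}) on $[0,\ell]$ to the constant speed geodesic $t\mapsto\ell t$ (of speed $\ell$) yields $u(t)=\tilde u(\ell t)\ge\sigma_\theta^{(1-t)}(\ell)\,u(0)+\sigma_\theta^{(t)}(\ell)\,u(1)$, which is precisely \eqref{kuconcavity1} for $\gamma$ since $\ell=|\dot\gamma|$. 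As $\gamma$ was arbitrary, $f$ is $\mathcal F\theta$-concave in $\Omega$.

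For the ``only if'' direction, fix $p\in\Omega$ and a unit vector $v\in T_pM$; the goal is $\nabla^2 f|_p(v,v)+\theta f(p)\le0$. For $\epsilon>0$ small enough that the image stays in $\Omega$, let $\gamma_\epsilon:[0,1]\to\Omega$ be the constant speed geodesic $\gamma_\epsilon(t)=\exp_p\bigl((2t-1)\epsilon v\bigr)$, so $\gamma_\epsilon(\tfrac12)=p$ and $|\dot\gamma_\epsilon|=2\epsilon$. Evaluating \eqref{kuconcavity1} at $t=\tfrac12$ gives
\[
f(p)\ \ge\ \sigma_\theta^{(1/2)}(2\epsilon)\,\bigl(f(\gamma_\epsilon(0))+f(\gamma_\epsilon(1))\bigr).
\]
A second-order Taylor expansion along $\gamma_\epsilon$ gives $f(\gamma_\epsilon(0))+f(\gamma_\epsilon(1))=2f(p)+\epsilon^2\nabla^2 f|_p(v,v)+o(\epsilon^2)$ (the first-order terms cancel), and $\sin_\theta(s)=s-\tfrac{\theta}{6}s^3+o(s^3)$ gives $\sigma_\theta^{(1/2)}(2\epsilon)=\sin_\theta(\epsilon)/\sin_\theta(2\epsilon)=\tfrac12+\tfrac{\theta}{4}\epsilon^2+o(\epsilon^2)$. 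Substituting, the inequality becomes $0\ge\tfrac{\epsilon^2}{2}\bigl(\nabla^2 f|_p(v,v)+\theta f(p)\bigr)+o(\epsilon^2)$; dividing by $\epsilon^2$ and letting $\epsilon\to0$ proves the claim, and since $v$ is arbitrary, $\nabla^2 f+\theta f g_M\le0$ on $\Omega$.

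There is no single hard step here; the points needing care are the two asymptotic expansions in the ``only if'' direction — especially the $\epsilon^2$-coefficient of $\sigma_\theta^{(1/2)}(2\epsilon)$, which must come out so that $\theta f(p)$ appears with exactly the right weight — and, in the ``if'' direction, the rescaling to $[0,\ell]$ so that the distortion coefficient ends up being $\sigma_\theta^{(\cdot)}(|\dot\gamma|)$ rather than $\sigma_\theta^{(\cdot)}(1)$. Degenerate configurations (geodesics of length $\ge\pi_\theta$, or points where $f$ vanishes) need no separate treatment, since they are handled by the conventions on the distortion coefficients already present in the comparison lemma.
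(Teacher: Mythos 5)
The paper states this Fact without proof, so there is no in-text argument to compare against; on its own merits, your proof is correct and complete. The reduction of both directions to the one-dimensional equivalence recorded just above the Fact (the distributional inequality $u''+\theta u\le 0$ vs.\ the $\sigma_\theta$-concavity inequality \eqref{kuconcavity1}, with Lemma~2.17 of \cite{kakest} supplying the nontrivial implication) is exactly the natural route: smoothness of $f$ and $\nabla_{\dot\gamma}\dot\gamma=0$ on $\Omega\subset M\setminus\partial M$ convert the Hessian bound into the pointwise ODE inequality along geodesics, your rescaling $\tilde u(s)=u(s/\ell)$ correctly turns $\ell^2$ into the argument $|\dot\gamma|$ of the distortion coefficient, and the second-order expansions in the converse — $f(\gamma_\epsilon(0))+f(\gamma_\epsilon(1))=2f(p)+\epsilon^2\nabla^2 f|_p(v,v)+o(\epsilon^2)$ and $\sigma_\theta^{(1/2)}(2\epsilon)=\tfrac12+\tfrac{\theta}{4}\epsilon^2+o(\epsilon^2)$, the latter valid for all $\theta\in\R$ since $\sin_\theta(s)=s-\tfrac{\theta}{6}s^3+o(s^3)$ uniformly in sign — combine to give $\nabla^2 f|_p(v,v)+\theta f(p)\le 0$ with the right constant. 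No gaps.
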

Let $X$ be an Alexandrov space and {$f: X \rightarrow \R$  $\lambda$-concave.} The limit
\begin{align*}
\lim_{r_i\downarrow 0} \frac{f\circ \gamma(r_i) - f\circ \gamma(0)}{r_i}=\frac{d}{dr^+}( f\circ \gamma)(0)=: df_p(\dot{\gamma})=:df(\dot{\gamma}) 
\in \R
\end{align*}
exists for every constant speed geodesic $\gamma:[0,\L(\gamma)]\rightarrow X$ with $\gamma(0)=p$,  for every $r_i\downarrow0$, and for every $p\in X$. The map
$df_p: C_pX\rightarrow \R$  is called the differential of $f$ in $p\in X$. If $(X,d)= (M,d_M)$ is a Riemannian manifold where $d_M$ is the induced distance and $f$ is smooth,  this coincides with the classical notion of the differential.

\subsection{$1D$ localisation}\label{subsec:1Dlocalisation}
Let $(X,d,\m)$ be a locally compact metric measure space that is (essentially) nonbranching. We  assume that $\supp\m =X$.
Let $u:X\rightarrow \mathbb{R}$ be a $1$-Lipschitz function. Then 
\begin{align*}
\Gamma_u:=\{(x,y)\in X\times X : u(x)-u(y)=d(x,y)\}
\end{align*}
is a  $d$-cyclically monotone set, and one defines $$\Gamma_u^{-1}=\{(x,y)\in X\times X: (y,x)\in \Gamma_u\}.$$
If $\gamma:[a,b]\rightarrow X$ is a geodesic such that $(\gamma(a),\gamma(b))\in \Gamma_u$ then $(\gamma(t),\gamma(s))\in \Gamma_u$ for $a<t\leq s<b$.  It is therefore natural to consider the set $G$ of unit speed transport geodesics $\gamma:[a,b]\rightarrow \R$ such that $(\gamma(t),\gamma(s))\in \Gamma_u$ for $a\leq t\leq s\leq b$. 
The union $\Gamma_u\cup \Gamma_u^{-1}$ defines a relation $R_u$ on $X\times X$, and $R_u$ induces a {\it transport set with endpoints} 
$$\mathcal T_u:= P_1(R_u\backslash \{(x,y):x=y\})\subset X$$ 
where $P_1(x,y)=x$. For $x\in \T_u$ one defines $\Gamma_u(x):=\{y\in X:(x,y)\in \Gamma_u\}, $
and similarly $\Gamma_u^{-1}(x)$ as well as $R_u(x)=\Gamma_u(x)\cup \Gamma_u^{-1}(x)$. Since $u$ is $1$-Lipschitz, 
 $\Gamma_u, \Gamma_u^{-1}, R_u\subset X^2$ and  $\Gamma_u(x), \Gamma_u^{-1}(x), R_u(x)\subset X$ are closed.

The {\it transport set without branching} $\mathcal T^b_u$ associated to $u$ is  defined as 
\begin{align*}
\mathcal T^b_u=\left\{ x\in \mathcal T_u: \forall  y,z\in R_u(x) \Rightarrow  (y,z)\in R_u\right\}.
\end{align*}
The sets $\T_u$ and $\T_u\backslash \T^b_u$ are $\sigma$-compact, and $\T^b_u$ and $R_u\cap \T^b_u\times \T^b_u$ are Borel sets.
In \cite{cavom} Cavalletti shows that $R_u$ restricted to $\T^b_u\times \T^b_u$ is an equivalence relation. 
Hence, from $R_u$ one obtains a partition of $\mathcal T^b_u$ into a disjoint family of equivalence classes $\{X_{\gamma}\}_{\gamma\in Q}$. Moreover, $\T^b_u$ is also $\sigma$-compact. 

Every $X_{\gamma}$ is isometric to some interval  $I_\gamma\subset\mathbb{R}$ via a distance preserving map $\gamma:I_\gamma \rightarrow X_{\gamma}$.  The map $\gamma:I_\gamma\rightarrow X$ extends to a geodesic  on $\overline I_{\gamma}$ that is arclength parametrized and that we also denote $\gamma$. Let $(a_\gamma,b_\gamma)$ be the interior of $\overline I_\gamma$ where $a_\alpha, \beta \in \R\cup\{\pm\}$. 

The set of equivalence classes $Q$ has a measurable structure such that $\mathfrak Q: \T^b_u\rightarrow Q$ is a measurable map. 
We set $\mathfrak q:= \mathfrak Q_{\#}\m|_{\mathcal T^b_u}$. 

{A measurable section of the equivalence relation $R$ on $\T^b_u$ is a measurable map $s: \T^b_u\rightarrow \T^b_u$ such that $R_u(s(x))=R_u(x)$ and $(x,y)\in R_u$ implies $s(x)=s(y)$. In \cite[Proposition 5.2]{cavom} Cavalletti shows there exists a measurable section $s$ of $R$ on $\T^b_u$. Therefore, one can identify the measurable space $Q$ with the measurable set $\{x\in \T^b_u: x=s(x)\}\subset X$ equipped with the induced measurable structure. Hence $\mathfrak q$ is a Borel measure on $X$. By inner regularity there exists a $\sigma$-compact set $Q'\subset X$ such that $\mathfrak q(Q\backslash Q')=0$ and in the following we will replace $Q$ with $Q'$ without further notice. {$\gamma\in Q$ is parametrized such that $\gamma(0)=s(x)$.} The functions $\gamma\in Q \mapsto a_{\gamma}, b_{\gamma}\in \R$ are measurable. We set $Q^l=\{l\geq |a_\gamma|, |b_\gamma|\geq \frac{1}{l}\}$.}
%
%Now, we also assume that $(X,d,\m)$ is a $CD^*(K,N)$ space for $K\in \R$ and $N\geq 1$. 
%The following lemma is .
\begin{lemma}[Theorem 3.4 in \cite{cavmon}]\label{somelemma}
Let $(X,d,\m)$ be an essentially nonbranching $CD^*(K,N)$ space for $K\in \R$ and $N\in (1,\infty)$ with $\supp \m=X$ and $\m(X)<\infty$.
Then, for any $1$-Lipschitz function $u:X\rightarrow \R$, it holds $\m(\T_u\backslash \T^b_u)=0$.
\end{lemma}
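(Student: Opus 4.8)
The plan is to reduce the statement to the vanishing of the $\m$-measure of the branching parts of the transport set, and then to play the quantitative structure of transport rays off against essential non-branching and the curvature-dimension hypothesis.

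\emph{Step 1: reduction to one-sided branching.} Call $x\in\T_u$ a \emph{forward branching point} if there exist $y,z\in\Gamma_u(x)$ with $(y,z)\notin R_u$, and a \emph{backward branching point} if the analogous statement holds with $\Gamma_u^{-1}(x)$ in place of $\Gamma_u(x)$; denote the corresponding sets by $A_+$ and $A_-$. A short case analysis using only that $u$ is $1$-Lipschitz shows that a pair $y\in\Gamma_u(x)$, $z\in\Gamma_u^{-1}(x)$ is automatically in $R_u$: since $u(z)-u(x)=d(z,x)$ and $u(x)-u(y)=d(x,y)$, the triangle inequality and $1$-Lipschitz continuity force $d(z,y)=d(z,x)+d(x,y)=u(z)-u(y)$, i.e. $(z,y)\in\Gamma_u$. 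Hence $\T_u\setminus\T^b_u=A_+\cup A_-$, and by the structure theory of Cavalletti recalled above these sets are $\sigma$-compact. Since replacing $u$ by $-u$ interchanges $A_+$ and $A_-$ without changing $\m$, it suffices to prove $\m(A_+)=0$.

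\emph{Step 2: quantitative localisation of $A_+$.} Concretely, $x\in A_+$ means that two unit-speed transport rays issue from $x$ in the direction of decreasing $u$ and eventually separate. Stratify $A_+$ into countably many $\sigma$-compact pieces on each of which the branching is uniform: there are fixed constants $a,b>0$ such that on a given piece $E$ every $x$ carries a backward transport ray of length at least $a$ and two forward transport rays whose endpoints at arclength $a$ are at distance at least $b$; refining the stratification further, assume also $\diam E<a/100$. Suppose, towards a contradiction, that some such $E$ has $\m(E)>0$. A measurable-selection argument produces Borel maps $\omega,\alpha,\beta\colon E\to X$ sending $x$ to the backward point and to the two forward points at arclength $a$; then for each $x\in E$ the concatenations $\omega(x)\to x\to\alpha(x)$ and $\omega(x)\to x\to\beta(x)$ are transport geodesics of length $2a$ that coincide on the segment $[\omega(x),x]$, yet satisfy $d(\alpha(x),\beta(x))\ge b$.

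\emph{Step 3: optimal plans and contradiction.} Push the probability measure $\m(E)^{-1}\m|_E$ forward along the two families of geodesics from Step 2 to obtain dynamical plans $\Pi_\alpha,\Pi_\beta$ supported on transport geodesics. Their endpoint marginals $(\omega,\alpha)_\#(\m|_E)$ and $(\omega,\beta)_\#(\m|_E)$ are concentrated on $\Gamma_u$, and because $\diam E$ is small compared to the fixed ray length $a$ one checks that $d$-cyclical monotonicity of $\Gamma_u$ upgrades on these supports to $d^2$-cyclical monotonicity, so that $\Pi_\alpha,\Pi_\beta$ are $W_2$-optimal. Moreover their marginals are $\m$-absolutely continuous: this is precisely where $CD^*(K,N)$ is used, through the measure contraction property $\mathrm{MCP}(K,N)$ and the ensuing absolute continuity of the conditional one-dimensional measures along transport rays, which prevents the ray maps $\omega,\alpha,\beta$ from concentrating mass — the degenerate sub-case of branching points admitting only a trivial backward ray being absorbed into the $\m$-null set of initial points of transport rays by the same input. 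Essential non-branching then forces $\Pi_\alpha$ and $\Pi_\beta$ each to be concentrated on families of geodesics no two of which branch; but both plans are built from geodesics agreeing on $[\omega(x),x]$ and separating afterwards, with $d(\alpha(x),\beta(x))\ge b>0$ on the positive-measure set $E$. This contradiction gives $\m(A_+)=0$, hence $\m(\T_u\setminus\T^b_u)=0$.

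\emph{Main obstacle.} The content is entirely in Step 3: a dynamical plan supported on the transport geodesics of $\Gamma_u$ is \emph{not} $W_2$-optimal in general — this is the whole reason for the quantitative localisation in Step 2 — and the absolute continuity of the relevant marginals, at which essential non-branching can be invoked, is not automatic and is the point where the curvature-dimension hypothesis genuinely enters.
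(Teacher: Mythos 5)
First, note that the paper does not prove this lemma: it is stated as a citation of Theorem~3.4 in Cavalletti--Mondino, so there is no ``paper's proof'' to compare against. You are reconstructing an external result. Your Step~1 is correct (the observation that a backward/forward pair is automatically $R_u$-related, so $\T_u\setminus\T^b_u=A_+\cup A_-$), and the general strategy of stratifying $A_+$ and deriving a contradiction from optimality plus essential non-branching is indeed the one used in the literature.

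However, the crucial final step of Step~3 as written does not close. Essential non-branching, applied to $\Pi_\alpha$ alone, tells you that the geodesics $\{\gamma^\alpha_x\}_{x\in E}$ in the support of $\Pi_\alpha$ do not branch \emph{among themselves}; applied to $\Pi_\beta$ alone, the same for $\{\gamma^\beta_x\}_{x\in E}$. Neither application says anything about a geodesic of $\Pi_\alpha$ branching off a geodesic of $\Pi_\beta$ --- and that cross-plan branching is precisely what your construction produces. There is no contradiction from invoking essential non-branching on the two plans separately. What is actually needed is to form a \emph{single} optimal dynamical plan containing both continuations, e.g.\ $\Pi:=\tfrac12\Pi_\alpha+\tfrac12\Pi_\beta$ (from $(\omega)_\#\bar\m|_E$ to $\tfrac12(\alpha)_\#\bar\m|_E+\tfrac12(\beta)_\#\bar\m|_E$), show \emph{that} plan is optimal with $\m$-absolutely continuous marginals, and then apply essential non-branching to $\Pi$: its support contains, for a.e.\ $x\in E$, the pair $\gamma^\alpha_x,\gamma^\beta_x$ which agree on $[0,1/2]$ and separate, giving the desired contradiction. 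This extra combination step carries real content: establishing $d^2$-cyclical monotonicity of the \emph{union} of the two supports (not just each one) is where the quantitative localisation has to be used, and your sketch only asserts it. In the same vein, the absolute continuity of $(\omega)_\#\bar\m|_E$ and $(\alpha)_\#\bar\m|_E$ is not a consequence of the one-dimensional conditional measures being absolutely continuous on their rays; it requires a genuine measure-distortion estimate for the ray-shift maps (this is the role of MCP), and the way you phrase it elides that step. So the architecture is right, but as it stands the argument has a gap at the exact point it claims to be done.
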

For $\mathfrak q$-a.e. $\gamma\in Q$ it was proved in \cite{cavmil} (Theorem 7.10) that 
\begin{align*}
R_u(x)=\overline{X_\gamma}\supset X_\gamma \supset (R_u(x))^{\circ} \ \ \forall x\in \mathfrak Q^{-1}(\gamma).
\end{align*}
where $(R_u(x))^\circ$ denotes the relative interior of the closed set $R_u(x)$. 
%
%A reference for the following statement is  .
\begin{theorem}[{\cite[Theorem 3.3]{cav-mon-lapl-18}}]
Let $(X,d,\m)$ be
a complete, proper, geodesic metric measure space 
with $\supp\m =X$ and $\m$ $\sigma$-finite. Let $u:X\rightarrow \mathbb{R}$ be a $1$-Lipschitz function,  let $(X_{\gamma})_{\gamma\in Q}$ be the induced partition of $\mathcal T^b_u$ via $R_u$, and let $\mathfrak Q: \T^b_u\rightarrow Q$ be the induced quotient map as above.
Then, there exists a unique strongly consistent disintegration $\{\m_{\gamma}\}_{\gamma\in Q}$ of $\m|_{\T^b_u}$ w.r.t. $\mathfrak Q$. 
\end{theorem}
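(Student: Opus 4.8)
The plan is to deduce the statement from the classical disintegration theorem for $\sigma$-finite measures, applied to the measure space $(\T^b_u,\mathcal B(\T^b_u),\m|_{\T^b_u})$ together with the quotient map $\mathfrak Q$. Most of the structural input has already been assembled above: since $X$ is complete and proper it is Polish and $\sigma$-compact; $\T^b_u$ and $R_u\cap(\T^b_u\times\T^b_u)$ are Borel subsets of $X$ and $X^2$ with $\T^b_u$ moreover $\sigma$-compact; $R_u$ restricted to $\T^b_u\times\T^b_u$ is an equivalence relation by \cite{cavom}; and $\m|_{\T^b_u}$ is $\sigma$-finite because $\m$ is. So the only genuine issue is to equip the quotient $Q$ with a measurable structure for which the abstract theorem applies in its \emph{strong} form.

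To do this I would first pin down $Q$ as an (essentially) standard Borel space. Using the Borel section $s\co\T^b_u\rightarrow\T^b_u$ of Cavalletti \cite[Proposition 5.2]{cavom}, identify $Q$ with $Q_0:=\{x\in\T^b_u:x=s(x)\}$, equipped with the trace Borel $\sigma$-algebra inherited from $X$; after the inner-regularity reduction recalled above one may take $Q_0$ (hence $Q$) to be $\sigma$-compact, so this $\sigma$-algebra is countably generated and separates points, and under this identification $\mathfrak Q=s$ is Borel measurable. Then $\mathfrak q:=\mathfrak Q_{\#}(\m|_{\T^b_u})$ is a well-defined $\sigma$-finite Borel measure on $Q$. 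Next I would invoke the disintegration theorem: for a $\sigma$-finite measure on a countably generated measurable space pushed forward by a measurable map into a measurable space whose $\sigma$-algebra is countably generated and separates points, there exists a disintegration $\{\m_\gamma\}_{\gamma\in Q}$ of $\m|_{\T^b_u}$ consistent with $\mathfrak Q$ and $\mathfrak q$ — meaning $\gamma\mapsto\m_\gamma(B)$ is $\mathfrak q$-measurable and $\m|_{\T^b_u}(B\cap\mathfrak Q^{-1}(C))=\int_C\m_\gamma(B)\,d\mathfrak q(\gamma)$ for Borel $B\subset X$, $C\subset Q$ — and it is $\mathfrak q$-essentially unique. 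The existence of the Borel section $s$ is exactly the hypothesis that upgrades consistency to \emph{strong} consistency: one checks, along the usual lines (composing with $s$, or appealing directly to the standard-Borel version of the statement), that the $\m_\gamma$ can be taken concentrated on $\mathfrak Q^{-1}(\gamma)=X_\gamma$ for $\mathfrak q$-a.e.\ $\gamma$; essential uniqueness of the consistent disintegration then forces agreement $\mathfrak q$-a.e.\ for any two strongly consistent choices, giving the asserted uniqueness.

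I expect the main (and in fact the only nontrivial) obstacle to be the measurable-selection step — producing the Borel section and the attendant standard Borel structure on $Q$ — but this is precisely \cite[Proposition 5.2]{cavom} as recalled above, so in the present write-up the argument reduces to verifying the hypotheses of the cited disintegration theorem and bookkeeping. A secondary point to be careful about is that properness of $X$, hence local compactness and $\sigma$-compactness of $\T_u$ and $\T^b_u$, is what makes the inner-regularity reduction $Q\rightsquigarrow Q'$ legitimate, so that $\mathfrak q$ is an honest Borel measure on a $\sigma$-compact set rather than merely a measure on an analytic set; once this is in place the countable generation and point-separation needed for uniqueness and strong consistency are automatic.
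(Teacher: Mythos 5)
This statement is imported verbatim from \cite[Theorem 3.3]{cav-mon-lapl-18}; the present paper gives no proof of it, so there is no in-paper argument to compare against. Your sketch correctly reproduces the strategy of the cited source: reduce to the abstract disintegration theorem on (essentially) standard Borel spaces, with Cavalletti's measurable-section result \cite[Proposition 5.2]{cavom} playing the double role of equipping the quotient $Q$ with a countably generated, point-separating Borel structure (so the abstract theorem applies) and of upgrading a consistent disintegration to a \emph{strongly} consistent one; essential uniqueness then comes for free from uniqueness in the abstract theorem. The one small point you gloss over is that the abstract disintegration theorem is usually formulated for finite or probability measures, so a localization or normalization step is needed to treat the $\sigma$-finite case before the uniqueness claim can be stated $\mathfrak q$-a.e.; this is routine but is spelled out in the cited reference. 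With that caveat, your argument is the same as the one in \cite{cav-mon-lapl-18}.
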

Define the ray map 
\begin{align*}
\mathfrak G:  \mathcal V\subset Q\times \R\rightarrow X\ \mbox{ via }\
\mbox{graph}(g)=\{ (\gamma, t,x) \in Q\times \R\times X:  \gamma(t)=x\}.  
\end{align*}
\begin{itemize}
\item 
The map $\mathfrak G$ is Borel measurable,  and 
by definition $\mathcal V= \mathfrak G^{-1}(\T^b_u)$. 
\item $\mathfrak G(\gamma,\cdot)=\gamma: (a_\gamma, b_\gamma)\rightarrow X$ is a geodesic, 
\item $\mathfrak G:\mathcal V\rightarrow  \T^b_u$ is bijective and its inverse is given by $\mathfrak G^{-1}(x)=( \mathfrak Q(x), \pm d(x,\mathfrak Q(x)))$.
\end{itemize}
\begin{theorem}[{\cite[Theorem 3.5]{cav-mon-lapl-18}}]\label{th:1dlocalisation}
Let $(X,d,\m)$ be an essentially nonbranching $CD^*(K,N)$  space with $\supp\m=X$, $\m$ $\sigma$-finite, $K\in \R$ and $N\in (1,\infty)$.

Then, for any $1$-Lipschitz function $u:X\rightarrow \R$ there exists a disintegration $\{\m_{\gamma}\}_{\gamma\in Q}$ of $\m$ that is strongly consistent with $R^b_u$. 

Moreover, for $\mathfrak q$-a.e. $\gamma\in Q$, $\m_{\gamma}$ is a Radon measure with $\m_{\gamma}=h_{\alpha}\mathcal{H}^1|_{X_{\alpha}}$ and $(X_{\gamma}, d_{X_\gamma}, \m_{\gamma})$ verifies the condition $CD(K,N)$.

More precisely, for $\mathfrak q$-a.e. $\gamma\in Q$ it holds that
\begin{align}\label{kuconcave}
h_{\gamma}(c_t)^{\frac{1}{N-1}}\geq \sigma_{K/N-1}^{(1-t)}(|\dot c|)h_{\gamma}(\gamma_0)^{\frac{1}{N-1}}+\sigma_{K/N-1}^{(t)}(|\dot c|)h_{\gamma}(\gamma_1)^{\frac{1}{N-1}}
\end{align}
for every geodesic $c:[0,1]\rightarrow (a_\gamma,b_\gamma)$.
\end{theorem}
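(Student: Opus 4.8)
\textbf{Proof plan for Theorem \ref{th:1dlocalisation}.}

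The plan is to invoke the $1D$-localisation machinery that has been set up in the preceding subsection and combine it with the structural results about one-dimensional $CD$ spaces recalled earlier. First I would appeal to Lemma \ref{somelemma} to pass from $\T_u$ to the transport set without branching $\T^b_u$: since $\m(\T_u\setminus\T^b_u)=0$ and, by the definition of $\Gamma_u$, no transport ray can start or end outside $\T_u$, we have $\m(X\setminus\T^b_u)=0$ as well (the complement of $\T_u$ consists only of points $x$ with $R_u(x)=\{x\}$, on which $u$ is a local "plateau"; a standard argument shows this set is $\m$-null under $CD^*(K,N)$, or alternatively one restricts attention to $\T^b_u$ and extends the disintegration trivially on the null remainder). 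This reduces the problem to disintegrating $\m|_{\T^b_u}$.

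Next I would apply the disintegration theorem (the unnamed \cite[Theorem 3.3]{cav-mon-lapl-18} quoted just above) to obtain the strongly consistent family $\{\m_\gamma\}_{\gamma\in Q}$ with $\m_\gamma$ concentrated on $X_\gamma$. The substantive step is then to prove the two claims: (i) each $\m_\gamma$ is absolutely continuous with respect to $\mathcal H^1|_{X_\gamma}$ with density $h_\gamma$ satisfying the $(K,N)$-concavity inequality \eqref{kuconcave}, and equivalently (ii) $(X_\gamma,d_{X_\gamma},\m_\gamma)$ satisfies $CD(K,N)$. By the Remark following Lemma \ref{importantlemma}, claims (i) and (ii) are equivalent, so it suffices to establish the density estimate. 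For this I would use the essential non-branching hypothesis together with a localisation of the $CD^*(K,N)$ inequality along the transport rays: one builds, for a geodesic $c\colon[0,1]\to(a_\gamma,b_\gamma)$, test plans supported on thin tubes of rays around $\gamma$, pushes the entropy inequality \eqref{ineq:cd} (in its $CD^*$ form) forward, and lets the tube shrink. The key point making this work is that under essential non-branching the transport rays genuinely foliate $\T^b_u$ (this is the content of the relation $R_u$ being an equivalence relation on $\T^b_u$, due to Cavalletti), so the conditional measures behave like genuine one-dimensional slices and the one-dimensional distortion coefficients $\sigma^{(t)}_{K/(N-1)}$ emerge from the $N$-dimensional ones.

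I expect the main obstacle to be the absolute-continuity and density-regularity part: showing that each $\m_\gamma$ has no singular part with respect to $\mathcal H^1$ and that the density is (a version of) a $(K,N)$-concave function in the precise sense of \eqref{kuconcave}. This is where one must be careful about the measurable dependence on $\gamma$, the behaviour at the endpoints $a_\gamma,b_\gamma$ (the rays may be half-open or of infinite length, hence the stratification $Q=\bigcup_l Q^l$ introduced above), and the passage from the integrated inequality over tubes to the pointwise inequality along individual rays — this requires a Lebesgue-differentiation / disintegration-uniqueness argument combined with the $\sigma$-compactness of $\T^b_u$ and $Q$. Everything else — the measurable structure on $Q$, the ray map $\mathfrak G$, strong consistency — has already been recorded in the excerpt and can be cited directly, so the proof is mostly an assembly of \cite{cavom, cavmil, cav-mon-lapl-18} with the one-dimensional facts from Subsection 2.3.
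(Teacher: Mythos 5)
This theorem is not proved in the paper; it is quoted verbatim as \cite[Theorem 3.5]{cav-mon-lapl-18} (Cavalletti--Mondino) and used as a black box, just like the disintegration theorem and Lemma~\ref{somelemma} quoted immediately before it. So there is no ``paper's own proof'' to compare your sketch against --- the honest answer here would have been simply to cite the source, as the paper does.

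That said, your outline does contain a concrete error worth flagging. You claim that $\m(X\setminus\T^b_u)=0$, arguing that the complement of $\T_u$ consists of ``plateau'' points which a standard $CD^*(K,N)$ argument forces to be null. This is false in general: $X\setminus\T_u$ is exactly the set of points $x$ with $R_u(x)=\{x\}$, and this set can have positive (indeed full) $\m$-measure --- take $u\equiv 0$, for which $\T_u=\emptyset$. What Lemma~\ref{somelemma} gives is only $\m(\T_u\setminus\T^b_u)=0$, i.e.\ the branching part of the transport set is negligible; it says nothing about $X\setminus\T_u$. Correspondingly the cited theorem disintegrates $\m|_{\T^b_u}$, not $\m$ itself --- the phrase ``disintegration of $\m$'' in the statement of Theorem~\ref{th:1dlocalisation} is a slight shorthand, which the preceding Theorem~2.18 makes explicit by writing $\m|_{\T^b_u}$. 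The rest of your sketch (using essential non-branching so that $R_u$ is an equivalence relation on $\T^b_u$, building thin transport tubes, pushing the entropy inequality down to one dimension, then handling measurability in $\gamma$ and the endpoint stratification $Q=\bigcup_l Q^l$) is a reasonable rendering of the Cavalletti--Mondino strategy, but the ``full measure'' reduction at the start should be dropped.
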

\begin{remark} The property
\eqref{kuconcave} yields that $h_{\gamma}$ is locally Lipschitz continuous on $(a_\gamma,b_\gamma)$ \cite[Section 4]{cavmon}, and that $h_{\gamma}:\R \rightarrow (0,\infty)$ satisfies
\begin{align*}
\frac{d^2}{dr^2}h_{\gamma}^{\frac{1}{N-1}}+ \frac{K}{N-1}h_{\gamma}^{\frac{1}{N-1}}\leq 0 \mbox{ on $(a_\gamma, b_\gamma)$} \mbox{ in distributional sense.}
\end{align*}
\end{remark}
%
%x\in \Sigma\mapsto T(x)=\exp_x(t\nabla \tilde u(x))
%$$
%and $\Sigma\subset M$ is a codimension 1 submanifold that contains $p$ and 
\subsubsection{Characterization of curvature bounds via $1D$ localisation}
\begin{definition}
Let $(X,d_X,\m_X)$ be an essentially nonbranching mm space with $\m(X)=1$, let $K\in \R$ and $N\geq 1$, and let $u:X\rightarrow \R$ be  $1$-Lipschitz. We say that $(X,d_X,\m_X)$ satisfies the condition $CD_u^1(K,N)$ if there exist subsets $X_\gamma\subset X$, $\gamma\in Q$, such that
\begin{itemize}
\item[(i)] There exists a disintegration of $\m_{\mathcal T_u}$ on $(X_\gamma)_{\gamma\in Q}$: 
\begin{align*}
\m|_{\mathcal T_u}=\int_{X_\gamma} \m_\gamma d\mathfrak q(\gamma)\ \mbox{ with }\m_\gamma(X_\gamma)=1\ \mbox{for $\mathfrak q$-a.e. }\gamma\in Q.
\end{align*}
\item[(ii)] For $\mathfrak q$-a.e. $\gamma\in Q$ the set $X_\gamma$ is the image $\mbox{Im}(\gamma)$ of a geodesic $\gamma:I_\gamma\rightarrow X$ for an interval $I_\gamma\subset \R$.
\item[(iii)] The metric measure space $(X_\gamma, d_{X_\gamma},\m_\gamma)$ satisfies the condition $CD(K,N)$.
\end{itemize}

A metric measure space $(X,d_X,\m_X)$ satisfies the condition $CD^1_{Lip}(K,N)$ if it satisfies the condition $CD^1_u(K,N)$ for every $1$-Lipschitz function $u$.
\end{definition}
\begin{remark}
From the previous subsection it is immediately clear that the condition $CD(K,N)$ implies the condition $CD^1_{Lip}(K,N)$. 
\end{remark}
\begin{theorem}[\cite{cavmil}]\label{thm:cavmil}
If an essentially nonbranching metric measure space $(X,d_X,\m_X)$ with finite measure satisfies the condition $CD_{Lip}^1(K,N)$ for $K\in \R$ and $N\in [1,\infty)$ then it satisfies the condition $CD(K,N)$.
\end{theorem}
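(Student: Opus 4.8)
The plan is to derive the displacement-convexity inequality \eqref{ineq:cd} for an arbitrary pair $\mu_0,\mu_1\in\mathcal P_b^2(X,\m)$ from the one-dimensional hypothesis by a needle decomposition adapted to the transport. First I would reduce the problem: by inner regularity of $\m$, truncation of the densities, lower semicontinuity of $S_N(\cdot|\m)$ and stability of $W_2$-geodesics under weak convergence, it suffices to verify \eqref{ineq:cd} for $\mu_i=\rho_i\m$ with $\rho_i$ bounded and $\supp\mu_0\cup\supp\mu_1$ of finite diameter. Since the geodesic and the coupling I will produce are built needle-by-needle and the one-dimensional inequality holds for all parameters $t$, it is enough to exhibit one admissible pair (geodesic, optimal coupling) for each such $\mu_0,\mu_1$.

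For such a pair I would like a $1$-Lipschitz function $u$ whose transport rays $\{X_\gamma\}_{\gamma\in Q}$ \emph{carry} an optimal $W_2$-transport of $\mu_0$ to $\mu_1$: an optimal plan disintegrating as $\pi=\int_Q q(\gamma)\,\pi_\gamma\,d\mathfrak q(\gamma)$ with each $\pi_\gamma$ an optimal coupling of probability measures $\mu_0^\gamma,\mu_1^\gamma$ supported on the single geodesic $X_\gamma$, and $\int_Q q\,d\mathfrak q=1$. The hypothesis $CD^1_u(K,N)$ then supplies the disintegration $\m|_{\mathcal T_u}=\int_Q\m_\gamma\,d\mathfrak q(\gamma)$ with $\m_\gamma=h_\gamma\mathcal H^1|_{X_\gamma}$ and with each $(X_\gamma,d_{X_\gamma},\m_\gamma)$ satisfying $CD(K,N)$; essential non-branching guarantees that the needle structure is well defined and that distinct needles do not interfere when the pieces are glued. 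The genuine difficulty is that an $L^2$-optimal transport is in general \emph{not} concentrated on the rays of any single $1$-Lipschitz function, so this localization has to be obtained indirectly — e.g.\ by reducing further to a midpoint inequality between small balls $B_r(x_0),B_r(x_1)$, taking $u$ to be a normalization of $x\mapsto d(x,x_1)-d(x,x_0)$ along a geodesic from $x_0$ to $x_1$ (for which the transport is $o(r)$-close to being ray-supported) and then letting $r\to0$ via a compactness argument, or by approximating the transport by finitely many $W_\infty$-small steps each of which is nearly localizable. Carrying this out, together with the measurability of $\gamma\mapsto(\mu_0^\gamma,\mu_1^\gamma,\pi_\gamma,h_\gamma)$, is the main obstacle I would expect.

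Granting the localization, the conclusion is essentially bookkeeping. On $\mathfrak q$-a.e.\ needle $(X_\gamma,d_{X_\gamma},\m_\gamma)$ is $CD(K,N)$, so the one-dimensional displacement interpolation $(\mu_t^\gamma)_{t\in[0,1]}$ of $\mu_0^\gamma$ and $\mu_1^\gamma$ along $X_\gamma$ satisfies $S_N(\mu_t^\gamma|\m_\gamma)\le-\int\bigl[\tau_{K,N}^{(1-t)}(\theta)\rho_0^\gamma(x)^{-1/N}+\tau_{K,N}^{(t)}(\theta)\rho_1^\gamma(y)^{-1/N}\bigr]d\pi_\gamma(x,y)$ with $\theta=d(x,y)$, the ambient and intrinsic distances coinciding along a ray. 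Setting $\mu_t:=\int_Q q(\gamma)\,\mu_t^\gamma\,d\mathfrak q(\gamma)$ yields a $W_2$-geodesic in $X$; since no mass is transported between needles one gets $\rho_i(x)=q(\mathfrak Q(x))\,\rho_i^{\mathfrak Q(x)}(x)$ for $\m$-a.e.\ $x$, hence $S_N(\mu_t|\m)=\int_Q q(\gamma)^{1-\frac1N}S_N(\mu_t^\gamma|\m_\gamma)\,d\mathfrak q(\gamma)$, and the right-hand side of \eqref{ineq:cd} for $\pi=\int_Q q(\gamma)\pi_\gamma\,d\mathfrak q(\gamma)$ disintegrates with the same weight $q^{1-1/N}$. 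Integrating the fiberwise inequality against $q(\gamma)^{1-1/N}d\mathfrak q(\gamma)$ gives \eqref{ineq:cd} for this pair, and undoing the reductions of the first step finishes the proof. The part I expect to be hard is precisely the localization step: reconciling the $L^2$ geometry of optimal transport with the rigidly $1$-Lipschitz geometry of needles.
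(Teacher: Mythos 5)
You correctly identify both the overall architecture — decompose the transport into needles, invoke $CD(K,N)$ on each one-dimensional fiber, and reassemble via the entropy identity $S_N(\mu_t|\m)=\int_Q q(\gamma)^{1-1/N}S_N(\mu_t^\gamma|\m_\gamma)\,d\mathfrak q(\gamma)$ — and the genuine obstacle: an $L^2$-optimal transport is in general not carried by the transport rays of any single $1$-Lipschitz function, so the hypothesis $CD^1_{Lip}(K,N)$ does not directly localize the displacement interpolation. But you do not actually overcome that obstacle, and the alternatives you sketch (a midpoint inequality between small balls with $u$ a normalized difference of distance functions and $r\to 0$, or chaining $W_\infty$-small steps) are not the route taken in \cite{cavmil} and would each require an argument at least as involved as what you are trying to avoid. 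Note also that the paper here does not reprove this theorem; it cites Cavalletti--Milman, whose proof is a major piece of work of its own.

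The actual mechanism in \cite{cavmil} is different in an essential way. One does \emph{not} localize the $W_2$-geodesic along the rays of a single $1$-Lipschitz function chosen at time $0$. Instead, one uses the Kantorovich potential $\varphi$ for the $(\mu_0,\mu_1)$-transport and the Hopf--Lax evolved potentials $\varphi_s$ at intermediate times $s\in(0,1)$: after a suitable time-reparametrization these \emph{are} $1$-Lipschitz, and the optimal transport geodesics do lie on their transport rays. Applying $CD^1_{Lip}$ to these interpolated potentials produces, for each $s$, a needle decomposition adapted to the transport at that time. The inequality \eqref{ineq:cd} is then obtained not by gluing one needle decomposition, but by combining first-, second-, and third-order information on the densities $h_\gamma$ across varying $s$, with a bootstrap that first upgrades to a measure-contraction property, then to a local curvature-dimension condition, and finally to $CD(K,N)$ via essential non-branching. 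Your proposal omits the use of Kantorovich duality at intermediate times (the key that makes localization possible at all) and the entire bootstrap/third-order analysis, which is where most of the work lies. The ``bookkeeping'' paragraph is essentially fine modulo measurability, but it rests on a localization you have not produced, so as written the proof has a genuine gap at exactly the point you flag as hard.
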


\section{Glued spaces}\label{sec:gluing}
We   consider Riemannian manifolds $(M_i,g_i)$, $i=0,1$, with boundary. 
\begin{assumption}\label{ass:1_0} 
Let $Y_i$ be a  closed subset in $\partial M_i$. Assume
there exists a Riemannian isometry $\mathcal I:   Y_0\rightarrow  Y_1$.  Moreover, we assume
\begin{enumerate}
\item
% $M_i$ is compact. In particular
  $\exists \overline \kappa, \underline \kappa \in \R$ such that $\underline \kappa\leq \mbox{sec}_{M_i}\leq \overline \kappa$.
\smallskip
\item
$Y_i$ is a compact, connected component of $\partial M_i$, $i=0,1$.
\end{enumerate}
\end{assumption}
\begin{assumption}\label{ass:1}
 We assume that
\begin{align*}\Pi:=(\Pi_0+\Pi_1)|_{Y_0\simeq Y_1}\geq 0 
, \ i=0,1,\end{align*}
where $\Pi_i= \Pi_{\partial M_i}$ is the 2nd fundamental form w.r.t. the inward pointing normal vector. 
\end{assumption}
\smallskip

The topological glued space of $ M_0$ and $M_1$ along $Y_0$ and $Y_1$ w.r.t. $\mathcal I$ is defined as the quotient space $( M_0\dot \cup  M_1)/R=:M$ of the disjoint union $M_0\dot \cup M_1$ where 
\begin{align*}
x\sim_ R y \ \ \mbox{ if and only if }\ \ \begin{cases} \mathcal I(x)=y& \mbox{ with }x\in Y_0, y\in Y_1, \\
x=y& \mbox{  otherwise.}
\end{cases}
\end{align*}
The  space ${\color{white} \hat  .} M$ is a topological manifold with boundary. Moreover, ${\color{white} \hat .} M\backslash \partial {\color{white} \hat .} M$  can be equipped with a differentiable structure   such that $ M_0\backslash \partial M_0$ and $ M_1\backslash \partial M_1$ are smooth submanifolds \cite{hirschdt}.

The equivalence relation $R$ induces a pseudo distance on ${\color{white} \hat .} M$ as follows.  First, one defines an extended metric $d$ on $M_0\dot \cup M_1$ via $d(x,y)=d_{g_i}(x,y)$ if $x,y\in M_i$ for some $i\in \{0,1\}$ and $d(x,y)=\infty$ otherwise. $d_{g_i}$ is the intrinsic distance w.r.t. $g_i$.
Then,  $\forall x,y\in M_0\dot \cup M_1$ we define
\begin{align*}
\hat d(x,y) = \inf  \sum_{i=0}^{k-1} d(p_{i},q_i)
\end{align*}
where the infimum runs over all collection of tuples 
 $\{(p_i,q_{i})\}_{i=0,\dots ,k-1}\subset {\color{white} \hat .} M\times {\color{white} \hat .} M$ for some $k\in N$
 such that $q_i\sim_R p_{i+1}$, for all $i=0,\dots, k-1$ and $x=p_0, y=q_k$. One can show that $x\sim_R y$ if and only if $\hat d(x,y)=0$ (note that for a gluing construction over metric spaces the latter is not true in general).

The {\it metric} glued space between $M_0$ and $M_1$ w.r.t. $\mathcal I:Y_0 \rightarrow Y_1$ is the metric space defined as 
$$M_0\cup_{\mathcal I} M_1:=(M, {\color{white} \hat .}{d}).$$

One can construct the distance $d$ also as follows. Since $\mathcal I$ is an isometry, there exists a continuous Riemannian metric $g$ on ${\color{white} \hat .} M$ such that $g|_{M_i}= g_i$. A length structure on ${\color{white} \hat .} M$ is defined via 
$$\L_{g}: \gamma\mapsto \int_a^b |\gamma'(t)|_g dt$$
for curves $\gamma:[a,b]\rightarrow {\color{white} \hat .} M$ such that there exist points $a=t_0\leq \dots \leq t_n=b$ with $\gamma|_{[t_{k-1},t_k]}\in C^1([t_{k-1}, t_k], M_i)$ for $i=0$ or $i=1$ and for all $k\in \{1, \dots, n\}$. 
Then  the  induced intrinsic distance coincides with $d$. 
\begin{remark}
The metric $g$ yields a  Riemannian volume  $\vol_g$ on ${\color{white} \hat .} M$.
\end{remark}
\begin{theorem}[\cite{kosovskiigluing, kosovski}] \label{th:basicglue} Assume $\mbox{sec}_{g_i}\geq \underline\kappa\in \R$. Then there exists a family of  Riemannian metrics $(g^\delta)_{\delta>0}$  on $M$ such that 
\begin{enumerate}
\item $ g^\delta$ coincides with $ g$ outside of $B_\delta( Y_0 \simeq  Y_1)$, 
\smallskip
\item $ g^\delta$ converges uniformly to $ g$ as $\delta \downarrow 0$. 
\smallskip
 \item  $\mbox{Sec}_{g^\delta}\geq \underline\kappa-\epsilon(\delta)$ with $\epsilon(\delta)\rightarrow 0$ as $\delta \downarrow 0$.
\end{enumerate}
 In particular,  if $\Pi_i\geq 0$ on $\partial M_i\backslash Y_i$, then the  metric glued space $M_0\cup_{\mathcal I} M_1$ is an Alexandrov space with curvature bounded from below by $\underline \kappa$.
\end{theorem}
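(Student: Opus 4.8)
The plan is to reproduce Kosovski\u{\i}'s argument---the route that Schlichting's later generalizations also take: I would build the metrics $g^\delta$ by a smooth modification of $g$, small in $C^0$ and supported in a collar of $Y:=Y_0\simeq Y_1$, and then extract the Alexandrov statement from Gromov--Hausdorff stability. First I would fix Fermi coordinates: since $Y_i$ is a compact component of $\partial M_i$ and $\mathcal I$ is a Riemannian isometry, a two-sided collar of $Y$ in $\hat M$ is diffeomorphic to $(-a,a)\times Y$ with $\{0\}\times Y=Y$, and in this chart $g=dt^2+h(t)$, where $t\mapsto h(t)$ is a path of Riemannian metrics on $Y$ that is continuous on $(-a,a)$, smooth on $[0,a)$ and on $(-a,0]$ separately, with $h(0)$ the common metric of $Y_0$ and $Y_1$. (These are the Fermi coordinates of $M_0$ and of $M_1$ about their boundary components, matched along $Y$; the identification is smooth off $\{t=0\}$ and $C^0$ across it.) With the standard conventions $\tfrac{d}{dt}h(0^+)=-2\Pi_0$ and $\tfrac{d}{dt}h(0^-)=+2\Pi_1$, because $\nu_0$ and $\nu_1$ point in opposite $t$-directions; hence the jump of $\tfrac{d}{dt}h$ across $t=0$ equals $-2(\Pi_0+\Pi_1)=-2\Pi\leq 0$ by Assumption \ref{ass:1}: the kink of $h$ at $t=0$ is ``concave'', i.e.\ in the direction favourable to a lower curvature bound, and by compactness of $M_0,M_1$ all curvature data of the two smooth pieces and all tangential derivatives of $h$ stay bounded up to $t=0$.

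Next I would carry out Kosovski\u{\i}'s smoothing: for small $\delta>0$ one constructs a smooth path $h^\delta(t)$ of metrics on $Y$ with $h^\delta=h$ for $|t|\geq\delta$, $h^\delta\geq h$ (so positive definite), and $\lVert h^\delta-h\rVert_{C^0}\to 0$ as $\delta\downarrow 0$, and puts $g^\delta:=dt^2+h^\delta(t)$ on the collar and $g^\delta:=g$ elsewhere; properties (1) and (2) are then immediate. The modification smears the derivative jump $-2\Pi$ of $h$ across an interval of length $\sim\delta$, and the content of the construction is that this can be arranged so that $\mbox{sec}_{g^\delta}\geq\underline\kappa-\epsilon(\delta)$ with $\epsilon(\delta)\to 0$. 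To see the mechanism, recall that for a warped metric $dt^2+h^\delta(t)$ the sectional curvatures split, via the Gauss--Codazzi relations and the Riccati equation for the shape operators $S^\delta_t$ of the slices $\{t=\mathrm{const}\}$, into: curvatures of planes containing $\partial_t$, governed by $-\tfrac12\tfrac{d^2}{dt^2}h^\delta$ plus terms quadratic in $\tfrac{d}{dt}h^\delta$; curvatures of planes tangent to a slice, which are the intrinsic curvatures of $(Y,h^\delta(t))$ corrected by a term quadratic in $S^\delta_t$; and mixed ones. On $|t|\geq\delta$ nothing changes; on $|t|<\delta$ the ``radial'' curvatures gain from the smeared jump a term proportional to $+\Pi$ times a nonnegative bump of total mass $\sim 1$, which only helps---this is exactly where $\Pi\geq 0$ is used---while the tangential and mixed terms have to be kept within $\epsilon(\delta)$ of the one-sided limits of the corresponding curvatures of $M_0$ and $M_1$, which are $\geq\underline\kappa$.

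For the Alexandrov statement, suppose in addition $\Pi_i\geq 0$ on $\partial M_i\setminus Y_i$. For $\delta$ small, $B_\delta(Y)$ is disjoint from $\partial\hat M=(\partial M_0\setminus Y_0)\sqcup(\partial M_1\setminus Y_1)$, so $g^\delta=g$ near $\partial\hat M$, where the second fundamental form of $\partial\hat M$ with respect to $g^\delta$ equals $\Pi_i\geq 0$; together with $\mbox{sec}_{g^\delta}\geq\underline\kappa-\epsilon(\delta)$ and the classical fact that an $n$-dimensional Riemannian manifold with locally convex boundary and sectional curvature $\geq k$ is an Alexandrov space with curvature $\geq k$, the space $(\hat M,d_{g^\delta})$ is Alexandrov with curvature $\geq\underline\kappa-\epsilon(\delta)$. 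Since $g^\delta\to g$ uniformly as tensors, $d_{g^\delta}\to\hat d$ uniformly, so $(\hat M,d_{g^\delta})\to M_0\cup_{\mathcal I}M_1$ in the Gromov--Hausdorff sense; by closedness of the class of $n$-dimensional Alexandrov spaces with a fixed lower curvature bound under Gromov--Hausdorff convergence, $M_0\cup_{\mathcal I}M_1$ has curvature $\geq\underline\kappa-\epsilon$ for every $\epsilon>0$, hence curvature $\geq\underline\kappa$.

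I expect the main obstacle to be the curvature estimate inside the smoothing, in the transition region $|t|<\delta$, specifically for planes tangent to the slices: there $S^\delta_t$ must interpolate between its two one-sided limits at $t=0$, which are the shape operators determined by $\Pi_0$ and by $-\Pi_1$ (their difference being $\Pi\geq 0$), while the tangential curvature correction is a \emph{determinant} of $S^\delta_t$---a non-linear, non-concave function of it---so a careless choice of $h^\delta$ can let this term drop by an amount of order $1$, independent of $\delta$, which would destroy the lower bound. Kosovski\u{\i}'s construction---and Schlichting's, for the other curvature conditions---is designed precisely to prevent this, and carrying out that estimate under the hypothesis $\Pi\geq 0$ is the substantive part of the argument.
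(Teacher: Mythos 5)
Your outline captures the correct strategy and the last paragraph (Alexandrov conclusion from GH stability and convexity of $\partial\hat M$) is accurate and matches what the paper uses. The sign computation for the jump of $\tfrac{d}{dt}h$ across $t=0$, giving $-2\Pi$, is also correct, and you correctly identify why $\Pi\geq 0$ is the right sign.

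However, you have explicitly flagged but not closed the central gap: you do not construct $h^\delta$, and you do not prove the sectional curvature estimate in the transition layer $|t|<\delta$. You even state that this is ``the substantive part of the argument''. In particular the naive proposal --- just mollify $t\mapsto h(t)$ to smear the derivative jump --- does \emph{not} by itself yield $\mathrm{sec}_{g^\delta}\ge\underline\kappa-\epsilon(\delta)$. The problem is exactly the one you name: via the Gauss equation the intrinsic slice curvatures are corrected by terms quadratic in the shape operator, and in the transition region the interpolating shape operator can produce an $O(1)$ drop independent of $\delta$. Kosovski\u{\i}'s construction (as recalled in Subsection \ref{subsec:gdelta}, following Schlichting) resolves this by \emph{not} merely smoothing the derivative jump: the deformed metric is of the form $g_\delta=g_0(\cdot,\mathbf G_\delta\cdot)$ with
$\mathbf G_\delta=\mathbf I+2F_\delta(x^n)\mathbf L-2C\mathcal F_\delta(x^n)\mathbf P^\top$,
where, in addition to the term $2F_\delta\mathbf L$ that redistributes the kink encoded by $\mathbf L$ (the operator of $\Pi=\Pi_0+\Pi_1$), there is an extra tangential contraction $-2C\mathcal F_\delta\mathbf P^\top$ with a carefully chosen profile $\mathcal F_\delta$ and constant $C$. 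This extra term is precisely what absorbs the quadratic-in-shape-operator losses in the tangential directions, so that after the estimate the only potentially large contribution is $-2f_\delta'\,\mathrm{Ric}(\mathcal L)$, which the hypothesis $\Pi\ge 0$ makes nonnegative via \eqref{id:ricci}. Moreover the modification is one-sided (performed only on the $M_0$ side, then mollified), whereas you propose a two-sided collar deformation $dt^2+h^\delta(t)$; the one-sided form is not just cosmetic, since the mollification step in the paper's recalled construction relies on the $W^{2,\infty}_{\mathrm{loc}}$ regularity of the resulting $C^0$ glued metric $g_{(\delta)}$. So the proposal is the right skeleton but misses the extra tangential perturbation term that is the actual mechanism making the curvature estimate work, and without it the curvature estimate you would need in the layer $|t|<\delta$ is simply not available.
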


Let $p\in Y_0 \simeq Y_1$. The tangent spaces $T_pM_0$ and $T_pM_1$ at $p$ w.r.t. $M_0$ and $M_1$ are isometric to the Euclidean halfspace $\mathbb H= \R_{\geq 0}\times \R^{n-1}$ equipped with the inner product $g_0|_p= g_1|_p$. Moreover, $T_pM_0$ and $T_pM_1$  embed isometrically into the blow up tangent cone $C_pM$ of $M_1\cup_{\mathcal I} M_2$ at $p$. Hence $C_pM$ is isometric to $\R^n$ equipped with $g|_p$. Consequently every point in $M$ is an Alexandrov regular point (see \cite{bbi}).

Since the boundary $\partial M_i$ is smooth, the distance function $f_i=d_{\partial M_i}$ is smooth on $B_\epsilon(p)\cap M_i$ for $\epsilon>0$ sufficiently small. In particular, the differential of $f_i: M_i\rightarrow \R$  exists as a blow up limit in the sense that was introduced for Alexandrov spaces:
$$ df_i|_p: T_pM_i \rightarrow \R, \ \ df_i|_p(v) = g_i|_p(\nabla f_i, v)= \lim_{q_k\rightarrow v} \frac{f(q_k)-f(p)}{d_{M_i}(q_k, p)} .$$
We say $v\in C_pM$ is {\it tangent to the boundary} if $df_0|_p(v)\leq 0$ and $df_1|_p(v) \leq 0$. 
\smallskip

In Theorem \ref{th:basicglue} one can replace lower sectional curvature bounds with lower Ricci curvature bounds. This was  proved by Perelman in \cite{perelmanglue}. A proof based on \cite{kosovskiigluing, kosovski} that also applies to other curvature notions was given by Schlichting \cite{sch}. We also refer to the following publications \cite{burdick, burdickthesis, bww} that contain interesting applications. 
\begin{theorem}\label{th:glue2} Assume $\ric_{g_i}\geq K\in \R$. Then the family of Riemannian metrics $(g^\delta)_{\delta>0}$  satisfies  $\ric_{g^\delta}\geq K-\epsilon(\delta)$ with $\epsilon(\delta)\rightarrow 0$ as $\delta \downarrow 0$.
\end{theorem}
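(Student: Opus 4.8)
Since Theorem~\ref{th:glue2} is Perelman's theorem \cite{perelmanglue} --- and Schlichting \cite{sch} gives a proof via Kosovski\u{\i}'s method that simultaneously covers other curvature conditions --- the plan is to reproduce that argument, running the Kosovski\u{\i}--Schlichting smoothing along $Y_0\simeq Y_1$ while tracking the Ricci tensor rather than sectional curvature; the only extra input over the proof of Theorem~\ref{th:basicglue} is Perelman's observation that rounding the corner creates curvature which is \emph{large and positive} in the normal direction, and this absorbs the mixed curvature terms it also produces. Concretely, the metrics $g^\delta$ are those of Theorem~\ref{th:basicglue}: outside $B_\delta(Y_0\simeq Y_1)$ they equal $g$, where $\ric_{g^\delta}=\ric_{g_i}\ge K$, so it suffices to estimate $\ric_{g^\delta}$ on the collar. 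There one uses Fermi coordinates $(t,y)\in(-t_0,t_0)\times Y$ with $M_0=\{t\le 0\}$, $M_1=\{t\ge 0\}$, $g=dt^2+b_t$, with $b_t$ smooth on each half, continuous at $t=0$, and one-sided $t$-derivatives whose jump at $t=0$ equals, up to a positive factor, $\Pi_0+\Pi_1=\Pi$; hypothesis~(1) says this jump is semidefinite of the sign which, after smoothing, is favourable for curvature --- exactly as in the sectional case. Kosovski\u{\i}'s construction replaces $b_t$ on $\{|t|<\delta\}$ by a smooth family $b^\delta_t$, still of warped form $g^\delta=dt^2+b^\delta_t$, agreeing with $b_t$ for $|t|\ge\delta$, with Weingarten maps $W^\delta_t:=\tfrac12(b^\delta_t)^{-1}\partial_t b^\delta_t$ and mean curvatures $H^\delta_t:=\tr W^\delta_t$ obeying uniform bounds $|b^\delta_t|+|(b^\delta_t)^{-1}|+|\ric^{b^\delta_t}|+|W^\delta_t|+|\nabla^Y W^\delta_t|\le C$ together with $\partial_t W^\delta_t\le C\,\mathrm{id}$ and $-\partial_t H^\delta_t\ge c\,\tr\Pi(y)/\delta-C$ on $\{|t|<\delta\}$, the smoothing profile being adapted pointwise to $\Pi(y)$ so that the modification degenerates where $\Pi$ does.

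Next I would write out $\ric_{g^\delta}$ for a metric $dt^2+b^\delta_t$ with respect to the splitting $T=\mathbb{R}\partial_t\oplus T\{t=\text{const}\}$. The Riccati, Codazzi and Gauss identities give, for $v$ tangent to a slice,
\begin{align*}
\ric_{g^\delta}(\partial_t,\partial_t)&=-\partial_t H^\delta_t-\tr\big((W^\delta_t)^2\big),\\
\ric_{g^\delta}(\partial_t,v)&=\big(\mathrm{div}^{b^\delta_t}W^\delta_t-dH^\delta_t\big)(v),\\
\ric_{g^\delta}(v,v)&=\ric^{b^\delta_t}(v,v)-\langle(\partial_t W^\delta_t)v,v\rangle-H^\delta_t\langle W^\delta_t v,v\rangle .
\end{align*}
The first line gives $\ric_{g^\delta}(\partial_t,\partial_t)\ge c\,\tr\Pi(y)/\delta-C$, hence $\ge-C$ everywhere and arbitrarily large wherever $\tr\Pi$ is bounded below. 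The second line involves only tangential derivatives of $W^\delta_t$, which are $\mathcal O(1)$, so $|\ric_{g^\delta}(\partial_t,v)|\le C|v|_{g^\delta}$. In the third line: where $\Pi$ is bounded below, $-\langle(\partial_t W^\delta_t)v,v\rangle\ge c\,\Pi(v,v)/\delta-C$ again makes $\ric_{g^\delta}(v,v)$ large and positive; where $\tr\Pi$ degenerates --- and then $\Pi\ge 0$ forces $\Pi=0$, so the corner disappears --- the pointwise adaptedness of the construction keeps $b^\delta_t$, $W^\delta_t$ and $\partial_t W^\delta_t$ within $\epsilon_1(\delta)$ of the one-sided data of $M_0$ or $M_1$, whence $\ric_{g^\delta}(v,v)\ge\min(\ric_{g_0},\ric_{g_1})(v,v)-\epsilon_1(\delta)\ge K-\epsilon_1(\delta)$ with $\epsilon_1(\delta)\downarrow 0$.

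To conclude, fix a point $(t,y)$ of the collar and a $g^\delta$-unit vector $w=\cos\varphi\,\partial_t+\sin\varphi\,v$ with $v$ a slice-unit vector; minimizing the resulting quadratic in $\cos\varphi$ gives
\[
\ric_{g^\delta}(w,w)\ \ge\ K-\epsilon_1(\delta)-\frac{C^2}{A_y(\delta)-K},\qquad A_y(\delta):=\ric_{g^\delta}(\partial_t,\partial_t)|_{(t,y)}\ \ge\ c\,\tr\Pi(y)/\delta-C .
\]
For $\eta>0$: on $\{\tr\Pi\ge\eta\}$ this yields $\ric_{g^\delta}(w,w)\ge K-\epsilon_1(\delta)-C^2\delta/(c\eta)$ once $\delta$ is small, while on a neighbourhood of $\{\tr\Pi<\eta\}$ --- where $\Pi$, hence the kink, is $\le C\eta$ and Kosovski\u{\i}'s modification is $\mathcal O(\eta)$ in $C^2$ --- one has $\ric_{g^\delta}\ge K-C\eta$ directly; choosing $\eta=\eta(\delta)\downarrow 0$ with $\delta/\eta(\delta)\to 0$ then gives $\ric_{g^\delta}\ge K-\epsilon(\delta)$ throughout with $\epsilon(\delta)\downarrow 0$. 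The delicate point --- already the main difficulty in Theorem~\ref{th:basicglue} --- is exactly the locus $\{\Pi=0\}\subset\{\Pi\ge 0\}$, where the ``$\ric(\partial_t,\partial_t)$ is huge and positive'' mechanism is unavailable and one really needs Kosovski\u{\i}'s smoothing profile adapted pointwise to $\Pi$, which is what upgrades a trace-type condition to the full matrix hypothesis~(1); this part is identical to the sectional-curvature case, and I would cite the construction together with its estimates from \cite{kosovski, sch}.
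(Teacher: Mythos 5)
Your proposal is a recognisable outline of the Perelman--Kosovski\u{\i} strategy in Fermi coordinates, but it is genuinely different from the proof the paper actually runs, and in at least two places the details you assert are stronger than what the cited construction obviously delivers, so I would flag it as incomplete rather than as a verified alternative.

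\textbf{What the paper does.} The paper does not pass to Fermi coordinates $g^\delta=dt^2+b^\delta_t$ and does not split $\ric_{g^\delta}$ into the normal, mixed, and tangential blocks via Riccati/Codazzi/Gauss. Instead it reproduces Schlichting's decomposition: the smoothed metric is $g_\delta=g_0(\cdot,{\bf G}_\delta\cdot)$ with ${\bf G}_\delta={\bf I}+2F_\delta(x^n){\bf L}-2C\mathcal F_\delta(x^n){\bf P}^\top$, and the resulting Ricci estimate \eqref{est} reduces, after discarding all terms containing only $f_\delta$ (which are uniformly $\epsilon(\delta)$), to controlling the single term $-2f_\delta'\,\mathrm{Ric}_{g_\delta}(\mathcal L)$ with $\mathcal L={\bf L}\wedge{\bf P}^\perp$. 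Since $f_\delta'\le 0$ on the only region where it is not already $\le\epsilon(\delta)$, the whole theorem boils down to showing that the $(2,0)$-tensor $\mathrm{Ric}_{g_\delta}(\mathcal L)$ is pointwise nonnegative \emph{as a quadratic form}; the coordinate computation \eqref{id:ricci} exhibits it as $\tfrac12\mu_n^{-1}\langle\xi,{\bf L}\xi\rangle+\tfrac12(\xi^n)^2\sum_l\mu_l^{-1}({\bf L})_{ll}$, and both summands are $\ge 0$ precisely because $\Pi\ge 0$. There is no case analysis, no estimate of a mixed component in isolation, and no ``large positive normal curvature absorbs mixed terms'' step: for an arbitrary $\xi=\xi^\top+\xi^n\partial_n$, the dangerous term is handled all at once.

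\textbf{Where your proposal is genuinely different, and where it is at risk.} Your plan treats $\ric_{g^\delta}(\partial_t,\partial_t)$, $\ric_{g^\delta}(\partial_t,v)$, and $\ric_{g^\delta}(v,v)$ as three separate scalars, posits uniform estimates for each, and then optimises the quadratic in the mixing angle. Two of those posited estimates need more justification than ``cite \cite{kosovski,sch}''.
First, the uniform bound $|\ric_{g^\delta}(\partial_t,v)|\le C$: by Codazzi this component is $(\mathrm{div}^{b^\delta_t}W^\delta_t-dH^\delta_t)(v)$, and $W^\delta_t$ carries a contribution $\sim F_\delta'(t)\,\nabla_y{\bf L}$. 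The profile described in the paper has $f_\delta'=F_\delta'$ concentrated and potentially unbounded on $[0,\delta^4)$ (it is only constrained to be negative there and $\le\epsilon(\delta)$ on $[\delta^4,\delta]$), so it is not evident that this mixed component stays $O(1)$. Schlichting's decomposition never requires this: the problematic $f_\delta'$-contributions to the mixed components are already part of the sign-definite quadratic form $-2f_\delta'\mathrm{Ric}(\mathcal L)$.
Second, your fallback regime $\{\tr\Pi<\eta\}$, where you assert that ``Kosovski\u{\i}'s modification is $\mathcal O(\eta)$ in $C^2$''. The $C^0$- and $C^1$-size of the perturbation $2F_\delta(t){\bf L}$ scales with $|{\bf L}|\le C\eta$, but its second $t$-derivative scales like $|F_\delta''|\cdot|{\bf L}|$, and $|F_\delta''|$ blows up as $\delta\downarrow 0$. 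So ``$\mathcal O(\eta)$ in $C^2$'' fails for any $\eta$ not going to zero much faster than $\delta^2$, while your first regime needs $\delta/\eta\to 0$. The two regimes, as you have set them up, demand incompatible scalings of $\eta(\delta)$. One \emph{can} still save the tangential estimate using that $-F_\delta''\Pi\ge 0$ where it is large, but that is exactly what makes the quadratic-form statement $\mathrm{Ric}_{g_\delta}(\mathcal L)\ge 0$ more robust than the componentwise bookkeeping.

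\textbf{Minor point.} After the mollification in step {\bf (3)} of the paper's construction, the final metric $g^\delta$ is no longer literally of the block form $dt^2+b^\delta_t$ with $t=d_{Y_0}$, so the Riccati/Codazzi/Gauss identities hold only for the $W^{2,\infty}$ intermediate metric $g_{(\delta)}$ and must be transferred through the mollification error; this is manageable but should be said.

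In short: the approach is a legitimately different route (Perelman/Kosovski\u{\i} slicing rather than Schlichting's $\Lambda^2$-wedge decomposition), but as written the two scaling regimes do not meet, and the uniform mixed-term bound and the ``$\mathcal O(\eta)$ in $C^2$'' assertion are not consequences one can simply cite; the paper's quadratic-form reduction to $\mathrm{Ric}_{g_\delta}(\mathcal L)\ge 0$ sidesteps precisely these difficulties.
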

\begin{remark} The uniform convergence of $g^\delta$ to $g$ implies that the corresponding induced intrinsic distances converge  in  pointed GH sense. Moreover $\vol_{g^\delta}$ converges weakly to $\vol_g$. If $\Pi_i\geq 0$ on $\partial M_i \backslash Y_i$, then the metric glued space $M_0\cup_{\mathcal I} M_1$ equipped with the measure $\vol_g$ satisfies the curvature-dimension condition $CD(K,n)$
because of Theorem \ref{th:cdbe} and the  stability of the  condition $CD(K,n)$ under measured GH convergence.
\end{remark}
\paragraph{\bf Proof of Theorem \ref{th:glue2}}\label{subsec:gdelta}
From \cite{sch} we recall the construction of the metric $g^\delta$ that appears in Theorem \ref{th:basicglue} and in Theorem \ref{th:glue2}. 

{\bf (1)}  We cover $M_0$ with  a \red{countable} family of coordinate charts  $$\phi_s=(x_s^1, \dots, x_s^n): U_s\subset M_0\rightarrow  \R^{n-1}\times [0,\infty), \ \ s=1, 2,  \dots,$$  such that $\exists N\in \{1, \dots, S\}$ with $$\mbox{$B_\delta(Y_0)\subset \bigcup_{s=1}^N U_s\subset B_{2\delta}(Y_0)\ $    and \ $Y_0\cap \bigcup_{s=N+1}^{\red{\infty}} U_s=\emptyset$,}$$
and for $s=1, \dots, N$ we have that $(x_s^1, \dots, x_s^{n-1})$ are coordinates of $Y_0$ and $x_s^n= d_{Y_0}$ where $$x\in M_0 \mapsto d_{Y_0}(x)= \inf_{z\in Y_0} d_{M_0}(x,z).$$ %In particular, $d_{Y_0}|_{U_s}$ is smooth. 
We choose $\delta>0$ small enough such that $d_{Y_0}$ is smooth on $B_{2\delta}(Y_0)$.
\smallskip

For $p\in Y_0$ let ${\bf I}:= {\bf I}_p$ be the identity operator on $T_pM$ and let ${\bf L}:= {\bf L}_p$ be the self-adjoint operator on $T_pY_0$ induced by $$\langle \cdot, {\bf L}\cdot \rangle=\Pi:= \Pi_0|_{Y_0} +\Pi_1|_{Y_1}.$$
One extends ${\bf L}$ via ${\bf L}N=0$ to $T_pM_0$. 
\smallskip

{\color{black}
In \cite{sch} the author uses the notation $L= \Pi$.  Hence, $L$ should not be confused with ${\bf L}$.  For consistency with \cite{sch} we adapt this notation in this section and write $L=\Pi$.}
\smallskip

\red{
{\bf (2)}
Given a parameter $\delta>0$ we will construct an auxiliary function $\mathcal F_\delta$ following \cite{sch} and \cite{kosovski}. 

We  consider a function $f_\delta: [0, \infty) \rightarrow \R$ with the following properties: 
\begin{enumerate}
\item $f_\delta(x)= 1-\frac{x}{\delta^4}$ on $[0, \delta^4]$, 
\item $-\delta^2 \leq f_\delta(x)\leq 0$ and $f_\delta'(x)\leq \delta$ for $\in [\delta^4, \delta]$, 
\item $f_\delta(x)=0$ for $x\in [\delta, \infty)$, 
\item $\int_0^\delta f_\delta(x) \de x=0$.
\end{enumerate}
Such $f_\delta$ is fairly easy to find. Then, we define for $y, z\in [0, \infty)$. 
$$F_\delta(y)=\int^y_0 f_\delta(x) \de x, \ \ \ \mathcal F_\delta(z)=  \int^z_0 F_\delta(y) \de y.$$
It follows that $F_\delta(y)= \mathcal F_\delta(z)=0$ for $y,z\geq 2\delta$,  $\mathcal F_\delta'= F_\delta, F_\delta'=  f_\delta$ and $\mathcal F_\delta(0)= F'_\delta(0)=0$.
Hence, $F_\delta$ and $f_\delta$ are compactly  supported in $[0, \delta)$.  However, $\mathcal F_\delta$ is not compactly supported in $[0, \delta)$, and it  will become clear shortly that this is  essential.
%that also $\mathcal F_\delta$ is compactly supported in $[0, \delta)$. 
 \footnote{This is a minor error in the construction of the glued metric presented in \cite{sch} and in \cite{kosovski} as was pointed out to us by the referee. }

We will  perturb $F_\delta$  such that the previous properties are preserved up to controlled errors and the new $\mathcal F_\delta$ is supported in $[0, \delta)$. 
\smallskip
%We note that by definition $\mathcal F_\delta(z)= \int_0^z F_\delta(y)\de y=\int_0^\infty F_\delta(y) \de y $ for $z\geq \delta$. Moreover,  

The properties of $f_\delta$ imply $0\leq F_\delta(y)= \int_0^y f_\delta(x) \de x \leq \delta ^4$ for all $y\in [0, \infty)$. Hence $$ \mathcal F_\delta(z)\leq \delta^5 \ \forall z\geq 0, \ \ \ 0\leq \mathcal F_\delta(z)= \int_0^\delta F_\delta(y) \de y= :I(\delta)\leq \delta^5  \mbox{ for } z\geq \delta.$$
 We choose a function $\phi \in C_c^2((0, \infty), [0,1])$ with $\supp \phi \subset [\frac \delta 2 , \delta]$ such that for a constant $C>0$ and for all $y$ we have
\begin{enumerate}
\item $|\phi_\delta(y)|\leq C\delta^3$, 
\item $\int_0^\infty \phi_\delta(y) \de y= I(\delta)$,
\item $-C\delta^2\leq \phi'(y)\leq C\delta^2$, 
\item $\phi''(y) \leq C \delta $.
\end{enumerate}
For instance, we can choose
%$\phi_\delta: [0, \infty) \rightarrow [0, 1]$ that is compactly supported in $(\frac \delta 2, \delta)$
 $$\phi_\delta(y)= \begin{cases} \frac{4 I(\delta)}{\delta } \sin^4\left( \frac{2\pi}{\delta}(y-\frac \delta 2)\right) & y\in [\frac \delta 2, \delta]\\
 0 & \mbox{otherwise}.
 \end{cases}$$
Then we define 
$$\tilde F_\delta(y)= F_\delta(y)-  \phi_\delta(y), \ \ \ \tilde{\mathcal F}_\delta(z)= \int_0^z \tilde F_\delta(y)\de y.$$
It follows for $z\geq \delta$ that  $$\tilde{\mathcal F}_\delta(z)= \int_0^z \tilde F_\delta(y)\de y= \int_0^z F_\delta(y) \de y - \int_0^z \phi_\delta(y) \de y=0$$
and we have the following properties
\begin{enumerate}
\item $\tilde{\mathcal F}_\delta' =\tilde F_\delta, \tilde F_\delta'$ and $\tilde F_\delta''$ are compactly supported in $[0, \delta)$, 
\item $\tilde{\mathcal F}_{\delta}(0) =\tilde F_\delta(0)=0$, 
\item $\tilde F'_\delta(x)= 1- \frac{x}{\delta^4}$ for $x\in [0, \delta^4]$, 
\item $-C\delta^2\leq \tilde F'_\delta\leq C \delta^2$ and $\tilde F''_\delta\leq C \delta$ on $[\delta^4, \delta]$ for constant a $C$ that does not depend on $\delta>0$.
\end{enumerate}
In the following we write again $f_\delta$, $F_\delta$ and $\mathcal F_\delta$ for $\tilde{\mathcal F}_\delta''$, $\tilde{\mathcal F}_\delta'=\tilde F_\delta$ and $\tilde{\mathcal F}_\delta$, respectively. }

{\bf (3)} One defines a $(1,1)$-tensor field  $\bf{G}_\delta$ on $B_\delta(Y_0)\subset M_0$ as follows.  ${\bf G}_\delta|_p$, $p\in B_\delta(Y_0)$, is the self adjoint endomorphism of $T_pM$ given by
% if $(x^1, \dots, x^{n-1}, x^n)$ are local coordinates in a neighborhood of a boundary point  $p\in Y_0$ with $x^n= d_{Y_0}$, then 
$${\bf G}_\delta={ \bf I} + 2 F_\delta(x_s^n) {\bf L}- 2C\mathcal F_\delta(x_s^n){\bf P}^T.$$
${\bf P}^\top: TM_0|_{Y_0} \rightarrow TY_0$ is the tangential projection operator.  Note that for $p\in B_\delta(Y_0)$  the operators ${\bf L}_{p}$ and ${\bf P}^{\top}|_p$  are obtained  via parallel transport of ${\bf L}_{q}$ along the unique  normal geodesic from the point $q\in Y_0$ with $d_{Y_0}(p)= d(q,p)$ to 
$p$. Moreover $F_\delta, \mathcal F_\delta\rightarrow 0$ uniformly as $\delta \downarrow 0$. 
%Since $Y_0$ is compact, we can choose $\delta>0$ sufficiently small such that  $B_\delta(Y_0)\subset \bigcup_{s=1}^{\mathcal S} U_s$.
The tensor field ${\bf G}_\delta|_p$ coincides with the identity map $\bf I$ on $T_pM_0$ outside of $B_\delta(Y_0)$ and on $Y_0$,  and converges uniformly to $\bf I$ if $\delta \rightarrow 0$. 
\smallskip

Then, one defines the metric $g_\delta$ on $M_0$ via 
$$ g_\delta=\begin{cases} g_0(\cdot , {\bf G}_\delta \cdot) &\mbox{ on } B_\delta(Y)\\
g_0 & \mbox{ otherwise}.
\end{cases}$$
In local coordinates $(x^1, \dots, x^n)$ for $B_\delta(Y)$ this is
$$
(g_\delta)_{\alpha\beta}= (g_0)_{\alpha\beta} + 2 F_\delta(x^n) ({\bf L})_{\alpha\beta} - 2C \mathcal F_\delta(x^n) ({\bf P}^{\top})_{\alpha\beta}
$$
where $({\bf L})_{\alpha\beta}=\langle \frac{\partial}{\partial x^\alpha}, {\bf L} \frac{\partial}{\partial x^\beta}\rangle$ and $({\bf P}^\top)_{\alpha\beta}=\langle \frac{\partial}{\partial x^\alpha}, {\bf P}^\top \frac{\partial}{\partial x^\beta}\rangle.$
\smallskip

It is easy to check that $g_\delta$ is a smooth Riemannian metric on $M_0$ (that coincides with $g_0$ outside of $B_\delta(Y_0)$ and $Y_0$) provided $\delta>0$ is sufficiently small. Moreover, the level sets of $d_{Y_0}$ in $M_0$ w.r.t. $g_\delta$ coincide with the level sets w.r.t. $g_0$ since ${\bf L}\frac{\partial}{\partial x^n}= 0 = {\bf P}^{\top} \frac{\partial}{\partial x^n}$. 
Then, one defines the $C^0$ metric $g_{(\delta)}$ via 
$$g_{(\delta)}= \begin{cases} g_\delta & \mbox{ on } M_0, \\
g_1 & \mbox{ on }M_1. 
\end{cases}$$

\red{
In fact, $g_{(\delta)}$ is a $C^1$-metric, since the transversial derivatives of the coefficients of  $g_\delta$ and of $g_1$ at  boundary  points coincide. We can check this directly from the definition of $g_\delta$, $F_\delta$ and $\mathcal F_\delta$. 
Since away from the boundary $g_{(\delta)}$ is smooth,  coefficients of the metric $g_{(\delta)}$ are in $W^{2, \infty}_{loc}$. }
%(Remark 2.8 in \cite{sch}).
\smallskip

{\bf (4)} Schlichting then computes  the Ricci curvature  \begin{align}\label{trace} \ric_{g_\delta}=\sum_{i=1}^n (g_\delta)_{\alpha\beta} \mathcal R_\delta(\cdot, dx^\alpha, \cdot, dx^\beta)\end{align} of $g_\delta$ where $\mathcal R_\delta$ is the curvature tensor of $g_\delta$.  
\smallskip

The main estimate for $\ric_{g_\delta}$ is 
\begin{align}\label{est}\mbox{Ric}_{g_\delta}(\mathcal R_\delta)\geq &\mbox{Ric}_g(\mathcal R)- f^2_\delta\mbox{Ric}_g(\mathcal A)+ f_\delta \mbox{Ric}_g(\mathcal B)\nonumber\\
&- 2f_\delta'\mbox{Ric}_{g_\delta}(\mathcal L)+ 2 f_\delta^2\mbox{Ric}_g (\mathcal L^2) + 2 Cf_\delta \mbox{ Ric}_g(\hat{\mathcal I}) - \epsilon(\delta) \mbox{id}_{TM_0}\end{align}
where $\mathcal A, \mathcal B, \mathcal L$, $\mathcal L^2$ and $\hat{\mathcal I}$ are endomorphisms of $\Lambda^2 TM$ of the form $C\wedge D$ for self-adjoint endomorphisms $C,D$ of $TM$.  The wedge product $\wedge$ between $C$ and $D$ is defined via $$(C\wedge D)(v\wedge w) = \frac{1}{2} C(v) D(w) - C(w) D(v).$$ 
$\mbox{Ric}_h(\cdot)$  is the operator that  takes the trace of such endomorphisms  on $\Lambda^2 TM$ as in \eqref{trace} w.r.t. an inner product $h$.  Most relevant for us is that $\mathcal L= {\bf L} \wedge {\bf P}^\perp$. 

Whenever just $f_\delta$ appears in \eqref{est} but not $f_\delta'$, then the corresponding $\mbox{Ric}$ can be uniformily estimated by an $\epsilon(\delta)$ with $\epsilon(\delta)\downarrow 0$ if $\delta\downarrow 0$. Hence, the inequality \eqref{est} reduces essentially to 
\begin{align}\label{alsoimportant}\ric_{g_\delta}=\mbox{Ric}_{g_\delta}(\mathcal R_\delta)\geq (\kappa- \epsilon(\delta)) \mbox{id}_{TM_0} - 2 f'_\delta \mbox{Ric}_{g_\delta}(\mathcal L).\end{align}
Now $f'_\delta$ is negative on $[0,\delta^4)$, does not exceed $\epsilon(\delta)$ on $[\delta^4, \delta]$ and vanishes everywhere else. 

It is enough to show that $\mbox{Ric}_{g_\delta}(\mathcal L)$ is non-negative. 
\begin{jjj}
More precisely: If $\mbox{Ric}_{g_\delta}(\mathcal L)\geq 0$ and  $f_\delta'<0$, we can drop $-2f'_\delta \mbox{Ric}_{g_\delta}(\mathcal L)$ in \eqref{alsoimportant}.  Otherwise $f'_\delta\leq C\delta$. Since $Y$ is compact, there is a constant $A$ that uniformly estimates $ \mbox{Ric}_{g_\delta}(\mathcal L)$. Hence,  $-2f'_\delta\mbox{Ric}_{g_\delta}(\mathcal L)$ is uniformly estimated from above by  $\epsilon(\delta)$ with $\epsilon(\delta)\downarrow 0$ for $\delta\downarrow 0$.
\end{jjj}

  For this we fix $x\in M_0$ close to $Y$ such that it is inside of a local cooardinate chart $(x^1, \dots, x^n)=\phi:= \phi_s, s=1, \dots, N$, of the form we considered before.  Moreover one can construct  $x^1, \dots, x^{n-1}$ such that the coordinate vector fields $\frac{\partial}{\partial x^i}$ are orthonormal in $x$ w.r.t. $g_x$ and ${\bf L}_x$ is diagonal w.r.t. $\frac{\partial}{\partial x^i}\big|_x$.  By construction of $g_\delta$ this implies that $g_\delta|_x$ is diagonal and the local coefficients in $x$ are $(g_\delta)_{ij}(x)= \mu_i \delta_{ij}$ with $\mu_i>0$. Then it follows for a vector $\xi\in T_xM_0$ that
{\color{black}
\begin{align}\label{id:ricci}
\mbox{Ric}_{g_\delta}(\mathcal L)(\xi, \xi)= \frac{1}{2} \frac{1}{\mu_n} L(\xi, \xi) + \frac{1}{2} (\xi^n)^2 \sum_{l=1}^n \frac{1}{\mu_l} ({ L})_{ll}.
\end{align}
The first term is the (parallel transported) 2nd fundamental form at $\xi$ times $\frac{1}{2}\frac{1}{\mu_n}$. In the second term we have the terms $L_{ll}= L\left(\frac{\partial}{\partial x^l}|_x, \frac{\partial}{\partial x^l}|_x\right)\geq 0$ times $\frac{1}{2} (\xi^n)^2= \frac{1}{2} g_x\left(\frac{\partial}{\partial x^n}, \xi\right)^2.$
It follows  that $\mbox{Ric}_{g_\delta}(\mathcal L)(\xi, \xi)$ is non-negative by assumption.  

We also notice that in the second term $\sum_{l=1}^n \frac{1}{\mu_l} ({ L})_{ll}$ is the trace $\mbox{tr}^{g_\delta}L(x)$ of $L(x)$ w.r.t. $g_\delta|_x$. }
\smallskip

{\bf (5)}
In the final step $g_{(\delta)}$ is smoothened. For this one covers the manifold $M_0$ with open sets $U_s$, $s=1, \dots, S,$ and we also pick coordinate charts $\phi_s$ as before such that $U_s\subset U'_s$.  Since  $Y_0\simeq Y_1$ is compact, it is covered by finitely many $U$, again these are $U_1, \dots, U_N$.  In each $U_i$ the metric $g_{(\delta)}$ has  a local representation given by functions $(g^{s}_{(\delta)})_{\alpha\beta}$, $\alpha, \beta=1, \dots, n$. Each $(g_{(\delta)}^s)_{\alpha\beta}$ is  mollified with a   standard mollifier function $\rho\in C^\infty_c(B_1(0))$ with $\int_{\R^{\lceil N \rceil}} \rho =1$.  
More precisely, one defines
$$
(g^{s, h}_{(\delta)})_{\alpha\beta}= \int_{B_h(0)} \rho_h(z)(g^{s}_{(\delta)})_{\alpha\beta}(x-hz) d\mathcal L^n(z).
$$
 Finally, with help of a partition of unity one builds a smooth Riemannian metric $g^h_{(\delta)}$ on $M$.  Then $g^h_{(\delta)}$ is the desired Riemannian metric $g^\delta$. 
\smallskip

\subsection{\bf Glued spaces with weights}\label{subsec:weightedgluing}
Let $\Phi_i:M_i\rightarrow \R$, $i=0,1$, be smooth such that $\Phi_0|_{Y_0}=\Phi_1|_{Y_1}$. Consider $M_i$ as weighted Riemannian manifolds $(M_i, \Phi_i)$, $i=0,1$. 
Define  $\Phi= \Phi_0\cup \Phi_1: M_0\cup_{\mathcal I}M_1\rightarrow \R$ as 
$$\Phi(x)=\begin{cases} \Phi_0(x) & x\in M_0\\
\Phi_1(x) & \mbox{otherwise}.
\end{cases}$$
Let $\nu_i: \partial M_i\rightarrow TM_i$ be the inward unit normal vector field. 
\begin{assumption}\label{ass:3} Let $\Phi_i, i=0,1$, as before. We assume $\Phi_0(p)= \Phi_1(p)>0$  for all $p\in Y_0\simeq Y_1$ and
\begin{enumerate}
\item  
%$\sum_{i=0,1}\langle \nabla \Phi_i, \nu_i\rangle \leq \mbox{tr} \Pi \mbox{ on }  Y_0\simeq Y_1$,  or equivalently
${\displaystyle \sum_{i=0,1} \langle \nabla \log \Phi_i, \nu_i \rangle =: H \leq \mbox{tr}\Pi\mbox{ on } Y_0\simeq Y_1}\mbox{ 
where $\Pi= \Pi_0|_{Y_0}+ \Pi_1|_{Y_1}$.} $
\end{enumerate}
In particular, since $\Pi\geq 0$ on $Y$, the assumption is satisfied if
\smallskip
\begin{enumerate}
\item[(2)]  ${\displaystyle \sum_{i=0,1} \langle \nabla \log \Phi_i, \nu_i \rangle \leq 0\mbox{ on } Y_0\simeq Y_1.}$
\end{enumerate}
\end{assumption}
\begin{example}
Let $[a,b]$ and $[b,c]$ be intervals in $\R$ and let $\Phi_0\in C^{\infty}([a,b])$ and $\Phi_1\in C^{\infty}([b,c])$ such that $\Phi_0, \Phi_1\geq 0$ and  $([a,b], \Phi_0)$ and $([b,c], \Phi_1)$ have a Bakry-Emery $N$-Ricci curvature lower bound $K$. If 
$$
\frac{d^-}{dt} \Phi_0(c) + \frac{d^+}{dt}{\Phi_1}(c)\leq 0
$$
then the function $\Phi$ on $[a,c]$ is $(K,N)$-concave and therefore $([a,c], \Phi d\mathcal L^1)$ satisfies the curvature-dimension condition $CD(K,N)$. 
\end{example}

\section{Proof of Theorem 1.1 and Theorem 1.2}\label{sec:4}
\subsection{Warped products over Riemann manifolds} We recall some facts about Riemannian warped products \cite{oneillsemi, ketterer}.
\smallskip

Let $M$ be a Riemannian manifold with boundary  $\partial M$ and let $f: M\rightarrow [0,\infty)$ be smooth. We set $$\mbox{$\mathring M= M\backslash f^{-1}(\{0\})$ and $\mathring f:= f|_{\mathring M}$.}$$  

Let $\Pi_{\partial M}$ the second fundamental form of $\partial M$. We set $\partial M\cap f^{-1}(\{0\})=:X$. $X$ is a connected component of $\partial M$. Let $F$ be a closed Riemannian manifold.  We define $\mathring C= \mathring M\times F$. 
\smallskip

The Riemannian warped product 
$
 \mathring{M}\times_{\mathring{f}} F
$ with respect to $\mathring{f}$ 
is the product space $\mathring C=\mathring M\times F$ with the Riemannian metric $\tilde g$  given by
\begin{displaymath}
 \tilde g:=(\pi_{\sM})^*g_{\sM}|_{\mathring M}+(f\circ\pi_{\sM})^2(\pi_F)^*g_F.
\end{displaymath}

Here $g_{\sM}$ and $g_F$ are the Riemannian metrics of ${M}$ and $F$ respectively.
The length of a Lipschitz-continuous curve $\gamma=(\alpha,\beta)$ in $\mathring M\times_{\mathring f} F$ is
\begin{displaymath}
\L_{\tilde g}(\gamma)=\int_0^1\sqrt{g_{\sM}({\alpha'}(t),{\alpha'}(t))+f^2\circ \alpha(t) g_F({\beta'}(t),{\beta'}(t))}dt.
\end{displaymath}
The  distance on $\mathring M\times_{\mathring f} F$ is defined by
$$
 |(p,x),(q,y)|=\inf {\L}_{\tilde g}(\gamma)
$$
where the infimum runs over all Lipschitz curves between $(p,x)$ and $(q,y)$ in $\mathring M\times F$. 
\smallskip

The distance $|\cdot, \cdot|$ is also defined on $M\times F$ by the same infimum w.r.t. to Lipschitz curves in $M\times F$.
\smallskip

The metric warped product $M\times_f F$ between $M$ and $F$ w.r.t. $f$ is  then defined as the quotient space 
$$(M\times F/\sim, ||\cdot, \cdot||) \ \ \mbox{ where } (p,x)\sim (q,y) \ \Leftrightarrow \ |(p,x), (q,y)|=0$$
and the distance $\left\|\cdot, \cdot\right\|$ is defined via $\left\|[p,x], [q,y]\right\|= |(p,x), (q,y)|$.  

The smooth space $\mathring M\times_{\mathring f} F$ locally embeds into $M\times_f F$. 

\begin{proposition}\label{bigformula}
Let $M^n$ and $F^{d}$ be Riemannian manifolds and let ${f}:{M}\rightarrow [0,\infty)$ be smooth. 
%We set $\mathring M=M\backslash f^{-1}(\{0\})$ and  $\mathring f= f|_{\mathring M}$. 
%Let $\mathring{M}\times_{\mathring f} F$ be the  warped product of $\mathring{M}$, $F$ and $\mathring{f}$. 
Consider $\xi+v\in T(\mathring{M}\times F)_{(p,x)}=TM_p\oplus TF_x$. It holds
\begin{align*}
\ric_{\mathring M\times_{\mathring f}F}(\xi+v, \xi +v)=&\ric_{M}(\xi, \xi)-d\frac{\nabla^2f(\xi, \xi)}{f(p)}\\
&+\ric_{\sF}(v,v)-\left(\frac{\Delta f(p)}{f(p)}+(d-1)\frac{|\nabla f(p)|^2}{f^2(p)}\right)|v|_{\tilde g}^2.
\end{align*}
\end{proposition}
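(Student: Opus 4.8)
The statement to prove is Proposition~\ref{bigformula}, the Ricci curvature formula for a Riemannian warped product $\mathring M \times_{\mathring f} F$.

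\medskip

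The plan is to reduce everything to the classical O'Neill-type warped product curvature identities and then take traces. First I would set up notation: fix a point $(p,x) \in \mathring M \times F$, and extend a tangent vector $\xi + v$ to vector fields by taking $\xi$ to be (the horizontal lift of) a vector field on $M$ and $v$ the (vertical lift of) a vector field on $F$, chosen so that at the base point the relevant covariant derivatives of $\xi$ and $v$ on the respective factors vanish. The standard formulas (see \cite{oneillsemi}) express the curvature tensor $R^{\tilde g}$ of the warped product in terms of $R^{g_M}$, $R^{g_F}$, the Hessian $\nabla^2 f$, and the gradient $\nabla f$ on $M$. Concretely, for horizontal $X,Y,Z$ and vertical $V,W$ one has $R(X,Y)Z$ purely from $M$; $R(X,V)Y = \frac{\nabla^2 f(X,Y)}{f} V$ (up to sign conventions); $R(X,V)W = \langle V,W\rangle_{g_F}\big(\text{terms in }\nabla^2 f\big)$; and $R(V,W)U$ from the curvature of $F$ corrected by $\frac{|\nabla f|^2}{f^2}$ terms.

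\medskip

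The second and main step is to take the trace of the sectional/curvature data to form $\ric_{\mathring M \times_{\mathring f} F}(\xi+v,\xi+v)$. I would pick a $\tilde g$-orthonormal basis of $T(\mathring M \times F)_{(p,x)}$ consisting of a $g_M$-orthonormal basis $e_1,\dots,e_n$ of $T_pM$ together with $\tfrac{1}{f(p)} \tilde e_1, \dots, \tfrac{1}{f(p)}\tilde e_{N-n}$ where $\tilde e_j$ is a $g_F$-orthonormal basis of $T_xF$ (note the warping factor rescales the vertical directions). Summing $\langle R(\,\cdot\,, \xi+v)(\xi+v), \cdot\,\rangle$ over this basis, the horizontal-horizontal contributions give $\ric_M(\xi,\xi)$ plus, from the mixed curvature terms summed over the vertical directions, a contribution $-(N-n)\frac{\nabla^2 f(\xi,\xi)}{f(p)}$; the vertical-vertical contributions give $\ric_F(v,v)$; and the remaining mixed terms, summed over the horizontal basis, produce $-\big(\frac{\Delta f(p)}{f(p)} + (N-1)\frac{|\nabla f(p)|^2}{f^2(p)}\big)|v|_{\tilde g}^2$, where the $\Delta f = \tr_{g_M}\nabla^2 f$ appears precisely from tracing the Hessian over $e_1,\dots,e_n$ and the $(N-1)$ from counting the remaining vertical directions plus the correction term. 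The cross term $\ric(\xi,v)$ vanishes by the antisymmetry/parity of the relevant curvature components.

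\medskip

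I do not expect a serious obstacle here — this is a standard computation — but the place demanding care is bookkeeping the sign conventions and the warping-factor normalizations consistently, in particular making sure the $\tfrac{1}{f}$ factors in the orthonormal vertical frame combine correctly to yield exactly $-(N-n)$ in front of $\frac{\nabla^2 f(\xi,\xi)}{f}$ and exactly $(N-1)$ (not $N-n-1$ or $N-n$) in the coefficient of $|v|^2$. One clean way to organize this, which I would adopt, is to first record the three elementary identities for $R^{\tilde g}$ on pure horizontal/vertical/mixed triples as a lemma (citing \cite{oneillsemi}), then perform the trace in a single display, so that the only real content is the combinatorial count of how many vertical versus horizontal basis vectors each term sees. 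The hypothesis $f > 0$ on $\mathring M$ is exactly what makes all the divisions by $f$ and $f^2$ legitimate, and restricting to $\mathring M$ (rather than all of $M$) is essential since on $f^{-1}(\{0\})$ the warped metric degenerates.
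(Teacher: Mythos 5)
Your overall strategy — apply the O'Neill curvature identities for warped products and trace against a $\tilde g$-orthonormal frame built from a $g_M$-orthonormal frame on $T_pM$ and a rescaled $\frac{1}{f(p)}$-times-$g_F$-orthonormal frame on $T_xF$ — is exactly the standard derivation, and the paper in fact gives no proof, only a citation to \cite{oneillsemi, ketterer}. The method is not in question.

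But the one place you flagged as "demanding care" is precisely where your proposal goes wrong. O'Neill's Ricci formula for $B^n\times_f F^d$ with $d=\dim F$ gives, for vertical $V,W$,
\begin{align*}
\ric(V,W)=\ric^F(V,W)-\langle V,W\rangle\left(\frac{\Delta f}{f}+(d-1)\,\frac{|\nabla f|^2}{f^2}\right),
\end{align*}
and here $d=N-n$, so the coefficient is $d-1=N-n-1$, \emph{not} $N-1$. The count you invoke — "the remaining vertical directions plus the correction term" — is exactly the $N-n-1$ vertical directions orthogonal to $v$, each contributing $-\frac{|\nabla f|^2}{f^2}$ to the mixed vertical-vertical sectional curvatures; there is no mechanism by which the $n$ horizontal directions add anything to this coefficient beyond the $\frac{\Delta f}{f}$ term. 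You explicitly assert the answer is "exactly $(N-1)$ (not $N-n-1$ or $N-n$)", which is backwards. The printed statement of Proposition \ref{bigformula} in fact contains a typo: when the paper itself applies this proposition in the proof of Proposition \ref{prop:warped}, it writes $(N-n-1)$, which is the correct coefficient and the one that makes the later estimate with $\ric_F\geq(N-n-1)\tilde L$ close cleanly. A careful execution of the trace you describe would produce $N-n-1$, and the right response would have been to flag the discrepancy in the statement rather than to certify the typo.
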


We fix a parameter $N\in (0, \infty)$. The metric measure space  given by the metric warped product $M\times_fF$ equipped with the measure $$f^{N} \vol_M \otimes \vol_F$$ is denoted with $M\times_f^{N} F$. Analogously $\mathring M\times_{\mathring f}^N F$ is $\mathring M\times_{\mathring f} F$ equipped with $(\mathring f)^N \vol_M \otimes \vol_F$. If $N= \dim F= d$, then $(\mathring f)^d \vol_M\otimes \vol F$ is exactly the Riemannian volume of $\mathring M \times_f F$. 

 Let $(M,g_M)$ be a Riemannian manifold that  is also an Alexandrov space with curvature bounded from below by $\kappa$. This is equivalent to  sectional curvature  bounded from below by $\kappa$ and $\Pi_{\partial M}\geq 0$.
The main theorem of \cite{ketterer1} states the following.
\begin{theorem}\label{th:ketterer1} If $\ric_{g_F}\geq (N-n-1) K_F>0$ and $f:M\rightarrow [0,\infty)$ is $\mathcal F\kappa$-concave and satisfies the condition $(\dagger)$ as well as
\begin{align}\label{condition}|\nabla f|\leq \sqrt{K_F} \mbox{ on } \partial M\cap f^{-1}(0)\end{align}
then the mm space $M\times_f^{N-n} F$ satisfies the  condition $CD((N-1)k, N)$. 
\smallskip\\
$(\dagger)$ If $M^{\dagger}$ is the result of gluing two copies of $M$ together along $\partial M\backslash X$ and $f^{\dagger}: M^{\dagger}\rightarrow [0,\infty)$ is the tautological extension of $f$, then  $f^\dagger$ is $\mathcal F\theta$-concave.
\end{theorem}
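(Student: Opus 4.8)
Write $W:=M\times_f^{N-n}F$ and let $\mathring W:=\mathring M\times_{\mathring f}F$ be its regular part. Since $\dim F=N-n$, the weight $f^{N-n}$ is the Jacobian of the warping, so $f^{N-n}\vol_M\otimes\vol_F$ is the Riemannian volume of $\mathring W$ and $\mathring W$ is a smooth $N$-dimensional Riemannian manifold; moreover $f^{-1}(0)=X$ by the argument of Corollary \ref{cor:pos}, so the singular set of $W$ is exactly $X\times F$ and $W$ is the metric completion of $\mathring W$. The plan is to first establish $CD((N-1)\kappa,N)$ for $\mathring W$ by means of Theorem \ref{th:cdbe}, and then to upgrade this to $W$ by a local analysis at the collapsed fibres $X\times F$, using that the curvature-dimension condition is local for essentially nonbranching spaces.

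For the regular part I would compute $\ric_{\mathring W}$ from Proposition \ref{bigformula}: along $\xi+v\in TM_p\oplus TF_x$ it splits into a base term $\ric_M(\xi,\xi)-(N-n)\nabla^2f(\xi,\xi)/f$ and a fibre term $\ric_F(v,v)-c(p)|v|_{\tilde g}^2$, with $c$ an explicit combination of $\Delta f/f$ and $|\nabla f|^2/f^2$. Using that $M$ is Alexandrov with curvature $\ge\kappa$ (so $\ric_M\ge(n-1)\kappa$), that $f$ is $\mathcal F\kappa$-concave (so $\nabla^2f\le-\kappa f g_M$, hence $\Delta f\le-n\kappa f$), that $\ric_F\ge(N-n-1)K_F$, and that $|\nabla f|^2+\kappa f^2\le K_F$ on $\mathring M$---which propagates from $X$, where it holds by \eqref{condition}, to all of $\mathring M$ by the gradient comparison for $\mathcal F\kappa$-concave functions---one checks $\ric_{\mathring W}\ge(N-1)\kappa$. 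For the boundary, $\partial\mathring W=(\partial M\setminus X)\times F$ has second fundamental form block-diagonal, equal to $\Pi_{\partial M}$ on $T(\partial M\setminus X)$ and to $-f\,\partial_\nu f\cdot g_F$ on $TF$; the first block is $\ge0$ since $M$ is Alexandrov, and the second is $\ge0$ because $(\dagger)$ forces $\partial_\nu f\le0$ on $\partial M\setminus X$: for a geodesic $\sigma$ of the double $M^\dagger$ that crosses the gluing locus its projection to $M$ reflects off $\partial M\setminus X$, and applying Lemma \ref{importantlemma} to the $\mathcal F\kappa$-concave function $f^\dagger\circ\sigma$ forces its left derivative to dominate its right derivative at the crossing, which reduces to $\langle\nabla f,\nu\rangle\le0$. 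Hence by Theorem \ref{th:cdbe} the space $(\mathring W,d_{\mathring W},\vol_{\mathring W})$ satisfies $CD((N-1)\kappa,N)$, so each point of $\mathring W$ has a neighbourhood satisfying $CD_{loc}((N-1)\kappa,N)$.

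It then remains to produce, for each $p=(x_0,y_0)\in X\times F$, a neighbourhood in $W$ satisfying $CD_{loc}((N-1)\kappa,N)$; granting this, $W$---being essentially nonbranching---satisfies $CD((N-1)\kappa,N)$ globally. Near $p$ the metric degenerates like $g_X+dr^2+r^2|\nabla f|^2(x_0)\,g_F$, so the tangent cone there is $\mathbb R^{n-1}\times C\big(F,|\nabla f|^2(x_0)g_F\big)$. By Bonnet--Myers $\diam(F,g_F)\le\pi/\sqrt{K_F}$, so \eqref{condition} gives $\diam\big(F,|\nabla f|^2(x_0)g_F\big)=|\nabla f|(x_0)\diam(F,g_F)\le\pi$, while $\ric_F\ge(N-n-1)K_F\ge(N-n-1)|\nabla f|^2(x_0)$; thus $(F,|\nabla f|^2(x_0)g_F)$ is a $CD(N-n-1,N-n)$ space of diameter $\le\pi$, the cone over it is $CD(0,N-n+1)$ by the cone theorem of \cite{ketterer1}, and the product with $\mathbb R^{n-1}$ is $CD(0,N)$. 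The same bound on the diameter of the link is what prevents geodesics from branching at $X\times F$, so $W$ is essentially nonbranching; and along any transport ray of a $1$-Lipschitz function that meets $X\times F$ the disintegrated density $h_\gamma$ vanishes there to the precise order $N-n$, so that $h_\gamma^{1/(N-1)}$ behaves like $|t-t^\ast|^{(N-n)/(N-1)}$ near the tip and the one-dimensional $(K,N)$-concavity of $h_\gamma$ persists across it.

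Assembling these with the estimate on the regular part---either through the $1D$-localisation criterion of Theorem \ref{thm:cavmil}, or by a direct optimal-transport argument splitting the entropy convexity inequality into a base contribution governed by the $\mathcal F\kappa$-concavity of $f$ and a fibre contribution governed by $\ric_F\ge(N-n-1)K_F$---gives the curvature-dimension condition for $W$. I expect the main obstacle to lie exactly at the collapsed fibres $X\times F$: one must simultaneously control branching of geodesics through them and the precise rate of vanishing of the reference measure there, together with the way these interact with the constants $K_F$ and $(N-1)\kappa$; the hypotheses $\ric_F\ge(N-n-1)K_F>0$ and $|\nabla f|\le\sqrt{K_F}$ on $X$ are sharp for precisely this reason. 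Everything away from $X\times F$ reduces to Theorem \ref{th:cdbe} once the role of $(\dagger)$ in convexifying $(\partial M\setminus X)\times F$ is identified.
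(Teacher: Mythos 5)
The paper does not prove Theorem~\ref{th:ketterer1}: it is imported as ``the main theorem of~\cite{ketterer1}'' and used as a black box, so there is no in-paper proof to compare your attempt against.

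On its own merits your sketch gets the regular part right. The Ricci estimate on $\mathring W$ from Proposition~\ref{bigformula}, the propagation of $|\nabla f|^2+\kappa f^2\le K_F$ from $X$ to all of $\mathring M$ (this is exactly Proposition~\ref{prop:albi}, and that is how the paper itself treats the condition~\eqref{condition}), and the observation that $(\dagger)$ forces $\langle\nabla f,\nu\rangle\le 0$ on $\partial M\setminus X$ via a reflection argument and Lemma~\ref{importantlemma}, combined with $\Pi_{\partial M}\ge 0$ from the Alexandrov assumption, give exactly the two blocks of $\tilde\Pi$ predicted by Proposition~\ref{prop:connection}. That cleanly yields $CD_{loc}((N-1)\kappa,N)$ on a neighbourhood of every point of $\mathring W$.

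The genuine gap is at $X\times F$, as you suspect, and the difficulties are more serious than your sketch acknowledges. First, your claim that $h_\gamma$ vanishes to order exactly $N-n$ at a ray's crossing of $X\times F$ is only valid for transport rays that hit $X\times F$ transversally; a ray tangent to $X\times F$ can carry a different vanishing rate, and you must show that the tangential rays form a $\mathfrak q$-null set for the relevant $1$-Lipschitz functions --- this is precisely the kind of argument carried out in Lemma~\ref{lemma:tangentgeodesics}, and it is not free. Second, knowing that $h_\gamma$ is $(K,N)$-concave on each side of $t^\ast$ and vanishes at $t^\ast$ does not by itself give $(K,N)$-concavity across $t^\ast$: you still need the one-sided derivative comparison of Lemma~\ref{importantlemma} at a zero of the density, and the admissible order of vanishing in a $CD(K,N)$ density is constrained (a model density such as $\sin^{N-1}$ vanishes to order $N-1$, not $N-n$), so matching the exponent $N-n$ with the $(K,N)$-profile is an actual computation, not a formality. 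Third, essential nonbranching at $X\times F$ needs $\diam(F,|\nabla f|^2g_F)<\pi$ strictly, or else the rigidity statement that equality forces the cone to be Euclidean and hence smooth; your sentence silently assumes the benign case. Lastly, the cone theorem you invoke is from~\cite{ketterer}, not~\cite{ketterer1}; citing the latter here would be circular.
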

A sufficient and necessary criteria for the condition \eqref{condition} is  given by the following proposition {\cite[Proposition 3.1]{albi}}.
%\begin{remark}
%More generally, it is possible to define the warped product w.r.t. length spaces $(M, d_M)$ and $(F, d_F)$, and a continuous function $f: M\rightarrow [0, \infty)$. For details we refer the reader to ...
%\end{remark}
\begin{proposition} \label{prop:albi} Let $M$ be an Alexandrov space. 
Let  $f: M\rightarrow [0,\infty)$ is $\mathcal F k$-concave.  We set $X= \partial M\cap f^{-1}(\{0\})$ and $f$ satisfies the condition $(\dagger)$ above. Then the following statements are equivalent. 
\begin{enumerate}\smallskip
\item $L\geq kf^2$ if $X=\emptyset$, or,   
$L\geq 0$ and $|\nabla f|_p^2\leq  L$ on $X$, if $X\neq \emptyset$.
\medskip
\item $L\geq k f^2$ and $|\nabla f|^2+ k f^2 \leq L^2$ on $M$.
\end{enumerate}
%
%
%, 
\end{proposition}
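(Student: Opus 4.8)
The plan is to reduce the problem to a one-variable statement along a geodesic realizing the distance to $X$. First I would recall that, since $f$ is $\mathcal Fk$-concave on the Alexandrov space $M$, the restriction $f\circ\gamma$ to any unit-speed geodesic $\gamma$ satisfies $(f\circ\gamma)''+k\,(f\circ\gamma)\le 0$ in the barrier/distributional sense; equivalently $u:=f\circ\gamma$ obeys \eqref{kuconcavity1} with $\theta=k$ by the lemma quoted from \cite{kakest}. Setting $L:=|\nabla f|^2+kf^2$, the natural guess (and the content of (2)) is that $L$ is a supersolution of an ODE that is constant when $f$ solves $f''+kf=0$ exactly. Concretely, along $\gamma$ one computes formally $\frac{d}{dt}\big((u')^2+ku^2\big)=2u'(u''+ku)\le 0$ wherever $u'\ge 0$, so $(u')^2+ku^2$ is nonincreasing as long as $f$ is increasing along $\gamma$; this is the mechanism that forces the bound $L\ge kf^2$ globally once it holds at the point where $f$ is minimized, namely on $X$ (or, if $X=\emptyset$, at an interior minimum of $f$). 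I would make this rigorous using the semiconcavity machinery of Section 2.3 (in particular Lemma \ref{importantlemma}) rather than classical differentiation, since $f$ is only semiconcave.

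The key steps, in order, are: (i) Show $(1)\Rightarrow(2)$. Fix $p\in M$ and let $\gamma:[0,\ell]\to M$ be a unit-speed geodesic from a nearest point $q\in X$ (when $X\ne\emptyset$) to $p$, chosen so that $f\circ\gamma$ is nondecreasing — this uses that $f$ is $\mathcal Fk$-concave and vanishes on $X$, together with the condition $(\dagger)$ which guarantees the relevant geodesics behave well near $\partial M\setminus X$. Along $\gamma$, apply the differentiation inequality for $(u')^2+ku^2$ in the weak sense to conclude $L(p)=(u'(\ell))^2+ku(\ell)^2\le (u'(0))^2+ku(0)^2=|\nabla f|_q^2$; here the boundary term $|\nabla f|_q^2\le L$ on $X$ (from hypothesis (1), $X\ne\emptyset$ case) or $=0$ at an interior minimum is exactly what is needed. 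Then $L\le L$ is trivial but what we actually want is the stated two inequalities: $L\ge kf^2$ (rearranging $|\nabla f|^2\ge 0$) and $|\nabla f|^2+kf^2=L\le L^2$ — wait, this shows $L$ is bounded by its boundary value, so one sets $L$ to be this constant and reads off both claims. (ii) For $(2)\Rightarrow(1)$: restricting $(2)$ to $X$ where $f=0$ gives $|\nabla f|^2\le L^2$ and $0\le L$, and $L\ge kf^2=0$; in the case $X=\emptyset$, evaluating at an interior minimum of $f$ (where $\nabla f=0$) gives $0+kf_{\min}^2\le L^2$, and combined with $L\ge kf^2$ one extracts $L\ge kf^2$ everywhere, which after taking the minimum is the claim $L\ge kf^2$ — here one must be slightly careful about signs of $k$ and whether $f$ attains an interior extremum, using compactness of $M$.

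The main obstacle I anticipate is step (i): justifying the monotonicity of $(u')^2+ku^2$ when $u=f\circ\gamma$ is merely semiconcave and $u'$ is only defined one-sidedly with a possible downward jump. The clean way is to write $u=u_0+kU$ where $U''=-u$ and $u_0$ is concave, note $u'=\frac{d}{dt^+}u$ is nonincreasing, and compute the distributional derivative of $(u')^2+ku^2$ using that $u'\,du'\le 0$ as measures (product of a function with a nonpositive measure, where the function is caught between left and right derivatives) — exactly the kind of argument underlying Lemma \ref{importantlemma}. One also needs to handle the passage through the gluing locus $\partial M\setminus X$ via the condition $(\dagger)$, ensuring the extended function $f^\dagger$ on $M^\dagger$ stays $\mathcal Fk$-concave so that geodesics crossing that boundary do not spoil the inequality; this is where $(\dagger)$ is used essentially and not just cosmetically. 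The remaining computations — rearranging $L=|\nabla f|^2+kf^2$ into the two displayed inequalities and the boundary evaluations — are routine.
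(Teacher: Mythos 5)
Note first that the paper does not prove this proposition; it quotes it as \cite[Proposition 3.1]{albi}, so there is no in-paper argument to compare against. Evaluating your attempt on its own merits, the central computational idea is the right one: $E(t) := (u'(t))^2 + k\,u(t)^2$ along a unit-speed geodesic $\gamma$ with $u = f\circ\gamma$ satisfies $E' = 2u'(u'' + ku)$, so $\mathcal F k$-concavity forces $E$ to decrease on intervals where $u' \geq 0$ and increase where $u' \leq 0$, and Lemma \ref{importantlemma}-style bookkeeping makes this rigorous for semiconcave $u$. But you open by \emph{defining} $L := |\nabla f|^2 + kf^2$ as a function on $M$, then conclude step (i) by "one sets $L$ to be this constant and reads off both claims," which as written establishes only $L \leq L$. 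In the proposition, $L$ is a fixed number, and the assertion is that, for this number, the boundary condition (1) is equivalent to the global pointwise condition (2); your sketch never cleanly separates the constant from the function, so the deduction in step (i) is not actually a proof of anything.

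The genuine gap in $(1)\Rightarrow(2)$ is existential, not the monotonicity computation you concentrate on. From an arbitrary $p$ with $\nabla f(p)\neq 0$ you need a geodesic in the steepest-descent direction along which $u'$ keeps one sign and which reaches either $X$ or a point where $E$ is controlled by hypothesis. Such a geodesic can exit through $\partial M \setminus X$ --- this is exactly where $(\dagger)$ is essential, by continuing into the double $M^\dagger$ where $f^\dagger$ is still $\mathcal F k$-concave --- or it can reach an interior critical point of $f$ before $X$, where $E = kf^2$ and one must separately argue $kf^2 \leq L$, which is itself part of conclusion (2) and threatens circularity unless the argument is organized segment by segment. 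You acknowledge the first possibility only as something you "would need to handle," and you do not address the second at all. Finally, your computed inequality reads $|\nabla f|^2 + kf^2 \leq L$ while condition (2) in the statement has $\leq L^2$; a complete proof must reconcile this (likely a typo in the source, since the paper's own \eqref{idd} uses $L$ rather than $L^2$), but your sketch copies both forms without noticing the mismatch.
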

\smallskip

Now let $(M, g, \Phi)$ be a weighted Riemannian manifold with boundary such that $\overline \kappa\geq \mbox{sec}_M\geq \underline \kappa$ for $\underline \kappa<0< \overline \kappa$ and $$\ric_{g}^{\Phi,N}\geq K=:(N-1)\eta$$ with $\eta:=  \frac{K}{N-1}$.  
\smallskip

The function  $\Phi^{\frac{1}{N-n}}$ is  $\mathcal F \theta$-concave with $\theta:=\min\{0,-\overline \kappa, \underline \kappa, \eta\}$ because of Fact \ref{firstfact}.  Moreover,  $\ric_g^{\Phi, N} \geq (N-1) \theta$ and $\mbox{sec}_M\geq \theta$. By Corollary \ref{cor:pos} we have $\Phi>0$ on $M\backslash \partial M$. We assume $X= \Phi^{-1}(\{0\})\subset \partial M$ is a connected component of $\partial M$.
\smallskip

\begin{assumption} We assume that 
$$
\sup_M |\nabla \Phi|<\infty.
$$
In particular, there exists $L>0$ such that
\begin{align}\label{idd}
\mbox{ $L\geq \theta \Phi^\frac{2}{N-n}$ \ \& \  $|\nabla \Phi^{\frac{1}{N-n}}|^2+\theta \Phi^\frac{2}{N-n}\leq L$.}
\end{align}
We note that first inequality holds trivially.

Since $\Phi\in C^{\infty}(M)$, this is satisfied, for instance, if $M$ is compact.
\end{assumption}
\noindent

We set $$\cN:=\min\{n\in \N: n\geq N\}$$
but in  the following  we assume by abuse of notation that $\lceil N \rceil=N\in \N$. 
We  also set $\Phi^{\frac{1}{N-n}}=f$. 

We will use an idea introduced by Lott in \cite{lobaem} to study the weighted Riemannian manifold $(M, g, \Phi)$.

\begin{proposition}\label{prop:warped}
Let $(F, g_F)$ be a compact $(N-n)$-dimensional Riemannian manifold with $$\ric_F\geq  (N-n-1) L- (N-1)\theta\max_{p \in M}   f(p) +  (N-1)\eta=:(N-n-1)\tilde L.$$
Then the warped product $ \mathring M\times_{\mathring f} F= (\mathring C, \tilde g)$ satisfies $\ric_{\mathring M\times_{\mathring f} F}\geq (N-1)\eta$.
\end{proposition}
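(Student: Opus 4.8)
The strategy is simply to substitute the hypotheses into the curvature formula for warped products in Proposition \ref{bigformula} and check pointwise that every term that appears is bounded below as claimed. Fix a point $(p,x) \in \mathring C = \mathring M \times F$ and a tangent vector $\xi + v \in T M_p \oplus T F_x$. By Proposition \ref{bigformula},
\begin{align*}
\ric_{\mathring M\times_{\mathring f} F}(\xi+v,\xi+v) = \ric_M(\xi,\xi) - (N-n)\frac{\nabla^2 f(\xi,\xi)}{f(p)} + \ric_F(v,v) - \left(\frac{\Delta f(p)}{f(p)} + (N-1)\frac{|\nabla f(p)|^2}{f^2(p)}\right)|v|_{\tilde g}^2.
\end{align*}
The first two terms combine to exactly $\ric_g^{\Phi,N}|_p(\xi,\xi)$ since $f = \Phi^{1/(N-n)}$, which by assumption is $\geq K|\xi|^2 = (N-1)\eta |\xi|^2$. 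So the whole issue is to show the remaining terms, involving only $v$, are bounded below by $(N-1)\eta |v|_{\tilde g}^2$, i.e. that
\begin{align*}
\ric_F(v,v) - \left(\frac{\Delta f(p)}{f(p)} + (N-1)\frac{|\nabla f(p)|^2}{f^2(p)}\right)|v|_{\tilde g}^2 \geq (N-1)\eta\, |v|_{\tilde g}^2.
\end{align*}

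For this, first I would bound the coefficient of $|v|_{\tilde g}^2$ from above. Using $\Delta f = \tr \nabla^2 f$ and the $\mathcal F\theta$-concavity of $f$ from Fact \ref{firstfact}, we get $\nabla^2 f \leq -\theta f\, g_M$, hence $\Delta f \leq -n\theta f$, so $\Delta f / f \leq -n\theta$. Combining with the second bound in \eqref{idd}, namely $|\nabla f|^2 \leq L - \theta f^2$, one gets $|\nabla f|^2 / f^2 \leq L/f^2 - \theta$. A cleaner route: since $f$ is bounded on $M$ and $L \geq \theta f^2$, the quantity $\Delta f/f + (N-1)|\nabla f|^2/f^2$ is bounded above by some explicit constant depending on $L$, $\theta$, $n$, $N$ and $\max_M f$; the hypothesis on $\ric_F$ has been engineered precisely so that $(N-n-1)\tilde L$ dominates this constant plus $(N-1)\eta$. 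So the plan is: (i) use $\ric_F(v,v) \geq (N-n-1)\tilde L |v|_{g_F}^2$, (ii) note $|v|_{\tilde g}^2 = f^2(p)|v|_{g_F}^2$, (iii) divide through by $f^2(p)$ and reduce the inequality to $(N-n-1)\tilde L \geq \Delta f(p)/f(p) \cdot f(p)^{?} + \dots$ — more precisely reduce to checking $(N-n-1)\tilde L\, f^2(p) \geq f(p)\Delta f(p) + (N-1)|\nabla f(p)|^2 + (N-1)\eta f^2(p)$, and (iv) bound the right side using $\Delta f \leq -n\theta f$ (so $f\Delta f \leq -n\theta f^2 \leq 0$, and in fact $\leq $ something harmless) and $|\nabla f|^2 + \theta f^2 \leq L$.

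The main obstacle, such as it is, is purely bookkeeping: tracking which way each inequality points and confirming that the stated lower bound $(N-n-1)\tilde L \geq (N-n-1)L - (N-1)\theta \max_M f + (N-1)\eta$ is exactly strong enough. I would organize the estimate so that after clearing $f^2$ the target becomes $(N-n-1)\tilde L \geq (N-1)|\nabla f|^2/f^2 + \Delta f / f + (N-1)\eta$; then $\Delta f/f \leq -n\theta \leq $ (harmless sign, drop it or keep as $\le 0$ if $\theta<0$ — note $\theta<0$ here so $-n\theta > 0$, so I must instead bound $\Delta f/f \le -n\theta \le -N\theta + \dots$, care needed), and $(N-1)|\nabla f|^2/f^2 \le (N-1)(L-\theta f^2)/f^2 = (N-1)L/f^2 - (N-1)\theta$. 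Since $f(p) \le \max_M f$, we have $L/f^2(p) \ge L/(\max_M f)^2$... this goes the wrong way, so instead I should use $L \ge \theta f^2$ differently, or bound $L/f^2(p)$ using a positive lower bound on $f$ on $\mathring M$ — but $f$ can vanish on $X$. The resolution is that near $X$ one uses the condition \eqref{condition}/Proposition \ref{prop:albi} controlling $|\nabla f|^2 \le L$ with the extra structure, so the term $(N-1)|\nabla f|^2/f^2$ stays controlled; away from $X$, $f$ is bounded below and everything is uniform. So the genuinely delicate point is the behavior as $f \to 0$ near $X$, and there one invokes the hypotheses \eqref{idd} together with the warped-product collapsing analysis already set up (Theorem \ref{th:ketterer1}, Proposition \ref{prop:albi}); I expect the proof to reduce to quoting those and then a short direct computation on $\mathring M \setminus (\text{nbhd of }X)$.
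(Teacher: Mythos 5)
Your overall frame is right (plug into Proposition \ref{bigformula}, recognize $\ric_M-(N-n)\nabla^2 f/f=\ric_g^{\Phi,N}$, then control the vertical terms), but the bookkeeping goes wrong at the crucial point and that error causes you to misdiagnose a singularity and reach for the wrong tools.

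The key observation you half-see but then lose is that $|v|_{\tilde g}^2 = f^2(p)\,|v|_{g_F}^2$, so the factor $1/f^2$ in the coefficient $\bigl(\Delta f/f + (N-n-1)|\nabla f|^2/f^2\bigr)$ is cancelled exactly when multiplied by $|v|_{\tilde g}^2$. (Note also that the coefficient is $(N-n-1)=\dim F - 1$, as used in the paper's proof, not $(N-1)$; the displayed statement of Proposition \ref{bigformula} appears to contain a typo that you inherited.) Carrying this through and dividing by $|v|_{g_F}^2$, the target inequality is
\begin{align*}
(N-n-1)\tilde L \;\geq\; f\,\Delta f + (N-n-1)|\nabla f|^2 + (N-1)\eta f^2 ,
\end{align*}
with \emph{no} $1/f^2$ anywhere. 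You instead wrote $(N-n-1)\tilde L\, f^2(p)$ on the left, which introduces a spurious $f^2$ and makes the inequality fail as $f\to 0$ on $X$ (the left side vanishes while $(N-n-1)|\nabla f|^2$ does not). That phantom failure is what drives you into worrying about $L/f^2(p)$, a positive lower bound for $f$, and the behavior near $X$. In the correct form, $f\,\Delta f\leq -n\theta f^2$ by $\mathcal F\theta$-concavity and $|\nabla f|^2\leq L-\theta f^2$ by \eqref{idd} give a uniform bound $(N-n-1)L - (N-1)\theta f^2 + (N-1)\eta f^2$ with $f$ ranging over the compact $M$, and the hypothesis on $\tilde L$ was engineered to dominate this; the argument is a two-line pointwise estimate, and the compactness of $M$ already provides all uniformity needed.

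Because of this, your proposed escape — invoking Theorem \ref{th:ketterer1} and Proposition \ref{prop:albi} to "control $|\nabla f|^2/f^2$ near $X$" — is not only unnecessary but the wrong kind of tool: those results concern the synthetic curvature-dimension condition for the \emph{metric measure} warped product $M\times_f^{N-n}F$, whereas Proposition \ref{prop:warped} asserts a pointwise lower bound for the classical Ricci tensor of the \emph{smooth Riemannian} manifold $\mathring M\times_{\mathring f}F$ (which excludes $X$). The paper's proof is exactly the direct pointwise computation above and quotes neither result. Fix the $f^2$ slip and your plan collapses to the paper's argument.
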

\begin{proof}
%We recall the formula of the Ricci curvature of the warped product $M\times_f F$ for a smooth function $f:M\rightarrow (0,\infty)$. 
Let  $\xi+v\in TM_p\oplus TF_x$. Then we have by Proposition \ref{bigformula}
\begin{align*}
\ric_{M\times_f F}(\xi+v, \xi+v)&=\ric_{M}(\xi, \xi)-(N-n)\frac{\nabla^2f(\xi, \xi)}{f(p)}\\
+&\ric_{\sF}(v,v)-\left(\frac{\Delta f(p)}{f(p)}+(N-n-1)\frac{(\nabla f_p)^2}{f^2(p)}\right)|v|_{\tilde g}^2.
\end{align*}
%We note that $|\tilde{V}_{(p,x)}|_{\tilde g}^2=f(p)|V_x|_{g_F}^2$.
Since $f= \Phi^{\frac{1}{N-n}}$ we recognize  that 
$$\ric_{M}(\xi, \xi)-(N-n)\frac{\nabla^2f(\xi, \xi)}{f(p)}=\ric^{\Phi,N}(\xi, \xi)\geq (N-1)\eta |\xi|_{g_i}^2.$$
Moreover, because of \eqref{idd} and since $f$ is $\mathcal F\theta$-concave, it follows
$$\frac{\Delta f(p)}{f(p)}+(N-n-1)\frac{(\nabla f_p)^2}{f^2(p)}\leq  - n\theta +(N-n-1) \left(\frac{ L}{f^2(p)}- \theta\right).$$
Combining these formulas together with $\ric_F\geq (N-n-1) \tilde L$  yields
\begin{align*}&\!\!\!\!\!\!\!\!\ric_{M\times_{f}F}(\xi+v, \xi+v)\geq (N-1)\eta |\xi|^2_{g_i} \\
&+ \ric_F(v,v) - (N-n-1)\frac{L}{f^2(p)} f^2(p)|v|^2_{g_F}+ (N-1)\theta f^2(p) |v|^2_{g_F}\\
=& (N-1)\eta(|\xi|_{g_i}^2 + |v|_{g_F}^2).
\end{align*}
This is the claim.
\end{proof}
In the following we choose $F=r\mathbb S^{N-n}$ with $r>0$ such that $\frac{1}{ r^2} \geq  \tilde L.$
We will often omit the dependency on $r$ and write $M\times_f F$ for $M\times_{f} r\mathbb S^{N-n}$. 
%as well as $\mathring C= \mathring M\times_{\mathring f} \mathbb S^{N-n}$.
%
%The warped product metric is 
%\begin{displaymath}
% \tilde g=(\pi_{\sM})^*g|_{\mathring M}+r^2(f_i\circ\pi_{\sM})^2(\pi_{\mathbb S^{\lceil N \rceil-n}})^*g_{\mathbb S^{\lceil N \rceil-n}} \mbox{ on } C.
%\end{displaymath}
\smallskip

In general the space $M\times_fF$ is not a smooth manifold in points where the metric $g$ degenerates. These are the points $(p,x)\in M\times_fF$ with $p\in X$ where $f(p)=0$. 
%
%If $F= r\mathbb S^{N-n}$,  we can modify $f$ in a small neighborhood of $X$ such that $\nabla^2 f\leq 0$  and $\langle \nabla f, \nu\rangle= \frac{1}{r^2}$ on $X_i$ where $\frac{1}{r^2}$ is exactly the curvature of $r\mathbb S^{N-n}$. 
%By explicitely computing the curvature of $C$ close to $X$ using the formula for the curvature of warped products \cite{oneillsemi} o
%
%Then one can check that $C$ is a smooth manifold at these points.

\begin{fact}
The boundary $\partial \mathring C$ of $\mathring C$ is $\partial \mathring{M}\times F$. 
\end{fact}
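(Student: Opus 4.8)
The plan is to reduce the statement to the elementary fact that, for manifolds with boundary $A$ and $B$, one has $\partial(A\times B)=(\partial A\times B)\cup(A\times\partial B)$, and then to observe that $F$ is closed so the second term drops out. The only thing to check is that $\mathring C=\mathring M\times_{\mathring f}F$ is, as a smooth manifold with boundary, simply the product $\mathring M\times F$. First I would record that $\mathring M$ is itself a manifold with boundary: by Corollary \ref{cor:pos} we have $\Phi>0$ on $M\setminus\partial M$, hence $f^{-1}(\{0\})\subset\partial M$, and since by hypothesis $X=\partial M\cap f^{-1}(\{0\})$ is a component of $\partial M$ we in fact get $f^{-1}(\{0\})=X$. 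Thus $\mathring M=M\setminus X$ is an open subset of $M$ (as $X$ is closed in $M$), so it is again a smooth manifold with boundary, with $\partial\mathring M=\partial M\setminus X$.

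Next I would note that on $\mathring M$ the warping function satisfies $\mathring f=f|_{\mathring M}>0$, so the tensor $\tilde g=(\pi_{\sM})^*g_{\sM}|_{\mathring M}+(f\circ\pi_{\sM})^2(\pi_F)^*g_F$ is a genuine (everywhere positive-definite) smooth Riemannian metric on the product manifold $\mathring M\times F$; no degeneration occurs precisely because $f^{-1}(\{0\})$ has been removed. Hence $(\mathring C,\tilde g)$ is a smooth Riemannian manifold whose underlying smooth manifold with boundary is exactly $\mathring M\times F$. Applying $\partial(A\times B)=(\partial A\times B)\cup(A\times\partial B)$ with $A=\mathring M$, $B=F$, and using $\partial F=\emptyset$ since $F$ is a closed Riemannian manifold, we conclude $\partial\mathring C=\partial(\mathring M\times F)=\partial\mathring M\times F$, which is the assertion.

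There is essentially no hard step here; the statement is a bookkeeping remark whose only content is the two observations above, namely that removing $f^{-1}(\{0\})$ makes the warped metric nondegenerate (so that $\mathring C$ is an honest manifold with boundary) and that $F$ contributes no boundary. If one wanted to be completely careful, the single point deserving a line of justification is that an open subset of a manifold with boundary is again a manifold with boundary with the expected boundary, which is standard.
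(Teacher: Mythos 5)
Your argument is correct and is exactly the justification the paper leaves implicit: the Fact is stated without proof, since by definition $\mathring C=\mathring M\times F$ with $\mathring f>0$ there, so $\tilde g$ is a genuine Riemannian metric on the product and $\partial(\mathring M\times F)=\partial\mathring M\times F$ because $F$ is closed. Your extra remarks (that $\mathring M$ is an open subset of $M$, hence a manifold with boundary, and that $f^{-1}(\{0\})=X$ via Corollary \ref{cor:pos} in the weighted setting) are consistent with the paper's setup and fill in the routine details.
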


\begin{proposition} \label{prop:connection} Let $(M, g, \Phi)$ be a weighted Riemannian manifold with boundary. Consider the Riemannian warped product $\mathring M\times_{\mathring f} F=(\mathring C, \tilde g)$ for $f=\Phi^{\frac{1}{N-n}}$ and $F$ as before. Let $p\in \partial M\backslash f^{-1}(0)= \partial \mathring M$ and $x\in F$.
It holds 
$$\tilde \Pi (\xi, v, \chi, w)= \Pi_{\partial M}(\xi, \chi)- g(\nu|_p, \nabla \log f|_p) f^2(p) g_F(  v,  w)$$
for $\xi, \chi\in T_p\partial \mathring M, \ v, w\in T_xF$
where $\nu$ is the inward unit normal vector field along $\partial \mathring M$ and $\tilde \Pi$ is the second fundamental form of $\partial \mathring C$. 
\end{proposition}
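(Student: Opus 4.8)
The plan is to compute the second fundamental form of the hypersurface $\partial \mathring C = \partial \mathring M \times F$ inside the warped product $(\mathring C, \tilde g) = \mathring M \times_{\mathring f} F$ by using the standard formula $\tilde\Pi(U,V) = \tilde g(\tilde\nabla_U \tilde\nu, V)$ where $\tilde\nu$ is the inward unit normal along $\partial\mathring C$, together with the O'Neill-type formulas for the Levi-Civita connection of a warped product. First I would observe that the inward unit normal to $\partial\mathring M \times F$ in $\mathring C$ is simply the horizontal lift $\tilde\nu$ of the inward unit normal $\nu$ to $\partial\mathring M$ in $M$; this is because the warped-product metric $\tilde g = \pi_M^* g_M + (f\circ\pi_M)^2 \pi_F^* g_F$ splits the tangent space $T_{(p,x)}\mathring C$ orthogonally as $T_pM \oplus T_xF$, with the $M$-factor carrying its own metric, so a vector orthogonal to $T_p\partial\mathring M$ within $T_pM$ stays orthogonal to all of $T_p\partial\mathring M \oplus T_xF$ and has the same unit length.

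Next I would split the computation of $\tilde\Pi$ into the three cases according to whether the arguments are horizontal (tangent to $\partial\mathring M$, lifted) or vertical (tangent to $F$). For horizontal $\xi,\chi \in T_p\partial\mathring M$: since the warped-product connection applied to two horizontal vectors is just the horizontal lift of the $M$-connection (the warping function only affects terms involving vertical vectors), one gets $\tilde\Pi(\xi,\chi) = \Pi_{\partial M}(\xi,\chi)$ directly. For one horizontal and one vertical argument: using $\tilde\nabla_X V = \tilde\nabla_V X = (X \log f) V$ for $X$ horizontal and $V$ vertical, one checks the normal component vanishes (since $\tilde\nu$ is horizontal and $(X\log f)V$ is vertical), so the mixed term of $\tilde\Pi$ is zero. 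For two vertical vectors $v,w \in T_xF$: the key warped-product identity $\tilde\nabla_V W = \nabla^F_V W - \tilde g(V,W)\,\nabla\log f$ (the gradient of $\log f$ taken in $M$ and lifted) gives $\tilde g(\tilde\nabla_v \tilde\nu, w) = -\tilde g(\tilde\nabla_v w, \tilde\nu) = \tilde g(v,w)\, g_M(\nabla\log f, \tilde\nu) = f^2(p)\, g_F(v,w)\, g_M(\nabla\log f|_p, \nu|_p)$; matching the sign convention (inward normal) yields the stated $-g(\nu|_p, \nabla\log f|_p) f^2(p) g_F(v,w)$ term. Assembling the three pieces by bilinearity gives the formula, using that $\tilde\Pi$ is a symmetric bilinear form so the general argument $\xi+v$, $\chi+w$ decomposes as the sum of the pure horizontal and pure vertical contributions.

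The main obstacle, and the only place requiring care, is getting the sign and normalization of the vertical term exactly right: one must be consistent about (i) whether $\tilde\nu$ is the inward or outward normal, which flips the sign of $\tilde\Pi$, and (ii) the factor $f^2(p)$ coming from converting between $\tilde g$-length and $g_F$-length of vertical vectors. I would double-check this against the warped-product Gauss/Weingarten formulas in \cite{oneillsemi}, and sanity-check on the model case $M = [0,\epsilon)$, $f(t) = t$, $F = \mathbb S^{N-n}$, where $\mathring M \times_{\mathring f} F$ is (an open subset of) Euclidean space and $\partial\mathring M \times F = \{t_0\} \times \mathbb S^{N-n}$ is a round sphere of radius $t_0$ sitting in $\R^{N-n+1}\times(\text{nothing})$ with the expected mean curvature, confirming $g(\nu, \nabla\log f) = 1/t_0$ matches the sphere's principal curvatures. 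Everything else is a routine application of the connection formulas already recalled from \cite{oneillsemi, ketterer}.
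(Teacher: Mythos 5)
Your approach is essentially the paper's: lift the inward normal $\nu$ to the horizontal vector $(\nu,0)$, invoke the O'Neill-type identities for the warped-product Levi--Civita connection (the three identities you quote are exactly the ones the paper records), and decompose the computation by horizontal/vertical type, observing that the cross terms vanish. The one genuine difference is that you work with the Weingarten formulation $\tilde\Pi(U,V)=\tilde g(\tilde\nabla_U\tilde\nu,V)$, whereas the paper computes $\tilde\Pi(U,V)=\tilde g(\tilde\nu,\tilde\nabla_U V)$ directly; with the \emph{same} (inward) normal these two definitions differ by a sign, which is where your bookkeeping goes astray.

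Concretely, your vertical--vertical calculation is correct in your convention and yields
$\tilde g(\tilde\nabla_v\tilde\nu,w)=+\,g(\nu,\nabla\log f)\,f^2(p)\,g_F(v,w)$, i.e.\ the \emph{opposite} sign from the stated formula. You then flip it and attribute the flip to ``matching the sign convention (inward normal),'' but that is a misdiagnosis: the paper also uses the inward normal, so inward-vs-outward is not the issue. The actual reason is that your $\tilde\Pi$ \emph{and} your $\Pi_{\partial M}$ are both negatives of the paper's; once you negate both sides of your identity $\tilde\Pi=\Pi_{\partial M}+g(\nu,\nabla\log f)f^2g_F$ you recover the stated $\tilde\Pi=\Pi_{\partial M}-g(\nu,\nabla\log f)f^2g_F$ in the paper's convention. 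So the computation is right, but the explanation of the sign flip is wrong and should be replaced by the observation that the two standard definitions of the second fundamental form differ by a sign. Your ``sanity check'' reproduces the same confusion: with $M=[0,t_0]$, $f(t)=t$, and $\nu=-\partial_t$ inward at $t_0$, one gets $g(\nu,\nabla\log f)=-1/t_0$, not $+1/t_0$; the sign you wrote corresponds to the outward normal. Fix the attribution and the sanity check and the proof is in order.
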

\begin{proof}
We consider vector fields $\mathcal X,\mathcal Y$ on $\mathring M=M\backslash f^{-1}(\{0\})$ and $V,W$ on $F$ such that $\mathcal X_p=\xi, \mathcal Y_p=\chi \in T_p\partial M$ and $V_x=v, W_x=w$. Let  $\tilde{\mathcal X}, \tilde{\mathcal Y}, \tilde{V}, \tilde{W}$ be the horizontal and vertical lifts, respectively. 
$\tilde{\nabla}$ denotes the Levi-Civita-connection of {$\mathring M\times_{\mathring{f}} F$}.  
Then the following identities holds
\begin{itemize}
\item[(1)] $\tilde{\nabla}_{\tilde{X}}\tilde{Y}=\widetilde{\nabla^{\mathring{M}}_{X}Y}$,
\smallskip
\item[(2)] $\tilde{\nabla}_{\tilde{X}}\tilde{V}=\tilde{\nabla}_{\tilde{V}}\tilde{X}=\left(\frac{Xf}{f}\circ \pi_{\sM}\right)\tilde{V}$,
\smallskip
\item[(3)] $\tilde{\nabla}_{\tilde{V}}\tilde{W}=-\left(\frac{\tilde g( \tilde{V},\tilde{W})}{f}\circ \pi_{\sM}\right)\widetilde{\nabla f}+\widetilde{\nabla^F_V W}$.
\end{itemize}
For $(p,x)\in  \partial \mathring M\times F$, we have $T_{(p,x)}\partial \mathring C= T_p\partial \mathring M\oplus T_x F$ as well as $N_{(p,x)} \partial \mathring C= N_{p} \partial \mathring M \oplus \{0\}$ where $N\partial \mathring M$ is the normal vector bundle of $M$. Thus, if $\nu$ is the inward unit normal vector field along $\partial \mathring M$, then $(\nu, 0)$ is the inward unit normal along $\partial \mathring C$. 
We compute:
\begin{align*}
\tilde \Pi_{(p,x)}\left({\mathcal X}_p,  V_x, {\mathcal Y}_p,  W_x\right)=& \tilde g\left((\nu, 0),\tilde \nabla_{(\tilde{\mathcal X}+ \tilde V)}( \tilde{\mathcal Y}+\tilde W)\big|_{(p,x)}\right)\\
=&\tilde g\left( (\nu,0), \widetilde{\nabla^{\mathring M}_{\mathcal X}\mathcal Y}\big|_{(p,x)}\right) + \tilde g\left( (\nu,0), \tilde{\nabla}_{\tilde{\mathcal X}}\tilde W\big|_{(p,x)}\right)\\
& +\tilde g\left((\nu,0), \tilde{\nabla}_{\tilde V}\tilde{\mathcal Y}\big|_{(p,x)}\right)+ \tilde g\left( (\nu,0), \tilde \nabla_{\tilde V}\tilde W\big|_{(x,p)}\right)\\
=&g\left( \nu, \nabla^{\mathring M}_{\mathcal X}\mathcal Y\big|_p\right)- \frac{\tilde g( \tilde V_{(p,x)}, \tilde W_{(p,x)})}{f(p)} g\left( \nu, \nabla f\big|_p\right)\\
=&\Pi_p(\mathcal X_p, \mathcal Y_p)-\underbrace{ {\tilde g( \tilde V_{(p,x)}, \tilde W_{(p,x)}) } }_{f^2(p) g_F(V_x, W_x)}g\left( \nu_i, \nabla \log f\big|_p\right).
\end{align*}
This is the claim. 
\end{proof}
\begin{remark}\label{rem:2sided}
If $\Pi\geq 0$ and $g(\nu, \nabla \log f)\leq 0$ on $\partial M\backslash X$, then \begin{center}$\tilde \Pi|_{(p,x)}\geq 0$ $\forall x\in F$ and $\forall p\in \partial M\backslash X$.\end{center}

Since $f\in C^\infty(M)$ and  $\partial M$ is compact, we always have that $|g(\nu, \nabla \log f)|$ is uniformily  bounded on $\partial M\backslash X$. Hence 
$$\big\| \tilde \Pi\big\| \leq A<\infty \mbox{ on } \partial C\backslash X\times F.$$
\end{remark}

\begin{corollary} Let $(M,g, \Phi)$ and $F=\mathbb S^{N-n}$ be as before. (1)
Then for every point $(p,x)$ with $p\in (M\backslash \partial M) \cup X$ in the metric warped product $M\times^{N-n}_fF$ there exists a neighborhood $U_{(p,x)}$ that satisfies the local curvature-dimension $CD_{loc}(K, N)$.  (2) If $\Pi_{\partial M}\geq 0$ and $g(\nu, \nabla \log f)\leq 0$ on $\partial M\backslash X$, then $M\times^{N-n}_f F$ satisfies the condition $CD(K,N)$. 
\end{corollary}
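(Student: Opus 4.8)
The plan is to verify the local condition $CD_{loc}(K,N)$ at every point of $M\times_f F$ and then globalize. Write $\m=f^{N-n}\vol_M\otimes\vol_F$; on the dense open set $\mathring M\times_{\mathring f}F$ this is the Riemannian volume of the smooth $N$-dimensional metric $\tilde g$, while the degenerate stratum $X\times F$ is $\m$-null. At a point $(p,x)$ with $p\in M\setminus\partial M$ we have $f(p)>0$ by Corollary \ref{cor:pos}, so $(p,x)$ lies in the smooth part $\mathring M\times_{\mathring f}F$, which by Proposition \ref{prop:warped} carries $\ric_{\tilde g}\ge(N-1)\eta=K$; the classical equivalence of lower Ricci bounds with the curvature-dimension condition for Riemannian manifolds \cite{cms,sturmrenesse} then shows that a sufficiently small geodesically convex ball $U_{(p,x)}$ satisfies $CD(K,N)$, and hence $CD_{loc}(K,N)$ since geodesics between its points stay inside. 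This handles the interior part of (1).

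The delicate case of (1) is $(p,x)$ with $p\in X$, where $\tilde g$ degenerates and one must call on the warped-product-over-Alexandrov result, Theorem \ref{th:ketterer1}, in localized form. Its hypotheses are available near $X$: by \eqref{idd}, $|\nabla f|^2\le L$ on $X$ (recall $f\equiv0$ there), and as $K_F=r^{-2}$ may be taken as large as desired we may assume $|\nabla f|\le\sqrt{K_F}$ on $X$; moreover $\Pi_X\ge0$ holds automatically, since $f$ is $\mathcal F\theta$-concave up to $X$, $g(\nu,\nabla f)>0$ on $X$ (otherwise an ODE comparison along the inward normal geodesic would force $f\equiv0$ near $X$, against Corollary \ref{cor:pos}), and $\nabla^2f(w,w)|_p=-g(\nu,\nabla f)\,\Pi_X(w,w)$ for $w\in T_pX$ while $\nabla^2f|_X\le0$. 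The conditions of Theorem \ref{th:ketterer1} concerning $\partial M\setminus X$ involve only a set at positive distance from $p$ and are thus irrelevant locally; concretely one applies the local content of the proof of Theorem \ref{th:ketterer1} (or replaces $M$ outside a thin collar of $X$ by a suitable convex cap that keeps the Bakry-Emery bound and the $\mathcal F\theta$-concavity, so that $(\dagger)$ becomes vacuous, and then applies Theorem \ref{th:ketterer1}; the resulting warped product agrees with $M\times_f F$ near $(p,x)$ as a metric measure space). Either way $CD_{loc}(K,N)$ holds near $(p,x)$, completing (1). I expect this step — controlling the collapsed stratum $X\times F$ and packaging the local geometry so that Theorem \ref{th:ketterer1} applies — to be the main obstacle.

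For (2), assume also $\Pi_{\partial M}\ge0$ and $g(\nu,\nabla\log f)\le0$ on $\partial M\setminus X$. At $(p,x)$ with $p\in\partial M\setminus X$ one has $f(p)>0$, so a neighbourhood is a smooth Riemannian manifold with boundary $(\partial M\setminus X)\times F$; by Proposition \ref{prop:connection} and Remark \ref{rem:2sided} the second fundamental form $\tilde\Pi$ is nonnegative there, and $\ric_{\tilde g}\ge K$ up to the boundary, so Theorem \ref{th:cdbe} applied to a small geodesically convex half-ball gives $CD_{loc}(K,N)$. Combined with the two previous paragraphs, every point of $M\times_f F$ has a $CD_{loc}(K,N)$ neighbourhood. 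Since $M\times_f F$ is compact (hence proper and of finite mass) and nonbranching — off the $\m$-null set $X\times F$ it is a smooth Riemannian manifold, and near $X\times F$ it is locally a metric cone over a round sphere of radius at most $1$ (using $|\nabla f|\le\sqrt{K_F}$), which is nonbranching — the globalization principle recalled after the definition of $CD_{loc}$ (\cite{cavmil}) yields $CD(K,N)$ for $(M\times_f F,\m)$, proving (2). Under the hypotheses of (2) one may alternatively verify that all global hypotheses of Theorem \ref{th:ketterer1} hold simultaneously — $\Pi_{\partial M}\ge0$ everywhere, $(\dagger)$ being equivalent to $g(\nu,\nabla\log f)\le0$ on $\partial M\setminus X$, and $|\nabla f|\le\sqrt{K_F}$ on $X$ — and apply it directly.
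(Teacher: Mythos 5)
Your strategy (verify $CD_{\mathrm{loc}}(K,N)$ everywhere, then globalize) is genuinely different from the paper's, which goes the other way around: it first applies Theorem \ref{th:ketterer1} \emph{globally} to obtain $CD((N-1)\underline\kappa,N)$ for $M\times^{N-n}_f F$, and then \emph{upgrades} the constant to $K$ by combining the pointwise Ricci bound of Proposition \ref{prop:warped} on $\mathring M\times_{\mathring f}F$ with entropy-convexity and the structural Theorem 3.4 of \cite{ketterer1} (which makes the collapsed stratum $X\times F$ negligible for Wasserstein geodesics). For part (1) the paper just localizes that whole two-step argument.

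The gap in your proposal is precisely the missing upgrade step. Theorem \ref{th:ketterer1} takes as input the \emph{sectional} curvature lower bound $\kappa$ for $(M,g)$ and the $\mathcal F\kappa$-concavity of $f$, and its conclusion is $CD((N-1)\kappa,N)$, not $CD(K,N)$ — in the present situation it only gives $CD((N-1)\underline\kappa,N)$, and one generally has $(N-1)\underline\kappa<K$. So when you invoke Theorem \ref{th:ketterer1} near the degenerate stratum in your second paragraph (in whichever of the two variants you sketch), the local constant you obtain near $X\times F$ is the weaker one, and your local-to-global scheme then only yields $CD((N-1)\underline\kappa,N)$. The same objection applies to your "alternative" argument at the end of (2): checking all hypotheses of Theorem \ref{th:ketterer1} globally still only yields $CD((N-1)\underline\kappa,N)$, not $CD(K,N)$. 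To close the gap you would need, in addition, the argument the paper actually runs: show that the Ricci bound $K$ on $\mathring M\times_{\mathring f}F$ gives the $\tau$-convexity inequality \eqref{ineq:cd} with constant $K$ along Wasserstein geodesics concentrated in the smooth part, and then cite Theorem 3.4 of \cite{ketterer1} (or an equivalent negligibility argument for $X\times F$) to push this through the collapsed stratum. Two secondary points: your cap/collar construction near $X$ is only sketched, and the paper avoids it entirely by keeping the Theorem \ref{th:ketterer1} application global; and the essential-nonbranching needed for the local-to-global step is asserted by a cone picture but not proved — the paper gets it for free from the global Alexandrov structure delivered by Theorem \ref{th:ketterer1}. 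Your interior and boundary cases for $p\notin X$ (Proposition \ref{prop:warped} plus \cite{cms,sturmrenesse} / Theorem \ref{th:cdbe}) are fine.
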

\begin{proof} We prove the second claim.
We observe  that, if $\Pi_{\partial M}\geq 0$, then $(M,g_M)$ is an Alexandrov space with curvature bounded from below by $\theta$. Moreover $f$ is $\mathcal F\theta$-concave and satisfies $|\nabla f|^2 + \theta f^2 \leq L$.  The boundary condition $g(\nu, \nabla \log f)\leq 0$ on $\partial M\backslash X$ implies the condition $(\dagger)$ above. We can apply Theorem \ref{th:ketterer1} and  hence $M\times^{N-n}_f F$ satisfies the condition $CD((N-1)\theta, N)$.  In particular $\tilde \Pi\geq 0$ on $(\partial M\backslash X) \times F$. On the other hand by   Proposition \ref{prop:connection} the Ricci tensor of $\mathring M\times_{\mathring f}^{N-n} F$ is bounded from below by $K$. Hence, following standard arguments about the characterization of lower Ricci curvature bounds via entropy convexity \cite{cms, stugeo2, kettererlp}, when an $L^2$-Wasserstein geodesic $(\mu_t)_{t\in [0,1]}$ between absolutely continuous measures $\mu_0$ and $\mu_1$ is concentrated in $\mathring M\times_{\mathring f}^{N-n} F$, then  the inequality \eqref{ineq:cd} for the $N$-Renyi entropy holds along $(\mu_t)_{t\in [0,1]}$. Together with  Theorem 3.4 in \cite{ketterer1} {\color{black} (for which we require the condition $CD((N-1)\theta, N)$ first)} this yields that $M\times_f^{N-n} F$ satisfies the condition $CD(K,N)$. 
\smallskip
\\
The first claim follows similarly.  For this we notice that we can localize the previous arguments on neighborhoods $U_{(p,x)}$ of points $(p,x)$ in the metric warped product with $p\in (M\backslash \partial M)\cup X$. 
\end{proof}

\subsection{Gluing of warped products} Now we apply the results about warped products to our gluing construction. 
\smallskip

Let $(M_i, g_i,\Phi_i)_{i=0,1}$ be weighted Riemannian manifolds with boundary that satisfy Assumption \ref{ass:1} and \ref{ass:3}. 
We set $f_i= \Phi_i^{\scriptscriptstyle \frac{1}{N-n}}$ and choose $F=r\mathbb S^{N-n}$  for $r>0$ such that $\ric_{r\mathbb S^{N-n}}\geq \tilde L$ as before. Then we  build the weighted warped products $M_i\times_{f_i}^{N-n} F=C_i$, $i=0,1$. 
%Recall that $\mathring M= M\backslash f^{-1}(\{0\})$, $\mathring f= f|_{\mathring M}$ and $\mathring C_i:= \mathring M_i \times^{N-n}_{\mathring f_i} F$. 
The   Riemannian warped product metrics  $\tilde g_i$, $i=0,1$, are defined on $\mathring C_i$. 
\medskip

Set $\tilde Y_i = Y_i \times \mathbb S^{\lceil N \rceil- n}$, $i=0,1$. $\tilde Y_0$ and $\tilde Y_1$ equipped with corresponding restricted metric are isometric via  $\tilde{\mathcal  I}: \tilde Y_0\rightarrow \tilde Y_1$ given by $\tilde{\mathcal I}(p,x)=(\mathcal I(p), x)$.
\smallskip

Hence we can define the metric glued space $\mathring C_0\cup_{\tilde{ \mathcal{I}}} \mathring C_1$. 
The corresponding $C^0$ Riemannian metric is 
$$\tilde g= \begin{cases} \tilde g_0 & \mbox{ on } \mathring C_0\\
\tilde g_1 & \mbox{ on } \mathring C_1.
\end{cases}
$$

We can follow the construction of the metric $g^\delta$ in Subsection \ref{subsec:gdelta}. This yields  a family of smooth Riemannian metric $(\tilde g^\delta)_{\delta>0}$ on $\mathring C_0\cup_{\tilde{\mathcal I}}\mathring C_1$ such that 
\begin{enumerate}
\item $\tilde g^\delta$ coincides with $\tilde g$ outside of $B_\delta(\tilde Y_0 \simeq \tilde Y_1)$, 
\smallskip
\item $\tilde g^\delta$ converges uniformly to $\tilde g$ as $\delta \downarrow 0$. 
\end{enumerate}
\smallskip
{\bf First, we procede assuming (2) in 
 Assumption \ref{ass:3}, i.e. \begin{align}\label{ass3}\mbox{$\sum_{i=0,1} \langle \nabla \log \Phi_i, \nu_i \rangle \leq 0\mbox{ on } Y_0\simeq Y_1$.}\end{align}} By Proposition \ref{prop:connection} it   follows
for $p\in Y_0\simeq Y_1$  that
$$(\tilde \Pi_0 + \tilde \Pi_1)|_{(p,x)}=: \tilde \Pi_{(p,x)}\geq 0 \ \forall x\in F.$$
Hence, we obtain the following corollary.
\begin{corollary}\label{th:gluedwarped} $\ric_{\tilde g^\delta}\geq K -\epsilon(\delta)$ with $\epsilon(\delta)\rightarrow 0$ for $\delta\downarrow 0$.
\end{corollary}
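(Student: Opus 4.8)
The plan is to combine the warped-product Ricci computation from Proposition \ref{prop:warped} and the boundary form computation from Proposition \ref{prop:connection} with the gluing machinery recalled in Theorem \ref{th:glue2}. First I would observe that on the open part $\mathring C_i = \mathring M_i\times_{\mathring f_i}F$ the metric $\tilde g_i$ is genuinely smooth and, by Proposition \ref{prop:warped}, satisfies $\ric_{\tilde g_i}\geq (N-1)\eta = K$; this is the input curvature bound on each side of the gluing. Second, I would note that the gluing interface $\tilde Y_i = Y_i\times \mathbb S^{\lceil N\rceil -n}$ is a compact, connected (since $Y_i$ is and $\mathbb S^{\lceil N\rceil -n}$ is) component of $\partial \mathring C_i$ sitting away from the degeneracy locus $X_i\times F$ (because $\Phi_i>0$ on $Y_i$, hence $f_i>0$ there), so near $\tilde Y_0\simeq \tilde Y_1$ everything is smooth and the construction of Subsection \ref{subsec:gdelta} applies verbatim to $(\mathring C_i,\tilde g_i)$.

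The key step is to check the boundary hypothesis needed for Theorem \ref{th:glue2}, namely $\tilde\Pi_0+\tilde\Pi_1\geq 0$ on $\tilde Y_0\simeq \tilde Y_1$. By Proposition \ref{prop:connection}, for $p\in Y_0\simeq Y_1$, $x\in F$ and tangent vectors $\xi,\chi\in T_p\partial\mathring M_i$, $v,w\in T_xF$ one has
\begin{align*}
(\tilde\Pi_0+\tilde\Pi_1)\big((\xi,v),(\chi,w)\big) = (\Pi_0+\Pi_1)(\xi,\chi) - \Big(g_0(\nu_0,\nabla\log f_0) + g_1(\nu_1,\nabla\log f_1)\Big)f^2(p)\,g_F(v,w),
\end{align*}
where $f=f_0=f_1$ on $Y_0\simeq Y_1$ since $\Phi_0=\Phi_1$ there. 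Assumption \ref{ass:1} gives $\Pi_0+\Pi_1\geq 0$, and since $f_i=\Phi_i^{1/(N-n)}$ we have $\nabla\log f_i = \frac{1}{N-n}\nabla\log\Phi_i$, so the coefficient in front of the $F$-block equals $-\frac{1}{N-n}\sum_{i}\langle\nabla\log\Phi_i,\nu_i\rangle$, which under the present hypothesis \eqref{ass3} is $\geq 0$. Hence $\tilde\Pi_{(p,x)}\geq 0$ for all $x\in F$, as claimed in the text just before the corollary. Finally, I would invoke Theorem \ref{th:glue2} (and its proof recalled in Subsection \ref{subsec:gdelta}, in particular the sign of $\mbox{Ric}_{g_\delta}(\mathcal L)$ in \eqref{id:ricci}, which requires exactly $\langle\cdot,{\bf L}\cdot\rangle = \tilde\Pi\geq 0$) applied to the smooth manifolds-with-boundary $(\mathring C_i,\tilde g_i)$ with $\ric_{\tilde g_i}\geq K$ along the interface $\tilde Y_0\simeq\tilde Y_1$: it produces the family $(\tilde g^\delta)_{\delta>0}$ agreeing with $\tilde g$ off $B_\delta(\tilde Y_0\simeq\tilde Y_1)$, converging uniformly to $\tilde g$, with $\ric_{\tilde g^\delta}\geq K-\epsilon(\delta)$ and $\epsilon(\delta)\to 0$.

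The main obstacle I anticipate is purely a matter of locality and domains: $\mathring C_0\cup_{\tilde{\mathcal I}}\mathring C_1$ is noncompact (and not even complete, since the boundary components $X_i\times F$ are at finite distance but removed), so one cannot literally quote Theorem \ref{th:glue2} as a black box for a compact glued manifold. The point is that the perturbation $\tilde g^\delta$ is supported in $B_\delta(\tilde Y_0\simeq\tilde Y_1)$, a region with compact closure disjoint from the degeneracy locus $X_i\times F$, so the construction and the curvature estimate \eqref{est}--\eqref{id:ricci} are entirely local there and unaffected by the open/incomplete ends; away from that region $\tilde g^\delta=\tilde g$ still has $\ric\geq K$ by Proposition \ref{prop:warped}. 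Once this localization is spelled out, the curvature bound $\ric_{\tilde g^\delta}\geq K-\epsilon(\delta)$ holds globally on $\mathring C_0\cup_{\tilde{\mathcal I}}\mathring C_1$, which is the assertion of the corollary.
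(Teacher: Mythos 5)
Your proposal is correct and follows essentially the same route the paper takes: Proposition \ref{prop:warped} supplies $\ric_{\tilde g_i}\geq K$ on each warped piece, Proposition \ref{prop:connection} together with Assumption \eqref{ass3} gives $\tilde\Pi_0+\tilde\Pi_1\geq 0$ along $\tilde Y_0\simeq\tilde Y_1$, and Theorem \ref{th:glue2} (via the Kosovski\u{\i}--Schlichting construction of Subsection \ref{subsec:gdelta}) then yields the family $\tilde g^\delta$ with $\ric_{\tilde g^\delta}\geq K-\epsilon(\delta)$. Your added remark that the perturbation is supported in a compact collar of $\tilde Y_0\simeq\tilde Y_1$ away from the degeneracy locus, so the incompleteness of $\mathring C_0\cup_{\tilde{\mathcal I}}\mathring C_1$ causes no trouble, is a useful explicit spelling-out of a point the paper leaves implicit.
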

%{\color{black}
%\begin{proof} 
% With Propostion \ref{prop:warped} and Proposition \ref{prop:connection} we can apply Theorem \ref{th:glue2}.
%\end{proof}}
We apply this corollary to deduce a local curvature-dimension condition for $\tilde g$.
We set $\Int (\mathring C_0\cup_{\tilde{\mathcal I}} \mathring C_1) : = (\mathring C_0 \cup_{\tilde{\mathcal I}} \mathring C_1)\backslash \partial (\mathring C_0\cup_{\tilde{\mathcal I}} \mathring C_1)$. 
\begin{corollary}\label{cor:assum}
For every $(p,x)\in \Int(\mathring C_0\cup_{\tilde{\mathcal I}}\mathring C_1)$ there is $R>0$ such that  $B^{\tilde g}_{2R}((p,x))\subset \Int (\mathring C_0 \cup_{\tilde{\mathcal I}}\mathring C_1)$ and $B^{\tilde g}_R((p,x))$ satisfies the condition $CD_{loc}(K,N)$. 
\end{corollary}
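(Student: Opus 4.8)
The plan is to pass the smooth approximation of Corollary~\ref{th:gluedwarped} to the level of the curvature-dimension condition via stability of $CD(K,N)$ under pointed measured Gromov--Hausdorff convergence. Fix $(p,x)\in\Int(\mathring C_0\cup_{\tilde{\mathcal I}}\mathring C_1)$. Over the interior the warped metric $\tilde g$ is a genuine, non-degenerate $C^{0}$ Riemannian metric (the warped product degenerates only over $X_i$, which has already been removed when passing to $\mathring M_i$), and the glued space is locally compact there; hence I choose $R>0$ so small that $\overline{B^{\tilde g}_{4R}((p,x))}$ is compact and contained in $\Int(\mathring C_0\cup_{\tilde{\mathcal I}}\mathring C_1)$. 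Since $\tilde g^\delta\to\tilde g$ uniformly, for all small $\delta$ the set $\overline{B^{\tilde g^\delta}_{3R}((p,x))}$ is also compact and contained in the interior, and every $\tilde g^\delta$-minimizing geodesic joining two points of $B^{\tilde g^\delta}_R((p,x))$ stays inside $B^{\tilde g^\delta}_{3R}((p,x))$; this is why I fix the ambient radii once and only let $\delta$ vary.

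Next I would verify that $B^{\tilde g^\delta}_R((p,x))$ satisfies $CD_{loc}(K-\epsilon(\delta),N)$ inside $(\mathring C_0\cup_{\tilde{\mathcal I}}\mathring C_1,\tilde g^\delta,\vol_{\tilde g^\delta})$. By Corollary~\ref{th:gluedwarped} we have $\ric_{\tilde g^\delta}\geq K-\epsilon(\delta)$, and by our running convention $\lceil N\rceil=N\in\N$ equals the dimension of $\mathring C_i$. Given absolutely continuous $\mu_0,\mu_1$ concentrated on $B^{\tilde g^\delta}_R((p,x))$, the $W_2$-geodesic between them is induced by an optimal map along $\tilde g^\delta$-geodesics, all of which remain in the compact region $\overline{B^{\tilde g^\delta}_{3R}((p,x))}$ lying in the boundary-free interior of the smooth manifold; along these geodesics the Jacobian estimate for a Riemannian manifold with $\ric\geq K-\epsilon(\delta)$ and dimension $N$ gives precisely inequality~\eqref{ineq:cd} with constant $K-\epsilon(\delta)$. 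This computation is purely local and requires neither completeness nor boundary convexity (cf.\ \cite{cms,sturmrenesse,stugeo2}). For points $(p,x)$ at positive distance from $\tilde Y_0\simeq\tilde Y_1$ one has $\tilde g^\delta=\tilde g$ near $(p,x)$ for small $\delta$, so this step already yields the claim without passing to the limit.

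Finally, because $\tilde g^\delta\to\tilde g$ uniformly on the compact set $\overline{B^{\tilde g}_{4R}((p,x))}$, the induced distances converge uniformly and $\vol_{\tilde g^\delta}\to\vol_{\tilde g}$ weakly (recall that over the interior $\vol_{\tilde g_i}$ coincides with the warped-product measure $f_i^{\lceil N\rceil-n}\vol_{M_i}\otimes\vol_F$), so the pointed metric measure spaces $(B^{\tilde g^\delta}_{3R}((p,x)),\tilde g^\delta,\vol_{\tilde g^\delta},(p,x))$ converge in pointed measured Gromov--Hausdorff sense to $(B^{\tilde g}_{3R}((p,x)),\tilde g,\vol_{\tilde g},(p,x))$. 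Invoking the stability of the localized curvature-dimension condition under such convergence (the Remark following Theorem~\ref{th:cdbe}, together with the proof of Theorem~7.1 in~\cite{kettererlp}) and letting $\epsilon(\delta)\downarrow 0$, I obtain that $B^{\tilde g}_R((p,x))$ satisfies $CD_{loc}(K,N)$; since $\tilde g$-geodesics between points of $B^{\tilde g}_R((p,x))$ stay in $B^{\tilde g}_{3R}((p,x))\subset\Int(\mathring C_0\cup_{\tilde{\mathcal I}}\mathring C_1)$, this is exactly the assertion of the corollary. The step I expect to require the most care is the second one: one has to control optimal $W_2$-geodesics between measures in the small ball so that they cannot escape the fixed compact neighbourhood in the interior, \emph{uniformly in} $\delta$ — which is precisely why the argument leans on the uniform convergence $\tilde g^\delta\to\tilde g$ rather than on any (non-uniform) curvature or injectivity-radius control near the gluing hypersurface.
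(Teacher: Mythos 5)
Your proof follows essentially the same approach as the paper: choose $R$ so that a larger ball sits in the interior, use the uniform convergence $\tilde g^\delta\to\tilde g$ together with the bound $\ric_{\tilde g^\delta}\geq K-\epsilon(\delta)$ from Corollary~\ref{th:gluedwarped} to get $CD_{loc}(K-\epsilon(\delta),N)$ on $B^{\tilde g^\delta}_R$, and pass to the limit by stability of the curvature-dimension condition under measured Gromov--Hausdorff convergence. The extra details you supply (containment of minimizing geodesics in $B^{\tilde g^\delta}_{3R}$ uniformly in $\delta$, and the observation that away from $\tilde Y_0\simeq\tilde Y_1$ one already has $\tilde g^\delta=\tilde g$) are correct and in fact make explicit what the paper leaves implicit.
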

\begin{proof}
We choose $R>0$ such that $B^{\tilde{g}}_{4R}(x)\subset \Int(\mathring C_0\cup_{\tilde{\mathcal I}}\mathring C_1)$. The uniform convergence of $\tilde g^\delta$ to $\tilde g$ for $\delta\downarrow 0$ implies  that $B^{\tilde g^\delta}_{3R}((p,x))\subset \Int (\mathring C_0 \cup_{\tilde{\mathcal I}}\mathring C_1)$ for  all $\delta>0$ sufficiently small,  and Gromov-Hausdorff convergence of $\overline B^{\tilde g^\delta}_{2R}(x)$ to $\overline B^{\tilde g}_{2R}(x)$ for $\delta\downarrow 0$. Since $\ric_{\tilde g^\delta}\geq K -\epsilon(\delta)$, we have that $\overline B^{\tilde g^\delta}_{R}(x)$  satisfies the condition $CD_{loc}(K- \epsilon(\delta), N)$. 
Hence, by stability of the curvature-dimension condition w.r.t. measured Gromov-Hausdorff convergence it follows that $B_R^{\tilde g}(x)$ satisfies the condition $CD_{loc}(K, N)$. 
\end{proof}
\begin{remark}\label{rem:simple}
If we assume \eqref{ass3}, we can finish the proof of Theorem \ref{main2} as follows.

We first prove a local convergence result.
By the construction of $\tilde g$  it is clear that for every $(p,x)\in \Int(\mathring C_0\cup_{\tilde{\mathcal I}}\mathring C_1)$ and for $R>0$ such that $\overline B_{2R}^g(p)\subset \Int (\mathring M_0\cup_{\mathcal I} \mathring M_1)$ the sequence of closed sets $\overline B_{2R}^{ g}(p)\times r \mathbb S^{N-n}$ equipped with  the distance function induced by $\tilde g$ converges  in Gromov-Hausdorff sense to $\overline B_{2R}^{g}(p)$  as $r\downarrow 0$. For $r>0$ an $\epsilon(r)$-GH-approximation (with $\epsilon(r)\downarrow 0$ for $r\downarrow 0$) is given by the projection of $\overline B_{2R}^{ g}(p)\times r \mathbb S^{N-n}$  onto $\overline B_{2R}^{g}(p)$. 

\begin{jjj}
$B_R^g(p)\times r\mathbb S^{N-n}$, as an open subset in $\Int(\mathring C_0 \cup_{\tilde{\mathcal I}}\mathring C_1)$ satisfies the condition $CD_{loc}(K-\epsilon(\delta), N)$ for $R>0$ sufficiently small.
\end{jjj}

To see weak convergence of the measure we note  
that the volume form of $\tilde g$ is given by $$\vol_{\tilde g}= (f r)^{\lceil N \rceil-n} \vol_{g} \vol_{\mathbb{S}^{\cN-n}}.$$
%Because of the special form of the metric its density in local coordinates $\phi: U\rightarrow V\subset \R^{\lceil N \rceil}$ w.r.t. the Lebesgue measure $\mathcal L^{\lceil N \rceil}$ can be written as
%In other words . 
The normalized volume form  $$\tilde \m=\frac{1}{\omega_{\cN-n}}{r^{-\lceil N \rceil+n}}\vol_{\tilde g}= f^{N-n} \vol_g \frac{1}{\omega_{\lceil N\rceil-n}} \vol_{\mathbb S^{\cN-n}}$$ is therefore independent of $r$ where $\omega_n$ is the volume of the $n$-dimensional standard sphere. The pushforward of $\tilde m$ under the projection is exactly $\Phi\vol_g= \m$. Hence $\tilde \m|_{\overline B_{2R}^{ g}(p)\times r \mathbb S^{N-n}}$ converges weakly to $\m|_{\overline B_{2R}^g(p)}$. 

In the statement of Theorem \ref{main2} we assume $X_i=\emptyset$ for $i=0,1$. Hence $C_i= \mathring C_i$, $i=0,1$, and $C_0\cup_{\tilde{\mathcal I}}C_1$ is a  Riemannian manifold with boundary such that we have a two-sided uniform bound for the second fundamental form of $\partial (C_0\cup_{\tilde{\mathcal I}}C_1)$ that is independent of $r>0$ (Remark \ref{rem:2sided}). Therefore, by \cite{wong}, it follows that a subsequence converges in \red{pointed} GH sense to a  metric space. On the other hand  we have that $\bar B_{2R}^g(p)\times r\mathbb S^{N-n}$ equipped with $d_{\tilde g}$ converges in GH sense to $\bar B_{2R}^g(p)\subset M_0\cup_{\mathcal I} M_1$ as $r\downarrow 0$. Hence, the  limit in \cite{wong} coincides with $M_0\cup_{\mathcal I}M_1$.  \qed
\end{remark}
\subsection{}
{\bf In the following we will prove Theorem \ref{main2} and Theorem \ref{main3} without assuming (2) in Assumption \ref{ass:3}.}

We consider again the metric $\tilde g^\delta$ as in the previous section that is constructed via the steps  {\bf (1)}, {\bf (2)} and {\bf (3)} in the proof of Theorem \ref{th:glue2}.
We write $g_{\mathbb S^{\lceil N\rceil-n}}=: h$ where $g_{\mathbb S^{\lceil N\rceil-n}}$ is the round metric   on $\mathbb S^{\lceil N \rceil- n}$. 
\begin{lemma}There is a smooth $(2,0)$-tensor  $\tilde h^\delta$ on $(\mathring M_0\cup_{\mathcal I} \mathring M_1)\times \mathbb S^{\uN-n}$ such that 
$$
\tilde g^\delta =  g^\delta + r^2 \tilde h^\delta.
$$
\end{lemma}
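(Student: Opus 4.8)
The plan is to run through the three steps \textbf{(1)}, \textbf{(2)}, \textbf{(3)} of the construction of the metric $\tilde g^\delta$ from the proof of Theorem \ref{th:glue2}, now applied to $\mathring C_0\cup_{\tilde{\mathcal I}}\mathring C_1$ along $\tilde Y_0\simeq \tilde Y_1$, and to check at each step that it respects the warped product splitting $T\mathring C_i=TM_i\oplus TF$ into horizontal and vertical subbundles, with the horizontal data being exactly the corresponding base data on $M_0\cup_{\mathcal I}M_1$ and the vertical data carrying an explicit factor $r^2$. As a warm-up, the unperturbed warped product metrics are $\tilde g_i=\pi_{M_i}^*g_i+r^2(f_i\circ\pi_{M_i})^2\pi_F^*h$ (using that the fibre metric scales by $r^2$, equivalently that the unit sphere is warped by $rf_i$), so each $\tilde g_i$, and hence the glued $\tilde g$, is a $\tilde g$-orthogonal sum of an $r$-independent horizontal block and $r^2$ times a smooth vertical tensor.

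The heart of the matter is step \textbf{(1)}. The operator $\mathbf G_\delta=\mathbf I+2F_\delta\tilde{\mathbf L}-2C\mathcal F_\delta\tilde{\mathbf P}^\top$ on $B_\delta(\tilde Y_0)\subset\mathring C_0$ is built from the self-adjoint $\tilde{\mathbf L}$ attached to $\tilde\Pi=(\tilde\Pi_0+\tilde\Pi_1)|_{\tilde Y_0\simeq\tilde Y_1}$ and the tangential projection $\tilde{\mathbf P}^\top$ onto $T\tilde Y_0$. By Proposition \ref{prop:connection}, $\tilde\Pi$ has no mixed horizontal-vertical components, its horizontal block is the base form $\Pi=(\Pi_0+\Pi_1)|_{Y_0\simeq Y_1}$, and its vertical block, after raising an index with $\tilde g$, is the $r$-independent scalar operator $-\tfrac{1}{N-n}H\,\mathrm{Id}_{TF}$ (the $r^2$ and $f^2$ factors cancelling), so $\tilde{\mathbf L}$ is block-diagonal with horizontal block the base operator $\mathbf L$ and vertical block $-\tfrac{1}{N-n}H\,\mathrm{Id}_{TF}$; likewise $\tilde{\mathbf P}^\top$ is block-diagonal with horizontal block $\mathbf P^\top$ and vertical block $\mathrm{Id}_{TF}$. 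The extension of these operators into $B_\delta(\tilde Y_0)$ is by parallel transport along the normal geodesics $t\mapsto(\gamma(t),x_0)$ of $\tilde Y_0$, which are horizontal; by the connection identities \textbf{(1)}-\textbf{(3)} in the proof of Proposition \ref{prop:connection}, parallel transport along such a curve maps horizontal vectors to horizontal and vertical to vertical, acting on the vertical part by scalars, so the block structure persists on $B_\delta(\tilde Y_0)$. Hence $\mathbf G_\delta$ is block-diagonal: its horizontal block is precisely the base tensor from step \textbf{(1)} on $M_0$, and its vertical block is $\phi_\delta\,\mathrm{Id}_{TF}$ with $\phi_\delta:=1-\tfrac{2}{N-n}F_\delta(d_{Y_0})\,(H\circ\mathrm{pr}_{Y_0})-2C\mathcal F_\delta(d_{Y_0})$ an $r$-independent smooth function on $B_\delta(Y_0)$ with $\phi_\delta\equiv1$ on $Y_0$. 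Then in step \textbf{(2)}, $\tilde g_\delta=\tilde g(\cdot,\mathbf G_\delta\cdot)$ has horizontal block $g_{M_0}(\cdot,\mathbf G_\delta^{M_0}\cdot)=g_\delta$ (the base metric of step \textbf{(2)}), no mixed block, and vertical block $\phi_\delta\,r^2f^2h$; the glued $\tilde g_{(\delta)}$ inherits the same form, the vertical blocks matching on $\tilde Y_0$ since $F_\delta(0)=\mathcal F_\delta(0)=0$ and $f_0=f_1$ on $Y_0\simeq Y_1$. Thus $\tilde g_{(\delta)}=\pi^*g_{(\delta)}+r^2\big(\phi_\delta f^2\pi_F^*h\big)$.

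For step \textbf{(3)} I would cover $\mathring C_0\cup_{\tilde{\mathcal I}}\mathring C_1$ by product charts, near $\tilde Y_0$ of the form $(U\text{-chart in }M_0)\times(\mathbb S^{\cN-n}\text{-chart})$ with last base coordinate $x^n=d_{Y_0}=d_{\tilde Y_0}$ (using that a warped product over $M_0$ with positive warping near $\partial M_0$ has $d_{\tilde Y_0}((p,x))=d_{Y_0}(p)$, by the same length comparison that defines the warped product distance), run Schlichting's mollification in these charts with the base mollifier taken to be the $(\cN-n)$-dimensional marginal of $\rho$, and patch with a partition of unity that is a product of partitions of unity on the base and on the sphere. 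In any such chart the horizontal-horizontal components of $\tilde g_{(\delta)}$ depend only on the base coordinates and agree with those of $g_{(\delta)}$, the mixed components vanish, and the vertical-vertical components are $r^2$ times a base function times the $h$-components; mollification and patching preserve this, so the horizontal block of $\tilde g^\delta$ is exactly the pullback of the base metric $g^\delta$, the mixed block vanishes, and the vertical block is $r^2$ times a smooth tensor. Away from $B_\delta(\tilde Y_0)$ one simply has $\tilde g^\delta=\tilde g=\pi^*g+r^2f^2\pi_F^*h$ and $g^\delta=g$, consistent with the above. Setting $\tilde h^\delta:=r^{-2}(\tilde g^\delta-\pi^*g^\delta)$ then yields the required smooth $(2,0)$-tensor. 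The one genuinely delicate point is this last compatibility: one must carry out the smoothing of $\tilde g_{(\delta)}$ and of the base metric $g_{(\delta)}$ with the same base atlas, the same partition of unity and mollifiers related by integrating out the fibre directions, so that no $r$-dependence leaks into the horizontal block and the splitting stays exact.
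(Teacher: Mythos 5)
Your argument is correct and is, at its core, the same as the paper's: trace the three steps of Schlichting's construction through the warped product and observe at each stage that the horizontal/vertical block decomposition of $T\mathring C_i = TM_i\oplus TF$ is respected, with an exact $r^2$ factor multiplying the vertical block. The paper carries this out in adapted product coordinates $\psi_s=(\phi_s,\check\phi_s)$, writing the three blocks of $(\tilde g_\delta)_{\alpha\beta}$ explicitly and then asserting tersely that ``the mollification in \textbf{(3)} preserves this structure.'' You reach the same endpoint $\tilde g_{(\delta)}=\pi^*g_{(\delta)}+r^2(\phi_\delta f^2\pi_F^*h)$ in a more invariant form.

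Two places where you supply reasoning the paper leaves implicit, and which are genuinely worth making explicit: first, the block structure of $\tilde{\bf L}$ and $\tilde{\bf P}^\top$ is established on $\tilde Y_0$ itself by Proposition~\ref{prop:connection}, but Schlichting's construction extends these operators to the collar $B_\delta(\tilde Y_0)$ by parallel transport along normal geodesics, and it must be checked that this extension is still block-diagonal; your observation that normal geodesics of $\tilde Y_0$ are horizontal and that the warped-product connection identities send horizontal to horizontal and vertical to vertical (scalar rescaling) under such transport is exactly the missing justification. Second, for the mollification step you are right that the chart atlas, partition of unity, and mollifier must be product-adapted so that the smoothing of $\tilde g_{(\delta)}$ and of the base $g_{(\delta)}$ are literally compatible; that said, because the horizontal block of $\tilde g_{(\delta)}$ is pulled back from the base and hence fibre-independent, a full $\lceil N\rceil$-dimensional mollification acts on it as the $n$-dimensional one would anyway, so the compatibility you insist on is automatic rather than a separate hypothesis. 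One minor slip: you call the base mollifier the ``$(\lceil N\rceil-n)$-dimensional marginal'' of $\rho$; it is the $n$-dimensional marginal obtained by integrating out the $(\lceil N\rceil-n)$ fibre variables. Also note that the sign you get for the vertical block of $\tilde{\bf L}$ (a factor $-\tfrac{1}{N-n}H$) follows Proposition~\ref{prop:connection} and is the opposite of the sign written in the paper's own proof of the lemma; this is an internal inconsistency in the source, not in your argument, and it does not affect the conclusion since the $r^2$-extraction is insensitive to the sign.
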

\begin{proof}
We introduce local coordinates $\psi_s$ of the form $\psi_s(p,x)= ( \phi_s(p), \check \phi_s(x))$  on $\mathring C_0$ where $\phi_s$ are local coordinates for $M_0$ as in the proof of Theorem \ref{th:glue2} and $\check \phi_s$ are local coordinates on $\mathbb S^{\uN-n}$.  Recall the definition of the operators ${\bf L}$ and ${\bf P}^\top$ from the step {\bf (1)} in the proof of Theorem \ref{th:glue2} but with $\tilde g_0$, $\tilde Y_0$ and  $\tilde \Pi$ in place of $g_0$, $Y_0$ and $\Pi$, respectively. 
Since $\tilde g_0$ is a warped product Riemannian metric, the   operators $\tilde{\bf L}$ and $\tilde {\bf P}^\top$ that appear in the proof of Theorem \ref{th:glue2}  have a block structure when they are represented in local coordinates given by such a chart $\psi_s$.  More precisely, for $\tilde{\bf L}$ this block structure follows from Proposition \ref{prop:connection}:
$$
\begin{pmatrix} ({\tilde {\bf L}}_{\alpha\beta})_{\alpha, \beta=1, \dots, n}& 0\\
0&  ({\tilde { \bf L}}_{\alpha\beta})_{\alpha, \beta=n+1, \dots, \lceil N \rceil}  
\end{pmatrix}
$$
where $\tilde g_0(\cdot, \tilde{\bf L} \cdot) = \tilde \Pi$ with $\tilde \Pi_0|_{\tilde Y_0}+ \tilde \Pi_1|_{\tilde Y_1}=: \tilde \Pi$, and $\tilde \Pi_i$,  $i=0,1$, is the second fundamental form of $\partial C_i$. Moreover
$$\tilde {\bf L}_{\alpha\beta}= g_0\left( \frac{\partial }{\partial x^\alpha}, {\bf L} \frac{\partial}{\partial x^\beta}\right)={\bf L}_{\alpha\beta}$$  for $\alpha, \beta =1, \dots, n-1$, $\tilde{\bf L}_{\alpha n} = \tilde{\bf L}_{\beta n} =0$ for $\alpha, \beta =1, \dots, n$, and 
$$\tilde{\bf L}_{\alpha\beta}=\frac{(\tilde g_0)_{\alpha\beta}}{f}\Big( g_0\left( \nu_0, \nabla f\right) + g_0\left( \nu_1, \nabla f\right)\Big)$$ for $\alpha, \beta= n+1, \dots, \lceil N \rceil$. 

The operator $\tilde{\bf P}^\top$ has the form $\left(\tilde{\bf P}^\top\right)_{\alpha\beta}=\delta_{\alpha\beta}$ for $\alpha, \beta\neq n$ and $\left(\tilde{\bf P}^\top\right)_{\alpha n}= \left(\tilde{\bf P}^\top\right)_{n \beta}=0$ for $\alpha, \beta= 1, \dots, \lceil N\rceil$.

Hence, the metric defined in ${\bf (2)}$ is written  in these local coordinates explicitly as follows:
\begin{enumerate}
\item
$\mbox{for } \alpha, \beta=1, \dots, n$ $$(\tilde g_\delta)_{\alpha\beta}= (g_0)_{\alpha\beta} +2F_\delta(x^n) {\bf L}_{\alpha\beta} - 2C \mathcal F_\delta(x^n){\bf P}_{\alpha\beta}=(g_\delta)_{\alpha\beta}$$
\item $\mbox{for }\alpha, \beta=n+1, \dots, \uN$
$$(\tilde g_\delta)_{\alpha\beta}= r^2 f^2\big(\underbrace{h_{\alpha\beta} + 2 F_\delta(x^n)\tilde{\bf L}_{\alpha\beta} - 2 C\mathcal F_\delta(x^n)\tilde{\bf P}_{\alpha\beta}}_{=: (h_\delta)_{\alpha\beta}}\big)$$

\item for $\alpha\in \{1, \dots, n\} \mbox{ and } \beta\in \{n+1, \dots, \lceil N \rceil\}$
$$(\tilde g_\delta)_{\alpha\beta}= 0.$$
\end{enumerate}
The mollification in step $(\bf{3})$ of the proof of Theorem \ref{th:glue2} preserves this structure and yields
$$\tilde g^\delta= g^\delta + r^2 \tilde h^\delta$$ where the metric $\tilde h^\delta$ is obtained by mollification of  $f^2  h_\delta$ on $M\times \mathbb S^{\lceil N\rceil-n}$. 
%The factor $r^2$ can be moved in front of the integrals.  
%This proves the lemma.
\end{proof}
\begin{proposition}\label{prop:gluedwarped2}
We assume Assumption \ref{ass:1},  {\color{black} and  a strict inequality in (1) in Assumption \ref{ass:3}. }

The family of Riemannian metrics $(\tilde g^\delta)_{\delta>0}$ satisfies\begin{align*}\ric_{\tilde{g}^\delta}(\xi+v, \xi+v)&\geq
 (K-\epsilon(\delta))\left( |\xi|_{\tilde g^\delta}^2 + |v|_{\tilde g^\delta}^2\right) + {\color{black} (\min f'_\delta)} \overline H|v|_{\tilde g^\delta}^2
%&\geq (K- \epsilon(\delta)-\overline H)|\xi + v|_{\tilde g^\delta}^2.
\end{align*}
%Hence, letting $\delta \downarrow 0$ yields that $C_0\cup_{\tilde{\mathcal I}} C_0$ satisfies $CD(K-\overline H), \cN)$.
 for $\xi+v\in T (\mathring C_0\cup_{\tilde{\mathcal I}}\mathring C_1)$
 with $\epsilon(\delta)\rightarrow 0$ for $\delta\downarrow 0$ where  
$$
{\max\left\{ \langle \nu_0, \nabla \log \Phi^\frac{1}{N-n}_0\rangle + \langle \nu_1, \nabla \log \Phi_1^{\frac{1}{N-n}}\rangle \right\}}=: \overline H.
$$
{\color{black} and $f_\delta: [0, \infty) \rightarrow \R$ is the function in the proof of Theorem \ref{th:glue2}. }
\end{proposition}
\begin{proof} For $(p,x)\in \tilde Y_0 $
we showed in Proposition \ref{prop:connection} for $\mathcal X_p\in TY_0|_p$ and $V_x\in TF_x$ (using the same notation)
that
\begin{align}\label{ID}
&\left(\tilde \Pi_0+\tilde \Pi_1\right)_{(p,x)}({\mathcal X}_p,  V_x, {\mathcal  X}_p,  V_x)\\
&\ \ \ \ \ = \Pi_0(\mathcal X_p, \mathcal X_p) + \Pi_1(\mathcal X_p, \mathcal X_p) \nonumber\\
%g_0( \nu_0, \nabla_{\mathcal X_p}\mathcal X) + g_1( \nu_1, \nabla_{\mathcal X_p}\mathcal X)\nonumber\\
&\ \ \ \ \ \ \ \ \ \ + \underbrace{\frac{f^2(p) h(  V_x,  V_x)}{N-n}}_{\frac{\tilde g(\tilde V_{(p,x)}, \tilde V_{(p,x)})}{N-n}}\left(g_0( \nu_0, \nabla \Psi) + g_1( \nu_1, \nabla \Psi )\right)\nonumber\\
&{\color{black} \ \ \ \ \  \geq 
% \Pi_0(\mathcal X_p, \mathcal X_p) + \Pi_1(\mathcal X_p, \mathcal X_p) \nonumber\\
%g_0( \nu_0, \nabla_{\mathcal X_p}\mathcal X) + g_1( \nu_1, \nabla_{\mathcal X_p}\mathcal X)\nonumber\\
%\ \ \ \ \ \  \ \ \ \
 - \tilde g(\tilde V_{(p,x)}, \tilde V_{(p,x)}) \left(g_0( \nu_0, \nabla \log \Phi^{\frac{1}{N-n}}) + g_1( \nu_1, \nabla \log \Phi^{\frac{1}{N-n}})\right)}\nonumber
\end{align}
where $\Psi := - \log f^{N-n}= -\log \Phi$ and $\tilde V$ is a vertical lift of $V$. {\color{black} We  used that $\Pi_0 + \Pi_1\geq 0$. }
\smallskip

 We set $\Pi_0+\Pi_1=:\Pi$ and $\tilde \Pi_0+\tilde \Pi_1=:\tilde \Pi$.  The trace of  $\tilde \Pi$ w.r.t. $\tilde g$  is 
\begin{align*} 
\mbox{tr}^{\tilde g} \tilde \Pi_{(p,x)}= \mbox{tr}^g\Pi_p +  \left(g_0( \nu_0|_p, \nabla \Psi|_p) + g_1( \nu_1|_p, \nabla \Psi |_p)\right)
\end{align*}
that is strictly positive on $\tilde Y_0\simeq \tilde Y_1$.
\smallskip

Recall the formula \eqref{id:ricci}. For the glued metric $\tilde g_\delta$ at some fixed point $(p,x)$ close to $Y_0\simeq Y_1$ it becomes

\begin{align}\label{id:another}
\mbox{Ric}_{\tilde{g}_\delta}(\tilde{\mathcal L})(\tilde \xi, \tilde \xi)&= \frac{1}{2} \frac{1}{\mu_n} \tilde \Pi(\tilde \xi,\tilde  \xi)+ \frac{1}{2} (\tilde \xi^n)^2 \sum_{l=1}^N \frac{1}{\mu_l} \tilde{\Pi}_{ll}.
\end{align}
Here, we choose coordinates $(x^1, \dots, x^n, \dots, x^N)$  such that $\tilde g_\delta$ is of diagonal form at $(p,x)$ and $(\mu_l)_{ l=1, \dots, N}$ are the diagonal entries. The index $n$  is  the coordinate direction $x^n$  that is normal to $Y_0\simeq Y_1$. $\tilde \xi= \xi + v$ is any vector in the tangent space at $(p,x)$.

Estimating the Ricci tensor of the metric $\tilde g_\delta$ corresponds to estimating $
\mbox{Ric}_{\tilde{g}_\delta}(\tilde{\mathcal L})(\tilde \xi, \tilde\xi)$.

We notice that the second term on the right hand side of \eqref{id:another} is
\begin{align*}
%&= \frac{1}{2} \frac{1}{\mu_n} \tilde \Pi(\xi, \xi)+ 
\frac{1}{2} (\tilde \xi^n)^2 \sum_{l=1}^N\frac{1}{\mu_n}\tilde{\Pi}_{ll} =\frac{1}{2}(\tilde \xi^n)^2\mbox{tr}^{\tilde g_\delta} \tilde \Pi
\end{align*}
Here 
%$\tilde \Pi$ is independent of $\delta$, but $\tilde g_\delta$ converges uniformly to $\tilde g$. In particular 
$\mbox{tr}^{\tilde g_\delta}\tilde \Pi \rightarrow \mbox{tr}^{\tilde g}\tilde \Pi$  uniformly as $\delta\rightarrow 0$ . {\color{black} Since $\mbox{tr}^{\tilde g}\tilde \Pi> 0$ and since $\tilde Y_0\simeq \tilde Y_1$ is compact},   for $\delta>0$ sufficiently small, it follows that 
\begin{align*}{\color{black} \frac{1}{2}(\tilde \xi^n)^2\mbox{tr}^{\tilde g_\delta} \tilde \Pi> 0
% \frac{1}{2} \tilde g_\delta\left( \frac{\partial}{\partial x^n}|_{(p,x)}, \xi\right)^2(\mbox{tr}^{\tilde g}\tilde \Pi -\epsilon)\geq
 %-\epsilon \tilde g_\delta(\xi, \xi).
 }
\end{align*}

{\color{black}
In the first term of the right hand side  of \eqref{id:another}   $\tilde \Pi(\tilde \xi, \tilde \xi)$ appears. Because of \eqref{ID} we can only estimate this with a correction that involves $\overline H$ and  the derivative  $f_\delta'$: If $f'_\delta \leq 0$, we can estimate $-2f_\delta' \tilde \Pi(\tilde \xi,\tilde  \xi)$ uniformly from below by $C (\min f_\delta') \bar H |v|_{\tilde g_\delta}^2$ where $C>0$ is another constant that does not depend on $\delta$. If $f'_\delta$ is positive, $f'_\delta$ can be estimated by $C\delta$. Moreover, since $Y$ is compact, there is a constant $A$ that uniformly estimates $\tilde \Pi$ (compare with Remark \ref{rem:2sided}).  In this case, we can estimate the term in question by $\epsilon(\delta)$ with $\epsilon(\delta)\downarrow 0$ for $\delta \downarrow 0$ (recall also the remark after \eqref{alsoimportant}).}

By step ${\bf (4)}$ in the proof of Theorem \ref{th:glue2} we get that
\begin{align*}\ric_{\tilde{g}^\delta}(\xi+v, \xi+v)&\geq
 (K-\epsilon(\delta))\left( |\xi|_{\tilde g^\delta}^2 + |v|_{\tilde g^\delta}^2\right) +{\color{black} (\min f'_\delta)} \overline H|v|_{\tilde g^\delta}^2
%&\geq (K- \epsilon(\delta)-\overline H)|\xi + v|_{\tilde g^\delta}^2.
\end{align*}
%Hence, letting $\delta \downarrow 0$ yields that $C_0\cup_{\tilde{\mathcal I}} C_0$ satisfies $CD(K-\overline H), \cN)$.
 for $\xi+v\in T (\mathring C_0\cup_{\tilde{\mathcal I}}\mathring C_1)$.\end{proof}
By the same reasoning as in the proof of  Corollary \ref{cor:assum} we obtain. 
\begin{corollary}
For every $(p,x)\in \Int(\mathring C_0\cup_{\tilde{\mathcal I}}\mathring C_1)$ there exists $R>0$ such that $B^{\tilde g}_{2R}((p,x))\subset \Int (\mathring C_0 \cup_{\tilde{\mathcal I}}\mathring C_1)$ and $B^{\tilde g}_R((p,x))$ satisfies the condition $CD_{loc}(K+ (\min f'_\delta)\overline H,N)$. 
\end{corollary}
Let $(p,x)\in \Int(\mathring C_0\cup_{\tilde{\mathcal I}}\mathring C_1)$,  let $B_{2R}^{g^\delta}(p)\subset \Int(\mathring M_0\cup_{\mathcal I}\mathring M_1)$ and consider $\overline B_{R}^{g^\delta}\times r\mathbb S^{\cN-n}.$
\begin{proposition} The mm space
$$\left(\overline B_R^{g^\delta}(p)\times r\mathbb S^{\cN-n}, \tilde \m^\delta|_{\scriptstyle \overline B_R^{g^\delta}(p)\times r\mathbb S^{\cN-n}}\right)$$ converges in measured GH sense to $$\left(\overline B_R^{g^\delta}(p), \m^\delta|_{\overline B_R^{g^\delta}(p)}\right)$$ as $r\downarrow 0$ where
  $\tilde \m^\delta=\omega_{\cN-n}^{-1}{r^{-\lceil N \rceil+n}}\vol_{\tilde g^\delta}$ and 
$\m^\delta= \Phi^\delta \vol_{\hat g^\delta}$ with $$\frac{1}{\omega_{\cN-n}}\vol_{\tilde h^\delta(p, \cdot)}(\mathbb S^{\cN-n})= \Phi^\delta(p).$$
\end{proposition}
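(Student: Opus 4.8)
The plan is to read off the claim from the structure lemma $\tilde g^\delta=g^\delta+r^2\tilde h^\delta$ proved above, exploiting that $\tilde h^\delta$ is a \emph{fixed} smooth $(2,0)$-tensor on $(\mathring M_0\cup_{\mathcal I}\mathring M_1)\times\mathbb{S}^{\cN-n}$ (independent of $r$) whose only non-vanishing block is the fiber block, and that $g^\delta$, being the pullback of a metric on the base, vanishes on fiber vectors. In particular $r^2\tilde h^\delta\big((0,w),(0,w)\big)=\tilde g^\delta\big((0,w),(0,w)\big)$ for a fiber vector $(0,w)$, so $\tilde h^\delta$ restricts to a genuine Riemannian metric $\tilde h^\delta(q,\cdot)$ on each fiber $\{q\}\times\mathbb{S}^{\cN-n}$, and that fiber has $\tilde g^\delta$-diameter at most $C(\delta)\,r$, where $C(\delta):=\sup_q\diam\big(\mathbb{S}^{\cN-n},\tilde h^\delta(q,\cdot)\big)<\infty$, the supremum being over a compact neighbourhood of $\overline B_R^{g^\delta}(p)$ and finite by smoothness of $\tilde h^\delta$. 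Here I use the hypothesis $B_{2R}^{g^\delta}(p)\subset\Int(\mathring M_0\cup_{\mathcal I}\mathring M_1)$ to stay away from the degeneracy locus.

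\textbf{Gromov--Hausdorff convergence.} First I would check that the projection $\pi\colon\overline B_R^{g^\delta}(p)\times r\mathbb{S}^{\cN-n}\to\overline B_R^{g^\delta}(p)$ is a surjective $C(\delta)r$-isometry. For a curve $\gamma=(\alpha,\beta)$ in the product one has
\begin{align*}
\L_{\tilde g^\delta}(\gamma)=\int\sqrt{g^\delta(\alpha',\alpha')+r^2\tilde h^\delta\big((\alpha',\beta'),(\alpha',\beta')\big)}\,dt\ \geq\ \L_{g^\delta}(\alpha),
\end{align*}
because $\tilde h^\delta\big((\alpha',\beta'),(\alpha',\beta')\big)=\tilde h^\delta\big((0,\beta'),(0,\beta')\big)\geq 0$; hence $d_{\tilde g^\delta}\big((q_0,x_0),(q_1,x_1)\big)\geq d_{g^\delta}(q_0,q_1)$. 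Conversely, concatenating the horizontal lift of a minimizing $g^\delta$-geodesic from $q_0$ to $q_1$ (with constant fiber coordinate $x_0$, which has $\tilde g^\delta$-length exactly $d_{g^\delta}(q_0,q_1)$) with a path inside the fiber over $q_1$ joining $(q_1,x_0)$ to $(q_1,x_1)$ gives $d_{\tilde g^\delta}\big((q_0,x_0),(q_1,x_1)\big)\leq d_{g^\delta}(q_0,q_1)+C(\delta)r$; the interior hypothesis keeps all curves involved in the region where the structure lemma applies. Letting $r\downarrow 0$ yields GH convergence $\overline B_R^{g^\delta}(p)\times r\mathbb{S}^{\cN-n}\to\overline B_R^{g^\delta}(p)$.

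\textbf{Convergence of the measures.} In the block coordinates of the structure lemma the off-diagonal block of $\tilde g^\delta$ vanishes and the base block equals $g^\delta$, so $\det\tilde g^\delta=\det g^\delta\cdot r^{2(\cN-n)}\det\big(\tilde h^\delta(p,\cdot)\big)$, which means $\vol_{\tilde g^\delta}$ disintegrates along $\pi$ into the conditional measures $r^{\cN-n}\vol_{\tilde h^\delta(p,\cdot)}$ over the base measure $\vol_{g^\delta}$. Hence the $r$-independent normalisation $\tilde\m^\delta=\omega_{\cN-n}^{-1}r^{-\cN+n}\vol_{\tilde g^\delta}$ satisfies, for every $r>0$,
\begin{align*}
\pi_*\big(\tilde\m^\delta|_{\overline B_R^{g^\delta}(p)\times r\mathbb{S}^{\cN-n}}\big)=\omega_{\cN-n}^{-1}\,\vol_{\tilde h^\delta(p,\cdot)}(\mathbb{S}^{\cN-n})\,\vol_{g^\delta}|_{\overline B_R^{g^\delta}(p)}=\m^\delta|_{\overline B_R^{g^\delta}(p)}.
\end{align*}
Since the pushforward under the GH approximation $\pi$ is literally the target measure for every $r$, measured Gromov--Hausdorff convergence is immediate from the GH convergence above.

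\textbf{Main obstacle.} The only point requiring genuine care is obtaining the estimates with constants uniform in $r$, and this is entirely encoded in the structure lemma — specifically in the fact that the block pattern of $\tilde g^\delta$ (vanishing off-diagonal block; fiber block equal to $r^2$ times an $r$-independent tensor) survives the mollification step {\bf (3)}. Granting that, the rest is a routine collapsing-fibre computation.
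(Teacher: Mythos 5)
Your proposal is correct and takes essentially the same route as the paper: both establish Gromov--Hausdorff convergence by showing the base projection is an $\epsilon(r)$-approximation (a consequence of the block structure $\tilde g^\delta = g^\delta + r^2\tilde h^\delta$), and both establish measured convergence by computing $\vol_{\tilde g^\delta}=r^{\cN-n}\vol_{g^\delta}\otimes\vol_{\tilde h^\delta(p,\cdot)}$ so that the $r$-independent normalization $\tilde\m^\delta$ pushes forward exactly to $\Phi^\delta\vol_{g^\delta}=\m^\delta$. You simply fill in the two-sided length estimates and the block-determinant computation that the paper states without proof.
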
 \begin{proof}Since $\tilde g^\delta= g^\delta + r^2 \tilde h^\delta$ with $r>0$, it follows that $\overline B_{R}^{g^\delta}\times r\mathbb S^{\cN-n}$ converges in GH sense to $\overline B_{R}^{g^\delta}(p)$ as $r\downarrow 0$.  For $r>0$ an $\epsilon(r)$-GH-approximation (with $\epsilon(r)\downarrow 0$ for $r\downarrow 0$) is given by the projection of $\overline B_{R}^{g^\delta}(p)\times r \mathbb S^{N-n}$  onto $\overline B_{R}^{g^\delta}(p)$. 
 The volume form of $\tilde g^\delta$ is given by $$\vol_{\tilde g^\delta}= r^{\lceil N \rceil-n} \vol_{g^\delta} \vol_{h^\delta}.$$
The normalized volume form  $\tilde \m^\delta=\omega_{\cN-n}^{-1}{r^{-\lceil N \rceil+n}}\vol_{\tilde g^\delta}$ is therefore independent of $r$. The pushforward of $\tilde \m^\delta$ under the projection map is

$$
{ \frac{1}{\omega_{\cN-n}}\vol_{\tilde h^\delta(p, \cdot)}(\mathbb S^{\cN-n}) \vol_{g^\delta}}{=: \Phi^\delta(p)} \vol_{g^\delta}
$$
where $\Phi^\delta\in C^{\infty}(M)$ (since $\tilde h^\delta(p,x)$ is smooth w.r.t. $p$) and with $\Phi^\delta\geq 0$.  Since this measure does not depend on $r>0$, restricted to $B_{R}^{g^\delta}(p)$ it converges weakly to  $\m^\delta= \Phi^\delta \vol_{\hat g^\delta}$  restricted to $B_{R}^{g^\delta}(p)$ as $r\downarrow 0$.
\end{proof}
By the same reasoning as in the end of Remark \ref{rem:simple} one obtains the following corollary.
\begin{corollary}
The metric measure spaces $(C_0\cup_{\tilde{\mathcal I}}C_1, \tilde \m^\delta)$ locally converge in measured GH sense to $(M, d_{ g^\delta}, \m^\delta)$ as $r\downarrow 0$. 
\end{corollary}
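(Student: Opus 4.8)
The plan is to repeat, with the smoothing parameter $\delta>0$ held fixed and $r\downarrow 0$, the argument carried out at the end of Remark \ref{rem:simple} (where the $C^0$ glued warped product $\tilde g$ was used in place of its smoothing $\tilde g^\delta$).

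\textbf{Uniform bounds and precompactness.} First I would record that, since $X_i=\emptyset$ forces $C_i=\mathring C_i$, for every $r>0$ the space $(C_0\cup_{\tilde{\mathcal I}}C_1, d_{\tilde g^\delta})$ is a compact Riemannian manifold with boundary $(\partial M_0\setminus Y_0)\times \mathbb S^{\lceil N\rceil-n}\cup (\partial M_1\setminus Y_1)\times \mathbb S^{\lceil N\rceil-n}$, and that three quantities are bounded uniformly in $r$: the Ricci curvature from below, by $K-\epsilon(\delta)-\overline H$ (Proposition \ref{prop:gluedwarped2}); the diameter, since $\tilde g^\delta=g^\delta+r^2\tilde h^\delta$ yields $\diam(C_0\cup_{\tilde{\mathcal I}}C_1,d_{\tilde g^\delta})\le \diam(M,d_{g^\delta})+Cr$; and the second fundamental form of the boundary two-sidedly, because for $\delta$ small the smoothing producing $\tilde g^\delta$ is supported near the gluing locus $\tilde Y_0\simeq \tilde Y_1$, which is at positive distance from $\partial(C_0\cup_{\tilde{\mathcal I}}C_1)$, so there $\tilde g^\delta=\tilde g$ and the second fundamental form is the one computed in Proposition \ref{prop:connection}, of norm bounded independently of $r$ by Remark \ref{rem:2sided}. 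Hence the precompactness theorem of Wong \cite{wong} applies with constants independent of $r$, so every sequence $r_j\downarrow 0$ has a subsequence along which $(C_0\cup_{\tilde{\mathcal I}}C_1,d_{\tilde g^\delta})$ converges in Gromov--Hausdorff sense to a compact metric space $Z$.

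\textbf{Identifying the limit.} Next I would identify $Z$. The preceding proposition already shows that for every $(p,x)\in \Int(\mathring C_0\cup_{\tilde{\mathcal I}}\mathring C_1)$ and every small enough $R>0$, the cylinder $\overline B_R^{g^\delta}(p)\times r\mathbb S^{\lceil N\rceil-n}$ with the restriction of $d_{\tilde g^\delta}$ converges to $\overline B_R^{g^\delta}(p)$ in GH sense, the base projection $M_i\times \mathbb S^{\lceil N\rceil-n}\to M_i$ being an $\epsilon(r)$-approximation. Since these local projections are restrictions of the two global base projections $C_i\to M_i$, they patch (with error $o_r(1)$) to a single $\epsilon(r)$-GH-approximation $C_0\cup_{\tilde{\mathcal I}}C_1\to M$; hence $Z$ along the chosen subsequence is isometric to $(M,d_{g^\delta})$. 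As the subsequence was arbitrary the limit is unique, so the whole family converges, $(C_0\cup_{\tilde{\mathcal I}}C_1,d_{\tilde g^\delta})\to (M,d_{g^\delta})$ as $r\downarrow 0$.

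\textbf{Measures, and where the difficulty lies.} By the preceding proposition the pushforward of $\tilde \m^\delta=\omega_{\lceil N\rceil-n}^{-1}r^{-\lceil N\rceil+n}\vol_{\tilde g^\delta}$ under the base projection equals $\m^\delta=\Phi^\delta\vol_{g^\delta}$ and does not depend on $r$, so pushing $\tilde\m^\delta$ forward along the $\epsilon(r)$-approximations of the previous step gives measures converging weakly to $\m^\delta$; this upgrades the Gromov--Hausdorff convergence to measured Gromov--Hausdorff convergence of $(C_0\cup_{\tilde{\mathcal I}}C_1,d_{\tilde g^\delta},\tilde\m^\delta)$ to $(M,d_{g^\delta},\m^\delta)$. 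The hard part will be the globalization in the second step: the fiber $r\mathbb S^{\lceil N\rceil-n}$ collapses, but the preceding proposition controls the collapse only locally, with uniformity that a priori degenerates near $\partial M_i\setminus Y_i$ and depends on $f$, so a global approximation cannot be built directly; the role of Wong's precompactness is precisely to furnish an a priori limit $Z$ that the local convergences then pin down as $(M,d_{g^\delta})$, after which the weak convergence of measures --- and hence the measured GH statement --- is automatic.
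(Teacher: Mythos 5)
Your proposal is correct and follows essentially the same route as the paper, which simply invokes "the same reasoning as in the end of Remark \ref{rem:simple}": uniform Ricci, diameter and two-sided second fundamental form bounds feed Wong's precompactness theorem to produce a subsequential GH limit; the base projections (which are $\epsilon(r)$-GH-approximations on interior balls by the preceding proposition) identify that limit as $(M,d_{g^\delta})$; and the $r$-independent pushforward of $\tilde\m^\delta$ to $\m^\delta$ upgrades this to measured GH convergence. Your additional observations — that $X_i=\emptyset$ forces $C_i=\mathring C_i$, that the smoothing is supported away from $\partial C$ so $\tilde g^\delta=\tilde g$ there, and that the diameter bound follows from $\tilde g^\delta=g^\delta+r^2\tilde h^\delta$ — are exactly the ingredients the paper leaves implicit.
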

\begin{theorem}  We fix $\delta>0$. With the same assumptions as in Proposition \ref{prop:gluedwarped2} it follows that
%Let $B_R^{g^\delta}(p)\subset \mathring M_0\cup_{\mathcal I} \mathring M_1$ such that $B_{2R}^{g^\delta}(p)\subset \Int (\mathring M_0 \cup_{\mathcal I} \mathring M_1)$. Then 
$(\mathring M_0\cup_{\mathcal I} \mathring M_1,  g^\delta, \Phi^\delta)$ has Bakry-Emery $\uN$-Ricci tensor bounded from below by $K-\epsilon(\delta)$.
% on $B_R^{g^\delta}(p)$. 
\end{theorem}
\begin{proof}
The proof of Proposition \ref{prop:gluedwarped2}  yields that $\tilde g^\delta$ satisfies 
\begin{align}\label{ineq:H}\ric_{\tilde{g}^\delta}\!|_{(x,p)}(\xi+v, \xi+v)\geq (K-\epsilon(\delta))\left( |\xi|_{\tilde g^\delta}^2 + |v|_{\tilde g^\delta}^2\right) +{\color{black} (\min f'_\delta)}\overline H|v|_{\tilde g^\delta}^2.
%(K- \epsilon(\delta)-\overline H)|\xi + v|_{\tilde g^\delta}^2 
\end{align}

Moreover $(\overline B_{4R}^{g^\delta}(p)\times r\mathbb S^{\cN-n}, \tilde \m^\delta|_{\overline B_{4R}^{g^\delta}(p)\times r\mathbb S^{\cN-n}})$ converges in measured GH sense to $(\overline B_{4R}^{g^\delta}(p), \m^\delta|_{\overline B_{4R}^{g^\delta}(p)})$ as $r\downarrow 0$ where $R>0$ such that $\overline B_{4R}^{g^\delta}(p)\subset \Int(\mathring M_0\cup_{\mathcal I}\mathring M_1)$. We will show the following claim.
\smallskip\\
{\it Claim:}
The condition $CD_{loc}(K-\epsilon(\delta),\cN)$ holds for $(B_R^{g^\delta}(p), \m^\delta|_{\overline B_R^{g^\delta}(p)})$. 
% where $R>0$ such that $\overline B_{2R}^{g^\delta}(p)\subset \Int(\mathring M_0\cup_{\mathcal I}\mathring M_1)$. 
\smallskip\\
{\it Proof of the Claim:} We pick two measures $\mu_0, \mu_1\in \mathcal P(M, \m^\delta)$ supported in $ B_R^{g^\delta}(p)$. We consider the unique optimal coupling $\pi$ w.r.t. the cost function $d_{g^\delta}^2$.
We  assume  first that $\pi$ is concentrated on $\{(x,y)\in M^2: d_{g^\delta}(x,y)> c> 0\}=: \Xi$.   This will be removed at the end of the proof. 

By standard arguments  (for instance we refer  to \cite[Section 4]{gmsstability} and \cite[Section 7]{kettererlp}) there exist probability measures $\mu_0^r$ and $\mu_1^r$, supported in $\overline B_{R}^{g^\delta}\times r\mathbb S^{\cN-n}$ and absolutely continuous w.r.t. $\tilde m^\delta$ such that $\mu_i^r\rightarrow \mu_i$ weakly,  the $L^2$-Wasserstein distances satisfy $W_{\tilde g^\delta}(\mu_0^r, \mu_1^r)\rightarrow W_{g^\delta}(\mu_0, \mu_1)$ and the $W_{\tilde g^\delta}$ geodesic $(\mu_t^r)_{t\in [0,1]}$ between $\mu_0^r$ and $\mu_1^r$, supported in $\overline B_{4R}^{g^\delta}(p)\times r\mathbb S^{\cN-n}$ converges weakly to the $W_{g^\delta}$ geodesic between $\mu_0$ and $\mu_1$, supported in $\overline B_{2R}^{g^\delta}(p)$, as $r\downarrow 0$.  Moreover there exist optimal couplings $\pi^r$, $r>0$, that converge weakly to $\pi$ for $r\downarrow 0$.

\begin{jjj}
Note that  the weak convergence of these measure is understood in the following sense:
{\color{black} 
We consider an ambient metric space $(Y, d_Y)$ in which  $\overline B_{4R}^{g^\delta}(p)\times r\mathbb S^{\cN-n}$ and $\overline B_{4R}^{g^\delta}(p)$ equipped with the distances $d_{\tilde g^\delta}$ and $d_{g^\delta}$ embed preserving distances, and where  the measure Gromov-Hausdorff convergence  is realized. In $Y$ weak convergence of  measures is defined in duality with bounded continuous functions. }
\end{jjj}
 {\color{black}
Since $\pi$ is concentrated in  the open set $\{d_{Y}>c>0\}$, by weak convergence of $\pi^r$ to $\pi$ the measures $\pi^r$ are also supported in the open set $\{d_Y>c>0\}$.}

The Wasserstein geodesic between $\mu_0^r, \mu_1^r\in \overline B_{R}^{g^\delta}\times r\mathbb S^{\cN-n}$ is induced by a family of maps $t\in [0,1]\mapsto T^r_t(x)=:\gamma_x(t)=(\alpha_x(t), \beta_x(t))\in M\times \mathbb{S}^{\cN-n}$ such that $t\in [0,1]\mapsto \det DT_t^r(x)=:y_x(t)$ satisfies the Riccatti inequality
$$(\log y_x(t))''+ \frac{1}{N} ((\log y_x(t))')^2 +  \ric_{\tilde g^\delta}(\gamma'_x(t), \gamma_x'(t))\leq 0
$$
{\color{black} where $t\in [0,1]\mapsto \gamma_x(t)$ is a constant-speed, minimal geodesic with $\gamma_x(0)=x$, $\beta_x(t)$ is a pre-geodesic in $r\mathbb S^{\cN-n}$} and
$$\ric_{\tilde g^\delta}(\gamma_x'(t), \gamma_x'(t))\geq ( K-\epsilon(\delta)) |\gamma_x'(t)|_{\tilde g^\delta}^2 +{\color{black} { \frac{(\min f_\delta')\bar H|\beta_x'(t)|_{\tilde g^\delta}^2}{|\gamma_x'(t)|_{\tilde g^\delta}^2}}} |\gamma_x'(t)|_{\tilde g^\delta}^2 .$$
This is derived by standard Jacobi field calculus for the family $t\in [0,1]\mapsto DT_t(x)$. For details we refer to \cite[Proof of Theorem 1.7]{stugeo2} and to \cite{cms}.  Similar calculation were performed in \cite{kettererlp}.

Since $\pi^r$ is concentrated on $\{ d_Y>c>0\}$, it holds $|\gamma'(t)|_{\tilde g^\delta}=L^{\tilde g^\delta}(\gamma)\geq c>0$.  Moreover $|\beta'(t)|_{\tilde g^\delta} \leq \tilde C r$ for a constant $\tilde C>0$ that is independent of $r$.  Hence, for a constant $C>0$ that also does not depend on $r$,
 $${ \frac{({\color{black} \min f_\delta'})\bar H|\beta_x'(t)|_{\tilde g^\delta}^2}{|\gamma_x'(t)|_{\tilde g^\delta}^2}} \geq -C r^2.$$

By a standard procedure for which we again refer to \cite{stugeo2, cms} it follows that the $W_{\tilde g^\delta}$ geodesic $(\mu_t^r)_{t\in [0,1]}$ that is induced by $T_t^r$ between $\mu_0^r$ and $\mu_1^r$ satisfies \eqref{ineq:cd} in Definition \ref{def:cd} with $K$ replaced with $K-\epsilon(\delta)- Cr^2$. This inequality is stable under the convergence of $\mu_t^r$ to $\mu_t$. It follows that the Wasserstein geodesic $\mu_t$ between $\mu_0$ and $\mu_1$ satisfies the inequality with $K-\epsilon(\delta)$. 

{\color{black}
Finally, we remove the assumption on the support of $\pi$. Let $\mu_0, \mu_1\in \mathcal P(M, \m^\delta)$ concentrated in $B_R^{g^\delta}(p)$ and let $\pi$ be  the associated optimal coupling. We note that $$\pi= \pi|_{\{d_{g^\delta}>0\}} +  \pi|_{\{d_{g^\delta}\equiv 0\}}= A \pi' + B\pi''$$
where $A=\pi(\{\de_{g^\delta}>0\})$, $B=\pi(\{\de_{g^\delta}\equiv 0\})$, and $\pi'$ and $\pi''$ are optimal couplings. We assume that $A\neq 0$. We obtain two disjoint Wasserstein geodesics  $\mu'_t$ and $\mu''_t$ that connect the marginal distributions of $\pi'$ and $\pi''$, respectively.  Moreover  $S_N(\mu_t| \m^\delta)= A^{1-\frac{1}{N} }S(\mu_t'|\m^\delta) + B^{1-\frac{1}{N}}S(\mu_t''|\m^\delta)$ and the displacement inequality along $\mu_t''$ is an equality.  It is therefore  sufficient to check  the displacement inequality for the optimal coupling $\pi'$ and its marginal distributions $\mu_0'$ and $\mu_1'$. 
We define $$\pi^k:=\pi'(\Xi^k)^{-1} {\pi'}|_{\Xi^k}, \ \ \Xi^k=\{d_{g^\delta}(x,y)>1/k>0\}.$$ 
The probability measure $\pi^k$ is  an optimal coupling between its marginal distributions $(P_i)_\sharp \pi^k= \mu_i^k$, $i=0,1$, that are  $\m^\delta$-absolutely continuous.  We consider the displacement inequality for $\mu_t^k$, the associated Wasserstein geodesic between $\mu_0^k$ and $\mu_1^k$. From the previous steps we have that $\mu^k_t$ and $\pi^k$ satisfy the displacement inequality with  the curvature parameter $K-\epsilon(\delta)=:K_\epsilon$, i.e. 
\begin{align*}
S_N(\mu_t^k|\m^\delta)\leq -\int \left[\tau_{K_\epsilon,N}^{(1-t)}(\theta)(\rho^k_0(x))^{-\frac{1}{N}}+\tau_{K_\epsilon,N}^{(t)}(\theta)(\rho^k_1(y))^{-\frac{1}{N}}\right]d\pi^k(x,y)
\end{align*}
where $\mu_i=\rho_id\m$, $i=0,1$, and $\theta= d_{g^\delta}(x,y)$. 

We observe that the right hand side of the last inenquality can be estimated from above by 
$$ -\int \left[\tau_{K_\epsilon,N}^{(1-t)}(\theta)(\rho'_0(x))^{-\frac{1}{N}}+\tau_{K_\epsilon,N}^{(t)}(\theta)(\rho'_1(y))^{-\frac{1}{N}}\right]d\pi'(x,y) + C \int_{\Xi^k} d\pi'$$
where the second term vanishes for $k\rightarrow \infty$. This estimate is obtained along the same lines as in step (iv) of the proof of Theorem 3.1 in \cite{stugeo2}.
%(for instance, see also step 8  in the proof of Theorem 3.29 in  \cite{braun2022}).

Since $\overline{B_R^{g^\delta}(p)}$ is compact and since $\pi'(\{d_{g^\delta}>1/k\})\rightarrow 1$, after extracting a subsequence, we find that 
%$\pi^k$ and $\mu^k_i$, $i=0,1$, converge weakly to $\pi'$ and $\mu_i'$, $i=0,1$, respectively.  
%
%Moreover, eventually extracting another subsequence also 
that  $\mu^k_t$ converges to a Wasserstein geodesic $\tilde \mu_t$ between $\mu_0'$ and $\mu_1'$.  Since $\mu_0'$ and $\mu_1'$ are $\vol_{g^\delta}$-absolutely continuous and since the underlying space is a smooth weighted Riemannian manifold, the Wasserstein geodesic between them is unique. Hence $\mu_t'= \tilde \mu_t$.  Moreover \begin{center}$\liminf_{k\rightarrow \infty} S_N(\mu^k_t|\m^\delta) \geq S_N(\mu_t'|\m^\delta)$. \end{center}
Hence, the desired inequality follows $(\mu_t')_{t\in [0, 1]}$ and $\pi'$.

This finishes the proof of the claim. }
%By another compactness argument we can also drop the assumption on the support of $\pi$.  This finishes the proof of the claim.
 \hfill $\triangle$
\smallskip

The statement of the theorem follows: Since $$\left(B_R^{g^\delta}(p), \m^\delta|_{\overline B_R^{g^\delta}(p)}\right)$$ is  a smooth weighted Riemannian manifold  ($g^\delta$ is the smooth approximation of the glued metric and $\m^\delta= \Phi^\delta \vol_{g^\delta}$ for $\Phi^\delta$ smooth), it follows that the corresponding Bakry-Emery $\cN$-tensor is pointwise bounded from below $K-\epsilon(\delta)$. This is the assertion of the theorem.
%
%Since the metric $g^\delta$ is smooth it also follows that its Bakry-Emery $\cN$-Ricci tensor is bounded from below by $K-\epsilon(\delta)$. This finishes the proof.
\end{proof}
\begin{proof}[Proof of Theorem \ref{main3}] We fix a sequence of $\delta_n=\delta$. {\color{black} For each $n$ we can deform  the weight $f$ slightly to $\tilde f$ such that Assumption \ref{ass3} holds in a strict sense while preserving the assumption on the Bakry Emery Ricci tensor up to an error $\epsilon_n$ such that $\epsilon_n\rightarrow 0$ (compare with Proposition 1.2.11 in \cite{burdickthesis}).}

We can apply the previous theorem. {\color{black} By a diagonal argument and 
since ${\bf G}_{\delta_n}\rightarrow {\bf I}$ uniformily on $M$ w.r.t. $g_0$, we can find a subsequence such that $g^{\delta_n}$ as well as $\tilde \Phi^{\delta_n}$ converge uniformily to $g$ and $\Phi$, respectively,  as $\delta\downarrow 0$. } Hence, the family of weighted Riemannian manifolds $(M,  g^{\delta_n}, \tilde \Phi^{\delta_n})$ satisfies the desired properties.
%
%Bakry-Emery curvature bound and the family of metric measure spaces $(M, d_{\hat g^\delta}, \m^\delta)$ converges in measured Gromov-Hausdorff sense to the glued space $(M, d_g, \Phi\vol_g)$. 
%Since the corresponding metric measure spaces $(M, d_{\hat g^\delta}, \m^\delta)$ satisfy the condition $CD(K-\epsilon(\delta), \uN)$ for $\epsilon(\delta)\downarrow 0$ as $\delta \downarrow 0$, the limit $(M, d_{g}, \Phi \vol_g)$ satisfies the condition 
\end{proof}
\begin{proof}[Proof of Theorem \ref{main2}]
We   bootstrap the whole argument. We  apply the warped product construction of Proposition \ref{prop:warped} with  $(M, \hat g^\delta)$ instead of $(M,g)$ and $\tilde \Phi^\delta$ instead of $\Phi$. Note that the boundary conditions of $(M, g^\delta, \Phi^\delta)$ on $\partial M$ are the same as for $(M_i, g_i, \Phi_i)$ on $\partial M_i \backslash Y_i$. In particular, we can  apply the compactness theorem of \cite{wong}. This yields the desired sequence of Riemannian manifolds. 
\end{proof}
\section{Proof of Corollary 1.3}\label{sec: Second application}
%
%
%% 
%
%
%
%
%
% 
%
%%
%
% %
%
%%
%
%
%
%
%
%%
%%
%%
%
%%
We assume the situation of Theorem \ref{main2} together with $\Pi_i\geq 0$ on $M_i\backslash Y_i$.  
In this case we already know that the weighted glued space $(M_0\cup_{\mathcal I}M_1, \Phi)$ satisfies $CD(K,\lceil N \rceil)$ because of Theorem \ref{main2} and we want to replace $\lceil N \rceil$ with $N$. 
We follow  the strategy that was applied in \cite[Section 3]{kakest}. The main difference  is that $M_0$ and $M_1$ equipped with $g_0$ and $g_1$ are not Alexandrov spaces in general since the second fundamental forms of the  boundaries are not positive semi-definite.  
\subsection{An application of $1D$ localisation}
%Let $M_0$ and $M_1$ be Riemannian manifolds with boundary as in Assumption \ref{ass:1}, and  let $M= M_0\cup_{\mathcal I} M_1$ be the  glued space with the corresponding $C^0$ metric $g$. We assume that $(M, g, \vol_{g})$ satisfies a  condition $CD(k(n-1),n)$.
The  results of this section are motivated by the next example for the behavior of geodesics in a glued space. 

\begin{example}
Consider the smooth funktion $f: \R\rightarrow \R$ given by 
$$
f(x)= \begin{cases}
e^{-\frac{1}{x}}\sin\left({e^{\frac{1}{x}}}\right) & \mbox{ if } x>0, \\
0& \mbox{ otherwise}.
\end{cases}
$$
Then we define two manifolds with boundary through $M_0= \{(x,y)\in \R^2: y\geq f(x)\}$ and $M_1=\{(x,y)\in \R^2: y\leq f(x)\}$.  Since $f$ is smooth also the boundaries of $M_0$ and $M_1$ are smooth. Moreover, the sum of the second fundamental forms trivially satisfies the condition in Assumption \ref{ass:1}. The glued space $M$ is $\R^2$.  One geodesic in $M$ is given by the $x$-axis in $\R^2$ and hence it oscillates between $M_0$ and $M_1$ when approaching $0$ from the right hand side. The example ilustrates that under the Assumption \ref{ass:1} such oscillating behavior of geodesics in general cannot be avoided. 
However  we will show in the following that geodesics with this behaviour are  a "set of measure $0$".
\end{example}
We note that by the Assumption \ref{ass:1_0}, $\Pi_i\geq 0$ on $M\backslash Y_i$, Theorem \ref{th:basicglue} and Theorem \ref{th:petrunincd} we know that the metric glued space is an Alexandrov space with curvature bounded from below by $\underline k$.

Let $u: M\rightarrow \R$ be a $1$-Lipschitz function, let $(\m_{\gamma})_{\gamma\in Q}$ be the induced disintegration {of $\vol_{g}$} together with the quotient measure $\mathfrak q$.
We pick  $\hat Q$ of full $\mathfrak q$ measure in $Q$ such that $R_u(x)=\overline{X_\gamma}$ for all $x\in X_\gamma$ and for all $\gamma\in \hat Q$. By abuse of notation we write $\hat Q=Q$ and $\T_u=\mathfrak Q^{-1}(\hat Q)$ (see the comments after Lemma \ref{somelemma}).

We say that  a unit speed geodesic $\gamma:[a,b]\rightarrow X$ is tangent to $Y$ if
 there exists $t_0\in [a,b]$ such that $\gamma(t_0)\in Y$ and $\dot{\gamma}(t_0)\in C_pM$ is tangent to the boundary.
\begin{fact}\label{lem: infinite implies tangent}
If $\gamma\notin Q^{\dagger}:= \left\{\gamma\in Q: \# \gamma^{-1}(Y)<\infty\right\}$, then $\gamma$ is tangent to $Y$.
\end{fact}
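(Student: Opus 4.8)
The plan is to prove the contrapositive: assuming $\gamma$ is not tangent to $Y$, I will show $\#\gamma^{-1}(Y)<\infty$. Since $\gamma\colon[a,b]\to X$ has compact domain and $Y$ is closed, $\gamma^{-1}(Y)$ is a closed subset of $[a,b]$, so if it were infinite it would have an accumulation point $t_0\in[a,b]$, and then $\gamma(t_0)=:p\in Y$. Hence it will suffice to prove the local statement: \emph{if $\dot\gamma(t_0)\in C_pM$ is not tangent to the boundary, then $t_0$ is an isolated point of $\gamma^{-1}(Y)$.}

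The main tool will be a signed distance function to $Y$. Near $Y_i$ the function $f_i=d_{\partial M_i}$ is smooth with $|\nabla f_i|\equiv 1$ and $\nabla f_i|_{Y_i}=\nu_i$. Since $T_pM_0$ and $T_pM_1$ embed into $C_pM\cong\R^n$ as the two closed half-spaces sharing the face $T_pY$, the inward normals satisfy $\nu_1=-\nu_0$ there; therefore the function $\rho$ defined on a neighbourhood $U$ of $Y$ in $\hat M$ by $\rho|_{U\cap M_0}=f_0$, $\rho|_{U\cap M_1}=-f_1$ is continuous, smooth on each side, and has matching one-sided gradients along $Y$ (both equal to $\nu_0$). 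A standard gluing argument then gives $\rho\in C^1(U)$, with $|\nabla\rho|\equiv1$ and, after shrinking $U$, $\rho^{-1}(0)=Y$. From the definition, $v\in C_pM$ is tangent to the boundary iff $df_0|_p(v)=\langle\nu_0,v\rangle\le0$ and $df_1|_p(v)=-\langle\nu_0,v\rangle\le0$, i.e. iff $\langle\nu_0,v\rangle=0$; so $\dot\gamma(t_0)$ being not tangent to the boundary means $c:=\langle\nu_0,\dot\gamma(t_0)\rangle\ne0$ (in particular this notion does not depend on the orientation of $\dot\gamma(t_0)$).

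Next I will use that $M$ is an Alexandrov space with $C_pM\cong\R^n$ at every point (Assumption \ref{ass:1_0}, $\Pi_i\ge0$ on $\partial M_i\setminus Y_i$, Theorems \ref{th:basicglue} and \ref{th:petrunincd}). At the interior parameter $t_0$ the two branches of the minimizing geodesic $\gamma$ meet at angle $\pi$, so the forward and backward directions of $\gamma$ at $p$ are antipodal in $C_pM$; together with the fact that the $C^0$ metric $g$ makes $d_M$ agree near $p$, to first order, with the Euclidean norm of $T_p\hat M\cong C_pM$, this will show that $\gamma$ is classically differentiable at $t_0$ with $\gamma'(t_0)=\dot\gamma(t_0)$ (a unit vector of $T_p\hat M$). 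Consequently $h:=\rho\circ\gamma$ is differentiable at $t_0$ with $h(t_0)=0$ and $h'(t_0)=d\rho|_p(\dot\gamma(t_0))=\langle\nu_0,\dot\gamma(t_0)\rangle=c\ne0$, whence $h(t_0+s)=cs+o(|s|)$ does not vanish for $0<|s|<\epsilon$. Thus $\gamma(t)\notin\rho^{-1}(0)=Y$ for $0<|t-t_0|<\epsilon$ (using a one-sided $\epsilon$ if $t_0\in\{a,b\}$), which contradicts $t_0$ being an accumulation point of $\gamma^{-1}(Y)$. This contradiction completes the argument.

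The step I expect to be the main obstacle is the regularity claim: showing that $\gamma$ — a priori only a metric geodesic that may cross the non-smooth locus $Y$ infinitely often near $t_0$ — is classically differentiable at $t_0$, so that $\rho\circ\gamma$ has the stated derivative there. This will rest on the angle-$\pi$ property of shortest paths in Alexandrov spaces, on $C_pM$ being Euclidean (so that angle $\pi$ forces antipodal directions), and on the $C^1$-regularity of $\rho$ across $Y$, which itself uses only $\nu_1=-\nu_0$. The remaining steps are elementary once $\rho$ is available.
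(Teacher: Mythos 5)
Your argument is correct, and it uses the same central mechanism as the paper's: an accumulation point $t_0$ of $\gamma^{-1}(Y)$, the blow-up direction $v=\dot\gamma(t_0)\in C_pM$, and the vanishing of a distance-to-$Y$--type function along the accumulating sequence. But you wrap this in a noticeably different (and heavier) framework. The paper keeps the two functions $d_{\partial M_0}$ and $d_{\partial M_1}$ separate --- each is smooth on its own side, so the Alexandrov-style blow-up differential $df_i|_p(v)=\lim_k\bigl(d_{\partial M_i}(\gamma(t_k))-d_{\partial M_i}(p)\bigr)/d(\gamma(t_k),p)$ exists and equals $g_i|_p(\nabla f_i,v)$ --- and then just reads off $df_i|_p(v)=0$ from $d_{\partial M_i}(\gamma(t_k))=0$. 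No gluing of a signed distance, no classical differentiability of $\gamma$ anywhere. You instead pass to the contrapositive, glue the signed distance $\rho=f_0-f_1$ into a $C^1$ function (using $\nu_1=-\nu_0$), and then try to differentiate $h=\rho\circ\gamma$ classically at $t_0$, which forces you into the regularity of $\gamma$ at a point it may cross $Y$ infinitely often. Your sketch of that step (angle-$\pi$ at interior points of a geodesic in an Alexandrov space with Euclidean tangent cone, plus first-order Euclideanness of $d_M$ in a chart) can be made to work, but it is exactly the step the paper's formulation avoids: once you have the blow-up direction $v$ and know $\rho$ is $C^1$ near $p$, you can simply compute $d\rho_p(v)=\lim_k\bigl(\rho(\gamma(t_k))-\rho(p)\bigr)/d(\gamma(t_k),p)=0$ directly, with no need for $\gamma$ to be classically differentiable and no need for the antipodality of the one-sided directions. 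Your reduction ``not tangent $\iff\langle\nu_0,v\rangle\neq0$'' is a correct and clean reformulation of the paper's definition, and the rest of your argument is sound; you just paid with extra machinery for a conclusion the blow-up calculus gives for free.
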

\begin{proof}
%If $\gamma\in Q\backslash Q^{\dagger}$, then $\#\gamma^{-1}(Y)=\infty$.  Therefore, t
There exists a sequence  $t_k\in [a_\gamma, b_\gamma]$ such that $\gamma(t_k)\in Y$ are pairwise distinct.  After we choose a subsequence we have $t_k\rightarrow t_0$, $t_k>t_0$ and $\gamma(t_i){\rightarrow}\gamma'(t_0)=v\in C_pX$ in GH sense. 
%
%The latter is understood as follows: if $X_i\rightarrow X$ in  (pointed) GH sense, we can embed $X_i, X$ into a metric space $Z$ such that $X_i\rightarrow X$ in Hausdorff sense in $Z$. Then points $x_i\in X_i$ to $x\in X$ if they converge inside of $Z$. 
We also have that $d_{\partial M_i}(\gamma(t_k))= 0$, $i=0,1$, since $\gamma(t_k)\in Y$. Then $d(d_{\partial M_i})|_p(v)= g|_{\gamma(t_0)}(\nabla d_{\partial M_i}|_{\gamma(t_0)}, v)= 0$. Hence $v$ is tangent to the boundary and consequently $\gamma$ is tangent to $Y$. 
\end{proof}

\begin{remark}
For $U\subset M$ open we write
\begin{align*}
{\vol_g({\T_u\cap U})}=\int_Q\m_{\gamma}(U) d\mathfrak q(\gamma) = \int_{\mathfrak G^{-1}(U)} h_\gamma(r) dr\otimes d\mathfrak q(\gamma)
\end{align*}
where $\mathfrak G:\mathcal V\subset \R\times Q\rightarrow \T^b_u$ is the ray map defined in Subsection \ref{subsec:1Dlocalisation}. 
We also note that $(r,\gamma)\in \mathcal V \mapsto h_\gamma(r)$ and $\gamma\in Q\mapsto a_{\gamma}, b_{\gamma}$ are measurable. 
\end{remark}

\begin{remark}
Consider the map $\Phi_t: \R\times Q\rightarrow \R\times Q$, $\Phi_t(r,q)=(tr,q)$ for $t>0$. Then, it is clear that $\Phi_t(\mathcal V)=\mathcal V_t$ is a measurable subset of $\mathcal V$ for $ t\in (0,1]$. Moreover $\mathfrak G(\mathcal V_t)=\T^b_{u,t}$ is  measurable  and a subset of $\T^b_u$ such that $X_\gamma\cap \T^b_{u,t}= t X_\gamma \subset X_\gamma$. If $t\in (0,1)$, then $\vol_g(\T^b_u\backslash \T^b_{u,t}) >0$.
Again by Fubini's theorem  $U\cap X_\gamma \cap \T_{u,t}^b= U\cap t X_\gamma$ is measurable in $X_\gamma$ for $\mathfrak q$-a.e. $\gamma\in Q$ and the map $$L_{U,t}:\gamma\in Q\mapsto  \mbox{L}(\gamma|_{(t a_\gamma, tb_\gamma)\cap \gamma^{-1}(U)})=\int 1_{U\cap tX_\gamma} d\mathcal L^1$$ is measurable. The set $(ta_\gamma, tb_\gamma)\cap \gamma^{-1}(U)$ might not be an interval.

\end{remark}
\begin{definition}
Consider $U_\eps=B_{\epsilon}(Y)$ for $\epsilon>0$. For $s\in \N$ and $t\in (0,1]$ we define
\begin{align*}
C_{\epsilon, s, t}=\left\{\gamma\in Q:  \mbox{L}(\gamma|_{\gamma^{-1}(U_\eps)\cap (ta_{\gamma},tb_\gamma)})>\epsilon s\right\}
\end{align*} 
as well as 
\begin{align*}
C_{s,t}=\bigcup_{\epsilon>0}\bigcap_{\epsilon'{{\geq}}\epsilon} C_{\epsilon',s,t}=\{\gamma\in Q: \liminf_{\epsilon\rightarrow 0}\mbox{L}(\gamma|_{\gamma^{-1}(U_\epsilon)\cap (ta_\gamma, tb_\gamma)})/\epsilon \geq s\}
\end{align*}
and 
\begin{align*}
 C_t=\bigcap_{s\in \N}C_{s,t} =\{\gamma\in Q: \lim_{\epsilon\rightarrow 0}\mbox{L}(\gamma|_{\gamma^{-1}(U_\epsilon)\cap (ta_\gamma, tb_\gamma)})/\epsilon=\infty\}. 
\end{align*}
\end{definition}
\begin{lemma}\label{lemma: tan-t->Ct}
Let  $t\in (0,1]$, $\gamma\in Q$. If $\gamma|_{[ta_\gamma, tb_\gamma]}$ is tangent to $Y$, then $\gamma\in C_t$.
\end{lemma}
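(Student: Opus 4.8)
The plan is to show that a geodesic tangent to $Y$ makes at least \emph{quadratic} contact with $Y$, and that quadratic contact alone forces the ratio defining $C_t$ to blow up. So let $t_0\in[ta_\gamma,tb_\gamma]$ be a tangency instant: $p:=\gamma(t_0)\in Y$ and $v:=\dot\gamma(t_0)\in C_pM$ is tangent to the boundary, i.e. $df_0|_p(v)\le0$ and $df_1|_p(v)\le0$. Since $p\in Y_0\simeq Y_1$ we have $\nabla f_i|_p=\nu_i$, and in the flat tangent cone $C_pM\cong\R^n$ the two inward normals are antipodal, $\nu_1=-\nu_0$; hence the two inequalities force $0=df_0|_p(v)=\langle\nu_0,v\rangle$, so $v\in T_pY$. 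Fix $\delta_0>0$ so small that the tube $U_{\delta_0}:=B_{\delta_0}(Y)$ meets no other boundary component and $d_Y$ coincides with $d_{\partial M_0}$ on $M_0\cap U_{\delta_0}$ and with $d_{\partial M_1}$ on $M_1\cap U_{\delta_0}$, both smooth off $Y$ (possible because $Y_i$ is a compact component of $\partial M_i$, disjoint from the rest). Define the signed distance $\rho$ on $U_{\delta_0}$ by $\rho:=d_{\partial M_0}$ on the $M_0$-side and $\rho:=-d_{\partial M_1}$ on the $M_1$-side, so that $d_Y=|\rho|$ on $U_{\delta_0}$. Then $\rho$ is $C^1$, being smooth on each side with matching gradients along $Y$ (namely $\nabla(d_{\partial M_0})=\nu_0$ and $\nabla(-d_{\partial M_1})=-\nu_1=\nu_0$); compactness of the $M_i$ bounds $\nabla^2\rho$ on each side, and the first-order matching across $Y$ makes $\nabla\rho$ globally Lipschitz on $U_{\delta_0}$, say with constant $L$, so $\rho$ is $C^{1,1}$ with $\|\nabla^2\rho\|_\infty\le L$ on $U_{\delta_0}$.

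Next I would establish the quadratic bound. Pick $\delta>0$ with $\gamma([t_0-\delta,t_0+\delta]\cap[a_\gamma,b_\gamma])\subset U_{\delta_0}$ and set $g:=\rho\circ\gamma$. On every sub-interval on which $\gamma$ avoids $Y$ it is a smooth Riemannian geodesic of $M_0$ or of $M_1$, so there $g$ is smooth with $g''=\nabla^2\rho(\dot\gamma,\dot\gamma)$ and $|g''|\le L$; on any sub-interval on which $\gamma$ runs inside $Y$ one has $g\equiv0$. Because every point of $M$ is Alexandrov regular with flat tangent cone, the velocity $\dot\gamma$ is continuous, so $g'=\langle\nabla\rho\circ\gamma,\dot\gamma\rangle$ has no jump at the (possibly infinitely many, possibly positive-measure) set of instants where $\gamma$ touches $Y$; piecing these together shows $g$ is $C^{1,1}$ on $[t_0-\delta,t_0+\delta]$ with $|g''|\le L$. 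Finally $g(t_0)=\rho(p)=0$ and $g'(t_0)=\langle\nabla\rho(p),v\rangle=\langle\nu_0,v\rangle=0$ by tangency, so the one-dimensional Taylor estimate gives
\[
d_Y(\gamma(s))=|g(s)|\le\tfrac{L}{2}\,(s-t_0)^2\qquad\text{for }|s-t_0|\le\delta .
\]

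To conclude, let $\epsilon>0$ be small enough that $\sqrt{2\epsilon/L}<\min\{\delta,\tfrac12(tb_\gamma-ta_\gamma)\}$. Then one of the two half-intervals $(t_0-\sqrt{2\epsilon/L},\,t_0)$, $(t_0,\,t_0+\sqrt{2\epsilon/L})$ lies inside $(ta_\gamma,tb_\gamma)$ (since $t_0\in[ta_\gamma,tb_\gamma]$); call it $I_\epsilon$. Every $s\in I_\epsilon$ satisfies $d_Y(\gamma(s))\le\tfrac L2(s-t_0)^2<\epsilon$, i.e. $\gamma(s)\in B_\epsilon(Y)=U_\epsilon$, and $\mathcal L^1(I_\epsilon)=\sqrt{2\epsilon/L}$. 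As $\gamma$ is unit speed this yields $\L(\gamma|_{\gamma^{-1}(U_\epsilon)\cap(ta_\gamma,tb_\gamma)})\ge\sqrt{2\epsilon/L}$, hence
\[
\frac{\L\big(\gamma|_{\gamma^{-1}(U_\epsilon)\cap(ta_\gamma,tb_\gamma)}\big)}{\epsilon}\ \ge\ \sqrt{\frac{2}{L\epsilon}}\ \longrightarrow\ \infty\quad(\epsilon\downarrow0),
\]
so $\gamma\in C_{s,t}$ for every $s\in\N$, i.e. $\gamma\in C_t$.

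The main obstacle is the regularity bookkeeping in the second step: showing that $g=\rho\circ\gamma$ is genuinely $C^{1,1}$ with a bound that does not see the set $\gamma^{-1}(Y)\cap[t_0-\delta,t_0+\delta]$ (which need not be finite, nor even null). Everything rests on two structural facts recalled earlier — that the two inward unit normals of $Y$ are antipodal in the flat glued tangent cone, which is what upgrades the normal distance $\rho$ from merely Lipschitz to $C^{1,1}$, and that $\gamma$, being a geodesic of the Alexandrov space $M$, has a continuous velocity, so no first-order jump of $g$ is created at the touching instants. Once these are in place, the remainder is the elementary Taylor estimate above; if one prefers, the $C^{1,1}$ claim for $g$ can be assembled piecewise from the semiconcavity/semiconvexity of $d_{\partial M_i}$ on each side together with the matching of one-sided derivatives at touching points, in the spirit of Lemma~\ref{importantlemma}.
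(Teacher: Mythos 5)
Your proof takes a genuinely different route from the paper's. The paper argues by contradiction and blow-up: assuming $\gamma\notin C_t$, it selects a sequence $\epsilon_i\downarrow 0$ with $\L(\gamma|_{\gamma^{-1}(B_{\epsilon_i}(Y))\cap\,\cdots})/\epsilon_i$ bounded, takes the maximal interval $I^{\epsilon_i}\ni t_0$ inside the tube, and rescales by its length $L_i$; after passing to the pointed GH limit (the Euclidean tangent cone at $\gamma(t_0)$) and using the convergence $\gamma(t_i)\to\eta\dot\gamma(t_0)$ of Alexandrov geometry, the tangency of $\dot\gamma(t_0)$ forces the rescaled distances $\epsilon_i/L_i$ to go to $0$, contradicting the uniform lower bound $\epsilon_i/L_i$ inherited from the assumed finiteness. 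You instead prove a direct quadratic‑contact estimate $d_Y(\gamma(s))\le\tfrac{L}{2}(s-t_0)^2$ and read off the blow-up of $\L(\gamma|_{\gamma^{-1}(U_\epsilon)})/\epsilon$ by an elementary computation. This avoids the GH convergence of rescalings entirely and is arguably more elementary, at the cost of having to control second-order regularity of $g=\rho\circ\gamma$.

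That second-order control is the place where your write-up is a bit too quick. ``$g'$ has no jump'' together with $|g''|\le L$ on the dense open set $[t_0-\delta,t_0+\delta]\setminus\gamma^{-1}(Y)$ does \emph{not} by itself yield that $g$ is $C^{1,1}$ with the same constant: a Cantor-staircase behaviour of $g'$ on a measure-zero closed exceptional set would be compatible with those two facts. What rescues the argument here is the extra structural fact that $g\equiv 0$ on $E:=\gamma^{-1}(Y)$, which forces $g'(s)=0$ at every non-isolated point $s$ of $E$, since the one-sided difference quotients along $E$ vanish and $g'(s)$ exists. With that in hand one can split $[s_1,s_2]$ at $a:=\inf(E_1\cap[s_1,s_2])$, $b:=\sup(E_1\cap[s_1,s_2])$ where $E_1$ is the perfect kernel of $E$ (so $g'(a)=g'(b)=0$), observe that $E\cap([s_1,a]\cup[b,s_2])$ is finite by compactness, and run the chain estimate $|g'(s_2)-g'(s_1)|\le L(s_2-s_1)$; alternatively one can bypass $C^{1,1}$ altogether by taking $a:=\sup\{\tau\le s:\gamma(\tau)\in Y\}$ and using Rolle's theorem on the preceding touching interval to bound $|g'(a^{+})|\le L(a-t_0)$ directly. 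Either way the conclusion you want follows, but the passage ``piecing these together shows $g$ is $C^{1,1}$'' should be replaced by one of these arguments (together with a one-line justification that $\dot\gamma$ is continuous across $\gamma^{-1}(Y)$, which you correctly attribute to the regularity of every point in the glued Alexandrov space). Once that is done your proof is complete and self-contained.
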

\begin{proof}
For the proof we ignore $t\in (0,1]$ and consider $\gamma|_{[a_\gamma,b_\gamma]}$.
Let $\gamma\in Q$ be tangent to $Y$. Assume $\gamma\notin C$.
Then there is a sequence $(\epsilon_i)_{i\in \N}\downarrow 0$ such that $$\lim_{i\rightarrow \infty} {\L(\gamma|_{\gamma^{-1}(B_{\epsilon_i}(Y))\cap (a_\gamma,b_\gamma)})}/{\epsilon_i}=:\lambda\in [0,\infty)$$ and hence   $\lim_{i\rightarrow \infty} {\L(\gamma|_{\gamma^{-1}(B_{\epsilon_i}(Y))\cap (a_\gamma,b_\gamma)})}=:0$. 
By assumption there exists $t_0\in [a_\gamma, b_\gamma]$ such that $\gamma(t_0)\in Y$. Hence $t_0\in \gamma^{-1}(B_{\epsilon_i}(Y))\cap [a_\gamma,b_\gamma]$. 
There exists a maximal interval $I^{\epsilon_i}$ contained in $\gamma^{-1}(B_{\epsilon_i}(Y))\cap [a_\gamma,b_\gamma]$  such that $t_0\in I^{\epsilon_i}$. 
Then
$$\infty>\lambda=\lim_{i\rightarrow \infty} \frac{\L(\gamma|_{\gamma^{-1}(B_{\epsilon_i}(Y))\cap (a_\gamma,b_\gamma)})}{\epsilon_i}\geq \liminf_{i\rightarrow \infty }\frac{\L(\gamma|_{I^{\epsilon_i}})}{\epsilon_i}\geq 0$$ 
and  $\L(\gamma|_{I^{\epsilon_i}})=:L_i\rightarrow 0$. We set $\gamma_i=\gamma|_{I^{\epsilon_i}}$. Since $I^{\epsilon_i}$ is maximal such that $\mbox{Im}(\gamma_i)\subset B_{\epsilon_i}(Y)$, we have 
$\sup_{t\in I^{\epsilon_i}}\inf_{y\in Y}d(y,\gamma(t))=\sup_{t\in I^{\epsilon_i}}\de_Y(\gamma(t))\geq \epsilon_i$ where $\de_Y=\inf_{y\in Y} \de(y, \cdot)$ is the distance function to $Y$.  Otherwise we would have $(a_\gamma, b_\gamma)\subset \gamma^{-1}(B_{\epsilon_i}(Y))$ for all $i\in \mathbb N$, and hence $\gamma\in C$. Since $\de_Y$ is continuous and $\overline{I^{\epsilon_i}}$ is compact, there exist $t_i\in \overline{I^{\epsilon_i}}$ such that $\de_Z(\gamma(t_i))= \sup_{t\in I^{\epsilon_i}}\de_Y(\gamma(t))= \epsilon_i$.  Moreover, $L_i\geq 2\epsilon_i$ since $I^{\epsilon_i}$ is maximal. Hence $\frac{L_i}{\epsilon}\geq 2$.
%Moreover, there are points $y_i\in Y$ such that $\de_Y(\gamma(t_i))= \de(y_i, \gamma(t_i))$. 
In the rescaled space $(Z, \frac{1}{L_i} d_Z)$ the geodesic $\gamma_i$ is of length $1$ and $$\frac{1}{2}\geq \frac{1}{L_i}\de_Y(\gamma(t_i))=\frac{\epsilon_i}{L_i}\geq \lambda/2$$
for $i\in \N$ sufficiently large.
%By the proof of Petrunin's glued space theorem  we know that 
$(M, \frac{1}{L_i}d_g, \gamma(t_0))$ converges in pointed GH sense to $T_{\gamma(t_0)}M$ and a subsequence of $\gamma(t_i)$ converges to   $\eta \dot\gamma(t_0)\in T_{\gamma(t_0)}M$ in GH sense with $\eta\in [0,2]$. Moreover $\frac{1}{L_i} \de_Y(\gamma(t_i))\rightarrow d(\de_{Y})_{\gamma(t_0)}(\dot\gamma(t_0))\geq \lambda/2$.
This is a contradiction, since either we have $\eta=0$, or $\eta\neq 0$ but $\dot\gamma(t_0)$ was assumed to be tangent to $Y$.
Hence, for any sequence $\epsilon_i\rightarrow 0$ it follow that $\frac{L_i}{\epsilon_i}\rightarrow \infty$ and therefore $\gamma \in C_t$. %
\end{proof}
\begin{corollary}\label{cor: tan->Ct}
Let $\gamma\in Q$. If {$\gamma|_{(a_\gamma,b_\gamma)}$} is tangent to $Y$, then $\gamma\in {C= \bigcup\limits_{t\in (0,1)}}C_t$. 
\end{corollary}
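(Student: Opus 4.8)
The plan is to deduce Corollary~\ref{cor: tan->Ct} directly from Lemma~\ref{lemma: tan-t->Ct} by a rescaling argument. Unwinding the definition, the hypothesis that $\gamma|_{(a_\gamma,b_\gamma)}$ is tangent to $Y$ means there is a parameter $t_0\in(a_\gamma,b_\gamma)$ with $\gamma(t_0)\in Y$ and $\dot\gamma(t_0)\in C_{\gamma(t_0)}M$ tangent to the boundary. If I can exhibit a $t\in(0,1)$ with $t_0\in[ta_\gamma,tb_\gamma]$, then the restriction $\gamma|_{[ta_\gamma,tb_\gamma]}$ is itself tangent to $Y$ (the tangency point $t_0$ lies in this subinterval), so Lemma~\ref{lemma: tan-t->Ct} applies and yields $\gamma\in C_t\subset C=\bigcup_{t\in(0,1)}C_t$, which is the assertion.

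To produce such a $t$, I would use that $\gamma$ is parametrised so that $\gamma(0)\in X_\gamma$ (see Subsection~\ref{subsec:1Dlocalisation}), hence $0\in I_\gamma\subset\overline{I_\gamma}$ and therefore $a_\gamma\le 0\le b_\gamma$. Consequently the dilated intervals $[ta_\gamma,tb_\gamma]$, $t\in(0,1)$, are increasing in $t$ and their union contains $(a_\gamma,b_\gamma)$. Concretely: if $t_0>0$ then $0<t_0<b_\gamma$, so any $t\in(t_0/b_\gamma,1)$ satisfies $ta_\gamma\le 0<t_0<tb_\gamma$; if $t_0<0$ then $a_\gamma<t_0<0$, and the symmetric choice $t\in(t_0/a_\gamma,1)$ gives $ta_\gamma<t_0<0\le tb_\gamma$; and the case $t_0=0$ is trivial since then $0\in[ta_\gamma,tb_\gamma]$ for every $t\in(0,1)$. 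In each case $t_0\in[ta_\gamma,tb_\gamma]$ for the chosen $t\in(0,1)$, as required.

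I do not anticipate a real obstacle: all of the geometric content is already contained in Lemma~\ref{lemma: tan-t->Ct} and, upstream, in Fact~\ref{lem: infinite implies tangent}; the corollary is merely the observation that a boundary tangency occurring at an interior parameter of $(a_\gamma,b_\gamma)$ survives a sufficiently mild dilation of the interval towards the base point $\gamma(0)$. The only points needing a word of care are that $\gamma(0)$ lies in $\overline{I_\gamma}$---so that $0\in[a_\gamma,b_\gamma]$ and the family $[ta_\gamma,tb_\gamma]$ behaves monotonically and stays inside $[a_\gamma,b_\gamma]$---and the straightforward case distinction on the sign of $t_0$, both of which are immediate from the construction of the quotient $Q$ recalled in Subsection~\ref{subsec:1Dlocalisation}.
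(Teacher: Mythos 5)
Your proof is correct and is exactly the argument the paper leaves implicit: since $a_\gamma<0<b_\gamma$ the nested intervals $[ta_\gamma,tb_\gamma]$ increase to $(a_\gamma,b_\gamma)$ as $t\uparrow 1$, so an interior tangency parameter $t_0\in(a_\gamma,b_\gamma)$ lies in $[ta_\gamma,tb_\gamma]$ for $t$ close enough to $1$, and Lemma~\ref{lemma: tan-t->Ct} then gives $\gamma\in C_t\subset C$. Your case distinction and the note on the parametrization $\gamma(0)=s(x)$ (hence $a_\gamma<0<b_\gamma$) are precisely the points that make the dilation argument work, and they are consistent with the subsequent remark that endpoint tangencies are not covered.
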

\begin{lemma}\label{lemma:tangentgeodesics}
 $\mathfrak q(Q\cap \bigcup_{t\in (0,1)} C_t)=0$.
\end{lemma}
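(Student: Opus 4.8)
The plan is to fix $t\in(0,1)$ and show $\mathfrak q(C_t)=0$; since the interval $(ta_\gamma,tb_\gamma)$ grows with $t$, one has $C_t\subseteq C_{t'}$ for $t\le t'$, so $\bigcup_{t\in(0,1)}C_t=\bigcup_{k\ge 2}C_{1-1/k}$ and the lemma will follow by countable subadditivity. The quantitative mechanism is the tension between two facts: the $\epsilon$-tube $U_\epsilon=B_\epsilon(Y)$ around the compact smooth hypersurface $Y$ has $\vol_g(U_\epsilon)=O(\epsilon)$, whereas every $\gamma\in C_t$ spends, along its middle portion $tX_\gamma=\gamma((ta_\gamma,tb_\gamma))$, a length super-linear in $\epsilon$ inside $U_\epsilon$.

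First I would record two inputs. \emph{(i) Tube estimate:} since $Y$ lies in the interior of $\hat M$ and any curve from $x\in M_0$ to $Y$ must reach $Y$ before it can enter $M_1$, one has $d_g(x,Y)=d_{g_0}(x,Y_0)$ for $x\in M_0$ and symmetrically for $M_1$; hence $U_\epsilon\cap M_i=B^{g_i}_\epsilon(Y_i)$ is a genuine one-sided tubular neighbourhood of the smooth compact hypersurface $Y_i\subset(M_i,g_i)$, and the tube formula gives $C,\epsilon_1>0$ depending only on $(M_i,g_i,Y_i)$ with $\vol_g(U_\epsilon)\le C\epsilon$ for $\epsilon\in(0,\epsilon_1]$. \emph{(ii) Density lower bound on a good set:} applying Theorem~\ref{th:1dlocalisation} to the essentially non-branching space $(M,d_g,\vol_g)$ (an Alexandrov space of curvature $\ge\underline\kappa$, hence a $CD$ space by Theorem~\ref{th:petrunincd}), for $\mathfrak q$-a.e.\ $\gamma$ the density $h_\gamma$ is positive and locally Lipschitz on $(a_\gamma,b_\gamma)$; hence on the set $Q^l$ of Subsection~\ref{subsec:1Dlocalisation} (where $|a_\gamma|,|b_\gamma|$ are bounded and bounded away from $0$) the interval $\overline{(ta_\gamma,tb_\gamma)}$ is a compact subinterval of $(a_\gamma,b_\gamma)$, so $\inf_{[ta_\gamma,tb_\gamma]}h_\gamma>0$ for a.e.\ such $\gamma$. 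I would then set, for $l,m\in\N$,
\[
Q^{l,m}_t:=\Big\{\gamma\in Q^l:\ h_\gamma\ge\tfrac1m\ \text{ on }[ta_\gamma,tb_\gamma]\Big\},
\]
which is measurable (by measurability of $\gamma\mapsto a_\gamma,b_\gamma,h_\gamma$) and satisfies $\mathfrak q\big(Q\setminus\bigcup_{l,m}Q^{l,m}_t\big)=0$.

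The main estimate then runs as follows. Fix $l,m,s\in\N$ and $\epsilon\in(0,\epsilon_1]$. For $\gamma\in C_{\epsilon,s,t}\cap Q^{l,m}_t$, since $\gamma$ is arclength parametrised, $\mathcal L^1\big(\gamma^{-1}(U_\epsilon)\cap(ta_\gamma,tb_\gamma)\big)=\L\big(\gamma|_{\gamma^{-1}(U_\epsilon)\cap(ta_\gamma,tb_\gamma)}\big)>\epsilon s$, hence $\m_\gamma\big(U_\epsilon\cap tX_\gamma\big)=\int_{\gamma^{-1}(U_\epsilon)\cap(ta_\gamma,tb_\gamma)}h_\gamma\,d\mathcal L^1\ge\epsilon s/m$. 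Using $tX_\gamma\subseteq\T^b_{u,t}$, the disintegration of $\vol_g|_{\T^b_u}$, and (i),
\[
\frac{\epsilon s}{m}\,\mathfrak q\big(C_{\epsilon,s,t}\cap Q^{l,m}_t\big)\le\int_Q\m_\gamma\big(U_\epsilon\cap tX_\gamma\big)\,d\mathfrak q(\gamma)=\vol_g\big(U_\epsilon\cap\T^b_{u,t}\big)\le\vol_g(U_\epsilon)\le C\epsilon,
\]
so $\mathfrak q\big(C_{\epsilon,s,t}\cap Q^{l,m}_t\big)\le Cm/s$ for every $\epsilon\in(0,\epsilon_1]$. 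Passing to $C_{s,t}$ is routine bookkeeping: $\gamma\in C_{s+1,t}$ forces $\gamma\in C_{\epsilon,s,t}$ for all small $\epsilon$, so $C_{s+1,t}\subseteq\bigcup_{j\ge j_1}\bigcap_{0<\epsilon<1/j}C_{\epsilon,s,t}$ with $1/j_1\le\epsilon_1$, an increasing union each of whose members lies in some $C_{\epsilon,s,t}$ with $\epsilon\le\epsilon_1$, whence $\mathfrak q(C_{s+1,t}\cap Q^{l,m}_t)\le Cm/s$. Letting $s\to\infty$ with $C_t=\bigcap_s C_{s,t}$ and $\mathfrak q$ finite gives $\mathfrak q(C_t\cap Q^{l,m}_t)=0$; summing over $l,m$ gives $\mathfrak q(C_t)=0$; summing over $t=1-1/k$ gives the claim.

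The substantive point is the collision of the linear tube bound (i) with the quantitative density lower bound $1/m$ on $Q^{l,m}_t$ — everything else is manipulation of the $\liminf$ definitions and countable exhaustions. I expect the main obstacle to be the measurable-selection housekeeping: the measurability of $C_{\epsilon,s,t}\cap Q^{l,m}_t$, the disintegration identity $\vol_g(U_\epsilon\cap\T^b_{u,t})=\int_Q\m_\gamma(U_\epsilon\cap tX_\gamma)\,d\mathfrak q$, and the positivity and local Lipschitz continuity of $h_\gamma$ needed so that $\bigcup_{l,m}Q^{l,m}_t$ exhausts $Q$ up to a $\mathfrak q$-null set; all of these are available from the $1D$-localisation theory but must be assembled carefully for the estimate to be meaningful.
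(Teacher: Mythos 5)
Your argument is correct and the broad strategy (trading the linear tube bound $\vol_g(B_\epsilon(Y))\lesssim\epsilon$ against the super-linear time that $\gamma\in C_t$ spends in $B_\epsilon(Y)$, after first passing to the middle portion $(ta_\gamma,tb_\gamma)$) is the same as the paper's, but the way you extract a density lower bound on the middle portion is genuinely different. The paper applies the explicit $CD(k(n-1),n)$ comparison estimate from Cavalletti--Mondino to bound $h_\gamma(r)$ on $I_{\gamma,\epsilon}\subset(ta_\gamma,tb_\gamma)$ from below by a constant $C(k,n,t,l)$ times $h_\gamma(\tau)$ for $\tau$ in the tails $(a_\gamma,ta_\gamma)\cup(tb_\gamma,b_\gamma)$, then integrates; the conclusion is that $\int_{C_t}\m_\gamma(\T^b_u\setminus\T^b_{u,t})\,d\mathfrak q=0$, and since each $\m_\gamma(\T^b_u\setminus\T^b_{u,t})>0$ this forces $\mathfrak q(C_t)=0$. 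You instead use only the soft facts that $h_\gamma$ is continuous, positive on the open interior $(a_\gamma,b_\gamma)$, and that on $Q^l$ the closed interval $[ta_\gamma,tb_\gamma]$ is compactly contained in $(a_\gamma,b_\gamma)$, then stratify $Q$ into the sets $Q^{l,m}_t$ where $h_\gamma\ge 1/m$ on that compact interval; the lower bound $\epsilon s/m\le\m_\gamma(U_\epsilon\cap tX_\gamma)$ is then immediate, and the tails play no role. Your route avoids the quantitative $CD$ comparison entirely (and replaces the Heintze--Karcher bound on $\vol_g(B_\epsilon(Y))$ the paper cites from $CD$ theory with the classical smooth tube formula in each $M_i$, using the easy observation $d_g(x,Y)=d_{g_i}(x,Y_i)$ for $x\in M_i$), at the cost of an extra countable exhaustion in $m$. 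The paper's version yields the cleaner quantitative estimate $\frac{M}{s}\ge \hat C\int_{C_{\epsilon,s,t}}\m_\gamma(\T^b_u\setminus\T^b_{u,t})\,d\mathfrak q$ uniformly over the chosen $Q^l$, whereas yours is a more elementary compactness argument; both are valid, and the measurability and disintegration bookkeeping you flag (measurability of $\gamma\mapsto\inf_{[ta_\gamma,tb_\gamma]}h_\gamma$ via the joint measurability of $(r,\gamma)\mapsto h_\gamma(r)$, $\gamma\mapsto a_\gamma,b_\gamma$, and the identity $\int_Q\m_\gamma(U_\epsilon\cap tX_\gamma)\,d\mathfrak q=\vol_g(U_\epsilon\cap\T^b_{u,t})$) all go through using the properties stated in Subsection~\ref{subsec:1Dlocalisation}.
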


\begin{proof} It is  enough to show  that $\mathfrak q(Q\cap C_t)=0$ for any $t\in (0,1)$. Therefore we fix $t\in (0,1)$ in the following.
We recall that {$a_\gamma<0<b_\gamma$},  $\gamma\in Q\mapsto a_\gamma, b_\gamma$ are measurable and $Q=\bigcup_{l\in \N}\{l\geq |b_\gamma|, |a_\gamma| \geq \frac{1}{l}\}$.
It is  enough to prove the lemma for $Q^l=\{l\geq |a_\gamma|, |b_\gamma|\geq \frac{1}{l}\}$  for arbitrary $l\in \N$. Therefore
we fix $l\in \mathbb N$ and replace $Q$ with $Q^l$. We will drop the superscript $l$ for the rest of the proof.
By rescaling the whole space with $4l$ we can assume that {$4\le |a_\gamma|, |b_\gamma| \le 4l^2$ }for each $\gamma\in Q$.

Let $C_{\epsilon, s, t}$ be defined as  before for $\epsilon\in (0,\epsilon_0)$ and $s\in \N$.
We pick $\gamma\in C_{\epsilon,s, t}$ and consider $\gamma^{-1}(B_{\epsilon}(Y))\cap (ta_\gamma, tb_\gamma)=: I_{\gamma, \epsilon}$. We set $\L(\gamma|_{I_{\gamma, \epsilon}})=:L^{\epsilon}$.

We observe that $4l^2\geq (1-t)|a_\gamma|\geq (1-t)4\mbox{ and }  4l^2\geq (1-t)|b_\gamma| \geq (1-t)4.$
We pick $r\in I_{\gamma, \epsilon}$ and $\tau \in (a_\gamma, ta_\gamma)\cup (tb_\gamma, b_\gamma)$. Theorem \ref{th:1dlocalisation} implies that $([a_\gamma, b_\gamma], h_\gamma dr)$ satisfies the condition $CD(k(n-1),n)$. Then, the following estimate holds (c.f. \cite[Inequality (4.1)]{cavmon})
\begin{align*}
h_\gamma(r)&\geq \frac{\sin^{n-1}_{k}((r-a_\gamma)\wedge (b_\gamma-r))}{\sin^{k-1}_{k}((\tau-a_\gamma)\wedge (b_\gamma-\tau))}{h_\gamma(\tau)} \\ &\geq \frac{\sin^{n-1}_{k}((1-t)4)}{\sin^{n-1}_{k}4l^2} h_\gamma(\tau)\\
&= C(k,n,t,l) h_\gamma(\tau)
\end{align*}
for a universal constant $C(k,n,t,l)$.
We take the mean value w.r.t. $\mathcal L^1$ on both sides and obtain
\begin{align*}
\frac{1}{L^{\epsilon}}\int_{I_{\gamma, \epsilon}} h_\gamma d\mathcal L^1 \geq C(k,n,t,l) \frac{1}{4l^2}\int_{(a_\gamma, ta_\gamma)\cup (tb_\gamma, b_\gamma)} h_\gamma d\mathcal L^1.
\end{align*}
Hence, after integrating w.r.t. $\mathfrak q$ on $C_{\epsilon,s,t}$ and taking into account $\frac{1}{\epsilon s}\geq \frac{1}{L^{\epsilon}}$ by definition of $C_{\epsilon,s,t}$, it follows
\begin{align*}
\frac{1}{\epsilon s}\vol_g(B_{\epsilon}(Y))&\geq 
\frac{1}{s\epsilon}\int_{C_{\epsilon,s,t}}\m_{\gamma}(B_{\epsilon}(Y))d\mathfrak q(\gamma)\\
&\geq\frac{1}{L^{\epsilon}}\int_{C_{\epsilon,s,t}} \int_{I_{\gamma, \epsilon}} h_\gamma d\mathcal L^1 d\mathfrak q(\gamma) \\
&\geq \hat C \int_{C_{\epsilon,s,t}}\int_{(a_\gamma, ta_\gamma)\cup (tb_\gamma, b_\gamma)} h_\gamma d\mathcal L^1 d\mathfrak q(\gamma)\\
&\geq \hat C \int_{C_{\epsilon,s,t}} \m_\gamma(\T^b_u\backslash \T^b_{u,t})d\mathfrak q(\gamma)
\end{align*}
where $\hat C=\frac{1}{2l} C(k,n,t,l)$.

Since $(M, g, \vol_g)$ satisfies $CD(k(n-1),n)$ and since $Y$ is a smooth embedded, compact submanifold of $M_0$ and of $M_1$, it follows from the Heintze-Karcher inequality for $CD$ spaces (see \cite{kettererhk}) that $\vol_g(B_{\epsilon}(Y))\leq \epsilon M$ for a constant $M>0$ that only depends on $k$, $n$ and a lower bound of the second fundamental forms of $Y$ in $M_0$ and in $M_1$ respectively. 
Hence
\begin{align*}
\frac{M}{s} \geq C(K,N,k,t) \int_{C_{\epsilon,s,t}} \m_\gamma(\T^b_u\backslash \T^b_{u,t})d\mathfrak q(\gamma).
\end{align*}
If we take the limit for $\epsilon\rightarrow 0$, we obtain

\begin{align*}
\frac{M}{s} \geq C(K,N,k,t) \int_{C_{s,t}} \m_\gamma(\T^b_u\backslash \T^b_{u,t})d\mathfrak q(\gamma).
\end{align*}
Finally, for $s\rightarrow \infty$ it follows
\begin{align*}
0 = \int_{C_{t}} \m_\gamma(\T^b_u\backslash \T^b_{u,t})d\mathfrak q(\gamma).
\end{align*}
But by construction of $\T^b_{u,t}$ we know that $\m_\gamma(\T^b_u\backslash \T^b_{u,t})$ is positive for every $\gamma\in Q$ if $t\in (0,1)$. Therefore, it follows $\mathfrak q(C_t)=0$.
\end{proof}
Combining the above lemma with Corollary~\ref{cor: tan->Ct} gives
\begin{corollary}\label{cor: tan-measure 0} $\mathfrak q(\gamma\in Q:  \gamma|_{(a_\gamma,b_\gamma)}$ tangent to $Y)=0$.
\end{corollary}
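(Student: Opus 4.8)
The plan is to obtain the corollary directly by combining the two preceding results; the proof is therefore very short, essentially all the real work having been done already. Write $S := \{\gamma \in Q : \gamma|_{(a_\gamma,b_\gamma)} \text{ is tangent to } Y\}$ for the set whose $\mathfrak q$-measure is to be bounded. First I would apply Corollary~\ref{cor: tan->Ct}, which says precisely that $S \subseteq C := \bigcup_{t\in(0,1)} C_t$. Then Lemma~\ref{lemma:tangentgeodesics} gives $\mathfrak q(Q \cap C) = 0$, and monotonicity of $\mathfrak q$ forces $\mathfrak q(S)=0$. That is the whole argument.

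It is worth noting why the two inputs are available, since that is where any difficulty resides. Corollary~\ref{cor: tan->Ct} is immediate from Lemma~\ref{lemma: tan-t->Ct}: if $\gamma$ is tangent to $Y$ at a parameter $t_0\in(a_\gamma,b_\gamma)$, then, using $a_\gamma<0<b_\gamma$, for $t<1$ close enough to $1$ one still has $t_0\in[ta_\gamma,tb_\gamma]$, so $\gamma|_{[ta_\gamma,tb_\gamma]}$ is tangent to $Y$ and Lemma~\ref{lemma: tan-t->Ct} puts $\gamma$ into $C_t\subseteq C$. The genuinely nontrivial ingredient is Lemma~\ref{lemma:tangentgeodesics}: one first observes that $C_t$ is monotone in $t$ (enlarging $t$ enlarges the interval $(ta_\gamma,tb_\gamma)$), so the uncountable union $C$ reduces to a countable one $\bigcup_{n}C_{1-1/n}$ and it suffices to show $\mathfrak q(C_t)=0$ for each fixed $t$; and then one uses the $1D$-localisation of $\vol_g$ on the Alexandrov space $M_0\cup_{\mathcal I}M_1$, the $CD(k(n-1),n)$ comparison for the conditional densities $h_\gamma$, and the Heintze--Karcher bound $\vol_g(B_\epsilon(Y))\le \epsilon M$ to squeeze $\mathfrak q(C_t)$ to zero. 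Thus the main obstacle is not in the corollary at all but has already been surmounted in that lemma.

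Hence, in proving the corollary itself there is essentially nothing to check beyond the inclusion $S\subseteq C$ supplied by Corollary~\ref{cor: tan->Ct}. I would additionally record the consequence that will actually be used later: by Fact~\ref{lem: infinite implies tangent} the set $Q\setminus Q^\dagger$ of transport geodesics meeting $Y$ infinitely often satisfies $Q\setminus Q^\dagger\subseteq S$, so the corollary yields $\mathfrak q(Q\setminus Q^\dagger)=0$; equivalently, $\mathfrak q$-a.e.\ transport geodesic crosses $Y$ only finitely many times.
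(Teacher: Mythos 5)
Your proof is correct and matches the paper's argument exactly: the corollary is obtained by combining Corollary~\ref{cor: tan->Ct} (which places every $\gamma$ with $\gamma|_{(a_\gamma,b_\gamma)}$ tangent to $Y$ inside $C=\bigcup_{t\in(0,1)}C_t$) with Lemma~\ref{lemma:tangentgeodesics} (which gives $\mathfrak q(Q\cap C)=0$). The surrounding commentary you add about why the two inputs hold is accurate but not part of the corollary's proof itself.
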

\begin{remark} It is not claimed that the set of geodesics in $Q$  which are tangent to $Y$ at one of the endpoints has measure zero. 
\end{remark}

{\begin{corollary}\label{cor:important}
Let $x_1\in M_1\backslash Y$. Then, {for} $\vol_{g}$-a.e. point $x_0\in M_0$ there exists a unique geodesic that connects $x_0$ and $x_1$ and intersects with $Y$ only finitely many times. 
\end{corollary}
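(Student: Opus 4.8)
The plan is to run the $1D$-localisation machinery of Theorem~\ref{th:1dlocalisation} on the $1$-Lipschitz function $u:=d_g(\cdot,x_1)$. Recall that (by Theorem~\ref{th:basicglue} and Theorem~\ref{th:petrunincd}) $M=M_0\cup_{\mathcal I}M_1$ is an Alexandrov space with curvature bounded below by some $\underline\kappa$, hence $(M,d_g,\vol_g)$ is $CD(\underline\kappa(n-1),n)$, essentially non-branching, with $\supp\vol_g=M$ and $\vol_g(M)<\infty$, so the theory applies. Let $\{X_\gamma\}_{\gamma\in Q}$ be the induced partition of $\T^b_u$ with quotient map $\mathfrak Q$ and quotient measure $\mathfrak q$, and for $\mathfrak q$-a.e.\ $\gamma$ let $\overline\gamma\colon[a_\gamma,b_\gamma]\to M$ be the arclength-parametrised geodesic extending $\gamma$; this is an isometric embedding with $\Im(\overline\gamma)=\overline{X_\gamma}=R_u(x)$ for all $x\in X_\gamma$. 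Since $(y,x_1)\in\Gamma_u$ for every $y\in M$, we have $\T_u=M$, so Lemma~\ref{somelemma} gives $\vol_g(M\setminus\T^b_u)=0$. Passing to the full-$\mathfrak q$-measure set $\widetilde Q\subseteq Q$ of geodesics that are \emph{not} tangent to $Y$ on $(a_\gamma,b_\gamma)$ (Corollary~\ref{cor: tan-measure 0}) and using that the smooth submanifold $Y$ satisfies $\vol_g(Y)=0$, the set $\mathcal G:=\bigl(\mathfrak Q^{-1}(\widetilde Q)\cap M_0\bigr)\setminus Y$ has full $\vol_g$-measure in $M_0$. I claim every $x_0\in\mathcal G$ works.

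Fix $x_0\in\mathcal G$ and put $\gamma:=\mathfrak Q(x_0)\in\widetilde Q$, so $x_0\in X_\gamma$. For uniqueness, let $\sigma\colon[0,L]\to M$ be any minimising geodesic from $x_1=\sigma(0)$ to $x_0=\sigma(L)$. Since $d_g(\sigma(t),x_1)=t$, one gets $u(\sigma(t))-u(\sigma(s))=d_g(\sigma(t),\sigma(s))$ for $0\le s\le t\le L$, i.e.\ $(\sigma(t),\sigma(s))\in\Gamma_u$; hence $\Im(\sigma)\subseteq\Gamma_u(x_0)\subseteq R_u(x_0)=\Im(\overline\gamma)$. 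In particular $x_1\in\overline{X_\gamma}$, so $x_1=\overline\gamma(s_1)$ and $x_0=\overline\gamma(s_0)$ for well-defined $s_0,s_1\in[a_\gamma,b_\gamma]$. As $\Im(\sigma)$ is a connected subset of $\Im(\overline\gamma)$ containing both points and of length $L=|s_0-s_1|$, it must equal $\overline\gamma\bigl([s_0\wedge s_1,\,s_0\vee s_1]\bigr)$, and being unit speed, $\sigma$ is the unique reparametrisation of that segment. Thus the geodesic from $x_0$ to $x_1$ is unique; denote it $\overline\gamma|_J$ with $J=[s_0\wedge s_1,\,s_0\vee s_1]$.

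It remains to show $\overline\gamma|_J$ meets $Y$ finitely often. Suppose not: after passing to a subsequence there are pairwise distinct $\tau_k\in J$ with $\overline\gamma(\tau_k)\in Y$ and $\tau_k\to t_0\in J$, so $\overline\gamma(t_0)\in Y$ since $Y$ is closed. But $\overline\gamma(s_0)=x_0\notin Y$ by choice of $x_0\in\mathcal G$, and $\overline\gamma(s_1)=x_1\notin Y$ by hypothesis, so $t_0$ lies in the open interval $(s_0\wedge s_1,\,s_0\vee s_1)\subseteq(a_\gamma,b_\gamma)$. Running the argument of Fact~\ref{lem: infinite implies tangent} verbatim at the interior point $t_0$ --- $\overline\gamma(\tau_k)\to\overline\gamma'(t_0)=:v$ in GH sense, and $d(d_{\partial M_i})|_{\overline\gamma(t_0)}(v)=0$ for $i=0,1$ because $d_{\partial M_i}\equiv 0$ along the $Y$-points --- shows $\overline\gamma$ is tangent to $Y$ on $(a_\gamma,b_\gamma)$, contradicting $\gamma\in\widetilde Q$. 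This proves the corollary. The only genuinely delicate point is that Corollary~\ref{cor: tan-measure 0} controls only tangency in the \emph{interior} of transport rays, so one must rule out an accumulation of $Y$-intersections hiding at an endpoint of $\overline\gamma|_J$; this is exactly what the two disjointness observations ($x_1\notin Y$ by assumption, $x_0\notin Y$ for $\vol_g$-a.e.\ $x_0$) are for, since they force $t_0$ strictly inside $(a_\gamma,b_\gamma)$. Everything else --- the disintegration, and that interior-tangent rays form a $\mathfrak q$-null set --- is already available from the preceding subsection.
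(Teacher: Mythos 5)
Your proof is correct and runs on the same machinery as the paper's: $1D$-localisation for $u=d_g(\cdot,x_1)$, Corollary~\ref{cor: tan-measure 0}, and Fact~\ref{lem: infinite implies tangent}. You are, however, more careful than the paper's own two-sentence proof in a respect that actually matters: the paper invokes Corollary~\ref{cor: tan-measure 0} as though it directly yields a full-$\mathfrak q$-measure set of rays meeting $Y$ only finitely often, but that corollary controls tangency only on the \emph{open} interval $(a_\gamma,b_\gamma)$ (the remark immediately following it explicitly warns that endpoint tangency is uncontrolled), whereas Fact~\ref{lem: infinite implies tangent} produces a tangency point $t_0\in[a_\gamma,b_\gamma]$ that could a priori be an endpoint. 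Your observation that the accumulation point of $Y$-intersections along the actual segment from $x_0$ to $x_1$ is forced into the open interior --- because $x_1\notin Y$ by hypothesis and $x_0\notin Y$ after discarding the $\vol_g$-null set $Y$, so $t_0\in(s_0\wedge s_1,s_0\vee s_1)\subset(a_\gamma,b_\gamma)$ --- is precisely the repair this step needs; making the uniqueness argument explicit via $\Im(\sigma)\subseteq\Gamma_u(x_0)\subseteq R_u(x_0)=\Im(\overline\gamma)$ is likewise a welcome clarification that the paper leaves implicit.
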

\begin{proof}Consider the $1$-Lipschitz function $u=\de(\cdot, x_1)$  and the corresponding $1D$ localisation with the quotient space $Q$ of geodesics, the quotient map $\mathfrak Q$  and the quotient measure $\mathfrak q$. 
Corollary \ref{cor: tan-measure 0} yields that there exists $\hat Q\subset Q$ such that $\mathfrak q(Q\backslash \hat Q)=0$ and for every geodesic $\gamma\in \hat Q$ we know that it intersects with $Y$ only finitely many times.
Also $\mathfrak Q^{-1}(\hat Q)\cap X_0$ has full $\vol_g$-measure in $M_0$. 
\end{proof} }
%
%
%%
%%
%%
%%
%%%
%%
%%
%%
%
%
%

\subsection{} Here we finish the proof of Corollary \ref{cor1}.

{\textbf 1. } Let $(M_i, \Phi_i)$, $i=0,1$, be weighted Riemannian manifolds with $\ric^{\Phi_i, N}_{M_i}\geq K$ satisfying Assumption \ref{ass:1} and Assumption \ref{ass:3}.

By Theorem \ref{main3} and stability of the curvature-dimension condition it follows that the metric glued space $M_0\cup_{\mathcal I}M_1$ with the reference measure $\Phi \vol_g$ 
%has Alexandrov curvature bounded from below by $\underline \kappa$ and by Theorem \ref{th:petrunincd} it 
satisfies the condition $CD(K,\cN)$.

Hence, any $1$-Lipschitz function $u: M_0\cup_{\mathcal I} M_1\rightarrow \R$ induces a disintegration $\{\m_\gamma\}_{\gamma\in Q}$ that is strongly consistent with $R^b_u$, and for $\mathfrak q$-a.e. $\gamma\in Q$ the metric measure space $(\overline{X}_\gamma,\m_\gamma)$ satisfies the condition $CD(K, \cN)$.
%and hence $CD(\underli)$  for all $N\in [n,\infty)$ by monotonicity in $N$ . 
It follows that $\m_{\gamma}=h_\gamma \mathcal H^1|_{X_\gamma}$ and $h_\gamma:[a_\gamma, b_\gamma]\rightarrow \R$ satisfy
\begin{align}\label{ano}
\frac{d^2}{dr^2} h_\gamma^{\frac{1}{\cN-1}}+ \frac{K}{\lceil N\rceil-1} h_\gamma^{\frac{1}{\cN-1}}\leq 0 \ \mbox{ on } (a_\gamma, b_\gamma) \ \mbox{ for }\mathfrak q\mbox{-a.e.}\gamma\in Q.
\end{align}
By Lemma \ref{importantlemma} it follows
\begin{align*}
\frac{d^-}{dr} h_{\gamma}^{\frac{1}{\cN-1}}\geq \frac{d^+}{dr} h_\gamma^{\frac{1}{\cN-1}} \ \text{  on }(a_\gamma, b_\gamma)
\end{align*}
and hence 
\begin{align*}
\frac{d^-}{dr} h_{\gamma}^{\frac{1}{N-1}}\geq \frac{d^+}{dr} h_\gamma^{\frac{1}{N-1}} \ \text{  on }(a_\gamma, b_\gamma).
\end{align*}
{\bf 2.}  Fix $0<t<1$. Define the set $C_t$ as in Section~\ref{sec: Second application}. 
Recall that all points in $M$ are regular points in the sense of an Alexandrov space. 

Let $Q_t= Q\backslash C_t$. By Lemma~\ref{lem: infinite implies tangent} and Lemma~\ref{lemma: tan-t->Ct} we know that for any $\gamma\in Q_t$ it holds that $\gamma|_{(ta_\gamma,tb_\gamma)}$ intersects $Y$ in finitely many points. Further by Lemma ~\ref{lemma:tangentgeodesics} we know that $Q_t$ has full measure in $Q$.
Since the Bakry-Emery $N$-tensor of $(M_i, \Phi_i)$, $i=0,1$, is bounded from below by $K$ we can follow precisely the final arguments in \cite{kakest} to obtain that
for $\mathfrak q$-almost every $\gamma\in Q$ the inequality \eqref{ano} holds with $N$ replaced by $\lceil N \rceil$ 
for any interval $I\subset (a_\gamma, b_\gamma)$ as long as $\gamma|_I$ is fully contained in $M_i$ for some $i=0,1$.

From Lemma \ref{importantlemma} 
 it follows that  \eqref{ano} with $\lceil N\rceil$ replaced by $N$ holds 
on $(ta_\gamma, tb_\gamma)$ for any $\gamma\in Q_t$.
Since this holds for arbitrary $0<t<1$, we get that for $\mathfrak q$-almost all $\gamma$ in $Q$ it holds that $([a_\gamma,b_\gamma], m_\gamma)$ satisfies $CD(K,N)$.
Since this holds for an arbitrary 1-Lipschitz function $u$ we obtain that $(M_0\cup_{\mathcal I} M_1, \Phi)$ satisfies $CD^1_{lip}(K,N)$.
Hence Theorem  \ref{thm:cavmil} yields the condition $CD(K,N)$ for $(M_0\cup_{\mathcal I} M_1, \Phi \vol_g)$. 

\section{Proof of Theorem 1.4}\label{sec:6}
\begin{th:re}
Assume the metric glued space $M_0\cup_{\mathcal I} M_1$ equipped with $\m= \Phi \vol_g$ satisfies a curvature-dimension condition $CD(K,N)$ for $K\in \R$ and $N\in [1, \infty)$. Then it follows
\smallskip
\begin{itemize}
\item[(1)] $\Pi_i \geq 0$ on $\partial M_i \backslash Y_i,$
\medskip 
\item[(2)] $\Pi_1 + \Pi_2 =: \Pi \geq 0$ on $Y_0\simeq Y_1$,  
\medskip
\item[(3)] $ \tr \Pi- \langle N_0, \nabla \log \Phi_0\rangle - \langle N_1, \nabla \log \Phi_1\rangle\geq  0$ on $Y_0\simeq Y_1$.
\end{itemize}
\end{th:re}
\begin{proof}
{\bf 1.} We first prove (3). Let 
\begin{center}
$H^{\Phi_i}
:= \tr \Pi_i -\langle N_i, \nabla \log \Phi_i\rangle 
$ on $Y_0\simeq Y_1$, $i=0,1$.
\end{center}
Let $d_{Y_0}$ and $d_{Y_0}$ be the distance functions of $Y_0\simeq Y_1=:Y$ in $M_0$ and in $M_1$ respectively. Since $Y$ is compact, for $\delta>0$ small enough restricted to $B_\delta(Y)$ the distance functions are smooth and $\nabla d_{Y_i}|_{Y}= N_i$ is the inward unit normal vector field of $Y_i$ in $(M_i, g_i)$, $i=0,1$.  The signed distance function $d_Y$ in the metric glued space $M_0\cup_{\mathcal I} M_1$ is  given by
$$d_Y= d_{Y_0}- d_{Y_1}$$
where $d_Y|_{M_0}\geq 0$ and $d_Y|_{M_1}\leq 0$, and $d_Y(x)=0$ if and only if $x\in Y$. Since the glued space is a geodesic metric space, $d_Y$ is a $1$-Lipschitz function and the induced disintegration $\m_{\gamma}= h_\gamma(t) dt$, $\gamma\in Q$, of $\m$ according to Subsection \ref{subsec:1Dlocalisation} can be constructed explicitly as follows:
\smallskip

Let $\gamma_x:(-\delta, \delta)\rightarrow M_0\cup_{\mathcal I} M_1$ be the geodesic passing through $\gamma_x(0)=x\in Y$ such that $\frac{d}{dt^+}\Big|_0 \gamma_x(t)= N_0$ and $\frac{d}{dt^-}\Big|_0\gamma_x(t)=N_1$. 

The map $T: Y\times (-\delta, \delta)\rightarrow M$ defined via $T(x,t)=\gamma_x(t)$ is smooth, a diffeomorphism  for $\delta>0$ sufficiently small and integrals on $B_\delta(Y)$ can be computed effectively via 
$$\int g d\m= \int g\circ T(x,t) \det (DT_{(x,t)}|_{T_xY}) \Phi \circ T(x,r) dr d\vol_Y(x).$$
This is a disintegration of $\m$  that is strongly consistent w.r.t. $\{\mbox{Im}(\gamma_x)\}_{x\in Y}$. It follows that 
$$\m_x =h_x(t) dt= \frac{1}{\lambda_x} \det (DT_{(x,t)}|_{T_xY}) \Phi \circ T(x,r) dr$$
and the quotient space $Q$ is given by $Y$.  $\lambda_x$ is  a normalisation constant such that $\m_x$ are probability measures. 
One can compute easily (as in \cite{kettererhk}) that 
$$ \frac{d}{dt^+}\Big|_{t=0} \log h_x(t) = H^{\Phi_0}(x)\ \mbox{ and } \ \frac{d}{dt^-} \Big|_{t=0} \log h_x(t)= H^{\Phi_1}(x).$$
The density  $h_x(t)$ is semi-convex by the Theorem \ref{thm:cavmil} about the characterizaton of the curvature-dimension condition and hence $H^{\Phi_0}+ H^{\Phi_1}\geq 0$ along $Y_0\simeq Y_1$. 
\smallskip\\
{\bf 2.} In the remaining  steps we prove  (2).  (1) follows already from \cite{han19}. 
\smallskip

We recall that   $g_0|_Y \simeq g_1|_Y$. 
We pick a smooth vector field $V\in \Gamma(TM_0|_Y)$ along $Y$ such that $|V|_g=1$ and $\langle V, N_0\rangle>0$ where $N_0$ is the inward pointing normal vector field of $Y_0\subset M_0$.
 %{\color{purple} Moreover, we assume that ${\bf P}^{\top} V= V- \langle V, N_0\rangle N_0$ is parallel in $Y$. }
{\color{black}
We consider the map \begin{center} $(x,t)\in Y\times \R\mapsto  T(x,t)= \begin{cases}  \exp^{M_0}(t V(x)) \in M_0 & \mbox{ if } t\geq 0,\\
%x= \exp^{M_0/M_1}(0V(x)) \in Y & \mbox{ if } t=0,\\
\exp^{M_1}(tV(x))\in M_1& \mbox{ if } t\leq 0
\end{cases}$. \end{center}}
\noindent
Since $Y_0\simeq Y_1=:Y$ is smooth and compact, and $DT_{(x,0)}=(\mbox{id}_{TY}, \mbox{id}_\R)$, there exists $\delta>0$ such that $T|_{Y_0\times (0, \delta)}: Y_0\times (0, \delta) \rightarrow U_0$ as well as $T|_{Y_1\times (-\delta,0)}: Y_1\times (- \delta, 0)\rightarrow U_1$  are  diffeomorphisms.  On $U_0$ we define a smooth  map $f_0$ via $P_2 \circ (T|_{Y_0\times (0,\delta)})^{-1}(p)=f_0(p)$. Hence $f_0\circ T(x,t)= t$ and $f_0$ is $1$-Lipschitz. 
Similarly,
% we  define a map $\exp(tV(x)) =:T_1(x,t)$ on $Y\times \R$ to $M_1$ that is a smooth diffeomorphism on $Y\times (-\delta, 0)$ for $\delta>0$ sufficiently small, and on $U_1= T(Y\times (-\delta,0))$ 
 we define $f_1(p)= P_2\circ (T|_{Y\times (-\delta,0)})^{-1}(p)$. The function 
$$f(p)= \begin{cases} f_0(p) & p\in U_0\subset M_0\\
0& p\in Y\\
f_1(p) & p\in U_1\subset M_1
\end{cases}.
$$
is smooth if restricted to  $M_0$ or $M_1$, and $1$-Lipschitz on $U_0\cup Y\cup U_1=:U$. The gradient flow curves of $f$ on $B_\delta(Y)$ are 
$$
t\in (-\alpha_x, \omega_x) \rightarrow M, \ \gamma_x(t)=T(x,t)
$$ 
and $\gamma_x(t)$ are geodesics in $M_0\cup_{\mathcal I} M_1$. 

We can assume that $f$ is defined everywhere on $M_0\cup_{\mathcal I} M_1$ and $f$ induces again a disintegration of $\m$ according to Subsection \ref{subsec:1Dlocalisation}.

As before the map $T$  also provides  a disintegration that is strongly consistent w.r.t. $\{\gamma_x\}_{x\in Y}$, and hence
$$\m_x =h_x(t) dt= \frac{1}{\lambda_x} \det DT_{(x,t)} \Phi \circ T(x,r) dr. $$
Since $t\mapsto h_x(t)^\frac{1}{N}$ is semi-convex (Theorem \ref{thm:cavmil}), we have $\frac{d}{dt^+}_{t=0}  h_x(t) + \frac{d}{dt^-}_{t=0}  h_x(t)\geq 0$ on $Y_0\simeq Y_1$.
\medskip\\
{\bf 3.} In the following we fix a point $x_0= :x$ in  $Y$. 

The goal in the remaining steps will be to  compute $\frac{d}{dt^+}\big|_{t=0}  h_x(t)$ (and $\frac{d}{dt^-}\big|_{t=0} h_x(t)$) for a special choice of  $V$.
\smallskip

For this we first observe  that
$$
\det DT_{(x,t)}={ \det DT^t(x) }\beta_x(t)
$$
where $x\in Y\mapsto T^t(x)= \exp_x(tV(x))\in T(Y\times\{0\})$ and $$\beta_x(t)= \langle N(x,t), c_{V(x)}'(t)\rangle$$ with $c_{V(x)}(t)= \gamma_x(t)= \exp_x(tV(x))$ and $N(x,t)\in (\Im (DT^t(x)))^\perp$ with $|N(x,t)|=1$.  Since $T$ is smooth for $\delta$ sufficiently small, also $N(x,t)$ is a smooth vector field.

{\color{black}
Hence 
\begin{align*}\frac{d}{dt^{\pm}}\Big|_{t=0}h_x(t)=&\left( \frac{d}{dt^{\pm}}\Big|_{t=0} \det DT^t(x) \right) \beta_x(t) \Phi\circ T(x,t) \\
&+ \det DT^t(x) \left(\frac{d}{dt^{\pm}} \Big|_{t=0}\beta_x(t) \right)\Phi\circ T(x,t) 
\\
&+ \det DT^t(x) \beta_x(t)\left( \frac{d}{dt^{\pm}} \Big|_{t=0}\Phi\circ T(x,t)\right).\end{align*}
In the following  we will drop the superscript $\pm$ of $t^\pm$ if it is not relevant.}
\smallskip
\\
{\bf 3.1.}
We set  $DT^t(x)=:A_x(t)$ and $\det A_x(t)=:y_x(t)$. Differentiating $y_x(t)$ at $t=0$ yields 
$$
y'_x(0)= \mbox{tr}^{T_xY}\frac{d}{dt}\Big|_{t=0} A_x(t). 
$$
{\color{black} Hence, we compute $\frac{d}{dt}\Big|_{t=0} A_x(t)$.}

We choose an orthonormal basis $e_i\in T_xY$, $i=1, \dots, n-1$, w.r.t. $g_0|_Y$ such that ${\bf P}^\top V(x)=\hat ae_1$ for $\hat a\in \R\backslash\{0\}$.  ${\bf P}^\top: TM_0|_Y\simeq TM_1|_Y\rightarrow TY$ is the tangential projection. 

Then $A_x(t)e_i= DT^t(x) e_i=J_i(t)$ is a Jacobi field that satifies $$J'_i(0)= \nabla_{e_i}V\circ T(x,0)= \nabla_{e_i} V_x.$$ In particular, note that $N(x,t)\perp J_i(t)$ $\forall i=1, \dots, n-1$. 

We fix an orthonormal frame $(E_i)_{i=1, \dots, n-1}$ on $Y$ such that $e_i=E_i(x)$ for $i=1, \dots, n-1$, and  $\frac{{\bf P}^\top V}{|{\bf P}^\top V|}= E_1$. 

{\color{black}
Moreover, we choose $E_1$ (and therefore $V$) such that  $\nabla_{e_i} E_1|_x=0$, $i=1, \dots, n-1$. We will see at the end of the proof that this is no restriction for our purpose.}

There exist smooth functions $a, b\in C^\infty(Y)$ such that 
$$ V=% \sum_{j=1}^{n-1}a_j
aE_1  + b N_0 \mbox{ on } Y_0 \mbox{ with } a(x)= \hat a \mbox{ and } b(x) =\hat b= \beta_x(0)$$
where $N_0$ is the unit normal vector field along $Y_0$.  $x$ is still our fixed point. 

We compute for $i=1, \dots, n-1$ that 
\begin{align*}\langle \nabla_{e_i} V, e_i\rangle=&
 \langle e_i, \nabla a|_x\rangle \langle e_1, e_i\rangle+ a(x) \langle \nabla_{e_i}E_1|_x, e_i\rangle\\
&\ \ \ + \langle e_i, \nabla b|_x\rangle\langle N_0, e_i\rangle+ b(x) \langle \nabla_{e_i} N_0, e_i\rangle.\end{align*}
The second term on the RHS is $0$ because $\nabla_{e_i} E_1|_x=0$ for $i=1, \dots, n-1$, while the third term on the RHS is $0$ because $N_0(x)\perp e_i$.  The last term on the RHS is $b$ times $\Pi_0|_x(e_i, e_i)$ where $\Pi_0$ is the second fundamental form of $Y_0$. For the first term on the RHS we notice that $\langle e_1, e_i\rangle =0$ if $i\neq 1$.
\smallskip

{
We still have to consider the first term on RHS for $i=1$.  }
%If $i=1$, we define $\langle e_1, \nabla a_1|_x\rangle \langle e_1, e_1\rangle= \langle e_1, \nabla a_1|_x\rangle=:a_+'(x)$. 

Note that $a_+:=a= \langle E_1, V\rangle$ describes the outgoing angle of the vector field $V$  relative to $T_xY$. Similarly, $a_-:=-a=\langle -V, E_1\rangle$ describes  the incoming angle of $V$.  
%Hence,  $$a_- + a_+=\langle -V, E_1\rangle + \langle V, E_1\rangle =0. $$ 
Hence, $0=\langle e_1, \nabla (a_+ + a_-)\rangle =  \langle e_1, \nabla a_+\rangle + \langle e_1, \nabla a_-\rangle.$
We conclude that 
\begin{align*}\frac{d}{dt^+} y_x(t)|_{t=0}+ \frac{d}{dt^-}y_x(t)|_{t=0}&= b\sum_{i=1}^{n-1} \left(\Pi_0(E_i, E_i) + \Pi_1(E_i, E_i)\right).
%+ (a^0)'(0)+ (a^1)'(0).
\end{align*}
\noindent
{\bf 3.2.} Next we compute the left/right derivative of  $\beta_x(t)$ at $t=0$. We get
$$b_x'(0)= \langle \nabla_{V(x)} N(x,t)|_{(x,0)}, V(x)\rangle.$$
We extend  $E_1$ to vectorfield $E_1$ on $U_0$ such that $[N, E_1]_{(x,0)}=0$.

We decompose  $V$ again  into $V=aE_1 + bN_0$ and write 
\begin{align*}b_x'(0)=& a^2(x) \Pi_0(e_1, e_1)+ a(x)b(x) \langle e_1, \nabla_{N_0(x)} N_0(x,t)|_{(x,0)}\rangle \\
&+ a(x)b(x) \langle N_0(x), \nabla_{E_1} N_0|_x\rangle + b^2(x) \langle N_0(x), \nabla_{N_0(x)} N(x,t)|_{(x,0)}\rangle.
\end{align*}
Using that $\nabla$ is a Riemannian connection that is symmetric we see that the last 3 terms on the RHS vanish. 
For instance, since $[N, E_1]_{(x,0)}=0$, one has
\begin{align*}
\langle E_1(x), \nabla_{N_0(x)} N(x,t)|_{(x,0)}\rangle= - \langle \nabla_{N_0(x)} E_1|_x, N_0(x)\rangle= - \langle \nabla_{e_1} N_0|_x, N_0(x)\rangle.
\end{align*}
The last term is the normal component of the covariant derivative of the unit  vector field $N_0$ in a directon tangent to $Y$ and hence $0$. 
\smallskip

We obtain 
$$\left(\frac{d}{dt^+} \beta_x+ \frac{d}{d^-} \beta_x\right)\Big|_{t=0}= \hat a^2 \left( \Pi_0(e_1, e_1)+ \Pi_1(e_1,e_1)\right).$$
\smallskip
\noindent
{\bf 3.3} We also note that  
\begin{align*}\frac{d}{dt^{+}} \Phi\circ T(x,t)\Big|_{t=0}&= \langle \nabla \Phi(x), V(x)\rangle\\
&=  \langle \nabla \Phi (x), \hat a e_1+ b N_0(x))\rangle= \langle \nabla \Phi(x), e_1\rangle + b \langle \nabla \Phi(x), N_0(x)\rangle
\end{align*}
Since $\frac{d}{dt^{-}} \Phi\circ T(x,t)\Big|_{t=0}= -\langle \nabla \Phi(x), V(x)\rangle$, it follows 
$$\left(\frac{d}{dt^+} \Phi\circ T(x,t) + \frac{d}{dt^-} \Phi\circ T(x,t)\right)\Big|_{t=0}=b\left( \langle \nabla \Phi_0|_x, N_0(x)\rangle + \langle \nabla \Phi_1|_x, N_1(x)\rangle\right).$$
\noindent 
{\bf 4.} We can summarize our computations as follows
\begin{align*}
&\frac{d}{dt^+}_{t=0}  h_x(t) + \frac{d}{dt^-}_{t=0}  h_x(t)\\
&\ \ \ =\hat b^2\left(\sum_{i=1}^{n-1} \left(\Pi_0(E_i, E_i) + \Pi_1(E_i, E_i)\right)\right) \Phi(x)\\
&\ \ \ \ \ \ \ +\hat a^2 \left( \Pi_0(e_1, e_1)+ \Pi_1(e_1,e_1)\right) \Phi(x)\\
&\ \ \ \ \ \ \ +\hat b^2\left( \langle \nabla \ln \Phi_0|_x, N_0(x)\rangle + \langle \nabla\ln  \Phi_1|_x, N_1(x)\rangle\right)\Phi(x)
\end{align*}
Now we fix $v\in T_xY$ and let $\epsilon>0$ be arbitrary. We can pick $V$ as before such that ${\bf P}^\top V(x)=v$ and such that $|\langle V(x), N_0(x)\rangle |= |\hat b|< \epsilon $.   It is clear that we can choose $V$ and vector fields $E_1, \dots, E_{n-1}$ along $Y$ with the properties as before (for instance ${\bf P}^\top V(x)= \hat a E_1(x)$). 

Then our computation and the condition $CD(K,N)$ for $M_0\cup_{\mathcal I}M_1$ imply
$$\Pi_0(v,v)+ \Pi_1(v,v)=\Pi(v,v)\geq 0.$$
This finishes the proof. 
\end{proof}
\medskip
\noindent
{\bf Data Availability:} Data sharing not applicable to this article as no datasets were generated or analysed during
the current study.
\bibliography{new}
\bibliographystyle{amsalpha}
\end{document}